\setlist[enumerate]{leftmargin=.5in}
\setlist[itemize]{leftmargin=.5in}
\newtheorem{theorem}{Theorem}[section]
\newtheorem{lemma}[theorem]{Lemma}
\newtheorem{proposition}[theorem]{Proposition}
\newtheorem*{proposition*}{Proposition}
\newtheorem{corollary}[theorem]{Corollary}
\theoremstyle{definition}
\newtheorem{definition}[theorem]{Definition}
\newcounter{claimcount}
\newenvironment{claim}{\refstepcounter{claimcount}\emph{Claim \arabic{claimcount}:}}{\vspace{1pt}}
\newtheorem{example}[theorem]{Example}
\newtheorem{remark}[theorem]{Remark}
\newcommand{\eps}{\varepsilon}
\newlist{steps}{enumerate}{1}
\setlist[steps, 1]{label = Step \arabic*:}
\newcommand{\R}{\mathbb{R}}
\newcommand{\Rp}{\R_{\geq 0}}
\newcommand{\norm}[1]{\left\lVert#1\right\rVert}
\newcommand{\N}{\mathbb{N}}
\newcommand{\X}{\ensuremath{\mathcal{X}}}
\newcommand{\muX}{\ensuremath{\mu_{X}}}
\newcommand{\dX}{\ensuremath{d_{X}}}
\newcommand{\mmspaceX}{\ensuremath{\left(X,\dX,\muX \right) }}
\newcommand{\Y}{\ensuremath{\mathcal{Y}}}
\newcommand{\muY}{\ensuremath{\mu_{Y}}}
\newcommand{\dY}{\ensuremath{d_{Y}}}
\newcommand{\mmspaceY}{\ensuremath{\left(Y,\dY,\muY \right) }}
\newcommand{\isocalss}{\mathcal{M}^w}
\newcommand{\uX}{\ensuremath{u_{X}}}
\newcommand{\ummspaceX}{\ensuremath{\left(X,\uX,\muX \right) }}
\newcommand{\uY}{\ensuremath{u_{Y}}}
\newcommand{\ummspaceY}{\ensuremath{\left(Y,\uY,\muY \right) }}
\newcommand{\Z}{\ensuremath{\mathcal{Z}}}
\newcommand{\uZ}{\ensuremath{u_{Z}}}
\newcommand{\ultradiscol}{\ensuremath{\mathcal{U}_\mathrm{dis}^w}}
\newcommand{\spec}[1]{\mathrm{spec}\left(#1\right)}
\newcommand{\supp}[1]{\ensuremath{\mathrm{supp}\left(#1\right) }}
\newcommand{\dis}{\mathrm{dis}}
\newcommand{\disu}{\dis^\mathrm{ult}}
\newcommand{\pseudoWasser}[1]{d_{\mathrm{W},#1}}
\newcommand{\WasserRpS}[1]{d^{(S,\Lambda_\infty)}_{\mathrm{W},#1}}
\newcommand{\dgh}{d_\mathrm{GH}}
\newcommand{\ugh}{u_\mathrm{GH}}
\newcommand{\dgw}[1]{d_{\mathrm{GW},#1}}
\newcommand{\ugw}[1]{u_{\mathrm{GW},#1}}
\newcommand{\usturm}[1]{u_{\mathrm{GW},#1}^\mathrm{sturm}}
\newcommand{\dsturm}[1]{d_{\mathrm{GW},#1}^\mathrm{sturm}}
\newcommand{\colijn}{d_{\mathrm{CP},2}}
\newcommand{\groupUSone}{\mathcal{G}_1}
\newcommand{\groupUStwo}{\mathcal{G}_2}
\newcommand{\grouptrop}{\mathcal{G}_3}
\newcommand{\uFLB}[1]{\mathbf{FLB}_{#1}^\mathrm{ult}}
\newcommand{\dFLB}[1]{\mathbf{FLB}_{#1}}
\newcommand{\uSLB}[1]{\mathbf{SLB}_{#1}^\mathrm{ult}}
\newcommand{\dSLB}[1]{\mathbf{SLB}_{#1}}
\newcommand{\uTLB}[1]{\mathbf{TLB}_{#1}^\mathrm{ult}}
\newcommand{\dTLB}[1]{\mathbf{TLB}_{#1}}
\def\Inf{\operatornamewithlimits{inf\vphantom{p}}}
\newcommand{\diam}[1]{\mathrm{diam}\left(#1\right) }
\newcommand{\lc}{\left(}
\newcommand{\rc}{\right)}
\begin{document}
	\title{The ultrametric Gromov-Wasserstein distance}

	\title{The ultrametric Gromov-Wasserstein distance}
	
 	\author{Facundo M\'emoli}
 	\address{Department of Mathematics and Department of Computer Science and Engineering,
 		The Ohio State University}
 	\email{memoli@math.osu.edu}
	
 	\author{Axel Munk}
 	\address{Institute for Mathematical Stochastics, University of G\"ottingen}
 	\email{munk@math.uni-goettingen.de}
	
 	\author{Zhengchao Wan}
 	\address{Department of Mathematics,
 		The Ohio State University}
 	\email{wan.252@osu.edu}
	
 	\author{Christoph Weitkamp}
 	\address{Institute for Mathematical Stochastics, University of G\"ottingen}
 	\email{cweitka@mathematik.uni-goettingen.de}

	\maketitle
	\begin{abstract}
	    In this paper, we investigate compact ultrametric measure spaces which form a subset $\mathcal{U}^w$ of the collection of all metric measure spaces $\mathcal{M}^w$. In analogy with the notion of the ultrametric Gromov-Hausdorff distance on the collection of ultrametric spaces $\mathcal{U}$, we define ultrametric versions of two metrics on $\mathcal{U}^w$, namely of Sturm's Gromov-Wasserstein distance of order $p$ and of the Gromov-Wasserstein distance of order $p$. We study the basic topological and geometric properties of these distances as well as their relation and derive for $p=\infty$ a polynomial time algorithm for their calculation. Further, several lower bounds for both distances are derived and some of our results are generalized to the case of finite ultra-dissimilarity spaces. Finally, we study the relation between the Gromov-Wasserstein distance and its ultrametric version (as well as the relation between the corresponding lower bounds) in simulations and apply our findings for phylogenetic tree shape comparisons.
	\end{abstract}

	\section{Introduction}

	Over the last decade the acquisition of ever more complex data, structures and shapes has increased dramatically. Consequently, the need to develop meaningful methods for comparing general objects has become more and more apparent. In numerous applications, e.g. in molecular biology \citep{holm1993protein,kufareva2011method,brown2016fast}, computer vision \citep{lowe2001local,jain20003d} and electrical engineering \citep{papazov2012rigid,kuo20143d}, it is important to distinguish between different objects in a pose invariant manner: two instances of the a given object in \emph{different} spatial orientations are deemed to be equal. Furthermore, also the comparisons of graphs, trees, ultrametric spaces and networks, where mainly the underlying connectivity structure matters, have grown in importance \citep{chen2011algebraic, dong2020copt}. One possibility to compare two general objects in a pose invariant manner is to model them as metric spaces $(X,\dX)$ and $(Y,\dY)$ and regard them as elements of the collection of isometry classes of compact metric spaces denoted by $\mathcal{M}$ (i.e. two compact metric spaces $(X,d_X)$ and $(Y,d_Y)$ are in the same class if and only if they are isometric to each other which we denote by $X\cong Y$). It is possible to compare $(X,\dX)$ and $(Y,\dY)$ via the \emph{Gromov-Hausdorff distance} \cite{edwards1975structure,gromov1981groups}, which is a metric on $\mathcal{M}$. It is defined as
	\begin{equation}\label{eq:Gromov Hausdorff}
	   d_{\mathrm{GH}}(X,Y):=\inf_{Z,\phi,\psi}d^{(Z,d_Z)}_{\mathrm{H}}(\phi(X),\psi(Y)),\end{equation}
	where $\phi:X\to Z$ and $\psi:Y\to Z$ are isometric embeddings into a metric space $(Z,d_Z)$ and $d^{(Z,d_Z)}_\mathrm{H}$ denotes the \emph{Hausdorff distance in $Z$}. The Hausdorff distance is a metric on the collection of compact subsets of a metric space $(Z,d_Z)$, which is denoted by $\mathcal{S}(Z)$, and for $A,B\in\mathcal{S}(Z)$ defined as follows
\begin{equation}
d_\mathrm{H}^{(Z,d_Z)}\left(A,B\right):=\max\left(  {\sup\limits_{a\in A} \Inf\limits_{b\in B}}d_Z(a,b),~\sup\limits_{b\in B} \Inf\limits_{a\in A}d_Z(a,b)\right).
\end{equation}	
While the Gromov-Hausdorff distance has been applied successfully for various shape and data analysis tasks (see e.g. \cite{memoli2004comparing,bronstein2006efficient,bronstein2006generalized,bronstein2009partial,bronstein2009topology,chazal2009gromov,bronstein2010gromov,carlsson2010characterization}), it turns out that it is generally convenient to equip the modelled objects with more structure and to model them as \emph{metric measure spaces}  \citep{memoli2007use,memoli2011gromov}. A metric measure space $\X=\mmspaceX$ is a triple, where $(X,\dX)$ denotes a metric space and $\muX$ stands for a Borel probability measure on $X$ with full support. This additional probability measure can be thought of as signalling the importance of different regions in the modelled object. Moreover, two metric measure spaces $\X=\mmspaceX$ and $\Y=\mmspaceY$ are considered as isomorphic (denoted by $\X\cong_w\Y$) if and only if there exists an isometry $\varphi:(X,\dX)\to (Y,\dY)$ such that $\varphi_\#\muX=\muY$. Here, $\varphi_\#$ denotes the pushforward map induced by $\varphi$. From now on, $\isocalss$ denotes the collection of all (isomorphism classes of) compact metric measure spaces.

The additional structure of the metric measure spaces allows to regard the modelled objects as probability measures instead of compact sets. Hence, it is possible to substitute the  Hausdorff component in \Cref{eq:Gromov Hausdorff} by a relaxed notion of proximity, namely the \emph{Wasserstein distance}. This distance is fundamental to a variety of mathematical developments
and is also known as Kantorovich distance \citep{kantorovith1942translocation}, Kantorovich-Rubinstein distance \citep{kantorowitsch1958space}, Mallows distance \citep{mallows1972note} or as the Earth Mover's distance \citep{rubner2000earth}. Given a compact metric space $(Z,d_Z)$, let $\mathcal{P}(Z)$ denote the space of probability measures on $Z$ and let $\alpha,\beta\in \mathcal{P}(Z)$. Then, the Wasserstein distance of order $p$, for $1\leq p< \infty$, between $\alpha$ and $\beta$ is defined as

\begin{equation}\label{eq:Wasserstein}
d^{(Z,d_Z)}_{\mathrm{W},p}(\alpha,\beta):=\left(\inf_{\mu\in\mathcal{C}(\alpha,\beta)}\int_{Z\times Z} d^p_Z(x,y)\,\mu(dx\times dy)\right)^\frac{1}{p},
\end{equation}
 and for $p=\infty$ as
 \begin{equation}\label{eq:def Wasserstein infinity}
     d^{(Z,d_Z)}_{\mathrm{W},\infty}(\alpha,\beta)\coloneqq\inf_{\mu\in\mathcal{C}(\alpha,\beta)}\sup_{(x,y)\in\mathrm{supp}(\mu)}d_Z(x,y),
 \end{equation}
where $\supp{\mu}$ stands for the support of $\mu$ and $\mathcal{C}(\alpha,\beta)$ denotes the set of all couplings of $\alpha$ and $\beta$, i.e., the set of all probability measures $\mu$ on the product space $Z\times Z$ such that 
\[\mu(A\times Z)=\alpha(A)~\text{ and  }~\mu(Z\times B)=\beta(B)\]
for all Borel measurable sets $A$ and $B$ of $Z$. 
It is worth noting that the Wasserstein distance between probability measures on the real line admits a closed form solution (see \cite{villani2003topics} and Remark \ref{rem:closed-form}). 

\citet{sturm2006geometry} has shown that replacing the Hausdorff distance in \Cref{eq:Gromov Hausdorff} with the Wasserstein distance indeed yields a meaningful metric on $\isocalss$. Let $\X=\mmspaceX$ and $\Y=\mmspaceY$ be two metric measure spaces. Then, \emph{Sturm's Gromov-Wasserstein distance} of order $p$, $1\leq p\leq \infty$, is defined as
\begin{equation}\label{eq:stdSturm}d_{\mathrm{GW},p}^{\mathrm{sturm}}(\X,\Y)\coloneqq \inf_{Z,\phi,\psi} d_{\mathrm{W},p}^{{(Z,d_Z)}}(\phi_\#\muX,\psi_\#\muY),\end{equation}
	where $\phi:X\to Z$ and $\psi:Y\to Z$ are isometric embeddings into the metric space $(Z,d_Z)$.

Based on similar ideas but starting from a different representation of the Gromov-Hausdorff distance, M\'emoli \cite{memoli2007use,memoli2011gromov} derived a computationally more tractable and topologically equivalent metric on $\isocalss$, namely the \emph{Gromov-Wasserstein} distance: For $1\leq p<\infty$, the \emph{$p$-distortion} of a coupling $\mu\in\mathcal{C}(\muX,\muY)$ is defined as
\begin{equation}\label{eq:distortion}
\mathrm{dis}_p(\mu)\coloneqq \left(\iint_{X\times Y \times X\times Y}\big|\dX(x,x')-\dY(y,y')\big|^p\,\mu(dx\times dy)\,\mu(dx'\times dy')\right)^{1/p}\end{equation}
and for $p=\infty$ it is given as
\[\mathrm{dis}_\infty(\mu)\coloneqq\sup_{\substack{x,x'\in \X,\,y,y'\in \Y\\ s.t.\,(x,y),(x',y')\in \supp{\mu}}}\big|\dX(x,x')-\dY(y,y')\big|.\]
The \emph{Gromov-Wasserstein distance} of order $p$, $1\leq p\leq \infty$, is defined as
\begin{equation}\label{eq:Gromov Wasserstein}
d_{\mathrm{GW},p}(\X,\Y)\coloneqq\frac{1}{2}\inf_{\mu\in\mathcal{C}(\muX,\muY)} \mathrm{dis}_p(\mu).
\end{equation}

It is known that in general $d_{\mathrm{GW},p}\leq \dsturm{p}$ and that the inequality can be strict \cite{memoli2011gromov}. Although both $\dsturm{p}$ and $d_{\mathrm{GW},p}$, $1\leq p\leq \infty$, are in general NP-hard to compute \citep{memoli2011gromov}, it is possible to efficiently approximate $d_{\mathrm{GW},p}$ via conditional gradient descent \citep{memoli2011gromov,peyre2016gromov}. This has led to numerous applications and extensions of this distance \citep{alvarez2018gromov,titouan2019optimal,bunne2019learning,chowdhury2020generalize,scetbon2021linear}.

In many cases, since the direct computation of either of these distances can be onerous, the determination of the degree of similarity between two datasets is performed via firstly  computing  \emph{invariant features} out of each dataset (e.g. global distance distributions \citep{osada2002shape}) and secondly by suitably  comparing these features. This point of view has motivated the exploration of inverse problems arising from the study of such features \citep{memoli2011gromov,sturm2012space,brinkman2012invariant,memoli2021distance}.

Clearly, $\isocalss$ contains various, extremely general spaces. However, in many applications it is possible to have prior knowledge about the metric measure spaces under consideration and it is often reasonable to restrict oneself to work on a specific sub-collections $\mathcal{O}^w\subseteq\isocalss$. For instance, it could be known that the metrics of the spaces considered are induced by the shortest path metric on some underlying trees and hence it is unnecessary to consider the calculation of $\dsturm{p}$ and $\dgw{p}$, $1\leq p\leq \infty$, for all of $\isocalss$.
The potential advantages of focusing on a specific sub-collection $\mathcal{O}^w$ are twofold. On the one hand, it might be possible to use the features of $\mathcal{O}^w$ to gain computational benefits. On the other hand, it might be possible to refine the definition $\dsturm{p}$ and $\dgw{p}$, $1\leq p\leq \infty$, to obtain more informative comparisons on $\mathcal{O}^w$. Naturally, it is of interest to identify and study these subclasses and the corresponding refinements.
This approach has been pursued to study (variants of) the Gromov-Hausdorff distance on compact \emph{ultrametric spaces} by \citet{zarichnyi2005gromov} and \citet{qiu2009geometry}, and on compact \emph{p-metric spaces} by \citet{memoli2019gromov}. Here, the metric space $\left(X,d_X\right)$ is called a $p$-metric space $(1\leq p<\infty)$, if for all $x,x',x''\in X$ it holds
	\[d_X(x,x'')\leq \left(d_X(x,x')^p+d_X(x',x'')^p\right)^{1/p}.\]   
Further, the metric space $(X,u_X)$ is called an ultrametric space, if 
$\uX$ fulfills for all $x,x',x''\in X$ that
	\begin{equation}\label{eq:ultra triangle ineq}
	\uX(x',x'')\leq \max(\uX(x,x'),\uX(x',x'')).
	\end{equation}
In particular, note that ultrametrics can be considered as the limiting case of $p$-metrics as $p\rightarrow\infty$. In particular, \citet{memoli2019gromov} derived a polynomial time algorithm for the calculation of the \emph{ultrametric Gromov-Hausdorff} distance $\ugh$ between two compact ultrametric spaces $(X,\uX)$ and $(Y,\uY)$ (see \Cref{sec:ultrametric Gromov-Hausdorff}), which is defined as 	\begin{equation}\label{eq:Gromov Hausdorff ultrametric}
	   u_{\mathrm{GH}}(X,Y):=\inf_{Z,\phi,\psi}d^{(Z,u_Z)}_{\mathrm{H}}(\phi(X),\psi(Y)),\end{equation}
where $\phi:X\to Z$ and $\psi:Y\to Z$ are isometric embeddings into a common \emph{ultrametric} space $(Z,u_Z)$ and $d^{(Z,u_Z)}_\mathrm{H}$ denotes the Hausdorff distance on $Z$.

A further motivation to study (surrogates of) the distances $\dsturm{p}$ and $\dgw{p}$ restricted on a subset $\mathcal{O}^w$ comes from the idea of \emph{slicing} which originated as a method to efficiently estimate  the Wasserstein distance $d^{\R^d}_{\mathrm{W},p}(\alpha,\beta)$ between probability measures $\alpha$ and $\beta$ supported in a high dimensional euclidean space $\R^d$ \cite{rubner2000earth}.  The original idea is that given any line $\ell$ in $\R^d$ one first  obtains $\alpha_\ell$ and $\beta_\ell$, the respective pushforwards of $\alpha$ and $\beta$ under the orthogonal projection map $\pi_\ell : \R^d \rightarrow \ell$, and then one invokes the explicit formula for the Wasserstein distance for probability measures on $\R$ (see \Cref{rem:closed-form}) to obtain a lower bound to $d^{\R^d}_{\mathrm{W},p}(\alpha,\beta)$ without incurring the possibly high computational cost associated to solving an optimal transportation problem. This lower bound is improved via repeated (often random) selections of the line $\ell$ \citep{rubner2000earth, bonneel2015sliced,kolouri2019generalized}.

Recently, \citet{le2019tree} pointed out that, thanks to the fact that the $1$-Wasserstein distance also admits an explicit formula when the underlying metric space is a tree \citep{do2011sublinear,evans2012phylogenetic,mcgregor2013sketching}, one can also devise \emph{tree slicing} estimates of the distance between two given probability measures by suitably projecting them onto tree-like structures. Most likely, the same strategy is successful for suitable projections on random ultrametric spaces, as on these there is also an explicit formula for the Wasserstein distance \citep{kloeckner2015geometric}. The same line of of work has also recently been explored in the Gromov-Wasserstein scenario \citep{vayer2019sliced,le2019fast} and could be extended based on efficiently computable restrictions (or surrogates of) $\dsturm{p}$ and $\dgw{p}$. Inspired by the results of \citet{memoli2019gromov} on the ultrametric Gromov-Hausdorff distance and the results of \citet{kloeckner2015geometric}, who derived an explicit representation of the Wasserstein distance on ultrametric spaces, we study  the collection of compact \emph{ultrametric measure spaces} $\mathcal{U}^w\subseteq\isocalss$, where  $\X=\ummspaceX\in \mathcal{U}^w$, whenever the underlying metric space $(X,\uX)$ is a compact ultrametric space.

In terms of applications, ultrametric spaces (and thus also ultrametric \emph{measure} spaces) arise naturally in statistics as metric encodings of dendrograms \citep{jardine1971mathematical,carlsson2010characterization} which is a graph theoretical representations of ultrametric spaces, in the context of phylogenetic trees \citep{semple2003phylogenetics}, in theoretical computer science in the probabilistic approximation of finite metric spaces \citep{bartal1996probabilistic,fakcharoenphol2004tight}, and in physics in the context of a mean-field theory of spin glasses
\citep{mezard1987spin,Rammal1986UltrametricityFP}. 

Especially for phylogenetic trees (and dendrograms), where one tries to characterize the structure of an underlying  evolutionary process or the difference between two such processes, it is important to have a meaningful method of comparison, i.e., to have a meaningful metric on $\mathcal{U}^w$. However, it is evident from the definition of $d_{\mathrm{GW},p}^{\mathrm{sturm}}$ and the relationship between $d_{\mathrm{GW},p}^{\mathrm{sturm}}$ and $d_{\mathrm{GW},p}$ (see \cite{memoli2011gromov}), that the ultrametric structure of $\X,\Y\in\mathcal{U}^w$ is not taken into account in the computation of either $d_{\mathrm{GW},p}^{\mathrm{sturm}}(\X,\Y)$ or $d_{\mathrm{GW},p}(\X,\Y)$, $1\leq p\leq \infty$. Hence, we suggest, just as for the ultrametric Gromov-Hausdorff distance, to adapt the definition of $d_{\mathrm{GW},p}^{\mathrm{sturm}}$ (see \Cref{eq:stdSturm}) as well as the one of $\dgw{p}$ (see \Cref{eq:Gromov Wasserstein}) and verify in the following that this makes the comparisons of ultrametric measure spaces more sensitive and leads for $p=\infty$ to a \emph{polynomial time} algorithm for the derivation of the proposed metrics.

	\subsection{The proposed approach}

	Let $\X=\ummspaceX$ and $\Y=\ummspaceY$ be ultrametric measure spaces. Reconsidering the definition of  Sturm's Gromov-Wasserstein distance in \Cref{eq:stdSturm},  we propose to only infimize over ultrametric spaces $(Z,u_Z)$ in \Cref{eq:stdSturm}. Thus, we define for $p\in[1,\infty]$ \emph{Sturm's ultrametric Gromov-Wasserstein distance} of order $p$ as \begin{equation}\label{eq:ultra Sturm}\usturm{p}(\X,\Y)\coloneqq\inf_{Z,\phi,\psi} d_{\mathrm{W},p}^{{(Z,u_Z)}}(\phi_\#\muX,\psi_\#\muY),\end{equation}
	where $\phi:X\to Z$ and $\psi:Y\to Z$ are isometric embeddings into an ultrametric space $(Z,u_Z)$.

	In the subsequent sections of this paper, we will establish many theoretically appealing properties of $\usturm{p}$. Unfortunately, we will verify that, although an explicit formula for the Wasserstein distance of order $p$ on ultrametric spaces exists \citep{kloeckner2015geometric}, for $p\in [1,\infty)$ the calculation of $\usturm{p}$ yields a highly non-trivial combinatorial optimization problem (see \Cref{subsubsec:alt rep for Sturms GW dist}). Therefore, we demonstrate that an adaption of the Gromov-Wasserstein distance defined in \Cref{eq:Gromov Wasserstein} yields a topologically equivalent and easily approximable distance on $\mathcal{U}^w$. In order to define this adaption, we need to introduce some notation. For $a,b\geq 0$ and $1\leq q <\infty$ let \[\Lambda_q(a,b)\coloneqq |a^q-b^q|^{1/q}.\]
	Further define $\Lambda_\infty(a,b)\coloneqq\max(a,b)$ whenever $a\neq b$ and $\Lambda_\infty(a,b)=0$ if $a=b$. 

Now, we can rewrite $d_{\mathrm{GW},p}$, $1\leq p\leq\infty,$ as follows
\begin{equation}\label{eq:p-dist delta1}
	d_{\mathrm{GW},p}(\X,\Y)=\frac{1}{2}\inf_{\mu\in\mathcal{C}(\muX,\muY)}\left(\iint_{X\times Y \times X\times Y}\!\!\!\!\!\big(\Lambda_1(\dX(x,x'),\dY(y,y'))\big)^p\,\mu(dx\times dy)\,\mu(dx'\times dy')\right)^{1/p}\!\!\!.
\end{equation}
	Considering the derivation of $d_{\mathrm{GW},p}$ in \cite{memoli2011gromov} and the results on the closely related ultrametric Gromov-Hausdorff distance studied in \cite{memoli2019gromov}, this suggests to replace $\Lambda_1$ in \Cref{eq:p-dist delta1} with $\Lambda_\infty$ in order to incorporate the ultrametric structures of $\ummspaceX$ and $\ummspaceY$ into the comparison. Hence, we define the \emph{$p$-ultra-distortion} of a coupling $\mu\in\mathcal{C}(\muX,\muY)$ for $1\leq p<\infty$ as
	\begin{equation}\label{eq:distortion ult}
	    \mathrm{dis}_p^\mathrm{ult}(\mu)\coloneqq \left(\iint_{X\times Y \times X\times Y}\big(\Lambda_\infty(\uX(x,x'),\uY(y,y'))\big)^p\,\mu(dx\times dy)\,\mu(dx'\times dy')\right)^{1/p}.
	\end{equation}
	and for $p=\infty$ as
	\[\mathrm{dis}_\infty^\mathrm{ult}(\mu)\coloneqq\sup_{\substack{x,x'\in \X,\,y,y'\in \Y\\ s.t.\,(x,y),(x',y')\in \supp{\mu}}}\Lambda_\infty(\uX(x,x'),\uY(y,y')).\]
    The \emph{ultrametric Gromov-Wasserstein distance} of order $p\in[1,\infty]$, is given as
	\begin{equation}
	\ugw{p}(\X,\Y)\coloneqq\inf_{\mu\in\mathcal{C}(\muX,\muY)} \mathrm{dis}_p^\mathrm{ult}(\mu).\label{eq:def uGW}
	\end{equation}
	Due to the structural similarity between $\dgw{p}$ and $\ugw{p}$, we can expect (and later verify) that many properties of $\dgw{p}$ extend to $\ugw{p}$. In particular, we will establish that also $\ugw{p}$ can be approximated\footnote{Here ``approximation" is meant in the sense that one can write code which will locally minimize the functional. There are in general no theoretical guarantees that these algorithms will converge to a global minimum.}  via conditional gradient descent and admits several polynomial time computable lower bounds which are useful in applications.	

	It is worth mentioning that \citet{sturm2012space} studied the family of so-called $L^{p,q}$-distortion distances similar to our construction of $\ugw{p}$. In our language, for any $p,q\in[1,\infty)$, the $L^{p,q}$-distortion distance is constructed by infimizing over the $(p,q)$-distortion defined by replacing $\Lambda_\infty$ with $(\Lambda_q)^q$ in \Cref{eq:distortion ult}. This distance shares many properties with $\dgw{p}$.
	
\subsection{Overview of our results}\phantom{a}\vspace{3mm}\\
We give a brief overview of our results.

\textbf{\Cref{sec:preliminaries}.} We  generalize the results of \citet{carlsson2010characterization} on the relation between ultrametric spaces and dendrograms and establish a bijection between compact ultrametric spaces and \emph{proper dendrograms} (see \Cref{def:proper dendrogram}). After recalling some results on the ultrametric Gromov-Hausdorff distance (see \Cref{eq:Gromov Hausdorff ultrametric}), we use the connection between compact ultrametric spaces and dendrograms to reformulate the explicit formula for the $p$-Wasserstein distance ($1\leq p< \infty$) on ultrametric spaces derived by \citet{kloeckner2015geometric} in terms of proper dendrograms. This allows us to derive a formulation of the $\infty$-Wasserstein distance on ultrametric spaces and to study the Wasserstein distance on compact subspaces of the ultrametric space $(\Rp,\Lambda_\infty)$, which will be relevant when studying lower bounds of $\ugw{p}$, $1\leq p\leq \infty$. 

\textbf{\Cref{sec:ultrametric GW distance}.} We demonstrate that $\ugw{p}$ and $\usturm{p}$, $1\leq p\leq \infty$, are $p$-metrics on the collection of ultrametric measure spaces $\mathcal{U}^w$. We derive several alternative representations for $\usturm{p}$ and study the relation between the metrics $\usturm{p}$ and $\ugw{p}$. In particular, we show that, while for $1\leq p<\infty$ it holds in general that $\ugw{p}\leq2^\frac{1}{p}\,\usturm{p}$, both metrics coincide for $p=\infty$, i.e., $\ugw{\infty}=\usturm{\infty}$. Furthermore, we show how this equality in combination with an alternative representation of $\ugw{\infty}$ leads to a \emph{polynomial time algorithm} for the calculation of $\usturm{\infty}=\ugw{\infty}$. Moreover, we study the topological properties of  $(\mathcal{U}^w,\usturm{p})$ and $(\mathcal{U}^w,\ugw{p})$, $1\leq p\leq \infty$. Most importantly, we show that $\usturm{p}$ and $\ugw{p}$ induce the same topology on $\mathcal{U}^w$ which is also different from the one induced by $\dsturm{p}/\dgw{p}$, $1\leq p\leq \infty$. While we further prove that the metric spaces $(\mathcal{U}^w,\usturm{p})$ and $(\mathcal{U}^w,\ugw{p})$,  $1\leq p<\infty$, are neither complete nor separable metric space, we demonstrate that the ultrametric space  $(\mathcal{U}^w,\usturm{\infty})$, which coincides with $(\mathcal{U}^w,\ugw{\infty})$, is complete. Finally, we establish that  $(\mathcal{U}^w,\usturm{1})$ is a geodesic space.

\textbf{\Cref{sec:lower bounds}.} Unfortunately, it does not seem to be possible to derive a polynomial time algorithm for the calculation of $\usturm{p}$ and $\ugw{p}$, $1\leq p<\infty$. Consequently, based on easily computable invariant features, in \Cref{sec:lower bounds} we derive several polynomial time computable lower bounds for $\ugw{p}$, $1\leq p\leq \infty$. Due to the structural similarity between $\dgw{p}$ and $\ugw{p}$, these are in a certain sense analogue to those derived in \cite{memoli2007use,memoli2011gromov} for $\dgw{p}$. Among other things, we show that 
\begin{equation}
\ugw{p}(\X,\Y)\geq\uSLB{p}(\X,\Y)\coloneqq\inf_{\gamma\in\mathcal{C}(\muX\otimes \muX,\muY\otimes\muY)}\norm{\Lambda_\infty(\uX,\uY)}_{L^p(\gamma)}.\end{equation}
We verify that the lower bound $\uSLB{p}$ can be reformulated in terms of the Wasserstein distance on the ultrametric space $(\Rp,\Lambda_\infty)$ (we derive an explicit formula for $d_{\mathrm{W},p}^{(\Rp,\Lambda_\infty)}$ in \Cref{sec:explicit formulat}). This allows us to efficiently calculate $\uSLB{p}(\X,\Y)$ in $O((m\vee n)^2)$, where $m$ stands for the cardinality of $X$ and $n$ for the one of $Y$.

\textbf{\Cref{sec:ultra-dissimilarity spaces}.} As the ultrametric space assumption is somewhat restrictive (especially in the context of phylogenetic trees, see \cite{semple2003phylogenetics}), we prove in \Cref{sec:ultra-dissimilarity spaces} that the results on $\ugw{p}$ can be extended to the more general \emph{ultra-dissimilarity spaces} (see \Cref{def:ultra dissimilarity}). In particular, we prove that $\ugw{p}$, $1\leq p\leq \infty$, is a metric on the \emph{isomorphism classes} of ultra-dissimilarity spaces (see \Cref{def:isomorphism ultra-dissimilarity}).

\textbf{\Cref{sec:computational aspects}.} We illustrate the behaviour and relation between $\ugw{1}$ (which can be approximated via conditional gradient descent) and $\uSLB{1}$ in a set of illustrative examples. Additionally, we carefully illustrate the differences between $\ugw{1}$ and $\uSLB{1}$, and $\dgw{1}$ and $\dSLB{1}$ (see \Cref{sec:lower bounds} for a definition), respectively.

\textbf{\Cref{sec:phylogenetic tree shapes}.} Finally, we apply our ideas to \emph{phylogenetic tree shape comparison}. To this end, we compare two sets of phylogenetic tree shapes based on the HA protein sequences from human influenza collected in different regions with the lower bound $\uSLB{1}$. In particular, we contrast our results in both settings to the ones obtained with the tree shape metric introduced in Equation (4) of \citet{colijn2018metric}.
	\subsection{Related work}

In order to better contextualize our contribution, we now describe  related work, both in  applied and computational geometry, and in phylogenetics (where notions of distance between trees have arisen naturally).

\subsubsection*{Metrics between trees: the phylogenetics perspective}
In phylogenetics, where one chief objective is to infer the evolutionary relationship between species via methods that evaluate observable traits, such as DNA sequences,
the need to be able to measure dissimilarity between different trees arises from the fact that the process of reconstruction of a phylogenetic tree may depend on the set of genes being considered. At the same time, even for the same set of genes, different reconstruction methods could be applied which would result in different trees. As such, this has led to the development of many different metrics for measuring distance between phylogenetic trees. Examples include  the
Robinson-Foulds metric \citep{robinson1981comparison}, the subtree-prune and regraft distance \citep{hein1990reconstructing}, and the nearest-neighbor interchange distance \citep{robinson1971comparison}.  

As pointed out in \cite{owen2010fast}, many of these distances tend to quantify differences between tree topologies and often do not take into account edge lengths. A certain phylogenetic tree metric space which encodes for edge lengths was proposed in \cite{billera2001geometry} and 
studied algorithmically in \cite{owen2010fast}. This tree space assumes that the all trees have the same set of taxa. An extension to the case of trees over different underlying sets is given in \cite{grindstaff2018geometric}. \citet{lafond2019complexity} considered one type of metrics on possibly \emph{muiltilabeled} phylogenetic trees with a fixed number of leafs. As the authors pointed out, a multilabeled phylogenetic tree in which no leafs are repeated is just a standard phylogenetic tree, whereas a  multilabeled phylogenetic tree in which all labels are equal defines a \emph{tree shape}. The authors then proceeded to study the computational complexity associated to  generalizations of some of the usual metrics for phylogenetic trees (such as the Robinson-Foulds distance) to the multilabeled case. \citet{colijn2018metric} studied a metric between (binary) phylogenetic tree shapes based on a bottom to top enumeration of specific connectivity structures. The authors applied their metric to compare evolutionary trees based on the HA protein sequences from human influenza collected in different regions.

\subsubsection*{Metrics between trees: the applied geometry perspective}
From a different perspective, ideas from applied geometry and applied and computational topology have been applied to the comparison of tree shapes in applications in probability, clustering and applied and computational topology.

Metric trees are also considered in probability theory in the study of models for random trees together with the need to quantify their distance; \citet{evans2007probability} described some variants of the Gromov-Hausdorff  distance between metric trees. See also \cite{greven2009convergence} for the case of metric measure space representations of trees and a certain Gromov-Prokhorov type of metric on the collection thereof.

Trees, in the form of dendrograms, are abundant in the realm of hierarhical clustering methods. In their study of the \emph{stability} of hierarchical clustering methods, \citet{carlsson2010characterization} utilized the Gromov-Hausdorff distance between the ultrametric representation of dendrograms. \citet{DBLP:journals/dcg/Schmiedl17} proved that computing the Gromov-Hausdorff distance between tree metric spaces is NP-hard.  \citet{liebscher2018new} suggested some variants of the Gromov-Hausdorff distance which are applicable in the context of phylogenetic trees. As mentioned before, \citet{zarichnyi2005gromov} introduced the ultrametric Gromov-Hausdorff distance $\ugh$ between compact ultrametric spaces (a special type of tree metric spaces). Certain theoretical properties such as precompactness of $\ugh$ has been studied in \cite{qiu2009geometry}. In contrast with the NP-hardness of computing $\dgh$, \citet{memoli2019gromov} devised an polynomial time algorithm for computing $\ugh$.

In computational topology \emph{merge trees} arise through the study of the sublevel sets of a given function \citep{adelson1945level,reeb1946points} with the goal of shape simplification. \citet{morozov2013interleaving} developed the notion of \emph{interleaving distance} between merge trees which is related to the Gromov-Hausdorff distance between trees through bi-Lipschitz bounds. In \cite{agarwal2018computing}, exploiting the connection between the interleaving distance and the Gromov-Hausdorff between metric trees, the authors approached the computation of the Gromov-Hausdorff distance between metric trees in general and provide certain approximation algorithms. \citet{touli2018fpt} devised fixed-parameter tractable (FPT) algorithms for computing the interleaving distance between metric trees. One can imply from their methods an FPT algorithm to compute a 2-approximation of the Gromov-Hausdorff distance between ultrametric spaces. \citet{memoli2019gromov} devised an FPT algorithm for computing the exact value of the Gromov-Hausdorff distances between ultrametric spaces.

\section{Preliminaries}\label{sec:preliminaries}
In this section we briefly summarize the basic notions and concepts required throughout the paper. 
	
\subsection{Ultrametric spaces and dendrograms}
We begin by describing compact ultrametric spaces in terms of  \emph{proper dendrograms}. To this end, we introduce some definitions and some notation. Given a set $X$, a \emph{partition} of $X$ is a set $P_X=\{X_i\}_{i\in I}$ where $I$ is any index set, $\emptyset\neq X_i\subseteq X$, $X_i\cap X_j=\emptyset$ for all $i\neq j\in I$ and $\bigcup_{i\in I}X_i=X$. We call each element $X_i$ a \emph{block} of the given partition $P_X$ and denote by $\mathbf{Part}(X)$ the collection of all partitions of $X$. For two partitions $P_X$ and $P'_X$ we say that $P_X$ is \emph{finer} than $P'_X$, if for every block $X_i\in P_X$ there exists a block $X'_j\in P'_X$ such that $X_i\subseteq X'_j$.

\begin{definition}[Proper dendrogram]\label{def:proper dendrogram}
Given a set $X$ (not necessarily finite), a \emph{proper dendrogram} $\theta_X:[0,\infty)\rightarrow \mathbf{Part}(X)$ is a map satisfying the following conditions:
\begin{enumerate}
\item $\theta_X(s)$ is finer than $\theta_X(t)$ for any $0\leq s<t<\infty$;
\item $\theta_X(0)$ is the finest partition consisting only singleton sets;
\item There exists $T>0$ such that for any $t\geq T$, $\theta_X(t)=\{X\}$ is the trivial partition;			\item For each $t> 0$, there exists $\eps>0$ such that $\theta_X(t)=\theta_X(t')$ for all $t'\in[t,t+\eps]$.
			\item For any distinct points $x,x'\in X$, there exists $T_{xx'}>0$ such that $x$ and $x'$ belong to different blocks in $\theta_X(T_{xx'})$.
			\item For each $t>0$, $\theta_X(t)$ consists of only finitely many blocks.
			\item Let $\{t_n\}_{n\in\mathbb N}$ be a decreasing sequence such that $\lim_{n\rightarrow\infty}t_n=0$ and let $X_n\in \theta_X(t_n)$. If for any $1\leq n<m$, $X_m\subseteq X_n$, then $\bigcap_{n\in\mathbb N}X_n\neq\emptyset$. 
		\end{enumerate}
	\end{definition}
	When $X$ is finite, a function $\theta_X:[0,\infty)\rightarrow \mathbf{Part}(X)$ satifying conditions (1) to (4) will satisfy conditions (5), (6) and (7) automatically, and thus a proper dendrogram reduces to the usual dendrogram (see \cite[Sec. 3.1]{carlsson2010characterization} for a formal definition).
Let $\theta_X$ be a proper dendrogram over a set $X$. For any $x\in X$ and $t\geq 0$, we denote by $[x]_t^X$ the block in $\theta(t)$ that contains $x\in X$ and abbreviate $[x]_t^X$ to $[x]_t$ when the underlying set $X$ is clear from the context. Similar to \cite{carlsson2010characterization}, who considered the relation between finite ultrametric spaces and {dendrograms}, we will prove that there is a bijection between compact ultrametric spaces and proper dendrograms. In particular, one can show that the subsequent theorem generalizes \cite[Theorem 9]{carlsson2010characterization}. Since its  proof depends on several concepts not yet introduced, we postpone it to \Cref{proof:thm:compact ultra-dendro}.
\begin{theorem}\label{thm:compact ultra-dendro}
Given a set $X$, denote by $\mathcal{U}(X)$ the collection of all compact ultrametrics on $X$ and $\mathcal{D}(X)$ the collection of all proper dendrograms over $X$. For any $\theta\in\mathcal{D}(X)$, consider $u_\theta$ defined as follows:
\[\forall x,x'\in X,\,\,\, u_\theta(x,x')\coloneqq\inf\{t\geq 0\,|\,x,x' \text{ belong to the same block of } \theta(t)\}.\]
Then, $u_\theta\in\mathcal{U}(X)$ and the map $\Delta_X:\mathcal{D}(X)\rightarrow\mathcal{U}(X)$ sending $\theta$ to $u_\theta$ is a bijection.
\end{theorem}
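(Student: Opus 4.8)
The plan is to split the argument into three parts: (i) that $u_\theta$ is always a compact ultrametric, so that $\Delta_X$ is well defined; (ii) injectivity of $\Delta_X$; and (iii) surjectivity. Throughout I would lean on the basic \emph{monotonicity} consequence of condition (1): for fixed $x,x'$ the set of times $t$ at which $x,x'$ lie in a common block of $\theta(t)$ is upward closed, hence an interval of the form $[\,u_\theta(x,x'),\infty)$ or $(u_\theta(x,x'),\infty)$. In particular, any $t>u_\theta(x,x')$ already forces $x,x'$ into the same block of $\theta(t)$.

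First I would check that $u_\theta\in\mathcal{U}(X)$. Symmetry and $u_\theta(x,x)=0$ are immediate from (2). Positivity is exactly condition (5): if $x\neq x'$ they are separated at some $T_{xx'}>0$, and by monotonicity they remain separated for every $t\le T_{xx'}$, so $u_\theta(x,x')\ge T_{xx'}>0$. For the strong triangle inequality, fix $t>\max(u_\theta(x,x'),u_\theta(x',x''))$; then $x,x'$ and $x',x''$ each share a block of $\theta(t)$, so by transitivity of the partition $x,x''$ do too, giving $u_\theta(x,x'')\le t$, and letting $t\downarrow\max(u_\theta(x,x'),u_\theta(x',x''))$ yields the ultrametric inequality. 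Compactness is the substantive point. Total boundedness follows from (6): any two points in a common block of $\theta(\eps)$ are at $u_\theta$-distance $\le\eps$, so the finitely many blocks of $\theta(\eps)$ form a finite cover by sets of diameter $\le\eps$. Completeness follows from (7): given a Cauchy sequence I would choose $s_k\downarrow 0$ and indices $N_k\uparrow\infty$ so that each tail $\{x_n: n\ge N_k\}$ lies in a single block $B_k$ of $\theta(s_k)$; monotonicity makes the $B_k$ nested, condition (7) produces a point $x_*\in\bigcap_k B_k$, and $u_\theta(x_*,x_n)\le s_k$ for $n\ge N_k$ shows $x_n\to x_*$.

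For bijectivity, the key is the identity, valid for $t>0$,
\[
u_\theta(x,x')\le t \iff x,x'\ \text{lie in a common block of}\ \theta(t),
\]
whose nontrivial direction (the case $u_\theta(x,x')=t$) uses the right-continuity condition (4): applying (4) at $t$ gives $\eps>0$ with $\theta(t)=\theta(t+\eps)$, and since $t+\eps>u_\theta(x,x')$ the two points share a block of $\theta(t+\eps)=\theta(t)$. This identity exhibits each $\theta(t)$, $t>0$, as the partition of $X$ into closed $u_\theta$-balls of radius $t$, while $\theta(0)$ is the singleton partition by (2); hence $\theta$ is recovered from $u_\theta$, giving injectivity. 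For surjectivity, given $u\in\mathcal{U}(X)$ I would set $\theta(0)$ to be the singletons and, for $t>0$, let $\theta(t)$ be the partition into closed balls $\{y:u(x,y)\le t\}$ (a genuine partition by the ultrametric inequality). Verifying the axioms is then routine: (1), (2) and (5) are immediate, (3) follows from finiteness of $\mathrm{diam}(X)$, (6) holds since an infinite family of distinct radius-$t$ balls would yield a $t$-separated sequence contradicting sequential compactness, (4) then follows because with finitely many blocks there is a positive gap before the next merge, and (7) holds because closed balls are compact and a nested family of nonempty compact sets has nonempty intersection. A final evaluation of the defining infimum gives $u_\theta=u$, so $\Delta_X$ is onto.

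I expect the main obstacle to be the careful handling of compactness in tandem with the ``boundary'' conditions (4), (6) and (7): on the one hand deducing completeness of $(X,u_\theta)$ from the nested-intersection property (7) and total boundedness from (6), and on the other hand re-deriving precisely these conditions from compactness when building the dendrogram of a prescribed ultrametric. The role of right-continuity (4) in pinning down $\theta(t)$ as exactly the closed-ball partition is what ultimately makes the two constructions mutually inverse, and keeping track of whether the relevant infima are attained is the most delicate bookkeeping.
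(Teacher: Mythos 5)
Your proposal is correct and follows essentially the same route as the paper: verify that $u_\theta$ is an ultrametric, establish compactness via completeness from condition (7) and total boundedness from condition (6) (the paper packages the latter as the finiteness of the quotients $X_t$), and construct the inverse by sending $u$ to the closed-ball partitions. Your explicit closed-ball identity $u_\theta(x,x')\le t \iff x,x'$ share a block of $\theta(t)$, proved via the right-continuity condition (4), is exactly the point the paper leaves as ``easy to check'' when verifying that the two maps are mutually inverse, so your write-up is if anything slightly more complete on that step.
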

\begin{remark}\label{rem:corresponding dendrogram}
From now on, we denote by $\theta_X$ the proper dendrogram corresponding to a given compact ultrametric $u_X$ on $X$ under the bijection given above. Note that a block $[x]_t$ in $\theta_X(t)$ is actually the closed ball $B_t(x)$ in $X$ centered at $x$ with radius $t$. So for each $t\geq 0$, $\theta_X(t)$ partitions $X$ into a union of several closed balls in $X$ with respect to $u_X$.    
\end{remark}

	\subsection{The ultrametric Gromov-Hausdorff distance}\label{sec:ultrametric Gromov-Hausdorff}
	Both $\dsturm{p}$ and $\dgw{p}$, $1\leq p\leq \infty$, are by construction closely related to the Gromov-Hausdorff distance. In a recent paper, \citet{memoli2019gromov} studied an ultrametric version of this distance, namely the \emph{ultrametric Gromov-Hausdorff distance} (denoted as $u_\mathrm{GH}$). Since we will demonstrate several connections between $\usturm{p}$, $\ugw{p}$, $1\leq p\leq \infty$, and this distance, we briefly summarize some of the results in \cite{memoli2019gromov}. We start by recalling the formal definition of $u_\mathrm{GH}$.
    \begin{definition}
	Let $(X,\uX)$ and $(Y,\uY)$ be two compact ultrametric spaces. Then, the \emph{ultrametric Gromov-Hausdorff} between $X$ and $Y$ is defined as
	\[u_\mathrm{GH}(X,Y)=\inf_{Z,\phi,\psi}d^Z_\mathrm{H}\left(\phi(X),\psi(Y)\right),\]
	where $\phi:X\to Z$ and $\psi:Y\to Z$ are isometric embeddings (distance preserving transformations) into the ultrametric space $(Z,u_Z)$.
	\end{definition}
	
\citet{zarichnyi2005gromov} has shown that $\ugh$ is an ultrametric on the isometry classes of compact ultrametric spaces, which are denoted by $\mathcal{U}$, and \citet{memoli2019gromov} identified a structural theorem (cf. \Cref{thm:ultrametric GH-distance}) that gives rise to a polynomial time algorithm for the calculation of $\ugh$. More precisely, it was proven in \cite{memoli2019gromov} that $\ugh$ can be calculated via so-called \emph{quotient ultrametric spaces}, which we define next. Let $(X,\uX)$ be an ultrametric space and let $t\geq 0$. We define an equivalence relation $\sim_t$ on $X$ as follows: $x\sim_t x'$ if and only if $\uX(x,x')\leq t$. We denote by $[x]^X_t$ (resp. $[x]_t$) the equivalence class of $x$ under $\sim_t$ and by $X_t$ the set of all such equivalence classes. In fact, $[x]_t^X=\{x'\in X|\,u(x,x')\leq t\}$ is exactly the closed ball centered at $x$ with radius $t$ and corresponds to a block in the corresponding proper dendrogram $\theta_X(t)$ (see \Cref{rem:corresponding dendrogram}). 
{Thus, one can think of $X_t$ as a ``set representation'' of $\theta_X(t)$.}
	We define an ultrametric $u_{X_t}$ on $X_t$ as follows:
	\[u_{X_t}([x]_t,[x']_t)\coloneqq\begin{cases}
	\uX(x,x'),& [x]_t\neq[x']_t\\
	0,& [x]_t=[x']_t.
	\end{cases} \]
	Then, $(X_t,u_{X_t})$ is an ultrametric space and we call $(X_t, u_{X_t})$ the \emph{quotient }of $(X,u_X)$ at level $t$  (see \Cref{fig:accumulated spaces} for an illustration). It is straightforward to prove that the quotient of a compact ultrametric space at level $t>0$ is a finite ultrametric space (cf. \cite[Lemma 2.3]{wan2020novel}). Furthermore, the quotient spaces characterize $\ugh$ as follows.
	\begin{theorem}[Structural theorem for $\ugh$, {\cite[Theorem 5.7]{memoli2019gromov}}]\label{thm:ultrametric GH-distance}
		Let $(X,\uX)$ and $(Y,\uY)$ be two compact ultrametric spaces. Then, 
		\[u_\mathrm{GH}(X,Y)=\inf\left\lbrace t\geq 0 \,|\,X_t \cong Y_t\right\rbrace.\]
	\end{theorem}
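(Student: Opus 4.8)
The plan is to prove the two inequalities $\ugh(X,Y)\le\tau$ and $\tau\le\ugh(X,Y)$, where I abbreviate $\tau:=\inf\{t\ge 0\mid X_t\cong Y_t\}$. Throughout I will use the fact (from \Cref{rem:corresponding dendrogram}) that the class $[x]_t^X$ is the closed ball of radius $t$, so that $X_t$ is literally the set of radius-$t$ balls of $X$ equipped with the quotient ultrametric $u_{X_t}$, and that for $t>0$ these quotients are finite.

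For the inequality $\ugh(X,Y)\le\tau$, I would fix any $t>0$ with $X_t\cong Y_t$ and exhibit a single ultrametric space realizing Hausdorff distance at most $t$. Let $\phi:X_t\to Y_t$ be an isometry and set $Z:=X\sqcup Y$. I define $u_Z$ to restrict to $\uX$ on $X$ and to $\uY$ on $Y$, and for a cross pair $x\in X$, $y\in Y$ to be $u_Z(x,y):=\max\big(t,\,u_{Y_t}(\phi([x]_t^X),[y]_t^Y)\big)$. The natural inclusions are then isometric embeddings, and since every block $\phi([x]_t^X)$ is a nonempty ball of $Y$, each $x$ admits some $y$ with $u_Z(x,y)=t$ and symmetrically; hence $d^Z_\mathrm{H}(X,Y)\le t$ and $\ugh(X,Y)\le t$. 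Taking the infimum over all such $t$ gives $\ugh(X,Y)\le\tau$.

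The main obstacle in this direction is checking that $u_Z$ is genuinely an ultrametric. I would organize this around the ``label'' map $\ell:Z\to Y_t$ sending $y\mapsto[y]_t^Y$ and $x\mapsto\phi([x]_t^X)$. The key observation is that $u_{Y_t}$ takes only the value $0$ (equal labels) or a value strictly greater than $t$ (distinct labels), from which one reads off that $u_Z(z,z')=u_{Y_t}(\ell(z),\ell(z'))$ whenever $\ell(z)\ne\ell(z')$, while $u_Z(z,z')\le t$ whenever $\ell(z)=\ell(z')$. The strong triangle inequality for triples whose outer labels differ then follows directly from that of $u_{Y_t}$ together with the bound $u_{Y_t}(\ell(z),\ell(z'))\le u_Z(z,z')$ valid in all cases; the remaining triples (with $\ell(z)=\ell(z'')$) are handled by noting that $u_Z(z,z')<t$ forces $z,z'$ to lie on the same side, so a short case check shows the right-hand side cannot drop below $u_Z(z,z'')$.

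For the reverse inequality $\tau\le\ugh(X,Y)$, I would start from arbitrary isometric embeddings $\phi:X\to Z$, $\psi:Y\to Z$ into an ultrametric space and any $r>d^Z_\mathrm{H}(\phi(X),\psi(Y))$, and show $X_r\cong Y_r$. Passing to the quotient $Z_r$, the composite $X\to Z\to Z_r$ identifies exactly the points with $\uX\le r$, so it descends to an isometric embedding $X_r\hookrightarrow Z_r$ whose image is the set of radius-$r$ balls of $Z$ meeting $\phi(X)$; likewise $Y_r\hookrightarrow Z_r$ onto the balls meeting $\psi(Y)$. Because $d^Z_\mathrm{H}<r$, every $r$-ball meeting $\phi(X)$ also meets $\psi(Y)$ and conversely, so the two images coincide and $X_r\cong Y_r$, whence $\tau\le r$. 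Letting $r\downarrow\ugh(X,Y)$, using the defining infimum of $\ugh$, yields $\tau\le\ugh(X,Y)$, and combining with the first part completes the proof. I expect the verification that the glued $u_Z$ is an ultrametric to be the only genuinely delicate step; the quotient argument is clean once one observes that ultrametric balls of a fixed radius partition the space.
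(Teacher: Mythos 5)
Your proof is correct. Note, however, that the paper does not prove this statement at all: it is quoted verbatim from \cite[Theorem 5.7]{memoli2019gromov}, so there is no internal proof to compare against. Your two-sided argument is a sound, self-contained derivation, and it follows the same template the paper does use for the analogous measure-theoretic statements: the glued metric $u_Z(x,y)=\max\bigl(t,\,u_{Y_t}(\phi([x]_t^X),[y]_t^Y)\bigr)$ is essentially the coupling built in the proof of \Cref{thm:ugw infty and sturm ugw infty} and in \Cref{lm:ultra-coupling}, and your reverse direction (descending to $Z_r$ and matching the $r$-balls that meet $\phi(X)$ with those that meet $\psi(Y)$) is the same quotient mechanism as in the proof of \Cref{thm:ugw-infty-eq}. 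The two points you flag as delicate are indeed the only ones that need care, and both of your verifications go through: the label map $\ell$ together with the dichotomy ``$u_{Y_t}$ is either $0$ or $>t$'' cleanly yields the strong triangle inequality for $u_Z$, and the strict inequality $d^Z_{\mathrm H}<r$ is exactly what guarantees that every closed $r$-ball of $Z$ meeting one image meets the other.
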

	\begin{figure}
		\centering
			\includegraphics[width=0.5\textwidth]{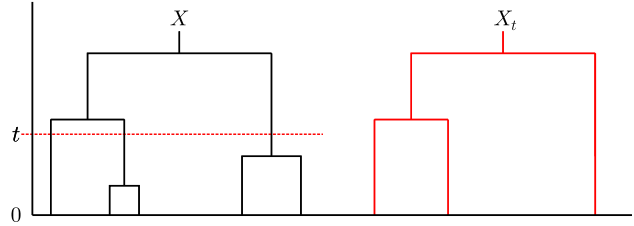}
		\caption{\textbf{Metric quotient:} An ultrametric space (black) and its quotient at level $t$ (red).} \label{fig:accumulated spaces}
	\end{figure}
	\begin{remark}
Let $(X,\uX)$ and $(Y,\uY)$ denote two finite ultrametric spaces and let $t\geq 0$. The quotient spaces $X_t$ and $Y_t$ can be considered as vertex weighted, rooted trees \citep{memoli2019gromov}. Hence, it is possible to check whether $X_t \cong Y_t$ in polynomial time \citep{aho1974design}. Consequently, \Cref{thm:ultrametric GH-distance} induces a simple, polynomial time algorithm to calculate $u_\mathrm{GH}$ between two finite ultrametric spaces.
	\end{remark}

	\subsection{Wasserstein distance on ultrametric spaces}\label{sec:explicit formulat}

\citet{kloeckner2015geometric} uses the representation of ultrametric spaces as so called \emph{synchronized rooted trees} to derive an explicit formula for the Wasserstein distance on ultrametric spaces. By the constructions of the dendrograms and of the synchronized rooted trees (see \Cref{sec:synchronized rooted tree}), it is immediately clear how to reformulate the results of \citet{kloeckner2015geometric} on compact ultrametric spaces in terms of proper dendrograms. To this end, we need to introduce some notation. For a compact ultrametric space $X$, let $\theta_X$ be the associated proper dendrogram and let $V(X)\coloneqq \bigcup_{t>0}\theta_X(t)=\{[x]_t|\,x\in X,t> 0\}$. 
It can be shown that $V(X)$ is the collection of all closed balls in $X$ except for singletons $\{x\}$ such that $x$ is a cluster point\footnote{A cluster point $x$ in a topological space $X$ is such that any neighborhood of $x$ contains countably many points in $X$.} (see \Cref{lm:vx characterization}).
For $B\in V(X)$, we denote by $B^*$ the smallest (under inclusion) element in $V(X)$ such that $B\subsetneqq B^*$ (for the existence and uniqueness of $B^*$ see \Cref{lemma:existence of B^*}).

\begin{theorem}[The Wasserstein distance on ultrametric spaces, {\cite[Theorem 3.1]{kloeckner2015geometric}}]\label{lemma:Wasserstein on ultrametric spaces} Let $(X,\uX)$ be a compact ultrametric space. For all $\alpha,\beta\in\mathcal{P}(X)$ and $1\leq p<\infty$, we have 
\begin{equation}
\left(d_{\mathrm{W},p}^{X}\right)^p(\alpha,\beta)=2^{-1}\sum_{B\in V(X)\backslash\{X\}}\left(\diam{B^*}^p-\diam{B}^p\right)\left|\alpha(B)-\beta(B)\right|.
\end{equation}
\end{theorem}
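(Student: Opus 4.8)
The plan is to obtain the stated identity as a transcription of Kloeckner's original formula \cite[Theorem 3.1]{kloeckner2015geometric}, passing from his synchronized rooted tree representation to the proper dendrogram representation by means of the explicit dictionary between the two developed in \Cref{sec:synchronized rooted tree}. I would first recall the native form of Kloeckner's result: writing $T_X$ for the synchronized rooted tree of $(X,\uX)$, with vertex set $V$, root $o$, parent map $v\mapsto v^{*}$, and height function $h$ normalized so that $\uX(x,x')=2\,h\big(\mathrm{lca}(x,x')\big)$ for leaves $x,x'$, one has
\[
\left(d_{\mathrm{W},p}^{X}\right)^p(\alpha,\beta)=2^{p-1}\sum_{v\in V\setminus\{o\}}\left(h^p(v^{*})-h^p(v)\right)\left|\alpha(X_v)-\beta(X_v)\right|,
\]
where $X_v\subseteq X$ denotes the set of leaves lying below $v$.

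The core of the argument is then to match the two indexing schemes term by term. By the construction in \Cref{sec:synchronized rooted tree}, the vertices of $T_X$ are exactly the closed balls of $X$ that appear as blocks of $\theta_X$ at positive levels, i.e. the elements of $V(X)$; the characterization of $V(X)$ in \Cref{lm:vx characterization} (all closed balls except singletons of cluster points) is precisely what makes this identification a bijection, since the singleton of a cluster point is never a block at a positive level. Under the bijection $v\leftrightarrow B$ the root $o$ corresponds to the whole space $X$, so dropping $o$ from the sum matches excluding $X$ from $V(X)$; the parent map $v\mapsto v^{*}$ corresponds to the smallest strictly larger ball $B\mapsto B^{*}$, whose existence and uniqueness are guaranteed by \Cref{lemma:existence of B^*}; and the descendant set $X_v$ corresponds to the ball $B$ itself viewed as a subset of $X$, so that $\alpha(X_v)=\alpha(B)$ and $\beta(X_v)=\beta(B)$.

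It then remains to reconcile heights with diameters and to track the prefactor. The synchronized normalization $\uX(x,x')=2\,h(\mathrm{lca}(x,x'))$ forces $\diam{B}=2\,h(v)$ for the vertex $v$ corresponding to $B=X_v$, because the diameter of $B$ is realized by a pair of leaves whose lowest common ancestor is $v$. Substituting $h(v)=\tfrac12\diam{B}$ and $h(v^{*})=\tfrac12\diam{B^{*}}$ turns each summand's height factor into $2^{-p}\big(\diam{B^{*}}^p-\diam{B}^p\big)$, and combining this with the global prefactor gives $2^{p-1}\cdot 2^{-p}=2^{-1}$, which is exactly the constant in the statement. This completes the translation.

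The step I expect to require the most care is making this vertex-to-ball bijection precise in the merely compact (hence possibly infinite) setting and controlling the now-infinite sum: one must check that Kloeckner's synchronized-tree construction and formula extend verbatim from finite to compact ultrametric spaces, that $V(X)$ is at most countable, and that the series converges. The status of singletons at cluster points (\Cref{lm:vx characterization}) is the genuine subtlety, but it is benign rather than an obstruction: such a singleton is not itself a vertex, yet it is approached by a nested sequence of balls in $V(X)$ whose diameters tend to $0$, and the telescoping of their contributions automatically accounts for any mass carried at the cluster point — so it is convergence of the sum, not a missing term, that must be verified. Should invoking the dictionary of \Cref{sec:synchronized rooted tree} prove awkward, the fallback is a self-contained derivation: build an optimal coupling greedily from the finest blocks upward, observe that the unmatched mass forced to leave each ball $B$ equals $|\alpha(B)-\beta(B)|$ and must be transported at a cost governed by $\diam{B^{*}}$, and telescope these contributions to reproduce the same formula.
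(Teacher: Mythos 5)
Your proposal is correct and is essentially the paper's own argument: the paper likewise builds the synchronized rooted tree from the dendrogram with vertex set $V(X)$, root $X$, parent map $B\mapsto B^{*}$, and height $h_X(B)=\tfrac{1}{2}\diam{B}$ (see \Cref{sec:synchronized rooted tree}), and then reads the stated formula off Kloeckner's result by exactly the substitution and prefactor bookkeeping you describe. The subtleties you flag — the exclusion of cluster-point singletons via \Cref{lm:vx characterization} and the existence of $B^{*}$ via \Cref{lemma:existence of B^*} — are precisely the points the paper isolates to make the dictionary work.
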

While \Cref{lemma:Wasserstein on ultrametric spaces} is only valid for $p<\infty$, it can be extended to the case $p=\infty$.

	\begin{lemma}\label{lm:winfty-finite}
		Let $X$ be a compact ultrametric space. Then, for any $\alpha,\beta\in P(X)$, we have
		\begin{equation}\label{eq:ultra Wasserstein infinity}
		    d_{\mathrm{W},\infty}^X(\alpha,\beta)=\max_{B\in V(X)\backslash\{X\}\text{ and }\alpha(B)\neq\beta(B)}\diam{B^*}.
		\end{equation}  
	\end{lemma}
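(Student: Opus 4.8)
The plan is to bypass Kloeckner's formula and instead characterize $d^X_{\mathrm{W},\infty}(\alpha,\beta)$ directly through the dendrogram $\theta_X$, reducing the optimal transport problem to a mass-matching condition on the blocks. Concretely, I would prove that
\[
d^X_{\mathrm{W},\infty}(\alpha,\beta)=t^*\coloneqq\inf\{t\ge 0 \,:\, \alpha(B)=\beta(B)\text{ for every }B\in\theta_X(t)\},
\]
and then identify $t^*$ with the right-hand side of \Cref{eq:ultra Wasserstein infinity}. (The case $\alpha=\beta$ is trivial, so assume $\alpha\neq\beta$.) The set of admissible levels is an up-set: if $\alpha$ and $\beta$ agree on all blocks at level $t$, they agree on the coarser blocks at any level $t'>t$, so $t^*$ is well defined.

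For the upper bound $d^X_{\mathrm{W},\infty}\le t^*$, I would fix an admissible level $t\ge t^*$ and build a block-diagonal coupling: on each block $B\in\theta_X(t)$ with $m\coloneqq\alpha(B)=\beta(B)>0$ put the rescaled product $m^{-1}\,\alpha|_B\otimes\beta|_B$, and sum over blocks. This is a coupling of $\alpha$ and $\beta$ whose support lies in $\bigcup_B B\times B$, where $\uX(x,y)\le\diam{B}\le t$ by ultrametricity, so its $\infty$-cost is at most $t$; taking the infimum over admissible $t$ gives $t^*$. The reverse inequality is the delicate step. Given any coupling $\mu$ with $s\coloneqq\sup_{(x,y)\in\supp{\mu}}\uX(x,y)$, the ultrametric inequality forces mass to stay block-diagonal at scale $s$: if $x$ lies in a closed ball $B$ of radius $s$ and $\uX(x,y)\le s$, then $y\in B$ as well. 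Since $\supp{\mu}\subseteq\{\uX\le s\}$ carries full measure, this yields $\alpha(B)=\mu(B\times B)=\beta(B)$ for every $B\in\theta_X(s)$, so $s\ge t^*$; minimizing over couplings gives $d^X_{\mathrm{W},\infty}\ge t^*$.

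Finally, I would match $t^*$ with $\max\diam{B^*}$. The key structural fact is that a ball $B\in V(X)\setminus\{X\}$ is a block of $\theta_X(t)$ exactly for $t\in[\diam{B},\diam{B^*})$, since $\diam{B^*}$ is the level at which $B$ merges into its parent. Hence for a \emph{bad} ball (one with $\alpha(B)\neq\beta(B)$) the whole interval $[\diam{B},\diam{B^*})$ consists of inadmissible levels, forcing $\diam{B^*}\le t^*$; conversely any level $t<t^*$ carries a bad block $B$ with $\diam{B^*}>t$, so $t^*=\sup\{\diam{B^*}:\alpha(B)\neq\beta(B)\}$. To see that this supremum is attained, I would use condition~(6): for $t>0$ the integer $|\theta_X(t)|$ is finite and non-increasing in $t$, hence constant on some $[t^*-\eps,t^*)$; a bad block persisting on this plateau must merge at level exactly $t^*$, i.e.\ has $\diam{B^*}=t^*$. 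The main obstacle is the lower bound together with this attainment bookkeeping; the upper bound and the lifetime computation are routine. As an alternative route more in line with the preceding theorem, one can pass to the limit $p\to\infty$ in Kloeckner's formula using $\lim_p d^X_{\mathrm{W},p}=d^X_{\mathrm{W},\infty}$, where the only real difficulty is controlling the infinite sum; this is handled by the elementary inequality $\diam{B^*}^p-\diam{B}^p\le \tfrac{p}{q}\,(t^*)^{p-q}\big(\diam{B^*}^q-\diam{B}^q\big)$ for a fixed auxiliary exponent $q$, which bounds the sum by $\tfrac{p}{q}(t^*)^{p-q}(d^X_{\mathrm{W},q})^q$ and gives $\limsup_p d^X_{\mathrm{W},p}\le t^*$, while a single dominant term gives the matching lower bound.
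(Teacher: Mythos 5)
Your route is genuinely different from the paper's. The paper invokes Strassen's theorem to rewrite $d_{\mathrm{W},\infty}^X(\alpha,\beta)$ as $\inf\{r\geq 0:\alpha(A)\leq\beta(A^r)\text{ for all closed }A\}$ and then exploits the fact that in an ultrametric space $A^\delta$ decomposes into disjoint closed balls of radius $\delta$; the maximizing ball $B_1$ itself witnesses the lower bound because $(B_1)^r=B_1$ for all $r<\diam{B_1^*}$. You instead argue on the primal side: the block-diagonal coupling gives the upper bound, and the observation that any coupling of $\infty$-cost $s$ must be supported in $\bigcup_{B\in\theta_X(s)}B\times B$ gives the lower bound. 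Your version is self-contained (no duality citation) and makes the structural parallel with \Cref{thm:ugw-infty-eq} explicit through the merging level $t^*$, at the cost of carrying out the coupling construction that Strassen's theorem does implicitly for the paper. Your fallback route via $p\to\infty$ in Kloeckner's formula is also viable, and the interpolation inequality you propose to control the infinite sum is correct.

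There is, however, one concrete defect: with $t^*$ literally defined as $\inf\{t\geq 0:\alpha(B)=\beta(B)\ \forall B\in\theta_X(t)\}$, the identity $d_{\mathrm{W},\infty}^X(\alpha,\beta)=t^*$ is false in general, because the level $t=0$ misbehaves. Take $X=\{0,1\}^{\mathbb N}$ with $u(x,y)=2^{-\min\{n\,:\,x_n\neq y_n\}}$, a compact ultrametric space, and let $\alpha,\beta$ be two distinct non-atomic measures (say two different Bernoulli product measures). They agree on every singleton, so $t=0$ is admissible and $t^*=0$, yet $d_{\mathrm{W},\infty}^X(\alpha,\beta)>0$. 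Exactly two of your steps break there: the up-set claim fails at $t=0$ (a block at level $t'>0$ is an uncountable union of singletons, so you cannot sum), and the block-diagonal coupling degenerates because every block has mass zero, producing the zero measure rather than a coupling. The repair is simply to take the infimum over $t>0$ (equivalently, over balls in $V(X)$); with that change everything you wrote — the coupling construction, the support argument, the identification $t^*=\sup\{\diam{B^*}:\alpha(B)\neq\beta(B)\}$ via the lifetime interval $[\diam{B},\diam{B^*})$, and the attainment argument on the plateau — goes through. You should also record explicitly that for $\alpha\neq\beta$ every coupling has $s>0$ (a coupling supported on the diagonal forces $\alpha=\beta$), so the lower-bound step only ever deals with the finite partitions $\theta_X(s)$, $s>0$.
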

	The proof of \Cref{lm:winfty-finite} is technical and we postpone it to \Cref{sec:proof of lm winfty-finite}.

	\subsubsection{Wasserstein distance on \texorpdfstring{$(\Rp,\Lambda_\infty)$}{the ultrametric real line}} 
	The non-negative half real line $\Rp$ endowed with $\Lambda_\infty$ turns out to be an ultrametric space (cf. \cite[Remark 1.14]{memoli2019gromov}). Finite subspaces of $(\mathbb R_{\geq 0},\Lambda_\infty)$ are of particular interest in this paper. These spaces possess a particular structure (see Figure \ref{fig:Rinfty}) and the computation of the Wasserstein distance on them can be further simplified. 
	
	\begin{figure}
		\centering
		\includegraphics[scale = 0.2]{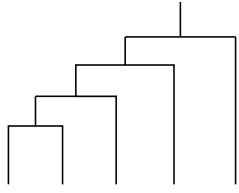}
		\caption{\textbf{Illustration of $(\Rp,\Lambda_\infty)$:} This is the dendrogram for a subspace of $(\Rp,\Lambda_\infty)$ consisting of 5 arbitrary distinct points of $\mathbb{R}_+$. } \label{fig:Rinfty}
	\end{figure}

	\begin{theorem}[$d^{(\Rp,\Lambda_\infty)}_{\mathrm{W},p}$ between finitely supported measures]\label{thm:closed-form-w-infty-real}
	Suppose $\alpha,\beta$ are two probability measures supported on a finite subset $\{x_0,\dots,x_n\}$ of $(\mathbb{R}_{\geq 0},\Lambda_\infty)$ such that $0\leq x_0<x_1<\dots<x_n$. Denote $\alpha_i\coloneqq \alpha(\{x_i\})$ and $\beta_i\coloneqq \beta(\{x_i\})$. Then, we have for $p\in[1,\infty)$ that
		\begin{equation}\label{eq:dp finite} d^{(\Rp,\Lambda_\infty)}_{\mathrm{W},p}(\alpha,\beta)=2^{-\frac{1}{p}}\left(\sum_{i=0}^{n-1}\left|\sum_{j=0}^i(\alpha_j-\beta_j)\right|\cdot|x_{i+1}^p-x_i^p|+\sum_{i=0}^n|\alpha_i-\beta_i|\cdot x_i^p\right)^\frac{1}{p}.
		\end{equation}
	Let $F_\alpha$ and $F_\beta$ denote the cumulative distribution functions of $\alpha$ and $\beta$, respectively. Then, for the case $p=\infty$ we obtain
	\[d_{\mathrm{W},\infty}^{(\Rp,\Lambda_\infty)}(\alpha,\beta)=\max\left(\max_{0\leq i\leq n-1, F_\alpha(x_i)\neq F_\beta(x_i)}x_{i+1},\max_{0\leq i\leq n, \alpha_i\neq\beta_i}x_i\right).\]
	\end{theorem}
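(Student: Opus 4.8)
The plan is to specialize the general formulas for the Wasserstein distance on compact ultrametric spaces to the finite ``comb-shaped'' subspace $X\coloneqq\{x_0,\dots,x_n\}\subset(\Rp,\Lambda_\infty)$: apply \Cref{lemma:Wasserstein on ultrametric spaces} when $p<\infty$ and \Cref{lm:winfty-finite} when $p=\infty$. First I would work out the metric structure of $X$. Since $\Lambda_\infty(x_i,x_j)=\max(x_i,x_j)$ for $i\neq j$, a closed ball $B_t(x_i)$ equals $\{x_j:x_j\leq t\}$ as soon as $t\geq x_i$ and equals $\{x_i\}$ for $0<t<x_i$. Hence every non-singleton ball is a prefix $\{x_0,\dots,x_k\}$ (the dendrogram is the caterpillar of \Cref{fig:Rinfty}, with $x_0,x_1$ merging first, then $x_2$ joining, and so on), so $V(X)=\{\{x_i\}:0\leq i\leq n\}\cup\{\{x_0,\dots,x_k\}:1\leq k\leq n\}$, the last of these being $X$ itself. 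I would then record $\diam{\{x_0,\dots,x_k\}}=x_k$, $\diam{\{x_i\}}=0$, and the parents $\{x_0,\dots,x_k\}^*=\{x_0,\dots,x_{k+1}\}$ for $1\leq k\leq n-1$, $\{x_i\}^*=\{x_0,\dots,x_i\}$ for $i\geq 1$, and the exceptional $\{x_0\}^*=\{x_0,x_1\}$. Writing $S_i\coloneqq F_\alpha(x_i)-F_\beta(x_i)=\sum_{j=0}^i(\alpha_j-\beta_j)$, I note $\alpha(\{x_0,\dots,x_k\})-\beta(\{x_0,\dots,x_k\})=S_k$ and $\alpha(\{x_i\})-\beta(\{x_i\})=\alpha_i-\beta_i$.

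For $1\leq p<\infty$, substituting these data into \Cref{lemma:Wasserstein on ultrametric spaces} gives
\[2\left(d_{\mathrm{W},p}^{X}\right)^p(\alpha,\beta)=x_1^p|\alpha_0-\beta_0|+\sum_{i=1}^n x_i^p|\alpha_i-\beta_i|+\sum_{k=1}^{n-1}(x_{k+1}^p-x_k^p)|S_k|,\]
the leading term coming from the exceptional parent of $\{x_0\}$. To reach the claimed expression I would split off the $i=0$ summands of $\sum_{i=0}^{n-1}|S_i|(x_{i+1}^p-x_i^p)$ and of $\sum_{i=0}^n|\alpha_i-\beta_i|x_i^p$; since $S_0=\alpha_0-\beta_0$, the two $i=0$ contributions $|\alpha_0-\beta_0|(x_1^p-x_0^p)$ and $|\alpha_0-\beta_0|x_0^p$ telescope to $|\alpha_0-\beta_0|x_1^p$, after which the claim matches the display above term by term. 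This telescoping is the only non-bookkeeping step of the computation.

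For $p=\infty$, \Cref{lm:winfty-finite} expresses the distance as the maximum of $\diam{B^*}$ over those $B\in V(X)\setminus\{X\}$ with $\alpha(B)\neq\beta(B)$. The prefix balls contribute $\max\{x_{k+1}:1\leq k\leq n-1,\ S_k\neq 0\}$, the singleton $\{x_0\}$ contributes $x_1$ exactly when $\alpha_0-\beta_0=S_0\neq 0$, and the singletons $\{x_i\}$ with $i\geq 1$ contribute $\max\{x_i:\alpha_i\neq\beta_i\}$. The first two combine into $\max\{x_{k+1}:0\leq k\leq n-1,\ F_\alpha(x_k)\neq F_\beta(x_k)\}$, which is the first inner maximum of the claim, while the third is the second inner maximum restricted to $i\geq 1$. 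The only discrepancy is the $i=0$ summand $x_0$ of the claim's second maximum (present only when $\alpha_0\neq\beta_0$); but whenever $\alpha_0\neq\beta_0$ the term $x_1\geq x_0$ already appears, so adjoining $x_0$ leaves the maximum unchanged. With the usual convention that an empty maximum is $0$ (covering $\alpha=\beta$), the two expressions agree.

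The main obstacle is getting the combinatorics of $V(X)$ right — in particular recognizing that $\{x_0\}$ is the unique ball whose parent ``skips'' a diameter level, its smallest strict superset being $\{x_0,x_1\}$ of diameter $x_1$ rather than $x_0$. This is exactly what produces the leading term $x_1^p|\alpha_0-\beta_0|$ (respectively the contribution $x_1$ in the $p=\infty$ case) and drives the telescoping that absorbs $x_0^p$. Once this is pinned down, the remainder is the routine substitution and reindexing described above.
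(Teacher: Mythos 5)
Your proof is correct and takes essentially the same route as the paper's (one-line) proof: determine $V(X)$ for this caterpillar-shaped subspace of $(\Rp,\Lambda_\infty)$ and substitute into \Cref{lemma:Wasserstein on ultrametric spaces} and \Cref{lm:winfty-finite}. Your bookkeeping is in fact more careful than the paper's, which lists the singletons in $V(X)$ only for $i=1,\dots,n$: you correctly include $\{x_0\}$, whose ``skipping'' parent $\{x_0,x_1\}$ is precisely what produces the $x_1^p|\alpha_0-\beta_0|$ term and drives the telescoping against $x_0^p$ needed to match \Cref{eq:dp finite}.
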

	
	\begin{proof}
		Clearly, $V(X)=\{\{x_0,x_1,\ldots,x_i\}|\,i=1,\ldots,n\}\cup\{\{x_i\}|\,i=1,\ldots,n\}$ (recall that each set corresponds to a closed ball). Thus, we conclude the proof by applying \Cref{lemma:Wasserstein on ultrametric spaces} and \Cref{lm:winfty-finite}.
	\end{proof}
	
	\begin{remark}[The case $p=1$]\label{rmk:int-w-inf-real}
		Note that when $p=1$, for any finitely supported probability measures $\alpha,\beta\in\mathcal{P}(\mathbb R_{\geq 0})$,
		\[d_{\mathrm{W},1}^{(\Rp,\Lambda_\infty)}(\alpha,\beta)=\frac{1}{2}\left(d_{\mathrm{W},1}^{(\mathbb{R},\Lambda_1)}(\alpha,\beta)+\int_\mathbb{R}x\,|\alpha-\beta|(dx)\right). \]
		The formula indicates that the $1$-Wasserstein distance on $(\Rp,\Lambda_\infty)$ is the average of the usual $1$-Wasserstein distance on $(\Rp,\Lambda_1)$ and a ``weighted total variation distance''. The weighted total variation like distance term is sensitive to difference of supports. For example, let $\alpha=\delta_{x_1}$ and $\beta=\delta_{x_2}$, then $\int_\mathbb{R}x\,|\alpha-\beta|(dx)=x_1+x_2$ if $x_1\neq x_2$.

\end{remark}
	\begin{remark}[Extension to compactly supported measures]\label{rem:extension to compactly supported measures}
    In fact, $X\subseteq(\Rp,\Lambda_\infty)$ is compact if and only if it is either a finite set or countable with 0 being the unique cluster point (w.r.t. the usual Euclidean distance $\Lambda_1$) (see \Cref{lm:compact of R}). Hence, it is straightforward to extend \Cref{thm:closed-form-w-infty-real} to compactly supported measures and we refer to \Cref{sec:extension to compactly supported measures} for the missing details.
	\end{remark}
	\begin{remark}[Closed-form solution for $d_{\mathrm{W},p}^{(\Rp,\Lambda_q)}$]\label{rem:closed-form}
    We know that there is a closed-form solution for Wasserstein distance on $\mathbb{R}$ with the usual Euclidean distance $\Lambda_1$: 
	\[d_{\mathrm{W},p}^{(\mathbb{R},\Lambda_1)}(\alpha,\beta)=\left(\int_0^1|F_\alpha^{-1}(t)-F_\beta^{-1}(t)|^pdt\right)^\frac{1}{p},\]
	where $F_\alpha$ and $F_\beta$ are cumulative distribution functions of $\alpha$ and $\beta$, respectively. We have also obtained a closed-form solution for $d_{\mathrm{W},p}^{(\Rp,\Lambda_\infty)}$ in \Cref{thm:closed-form-w-infty-real}. We generalize these formulas to the case $d_{\mathrm{W},p}^{(\Rp,\Lambda_q)}$ when $q\in(1,\infty)$ and $q\leq p$ in \Cref{sec:closed form solution}. 
	\end{remark}

	\section{Ultrametric Gromov-Wasserstein distances}\label{sec:ultrametric GW distance}
	
	In this section we investigate the properties of $\ugw{p}^\mathrm{sturm}$ as well as $\ugw{p}$, $1\leq p\leq\infty$, and study the relation between them.
	
\subsection{Sturm's ultrametric Gromov-Wasserstein distance}\label{subsec:Sturms ultrametric GW distance}
	We begin by establishing several basic properties of $\usturm{p}$, $1\leq p\leq \infty,$ including a proof that $\usturm{p}$ is indeed a metric (or more precisely a $p$-metric) on the collection of compact ultrametric measure spaces $\mathcal{U}^w$.
	
	The definition of $\usturm{p}$ given in \Cref{eq:ultra Sturm} is clunky, technical and in general not easy to work with. Hence, the  first observation to make is the fact that $\usturm{p}$, $1\leq p \leq\infty$, shares a further property with $\dsturm{p}$: $\usturm{p}$ can be calculated by minimizing over pseudo-ultrametrics instead of isometric embeddings.
\begin{lemma}\label{lemma:pseudometric representation of usturm}
	Let $\X=\ummspaceX$ and $\Y=\ummspaceY$ be two ultrametric measure spaces. Let $\mathcal{D}^\mathrm{ult}(\uX,\uY)$ denote the collection of all pseudo-ultrametrics $u$ on the disjoint union $X\sqcup Y$ such that $u|_{X\times X} = \uX$ and $u|_{Y\times Y} = \uY$. Let $p\in[1,\infty]$. Then, it holds that \begin{equation}\label{eq:pseudometric def of usturm}\usturm{p}(\X,\Y)=\inf_{u \in \mathcal{D}^\mathrm{ult}(\uX,\uY)} \pseudoWasser{p}^{(X\sqcup Y,u)}(\muX,\muY), \end{equation}
	where $\pseudoWasser{p}^{(X\sqcup Y,u)}$ denotes the\emph{ Wasserstein pseudometric} of order $p$ defined in \Cref{eq:def Wasserstein pseudpmetric p} (resp. in \Cref{eq:def Wasserstein pseudpmetric infinity} for $p=\infty$) in \Cref{sec:Wasserstein pseudometric} of the supplement.
\end{lemma}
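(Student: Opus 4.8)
The plan is to prove the two inequalities separately, in both cases exploiting a cost-preserving correspondence between transport plans. The key preliminary observation is that, regarding $\muX$ and $\muY$ as Borel measures on $X\sqcup Y$ concentrated on $X$ and $Y$ respectively, every element of $\mathcal{C}(\muX,\muY)$ is concentrated on $X\times Y$; hence, for $p<\infty$,
\[\pseudoWasser{p}^{(X\sqcup Y,u)}(\muX,\muY)=\inf_{\mu\in\mathcal{C}(\muX,\muY)}\left(\int_{X\times Y}u(x,y)^p\,\mu(dx\times dy)\right)^{1/p},\]
with the analogous supremum-over-support expression for $p=\infty$.

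For the inequality $\usturm{p}\geq\inf_{u}\pseudoWasser{p}^{(X\sqcup Y,u)}(\muX,\muY)$, I would fix any isometric embeddings $\phi\colon X\to Z$ and $\psi\colon Y\to Z$ into an ultrametric space $(Z,u_Z)$ and pull the ambient ultrametric back along $\iota\coloneqq\phi\sqcup\psi$, defining $u(a,b)\coloneqq u_Z(\iota(a),\iota(b))$ on $X\sqcup Y$. This $u$ is a pseudo-ultrametric, and because $\phi,\psi$ are isometries it restricts to $\uX$ and $\uY$, so $u\in\mathcal{D}^\mathrm{ult}(\uX,\uY)$. Since $\phi$ and $\psi$ are injective (isometries into a genuine metric space), pushing a coupling $\mu\in\mathcal{C}(\muX,\muY)$ forward under $(\phi,\psi)$ produces a coupling of $\phi_\#\muX$ and $\psi_\#\muY$ with identical $p$-cost, and conversely every coupling of $\phi_\#\muX,\psi_\#\muY$ is concentrated on $\phi(X)\times\psi(Y)$ and therefore pulls back; thus the two optimal transport values agree and $\pseudoWasser{p}^{(X\sqcup Y,u)}(\muX,\muY)=d_{\mathrm{W},p}^{(Z,u_Z)}(\phi_\#\muX,\psi_\#\muY)$. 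Taking the infimum over $(Z,\phi,\psi)$ gives the desired inequality.

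For the reverse inequality I would start from an arbitrary $u\in\mathcal{D}^\mathrm{ult}(\uX,\uY)$ and pass to the metric quotient $Z\coloneqq(X\sqcup Y)/\!\!\sim$, where $a\sim b$ iff $u(a,b)=0$, endowed with the induced ultrametric $u_Z([a],[b])\coloneqq u(a,b)$. Writing $q$ for the quotient map and $\phi\coloneqq q|_X$, $\psi\coloneqq q|_Y$: since $u$ restricts to the genuine ultrametrics $\uX,\uY$ no two distinct points of $X$ (or of $Y$) are identified, so $\phi$ and $\psi$ are isometric embeddings, and $Z=\phi(X)\cup\psi(Y)$ is a union of two compact sets, hence a compact ultrametric space admissible in the definition of $\usturm{p}$. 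The same coupling correspondence as above now yields $d_{\mathrm{W},p}^{(Z,u_Z)}(\phi_\#\muX,\psi_\#\muY)=\pseudoWasser{p}^{(X\sqcup Y,u)}(\muX,\muY)$, and taking the infimum over $u\in\mathcal{D}^\mathrm{ult}(\uX,\uY)$ completes the proof.

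The main thing to pin down carefully is the cost-preserving bijection between $\mathcal{C}(\muX,\muY)$ and $\mathcal{C}(\phi_\#\muX,\psi_\#\muY)$ used in both halves: one must check that, the images $\phi(X)$ and $\psi(Y)$ carrying the full mass of the respective pushforwards, the inverse maps onto these images are measurable, so that pushforward and pullback of plans are mutually inverse and preserve the integrand $u(x,y)^p=u_Z(\phi(x),\psi(y))^p$. For $p=\infty$ the identical scheme applies with the integral replaced by the supremum over the support of the plan, using that pushforward under a continuous injection carries the support of a plan onto the image of its support. The remaining verifications—that a pullback of an ultrametric is a pseudo-ultrametric restricting correctly, and that the metric quotient is a genuine compact ultrametric space—are routine.
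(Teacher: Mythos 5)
Your proposal is correct and follows essentially the same route as the paper, which simply invokes the argument of Lemma 3.3(iii) in Sturm (2006): embeddings into a common ultrametric space yield pseudo-ultrametric couplings by pulling back the ambient metric, and conversely a pseudo-ultrametric coupling yields an admissible embedding via the metric quotient, with a cost-preserving bijection of transport plans in both directions. The ultrametric-specific checks you flag (pullbacks of ultrametrics are pseudo-ultrametrics, the quotient is a genuine compact ultrametric space) are exactly the routine adaptations the paper leaves implicit.
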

\begin{proof}
    The above lemma follows by the same arguments as Lemma 3.3 $(iii)$ in \cite{sturm2006geometry}.
\end{proof}
 \begin{remark}[Wasserstein pseudometric]
 The \emph{Wasserstein pseudometric} is a natural extension of the Wasserstein distance to pseudometric spaces  and has for example been studied in \citet{thorsley2008model}. In \Cref{sec:Wasserstein pseudometric} we carefully show that it is closely related to the Wasserstein distance on a canonically induced metric space. We further establish that the Wasserstein distance and the Wasserstein pseudometric share many relevant properties. Hence, we do not notationally distinguish between these two concepts. \end{remark}	
The representation of $\usturm{p}$, $1\leq p\leq \infty$, given by the above lemma is much more accessible and we first use it to establish the subsequent basic properties of $\usturm p$ (see \Cref{sec:proof of prop usturm_basic} for a full proof).

\begin{proposition}\label{prop:usturm_basic}
		Let $\X,\Y\in\mathcal{U}^w$. Then, the following holds:
		\begin{enumerate}
		\item For any $p\in[1,\infty]$, we always have that $\usturm{p}(\X,\Y)\geq \dsturm{p}(\X,\Y)$.
			\item For any $1\leq p\leq q\leq \infty$, we have that $\usturm{p}(\X,\Y)\leq\usturm{q}(\X,\Y) $.
			\item It holds that $\lim_{p\rightarrow\infty}\usturm{p}(\X,\Y)=\usturm{\infty}(\X,\Y). $
		\end{enumerate}
	\end{proposition}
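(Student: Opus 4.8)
The plan is to route all three claims through the pseudo-ultrametric representation of \Cref{lemma:pseudometric representation of usturm}. Writing $g_u(x,y)\coloneqq u(x,y)$ for the cross-distance of a pseudo-ultrametric $u\in\mathcal D^\mathrm{ult}(\uX,\uY)$, and using that every coupling in $\mathcal C(\muX,\muY)$ is carried by $X\times Y$, I would record
\[\usturm{p}(\X,\Y)=\inf_{u\in\mathcal D^\mathrm{ult}(\uX,\uY)}\ \inf_{\mu\in\mathcal C(\muX,\muY)}\norm{g_u}_{L^p(\mu)},\qquad 1\le p\le\infty,\]
with $\norm{g_u}_{L^\infty(\mu)}=\sup_{\supp{\mu}}g_u$. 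For (1) the point is that a pseudo-ultrametric is in particular a pseudometric with the same restrictions, so $\mathcal D^\mathrm{ult}(\uX,\uY)$ sits inside the set of pseudometrics extending $\uX,\uY$ over which the analogous representation of $\dsturm{p}$ (Sturm's Lemma~3.3) infimizes the \emph{identical} functional; restricting to the smaller ultrametric class can only increase the infimum, giving $\usturm{p}\ge\dsturm{p}$. For (2) I would fix $u$ and $\mu$ and invoke monotonicity of $L^p$-norms on the probability space $(X\times Y,\mu)$, namely $\norm{g_u}_{L^p(\mu)}\le\norm{g_u}_{L^q(\mu)}$ for $p\le q\le\infty$, then pass to $\inf_\mu$ and $\inf_u$, which preserves the inequality.

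Claim (3) is the real content. From (2), $p\mapsto\usturm{p}$ is non-decreasing and bounded above by $\usturm{\infty}$, so $L\coloneqq\lim_{p\to\infty}\usturm{p}=\sup_{p<\infty}\usturm{p}$ exists with $L\le\usturm{\infty}$; the task is the reverse bound $\usturm{\infty}\le L$. I would argue by compactness: pick $p_n\uparrow\infty$ and near-minimizers $u_n,\mu_n$ with $\norm{g_{u_n}}_{L^{p_n}(\mu_n)}\le L+\tfrac1n$. After truncating each $u_n$ at level $M\coloneqq\diam{X}\vee\diam{Y}\vee(L+1)$ — which keeps it in $\mathcal D^\mathrm{ult}(\uX,\uY)$ (truncation respects the strong triangle inequality and, since $M\ge\diam{X},\diam{Y}$, fixes $\uX,\uY$) and does not raise the cost — I may assume $0\le g_{u_n}\le M$.

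The structural fact that makes this work is that the ultrametric inequality forces, for every such $u$, the bounds $|u(x,y)-u(x',y)|\le\uX(x,x')$ and $|u(x,y)-u(x,y')|\le\uY(y,y')$; hence $\{g_{u_n}\}$ is uniformly bounded and $1$-Lipschitz on the compact space $X\times Y$. Arzel\`a–Ascoli then yields a uniformly convergent subsequence $g_{u_n}\to g_*$ whose limit defines some $u_*\in\mathcal D^\mathrm{ult}(\uX,\uY)$ (strong triangle inequalities survive pointwise limits), and Prokhorov's theorem gives $\mu_n\rightharpoonup\mu_*\in\mathcal C(\muX,\muY)$ along a further subsequence. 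For fixed $q<\infty$ and $n$ large, $\norm{g_{u_n}}_{L^q(\mu_n)}\le\norm{g_{u_n}}_{L^{p_n}(\mu_n)}\le L+\tfrac1n$; since $g_{u_n}^q\to g_*^q$ uniformly (bounded, continuous) and $\mu_n\rightharpoonup\mu_*$, one passes to the limit to obtain $\norm{g_*}_{L^q(\mu_*)}\le L$, and then $q\to\infty$ gives $\norm{g_*}_{L^\infty(\mu_*)}\le L$. Finally
\[\usturm{\infty}(\X,\Y)\le\pseudoWasser{\infty}^{(X\sqcup Y,u_*)}(\muX,\muY)\le\sup_{\supp{\mu_*}}g_*=\norm{g_*}_{L^\infty(\mu_*)}\le L,\]
closing the loop.

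The routine steps are (1) and (2); the genuine obstacle is (3), namely interchanging $\lim_{p\to\infty}$ with the double infimum over $u$ and $\mu$. I expect the crux to be producing enough compactness to justify this interchange: the decisive (and ultrametric-specific) ingredient is the uniform $1$-Lipschitz estimate on cross-distances, which compactifies the family of competitor pseudo-ultrametrics via Arzel\`a–Ascoli, so that together with weak compactness of couplings the two-parameter limit $\lim_{q\to\infty}\lim_{n\to\infty}\norm{g_{u_n}}_{L^q(\mu_n)}$ can be evaluated and compared with the $L^\infty$ cost of the limiting pair $(u_*,\mu_*)$.
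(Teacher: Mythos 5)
Your proof is correct and follows essentially the same route as the paper's: parts (1) and (2) are the same trivial inclusion and $L^p$-monotonicity observations, and for part (3) you extract a limiting pair $(u_*,\mu_*)$ by exactly the compactness the paper packages in \Cref{lm:measure coupling compact} and \Cref{lm:metric coupling bounded integral} (Prokhorov plus Arzel\`a--Ascoli via the reverse-triangle Lipschitz bound, which incidentally holds for any pseudometric coupling, not just ultrametric ones). The only differences are cosmetic: you secure equiboundedness by truncating at $M$ rather than from the integral bound, and you close by the $L^q$-then-$q\to\infty$ estimate $\norm{g_*}_{L^q(\mu_*)}\le L$, where the paper instead lower-bounds $\bigl(\int u_n^n\,d\mu_n\bigr)^{1/n}$ directly by $(\mu(U)-\eps_1)^{1/n}(M-2\eps)$ on the open set $U$ where $u$ is within $\eps$ of its supremum over $\supp{\mu}$; both closings are valid.
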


Moreover, we use \Cref{lemma:pseudometric representation of usturm} to prove that $(\mathcal{U}^w,\usturm{p})$ is indeed a metric space. 
\begin{theorem}\label{thm:sturms um}
	$\ugw{p}^\mathrm{sturm}$ is a $p$-metric on the collection $\mathcal{U}^w$ of compact ultrametric measure spaces. In particular, when $p=\infty$, $\ugw{\infty}^\mathrm{sturm}$ is an ultrametric.
\end{theorem}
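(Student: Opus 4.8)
The plan is to verify the four defining properties of a $p$-metric on $\mathcal{U}^w$, working throughout with the pseudo-ultrametric representation of \Cref{lemma:pseudometric representation of usturm}. Non-negativity is immediate, and symmetry follows since the identity map sets up a bijection between $\mathcal{D}^\mathrm{ult}(\uX,\uY)$ and $\mathcal{D}^\mathrm{ult}(\uY,\uX)$ under which the Wasserstein pseudometric is unchanged. For the identity of indiscernibles, if $\X\cong_w\Y$ via a measure-preserving isometry $\varphi$, I would take $u$ to be the pullback of $\uY$ along the map $X\sqcup Y\to Y$ identifying $x$ with $\varphi(x)$; this is a pseudo-ultrametric in $\mathcal{D}^\mathrm{ult}(\uX,\uY)$, and the coupling $(\mathrm{id}\times\varphi)_\#\muX$ is supported on pairs at $u$-distance $0$, forcing $\usturm{p}(\X,\Y)=0$. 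Conversely, since \Cref{prop:usturm_basic}(1) gives $\usturm{p}\geq\dsturm{p}\geq 0$, any pair with $\usturm{p}(\X,\Y)=0$ also satisfies $\dsturm{p}(\X,\Y)=0$; as $\dsturm{p}$ is a metric on $\isocalss$, this yields $\X\cong_w\Y$ as metric measure spaces, and because the underlying metrics are ultrametrics this is exactly isomorphism in $\mathcal{U}^w$.

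The substantive part is the $p$-triangle inequality. The key structural input I would isolate first is that on any pseudo-ultrametric space $(W,u)$ the Wasserstein pseudometric $\pseudoWasser{p}^{(W,u)}$ is itself a $p$-metric, and for $p=\infty$ an ultrametric. This follows from the gluing lemma for couplings (valid here since compact ultrametric spaces are Polish): given near-optimal couplings of the three pairwise combinations, one forms a measure on the triple product having these as its pairwise marginals, projects to obtain a coupling of the outer marginals, and then invokes the inequality $u(x,z)^p\leq u(x,y)^p+u(y,z)^p$ satisfied by every pseudo-ultrametric, integrating and taking infima; for $p=\infty$ one replaces the sum by the maximum over the support.

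With this in hand, I would prove the triangle inequality for $\usturm{p}$ by gluing pseudo-ultrametrics. Fix $\X,\Y,\Z$ and $\eps>0$, choose $u_{XY}\in\mathcal{D}^\mathrm{ult}(\uX,\uY)$ and $u_{YZ}\in\mathcal{D}^\mathrm{ult}(\uY,\uZ)$ that are $\eps$-nearly optimal in \Cref{eq:pseudometric def of usturm}, and define $u$ on $X\sqcup Y\sqcup Z$ by keeping $u_{XY}$ and $u_{YZ}$ on the respective pieces and setting $u(x,z)\coloneqq\inf_{y\in Y}\max\big(u_{XY}(x,y),u_{YZ}(y,z)\big)$ for $x\in X$, $z\in Z$. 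The main obstacle, and the step requiring care, is checking that this $u$ is a genuine pseudo-ultrametric whose restrictions recover $u_{XY}$ and $u_{YZ}$: the strong triangle inequality must be verified for every placement of the three vertices among $X$, $Y$, $Z$, and the mixed cases are precisely where one exploits the ultrametric inequality in each of the three constituent relations together with the fact that $u_{XY}$ and $u_{YZ}$ both agree with $\uY$ on $Y$. Granting this, $u|_{X\sqcup Z}\in\mathcal{D}^\mathrm{ult}(\uX,\uZ)$, and since $\muX,\muY,\muZ$ are supported on the respective factors, the Wasserstein pseudometrics between them in $(X\sqcup Y\sqcup Z,u)$ coincide with those in the two glued subspaces. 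Applying the $p$-metric property of $\pseudoWasser{p}$ on $(X\sqcup Y\sqcup Z,u)$ then gives
\[\usturm{p}(\X,\Z)^p\leq\pseudoWasser{p}^{(X\sqcup Y\sqcup Z,u)}(\muX,\muZ)^p\leq\big(\usturm{p}(\X,\Y)+\eps\big)^p+\big(\usturm{p}(\Y,\Z)+\eps\big)^p,\]
and letting $\eps\to 0$ yields the $p$-triangle inequality; the $p=\infty$ computation replaces the sum by a maximum and delivers the ultrametric inequality, proving that $\usturm{\infty}$ is an ultrametric.
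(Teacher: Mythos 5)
Your proposal is correct and follows essentially the same route as the paper: reduce to the pseudo-ultrametric formulation, prove that the Wasserstein pseudometric on a pseudo-ultrametric space is a $p$-(pseudo)metric via the gluing lemma and the inequality $u(x,z)^p\le u(x,y)^p+u(y,z)^p$, handle the identity of indiscernibles through $\usturm{p}\ge\dsturm{p}$, and obtain the $p$-triangle inequality by gluing two metric couplings on $X\sqcup Y\sqcup Z$ with $u(x,z)=\inf_y\max(u_{XY}(x,y),u_{YZ}(y,z))$ (the paper invokes \citet[Lemma 1.1]{zarichnyi2005gromov} for the verification you flag as the delicate step). The only cosmetic difference is that you use $\eps$-nearly optimal metric couplings and let $\eps\to0$, whereas the paper first establishes the existence of exact optimizers (\Cref{prop:usturm_optimal}) and glues those.
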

In order to increase the readability of this section we postpone the proof of \Cref{thm:sturms um} to \Cref{sec:proof of thm sturms um}. In the course of the proof, we will, among other things, verify the existence of optimal metrics and optimal couplings in \Cref{eq:pseudometric def of usturm} (see \Cref{prop:usturm_optimal}). Furthermore, it is important to note that the topology induced on $\mathcal{U}^w$ by $\usturm{p}$, $1\leq p\leq \infty$, is different from the one induced by $d_{\mathrm{GW},p}^{\mathrm{sturm}}$. This is well illustrated in the following example.

	\begin{example}[$\usturm{p}$ and $d_{\mathrm{GW},p}^{\mathrm{sturm}}$ induce different topologies]\label{ex:notation two point space}
		This example is an adaptation from \citet[Example 3.14]{memoli2019gromov}. For each $a>0$, denote by $\Delta_2(a)$ the two-point metric space with interpoint distance $a$. Endow with $\Delta_2(a)$ the uniform probability measure $\mu_a$ and denote the corresponding ultrametric measure space $\hat{\Delta}_2(a)$. Now, let $\X\coloneqq \hat{\Delta}_2(1)$ and let $\X_n\coloneqq\hat{\Delta}_2\left(1+\frac{1}{n}\right)$ for $n\in\mathbb N$. It is easy to check that for any $1\leq p \leq \infty$, $\dsturm{p}(\X,\X_n)=\frac{1}{2n}$ and $\usturm{p}(\X,\X_n)=
		2^{-\frac{1}{p}}(1+\frac{1}{n})$ where we adopt the convention that $1/\infty=0$. Hence, as $n$ goes to infinity $\X_n$ will converge to $\X$ in the sense of $\dsturm{p}$, but not in the sense of $\usturm{p}$, for any $1\leq p\leq\infty$.
		\end{example}

	\subsubsection{Alternative representations of \texorpdfstring{$\usturm{p}$}{Sturm's Gromov-Wasserstein distance}}\label{subsubsec:alt rep for Sturms GW dist}
	In this subsection, we derive an alternative representation for $\ugw{p}^\mathrm{sturm}$ defined in \Cref{eq:ultra Sturm}. We mainly focus on the case $p<\infty$, however it turns out that the results also hold for $p=\infty$ (see \Cref{subsec:relation between ugw and usturm}).

Let $\X,\Y\in\mathcal{U}^w$ and recall the original definition of $\usturm{p}$, $p\in[1,\infty]$, given in \Cref{eq:ultra Sturm}, i.e., 
\[\usturm{p}(\X,\Y)=\inf_{Z,\phi,\psi} d_{\mathrm{W},p}^{(Z,u_Z)}(\varphi_\#\muY,\psi_\#\muY),\]
where $\phi:X\to Z$ and $\psi:Y\to Z$ are isometric embeddings into an ultrametric space $(Z,u_Z)$. It turns out that we only need to consider relatively few possibilities of mapping two ultrametric spaces into a common ultrametric space. Exemplarily, this is shown in \Cref{fig:common ultrametric space}, where we see two finite ultrametric spaces and two possibilities for a common ultrametric space $Z$.
	\begin{figure}
		\centering
		\includegraphics[width =0.8\textwidth]{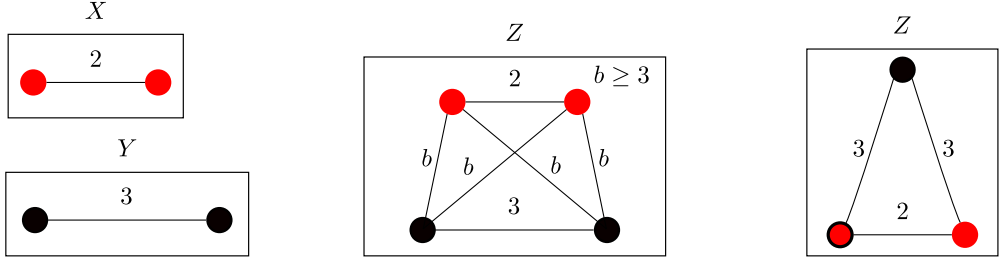}
		\caption{\textbf{Common ultrametric spaces:} Representation of the two kinds of ultrametric spaces $Z$ (middle and right) into which we can isometrically embed the spaces $X$ and $Y$ (left).} \label{fig:common ultrametric space}
	\end{figure}
	Indeed, it is straightforward to write down all reasonable embeddings and target spaces. We define the set
	\begin{equation}\label{eq:definition of A}
	\mathcal{A}\coloneqq\{(A,\varphi)\,|\,\emptyset\neq A\subseteq X \text{ is closed and } \varphi:A\hookrightarrow Y \text{ is an isometric embedding } \}.
	\end{equation}
	Clearly, $\mathcal{A}\neq\emptyset$, as it holds for each $x\in X$ that $\{(\{x\},\varphi_y)\}_{y\in Y}\subseteq\mathcal{A}$, where $\varphi_y$ is the map sending $x$ to $y\in Y$. Another possibility to construct elements in $\mathcal{A}$ is illustrated in the subsequent example.
	
	\begin{example}\label{ex:u=0 A}
	Let $\X,\Y\in\mathcal{U}^w$ be finite spaces and let $u\in\mathcal{D}^\mathrm{ult}(\uX,\uY)$. If $u^{-1}(0)\neq \emptyset$, we define $A\coloneqq\pi_X(u^{-1}(0))\subseteq X$, where $\pi_X:X\times Y\rightarrow X$ is the canonical projection. Then, the map $\varphi:A\rightarrow Y$ defined by sending $x\in A$ to $y\in Y$ such that $u(x,y)=0$ is an isometric embedding and in particular, $(A,\varphi)\in\mathcal{A}$.
	\end{example}
	Now, fix two compact spaces $\X,\Y\in\mathcal{U}^w$. Let $(A,\varphi)\in\mathcal{A}$ and let $Z_A=X\sqcup (Y\setminus \varphi(A))\subseteq X\sqcup Y$. Furthermore, define $u_{Z_A}:Z_A\times Z_A\rightarrow\Rp$ as follows:
	\begin{enumerate}
	    \item $u_{Z_A}|_{X\times X}\coloneqq u_X$ and $u_{Z_A}|_{Y\setminus \varphi(A)\times Y\setminus \varphi(A)}\coloneqq u_Y|_{Y\setminus \varphi(A)\times Y\setminus \varphi(A)}$;
	    \item For any $x\in A$ and $y\in Y\setminus \varphi(A)$ define $u_{Z_A}(x,y)\coloneqq \uY(y,\varphi(x)) $; 
	    \item For $x\in X\setminus A$ and $y\in Y\setminus \varphi(A)$ let $u_{Z_A}(x,y)\coloneqq \inf\{\max(\uX(x,a),\uY(\varphi(a),y))\,|\,a\in A\} $;
	    \item For any $x\in X$ and $y\in Y\setminus\varphi(A)$, $u_{Z_A}(y,x)\coloneqq u_{Z_A}(x,y). $
	\end{enumerate}
Then, $(Z_A,u_{Z_A})$ is an ultrametric space such that $X$ and $Y$ can be mapped isometrically into $Z_A$  (see \cite[Lemma 1.1]{zarichnyi2005gromov}). Let $\phi^X_{(A,\varphi)}$ and $\psi^Y_{(A,\varphi)}$ denote the corresponding isometric embeddings of $X$ and $Y$, respectively.
This allows us to derive the following statement, whose proof is postponed to \Cref{sec:proof of compact usturm a phi}.

\begin{theorem}\label{thm:compact usturm A Phi representation}
	Let $\X,\Y\in\mathcal{U}^w$. Then, we have for each $p\in[1,\infty)$ that
\begin{equation}\label{eq:alternative representation ustum}
\usturm{p}(\X,\Y)=\inf_{(A,\varphi)\in\mathcal{A}}d_{\mathrm{W},p}^{Z_A}\left({\left(\phi^X_{(A,\varphi)}\right)}_\#\muX,{\left(\psi^Y_{(A,\varphi)}\right)}_\#\muY\right).\end{equation}
	\end{theorem}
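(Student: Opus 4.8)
The inequality $\usturm{p}(\X,\Y)\le\inf_{(A,\varphi)\in\mathcal{A}}d_{\mathrm{W},p}^{Z_A}(\cdots)$ is immediate, and I would dispose of it first: for every $(A,\varphi)\in\mathcal{A}$ the ultrametric space $(Z_A,u_{Z_A})$ together with the isometric embeddings $\phi^X_{(A,\varphi)},\psi^Y_{(A,\varphi)}$ is one of the admissible triples appearing in \Cref{eq:ultra Sturm}, so the right-hand side merely infimizes over a subfamily. All the work lies in the reverse inequality. For that, I would pass to the pseudo-ultrametric picture via \Cref{lemma:pseudometric representation of usturm}, writing $\usturm{p}(\X,\Y)=\inf_{u\in\mathcal{D}^\mathrm{ult}(\uX,\uY)}\pseudoWasser{p}^{(X\sqcup Y,u)}(\muX,\muY)$, and then fix a minimizing $u$ together with an optimal coupling $\mu\in\mathcal{C}(\muX,\muY)$ for $\pseudoWasser{p}^{(X\sqcup Y,u)}$; both exist by \Cref{prop:usturm_optimal} (compactness plus lower semicontinuity).

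From such a $u$ I would read off a candidate $(A,\varphi)$ by setting $A\coloneqq\pi_X(u^{-1}(0))$ and, for $x\in A$, letting $\varphi(x)$ be the unique $y\in Y$ with $u(x,y)=0$. Uniqueness and the isometry property of $\varphi$ follow directly from the strong triangle inequality \Cref{eq:ultra triangle ineq} (if $u(x,y)=u(x',y')=0$ then $\uX(x,x')=\uY(y,y')$), while $A$ is closed since $u$ is continuous and $X\times Y$ is compact; hence $(A,\varphi)\in\mathcal{A}$ provided $A\neq\emptyset$. That $A\neq\emptyset$ at an optimum needs a short argument: if $u^{-1}(0)=\emptyset$ there is a positive gap $\inf_{x,y}u(x,y)>0$, and lowering the cross-distance of a pair carrying positive $\mu$-mass (capping to restore \Cref{eq:ultra triangle ineq} only decreases other cross-distances) would strictly reduce $\pseudoWasser{p}^{(X\sqcup Y,u)}$, contradicting optimality.

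The comparison step comes next. Let $\bar u$ denote the pullback of $u_{Z_A}$ to $X\sqcup Y$ along $\phi^X,\psi^Y$. Applying \Cref{eq:ultra triangle ineq} through $A$ gives $u\le\bar u$ pointwise, with automatic equality as soon as one of the two points lies in $A$ or in $\varphi(A)$; the two can differ only on pairs $(x,y)$ with $x\notin A$ and $y\notin\varphi(A)$. Pushing $\mu$ forward to a coupling on $Z_A$ yields $d_{\mathrm{W},p}^{Z_A}\bigl((\phi^X)_\#\muX,(\psi^Y)_\#\muY\bigr)\le\bigl(\int\bar u^{\,p}\,d\mu\bigr)^{1/p}$, so the proof reduces to showing $\bar u=u$ holds $\mu$-almost everywhere. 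I would establish this by contradiction: on a pair $(x,y)\in\supp{\mu}$ with $x\notin A$, $y\notin\varphi(A)$, and $\bar u(x,y)>u(x,y)=:c$, the identity $\bar u(x,y)=\inf_{a\in A}\max(\uX(x,a),\uY(\varphi(a),y))>c$ forces, via a short case analysis on \Cref{eq:ultra triangle ineq}, that $\uX(x,a)=\uY(\varphi(a),y)$ for every $a\in A$; thus $\varphi$ extends isometrically by $x\mapsto y$, and identifying this mass-carrying pair at level $0$ produces a competitor with strictly smaller $p$-cost, contradicting the optimality of $u$. Consequently $\int\bar u^{\,p}\,d\mu=\int u^{\,p}\,d\mu=\bigl(\pseudoWasser{p}^{(X\sqcup Y,u)}(\muX,\muY)\bigr)^p$, whence $d_{\mathrm{W},p}^{Z_A}\le\usturm{p}(\X,\Y)$ and the reverse inequality follows.

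The main obstacle is precisely this last almost-everywhere agreement $\bar u=u$ on $\supp{\mu}$. The difficulty is structural: amalgamations have pointwise \emph{larger} cross-distances than a general common ultrametric (indeed $u\le\bar u$), so naively pushing an optimal plan onto $Z_A$ can only raise the cost, and one must genuinely use optimality to pin the support of the optimal coupling to the region where $u$ and its amalgamation coincide. The ultrametric rigidity supplied by \Cref{eq:ultra triangle ineq}—which turns a transport-tight near-match into an exact isometric identification—is what makes this possible, and verifying that the resulting level-$0$ identification yields an admissible pseudo-ultrametric with strictly smaller cost (preserving $\uX,\uY$, as guaranteed by the amalgamation construction of \cite{zarichnyi2005gromov}) is the technical heart of the argument. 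Finally, the statement is restricted to $p\in[1,\infty)$; the case $p=\infty$ is recovered separately through the identity $\ugw{\infty}=\usturm{\infty}$ proved in \Cref{subsec:relation between ugw and usturm}.
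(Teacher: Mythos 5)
Your opening moves coincide with the paper's: the easy inequality, the reduction to $\mathcal{D}^\mathrm{ult}(\uX,\uY)$ via \Cref{lemma:pseudometric representation of usturm}, the extraction of $(A,\varphi)$ from $u^{-1}(0)$, the pointwise bound $u\le\bar u$, and the case analysis showing that a pair $(x,y)$ with $u(x,y)<\bar u(x,y)$ extends $\varphi$ isometrically are all ingredients of the paper's proof of the finite case (\Cref{thm:usturm A Phi representation}, together with \Cref{lm:adm non empty} and \Cref{lm:u-1 neq empty}). The gap is in your contradiction step, ``identifying this mass-carrying pair at level $0$ produces a competitor with strictly smaller $p$-cost'': neither of the two natural competitors does what you need. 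The pointwise-dominated candidate, obtained by keeping $\uX,\uY$ on the diagonal blocks and setting $u'(x',y')\coloneqq\min\bigl(u(x',y'),\max(\uX(x',x),\uY(y,y'))\bigr)$ on cross pairs, satisfies $u'\le u$ and $u'(x,y)=0$, but it need not obey the strong triangle inequality. Take $X=\{x_1,x_2,a\}$ and $Y=\{y_1,y_0,b\}$ with $\uX(x_1,x_2)=2$, $\uY(y_1,y_0)=1$, all distances to $a$ and to $b$ equal to $5$, and $u\in\mathcal{D}^\mathrm{ult}(\uX,\uY)$ given by $u(a,b)=0$, $u(x_1,y_1)=u(x_1,y_0)=1$, $u(x_2,y_1)=u(x_2,y_0)=2$, and $5$ on all remaining cross pairs. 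Here $A=\{a\}$ and $(x,y)=(x_2,y_0)$ is a ``bad'' pair, yet $u'(x_1,y_0)=1$ and $u'(x_2,y_0)=0$ while $\uX(x_1,x_2)=2$, so $u'\notin\mathcal{D}^\mathrm{ult}(\uX,\uY)$. The other candidate, the amalgamation over $\mathrm{graph}(\varphi)\cup\{(x,y)\}$, is a legitimate pseudo-ultrametric, but it is not pointwise below $u$: in the same example it raises the value at $(x_1,y_0)$ from $1$ to $2$, so whether its cost against $\mu$ decreases depends on how $\mu$ distributes its mass, and no contradiction with optimality follows. (A milder instance of the same issue affects your argument that $A\neq\emptyset$: the capping construction is only guaranteed to stay in $\mathcal{D}^\mathrm{ult}(\uX,\uY)$ when applied at the global minimizer of $u$ over $X\times Y$, which need not lie in $\supp{\mu}$.)

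The paper sidesteps this entirely by not invoking cost-optimality of $u$ at all: it restricts to pseudo-ultrametrics that are minimal for the \emph{pointwise} order (``admissible'' ones, which exist by Zorn's lemma and suffice for the infimum), and then enlarges $(A,\varphi)$ iteratively, at each stage adjoining a pair $(x_1,y_1)$ that is minimal among the bad ones, until $u\ge u_{Z_A}$ holds \emph{everywhere} rather than merely $\mu$-almost everywhere; pointwise domination then yields the cost inequality for every coupling simultaneously. That iteration terminates only because the spaces are finite, which is why the paper first proves the finite statement and then deduces the compact case by applying it to the quotients $\X_t,\Y_t$, identifying $(Z_A)_t$ with $Z_{A_t}$, and letting $t\to0$ using $\usturm{p}(\X_t,\X)\le t$ and \Cref{lm:quotient ultrametric dW}. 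Your proposal does not address this passage from finite to compact, and without it (or a transfinite version of the extension argument together with a verification that the limiting $A$ is closed and that the relevant infima over $A$ are attained) the argument does not close even if the contradiction step were repaired.
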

	
\begin{remark}\label{rem:computation of usturm}
Let $\X$ and $\Y$ be two finite ultrametric measure spaces. The representation of $\ugw{p}(\X,\Y)$, $1\leq p\leq \infty$ given by \Cref{thm:compact usturm A Phi representation} is very explicit and recasts the computation of $\ugw{p}(\X,\Y)$, $1\leq p\leq \infty$, as a combinatorial problem. In fact, as $\X$ and $\Y$ are finite, the set $\mathcal{A}$ in \Cref{eq:alternative representation ustum} can be further reduced. More precisely, we demonstrate in \Cref{sec:proof of compact usturm a phi} (see \Cref{coro:usturm A Phi representation}) that it is sufficient to infimize over the set of all \emph{maximal pairs}, denoted by $\mathcal{A}^*$. Here, a pair $(A,\varphi_1)\in\mathcal{A}$ is denoted as \emph{maximal}, if for all pairs $(B,\varphi_2)\in \mathcal{A}$ with $A\subseteq B$ and $\varphi_2|_A=\varphi_1$ it holds $A=B$.
Using the ultrametric Gromov-Hausdorff distance (see \Cref{eq:Gromov Hausdorff ultrametric}) it is possible to determine if two ultrametric spaces are isometric in polynomial time \cite[Theorem 5.7]{memoli2019gromov}. However, this is clearly not sufficient to identify all $(A,\varphi)\in\mathcal{A}^*$ in polynomial time. Especially, for a given, viable $A\subseteq X$, there are usually multiple ways to define the corresponding map $\varphi$. Furthermore, we have for $1\leq p<\infty$ neither been able to further restrict the set $\mathcal{A}^*$ nor to identify the optimal $(A^*,\varphi^*)$. This just leaves a brute force approach which is computationally not feasible. On the other hand, for $p=\infty$ we are able to explicitly construct the optimal pair $(A^*,\varphi^*)$ (see \Cref{thm:optimal A and varphi (usturm)}).
\end{remark}
	
	\subsection{The ultrametric Gromov-Wasserstein distance}\label{subsec:the ultrametric GW distance}
	In the following, we consider basic properties of $\ugw{p}$ and prove the analogue of \Cref{thm:sturms um}, i.e., we verify that also $\ugw{p}$ is a $p$-metric, $1\leq p\leq \infty$, on the collection of ultrametric measure spaces. 

	The subsequent proposition collects three  basic properties of $\ugw{p}$ which are also shared by $\usturm{p}$ (cf. \Cref{prop:usturm_basic}). We refer to \Cref{sec:proof:prop:ugw-properties} for its proof.
	\begin{proposition}\label{prop:ugw-properties}
		Let $\X,\Y\in\mathcal{U}^w$. Then, the following holds:
		\begin{enumerate}
		    \item For any $p\in[1,\infty]$, we always have that $\ugw{p}(\X,\Y)\geq \dgw{p}(\X,\Y)$.
			\item For any $1\leq p\leq q\leq \infty$, it holds $\ugw{p}(\X,\Y)\leq\ugw{q}(\X,\Y) $;
			\item We have that $\lim_{p\rightarrow\infty}\ugw{p}(\X,\Y)=\ugw{\infty}(\X,\Y).$
		\end{enumerate}
	\end{proposition}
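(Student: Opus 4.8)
The plan is to prove each of the three parts of Proposition~\ref{prop:ugw-properties} by direct manipulation of the defining infimum in \Cref{eq:def uGW}, mirroring the structure of the analogous statements for $\usturm{p}$ in \Cref{prop:usturm_basic}. All three parts are ultimately pointwise statements about the integrand $\Lambda_\infty(\uX(x,x'),\uY(y,y'))$ versus $\Lambda_1(\dX(x,x'),\dY(y,y'))$, together with monotonicity and continuity of the map $p\mapsto\norm{\cdot}_{L^p}$; the coupling set $\mathcal{C}(\muX,\muY)$ is the same in every distance appearing, which is what makes these comparisons clean.

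\textbf{Part (1).} For part (1) I would fix an arbitrary coupling $\mu\in\mathcal{C}(\muX,\muY)$ and compare the two integrands (or suprema, when $p=\infty$) pointwise. The key inequality is that for all $a,b\geq 0$ one has $\Lambda_1(a,b)=|a-b|\leq\max(a,b)=\Lambda_\infty(a,b)$ (with the convention $\Lambda_\infty(a,a)=0$, which only helps since then $\Lambda_1(a,a)=0$ too). Applying this with $a=\uX(x,x')$ and $b=\uY(y,y')$ gives $\Lambda_1(\uX(x,x'),\uY(y,y'))\leq\Lambda_\infty(\uX(x,x'),\uY(y,y'))$ at every point of $X\times Y\times X\times Y$. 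Recalling from \Cref{eq:p-dist delta1} that $\dgw{p}$ is $\tfrac12\inf_\mu$ of the $L^p(\mu\otimes\mu)$-norm of $\Lambda_1(\uX,\uY)$ while $\ugw{p}$ is $\inf_\mu$ of the $L^p$-norm of $\Lambda_\infty(\uX,\uY)$, the pointwise bound yields $\mathrm{dis}_p(\mu)\leq\mathrm{dis}_p^{\mathrm{ult}}(\mu)$ (the factor $\tfrac12$ in $\dgw{p}$ only strengthens the inequality). Taking the infimum over $\mu$ on both sides preserves the inequality and gives $\dgw{p}(\X,\Y)\leq\ugw{p}(\X,\Y)$. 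The $p=\infty$ case is identical with suprema replacing integrals.

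\textbf{Parts (2) and (3).} Parts (2) and (3) are facts about the function $q\mapsto\norm{f}_{L^q(\nu)}$ for $\nu\coloneqq\mu\otimes\mu$ a probability measure on the fourfold product and $f\coloneqq\Lambda_\infty(\uX,\uY)$ a bounded nonnegative measurable function. For part (2), since $\mu\otimes\mu$ is a probability measure, Jensen's inequality (equivalently the monotonicity of $L^q$-norms with respect to a probability measure) gives $\norm{f}_{L^p(\nu)}\leq\norm{f}_{L^q(\nu)}$ for $1\leq p\leq q<\infty$, hence $\mathrm{dis}_p^{\mathrm{ult}}(\mu)\leq\mathrm{dis}_q^{\mathrm{ult}}(\mu)$ for each fixed $\mu$; taking infima over $\mu$ gives $\ugw{p}\leq\ugw{q}$. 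For the case $q=\infty$ I would use that $\norm{f}_{L^p(\nu)}\leq\norm{f}_{L^\infty(\nu)}=\sup_{\supp\nu}f$, which bounds $\mathrm{dis}_p^{\mathrm{ult}}(\mu)$ by $\mathrm{dis}_\infty^{\mathrm{ult}}(\mu)$ for each $\mu$ and again passes to the infimum. For part (3), the direction $\limsup_{p\to\infty}\ugw{p}\leq\ugw{\infty}$ is immediate from part (2) (the family is monotone nondecreasing and bounded above by $\ugw{\infty}$), so the content is the reverse inequality. Here I would exploit compactness: by \Cref{thm:sturms um}-type reasoning and the existence of optimal couplings, for each $p$ let $\mu_p$ be an optimal (or near-optimal) coupling, and use that the spaces $\diam{X}, \diam{Y}$ are bounded so that $f$ is uniformly bounded; then the standard fact $\lim_{p\to\infty}\norm{f}_{L^p(\nu)}=\norm{f}_{L^\infty(\nu)}$ together with a selection of a weakly convergent subsequence of couplings (the coupling set $\mathcal{C}(\muX,\muY)$ is weakly compact since $X,Y$ are compact) yields $\lim_{p\to\infty}\ugw{p}\geq\ugw{\infty}$.

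\textbf{Main obstacle.} I expect the only genuinely delicate point to be the lower-semicontinuity/compactness argument in part (3): interchanging the limit $p\to\infty$ with the infimum over couplings requires care, since $\norm{f}_{L^p(\mu_p\otimes\mu_p)}$ involves couplings that vary with $p$. The clean route is to pick, for each $p$, a coupling $\mu_p$ achieving $\mathrm{dis}_p^{\mathrm{ult}}(\mu_p)=\ugw{p}(\X,\Y)$, extract a weak-$*$ limit $\mu_\infty$ along a subsequence via Prokhorov (valid because $X\times Y$ is compact), and then show $\mathrm{dis}_\infty^{\mathrm{ult}}(\mu_\infty)\leq\lim_{p\to\infty}\ugw{p}$ using lower semicontinuity of the $L^\infty$-distortion functional under weak convergence of couplings. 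This last lower-semicontinuity property is the crux; once it is in place, combining it with part (2) closes the limit. Everything else reduces to the elementary inequalities above and to standard properties of $L^p$-norms on a probability space, so I would present those briefly and concentrate the argument on part (3).
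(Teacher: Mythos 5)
Your proposal is correct and follows essentially the same route as the paper: part (1) from the pointwise bound $\Lambda_1\leq\Lambda_\infty$ and the definitions, part (2) from Jensen's inequality applied to the probability measure $\mu\otimes\mu$, and part (3) by combining the monotone bound from (2) with optimal couplings, weak compactness of $\mathcal{C}(\muX,\muY)$, and a lower-semicontinuity argument. The "crux" you flag in part (3) is exactly what the paper executes via the portmanteau theorem: for the weak limit $\mu$ and $M=\dis_\infty^{\mathrm{ult}}(\mu)$ one takes the open superlevel set $U=\{\Lambda_\infty(\uX,\uY)>M-\eps\}$, uses $\liminf_n\mu_n\otimes\mu_n(U)\geq\mu\otimes\mu(U)>0$, and bounds $\dis_n^{\mathrm{ult}}(\mu_n)\geq(\mu_n\otimes\mu_n(U))^{1/n}(M-2\eps)\to M-2\eps$.
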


Next, we verify that $\ugw{p}$ is indeed a metric on the collection of ultrametric measure spaces.
\begin{theorem}\label{thm:ugw-p-metric}
The ultrametric Gromov-Wasserstein distance $\ugw{p}$ is a $p$-metric on the collection $\mathcal{U}^w$ of compact ultrametric measure spaces. In particular, when $p=\infty$, $\ugw{\infty}$ is an ultrametric.
\end{theorem}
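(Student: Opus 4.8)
The plan is to verify directly that $\ugw{p}$ satisfies the four defining properties of a $p$-metric: non-negativity, symmetry, the identity of indiscernibles, and the $p$-relaxed triangle inequality (which for $p=\infty$ becomes the strong, i.e.\ ultrametric, triangle inequality). Non-negativity is immediate since $\Lambda_\infty\geq 0$ forces $\mathrm{dis}_p^\mathrm{ult}(\mu)\geq0$ for every coupling. Symmetry follows by noting that if $\mu\in\mathcal{C}(\muX,\muY)$ then its transpose $\mu^\top\in\mathcal{C}(\muY,\muX)$ satisfies $\mathrm{dis}_p^\mathrm{ult}(\mu^\top)=\mathrm{dis}_p^\mathrm{ult}(\mu)$, because $\Lambda_\infty(a,b)=\Lambda_\infty(b,a)$; taking infima over couplings gives $\ugw{p}(\X,\Y)=\ugw{p}(\Y,\X)$.

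For the identity of indiscernibles, the forward direction is the easy one: if $\X\cong_w\Y$ via a measure-preserving isometry $\varphi$, then the coupling $\mu=(\mathrm{id}\times\varphi)_\#\muX$ is supported on the graph of $\varphi$, so $\uY(y,y')=\uX(x,x')$ on $\supp{\mu}$ and hence $\mathrm{dis}_p^\mathrm{ult}(\mu)=0$, giving $\ugw{p}(\X,\Y)=0$. The converse is where I would lean on earlier results rather than reprove non-degeneracy from scratch: by \Cref{prop:ugw-properties}(1) we have $\dgw{p}(\X,\Y)\leq\ugw{p}(\X,\Y)=0$, and since $\dgw{p}$ is already known to be a genuine metric on $\isocalss$, this forces $\X\cong_w\Y$ as metric measure spaces. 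Because the metric of an ultrametric measure space \emph{is} its ultrametric, a metric-measure isomorphism is automatically an isomorphism of ultrametric measure spaces, closing the argument.

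The main obstacle is the $p$-triangle inequality, which I would establish via the gluing of couplings. Given $\X,\Y,\Z\in\mathcal{U}^w$ and $\epsilon>0$, choose near-optimal couplings $\mu_{XY}\in\mathcal{C}(\muX,\muY)$ and $\mu_{YZ}\in\mathcal{C}(\muY,\muZ)$, disintegrate both over $\muY$, and glue to produce $\mu\in\mathcal{P}(X\times Y\times Z)$ with marginals $\mu_{XY}$ and $\mu_{YZ}$; set $\mu_{XZ}\coloneqq(\pi_{XZ})_\#\mu\in\mathcal{C}(\muX,\muZ)$. The key structural input is that $\Lambda_\infty$ is an ultrametric on $\Rp$ (as used in \Cref{sec:explicit formulat}), so pointwise
\[\Lambda_\infty(\uX(x,x'),\uZ(z,z'))\leq\max\big(\Lambda_\infty(\uX(x,x'),\uY(y,y')),\,\Lambda_\infty(\uY(y,y'),\uZ(z,z'))\big).\]
For $1\leq p<\infty$, raising to the $p$-th power and using $\max(a,b)^p\leq a^p+b^p$, then integrating against $\mu\otimes\mu$ and applying Fubini (each term depends only on the relevant pair of coordinates and so reduces to an integral against $\mu_{XY}\otimes\mu_{XY}$ or $\mu_{YZ}\otimes\mu_{YZ}$), yields
\[\mathrm{dis}_p^\mathrm{ult}(\mu_{XZ})^p\leq\mathrm{dis}_p^\mathrm{ult}(\mu_{XY})^p+\mathrm{dis}_p^\mathrm{ult}(\mu_{YZ})^p.\]
Bounding the left side below by $\ugw{p}(\X,\Z)^p$ and letting $\epsilon\to0$ gives $\ugw{p}(\X,\Z)\leq(\ugw{p}(\X,\Y)^p+\ugw{p}(\Y,\Z)^p)^{1/p}$, as required.

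For $p=\infty$ the same gluing works with the supremum in place of the integral: every $(x,y,z)\in\supp{\mu}$ projects into $\supp{\mu_{XY}}$ and $\supp{\mu_{YZ}}$, so the ultrametric inequality for $\Lambda_\infty$ directly gives $\Lambda_\infty(\uX(x,x'),\uZ(z,z'))\leq\max(\mathrm{dis}_\infty^\mathrm{ult}(\mu_{XY}),\mathrm{dis}_\infty^\mathrm{ult}(\mu_{YZ}))$ on $\pi_{XZ}(\supp{\mu})$. The only subtlety is that $\supp{\mu_{XZ}}$ is the \emph{closure} of $\pi_{XZ}(\supp{\mu})$; here I would pass to the limit using continuity of the ultrametrics $\uX,\uZ$ together with a short case analysis, namely that the bound is preserved when the limiting arguments of $\Lambda_\infty$ are distinct (since then $\Lambda_\infty=\max$ is continuous there) and is trivially $0$ when they coincide, so the supremum over the closure obeys the same bound. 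This yields $\ugw{\infty}(\X,\Z)\leq\max(\ugw{\infty}(\X,\Y),\ugw{\infty}(\Y,\Z))$, so $\ugw{\infty}$ is an ultrametric. Alternatively, once the identity $\ugw{\infty}=\usturm{\infty}$ is available, the $p=\infty$ case follows directly from \Cref{thm:sturms um}.
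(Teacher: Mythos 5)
Your proposal is correct and follows essentially the same route as the paper: non-degeneracy is reduced to the known metric property of $\dgw{p}$ via \Cref{prop:ugw-properties}(1), and the $p$-triangle inequality is obtained by gluing couplings over $\Y$ and exploiting that $\Lambda_\infty$ is an ultrametric (hence a $p$-metric) on $\Rp$. The only cosmetic difference is that you glue $\epsilon$-near-optimal couplings and let $\epsilon\to 0$, whereas the paper first establishes the existence of exact optimal couplings (\Cref{prop:ugw-ext-opt}) and glues those; both work, and your explicit treatment of the $p=\infty$ support issue (which the paper leaves as "analogous") is sound, and in fact unnecessary since $\pi_{XZ}(\supp{\mu})$ is already closed by compactness.
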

The full proof of \Cref{thm:ugw-p-metric}, which is based on the existence of optimal couplings in \Cref{eq:def uGW} (see \Cref{{prop:ugw-ext-opt}}), is postponed to \Cref{sec:proof of thm ugw-p-metric}.

\begin{remark}[$\ugw{p}$ and $\dgw{p}$ induce different topologies]
Reconsidering \Cref{ex:notation two point space}, it is easy to verify that in this setting $\ugw{p}(\X,\X_n)=2^{-\frac{1}{p}}\left(1+\frac{1}{n}\right)$ while $\dgw{p}(\X,\X_n)=\frac{1}{2^{1/p}n}$, $1\leq p\leq \infty$. Hence, just like $\usturm{p}$ and $\dsturm{p}$, $\ugw{p}$ and $\dgw{p}$ do not induce the same topology on $\mathcal{U}^w$. This result can also be obtained from \Cref{sec:topology and geodesic properties} where we derive that $\ugw{p}$ and $\usturm{p}$ give rise to the same topology. 
\end{remark}

\begin{remark}\label{rem:computational complexity ugw}
By the same arguments as for $\dgw{p}$, $1\leq p<\infty$, \citep[Sec. 7]{memoli2011gromov}, it follows that for two finite ultrametric
measure spaces $\X$ and $\Y$ the computation of $\ugw{p}(\X, \Y)$, $1\leq p<\infty $, boils down to solving a (non-convex) quadratic program. This is in general NP-hard \citep{pardalos1991quadratic}. On the other hand, for $p=\infty$, we will derive a polynomial time algorithm to determine $\ugw{\infty}(\X, \Y)$ (cf. \Cref{subsubsec:alternative repesentation of ugw infty}).    
\end{remark}
\subsubsection{Alternative representations of \texorpdfstring{$\ugw\infty$}{the ultrametric Gromov-Wasserstein distance}}\label{subsubsec:alternative repesentation of ugw infty} In the following, we will derive an alternative representation of $\ugw\infty$ that resembles the one of $\ugh$ derived in \cite[Theorem 5.7]{memoli2019gromov}. It also leads to a polynomial time algorithm for the computation of $\ugw{\infty}$. For this purpose, we define the \emph{weighted quotient} of an ultrametric measure space. Let $\X=\ummspaceX\in\mathcal{U}^w$ and let $t\geq 0$. Then, the \emph{weighted quotient} of $\X$ at level $t$, is given as $\X_t=(X_t,u_{X_t},\mu_{X_t})$, where $(X_t, u_{X_t})$ is the quotient of the ultrametric space $(X,u_X)$ at level $t$ (see \Cref{sec:ultrametric Gromov-Hausdorff}) and $\mu_{X_t}\in\mathcal{P}(X_t)$ is the push forward of $\muX$ under the canonical quotient map $Q_t:(X,u_X)\rightarrow(X_t,u_{X_t})$ sending $x$ to $[x]_t$ for $x\in X$. \Cref{fig:accumulated measure spaces}  illustrates the weighted quotient in a simple example.
	\begin{figure}
		\centering
			\includegraphics[width=0.5\textwidth]{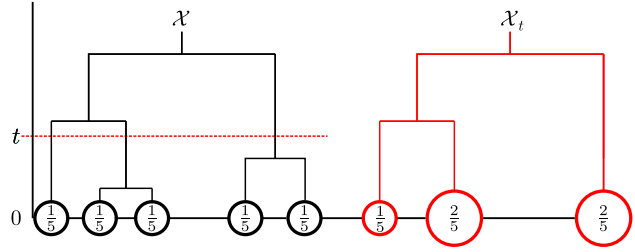}
		\caption{\textbf{Weighted Quotient:} An ultrametric  measure space (black) and its weighted quotient at level $t$ (red).} \label{fig:accumulated measure spaces}
	\end{figure}
Based on this definition, we show the following theorem, whose proof is postponed to \Cref{sec:proof of thm ugw-infty-eq}.
	\begin{theorem}\label{thm:ugw-infty-eq}
		Let $\X=\ummspaceX$ and $\Y=\ummspaceY$ be two compact ultrametric measure spaces. 
		Then, it holds that
		\[u_{\mathrm{GW},\infty}(\X,\Y)=\min\left\lbrace t\geq 0 \,|\,\X_t \cong_w \Y_t\right\rbrace.\]
	\end{theorem}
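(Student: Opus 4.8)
The plan is to prove the two inequalities separately, using the structural theorem for $\ugh$ (\Cref{thm:ultrametric GH-distance}) as a guide, but carefully tracking the measures since $\cong_w$ requires a measure-preserving isometry rather than a mere isometry. Write $t^* \coloneqq \inf\{t \geq 0 \mid \X_t \cong_w \Y_t\}$. I first would argue that the infimum is attained, so that it is a genuine minimum: the key observation is that $\cong_w$ at level $t$ should be a ``closed'' condition as $t$ increases, because the weighted quotients $\X_t$ and $\Y_t$ stabilize on intervals (the underlying dendrograms $\theta_X,\theta_Y$ are right-constant by condition (4) of \Cref{def:proper dendrogram}, and there are only finitely many blocks at each positive level by condition (6)). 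A limiting argument using compactness of the spaces of couplings should show that if $\X_{t_n} \cong_w \Y_{t_n}$ for $t_n \downarrow t^*$, then $\X_{t^*} \cong_w \Y_{t^*}$.

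For the inequality $\ugw{\infty}(\X,\Y) \leq t^*$, I would take an optimal (measure-preserving) isometry realizing $\X_{t^*} \cong_w \Y_{t^*}$ and use it to build a coupling $\mu \in \mathcal{C}(\muX,\muY)$ whose $\infty$-ultra-distortion is at most $t^*$. Concretely, the isometry pairs up the blocks of $\theta_X(t^*)$ with those of $\theta_Y(t^*)$ in a measure-preserving way; within each matched pair of blocks one couples $\muX$ and $\muY$ arbitrarily (e.g.\ via a product coupling on the block masses, which match). For any two points $(x,y),(x',y')$ in the support of this $\mu$: if $x,x'$ lie in different blocks at level $t^*$, then the block-level isometry forces $y,y'$ into the correspondingly different blocks, so $\uX(x,x')=u_{X_{t^*}}([x],[x'])=u_{Y_{t^*}}([y],[y'])=\uY(y,y')$ and hence $\Lambda_\infty=0$; if $x,x'$ lie in the same block then $\uX(x,x')\leq t^*$ and likewise $\uY(y,y')\leq t^*$, so $\Lambda_\infty(\uX(x,x'),\uY(y,y'))\leq t^*$. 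Taking the supremum gives $\disu_\infty(\mu)\leq t^*$, whence $\ugw{\infty}(\X,\Y)\leq t^*$.

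For the reverse inequality $\ugw{\infty}(\X,\Y)\geq t^*$, I would start from an optimal coupling $\mu$ (whose existence should be guaranteed by the analogue of \Cref{prop:ugw-ext-opt} for $p=\infty$), set $r\coloneqq\disu_\infty(\mu)=\ugw{\infty}(\X,\Y)$, and show $\X_r\cong_w\Y_r$, which gives $t^*\leq r$. The coupling $\mu$ descends to a coupling $\mu_r$ of $\mu_{X_r}$ and $\mu_{Y_r}$ under the quotient maps $Q_r^X\times Q_r^Y$. The crucial point is that the constraint $\Lambda_\infty(\uX(x,x'),\uY(y,y'))\leq r$ for all pairs in $\supp(\mu)$ means that whenever $\uX(x,x')>r$ (so $x,x'$ lie in distinct level-$r$ blocks) one must have $\uY(y,y')=\uX(x,x')$, and symmetrically; this forces $\mu_r$ to be supported on the graph of a bijection between the blocks of $\X_r$ and those of $\Y_r$ that preserves the quotient ultrametrics and pushes $\mu_{X_r}$ to $\mu_{Y_r}$, i.e.\ a witness for $\X_r\cong_w\Y_r$.

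I expect the main obstacle to be this last step: verifying that the descended coupling $\mu_r$ is genuinely concentrated on the graph of a \emph{bijective, measure-preserving} isometry of the finite quotients, rather than merely on a measure-preserving correspondence. One must rule out ``splitting'' of block mass—where a single block of $\X_r$ is coupled to several blocks of $\Y_r$—using the fact that any two points in distinct blocks have ultrametric distance strictly greater than $r$, so the $\Lambda_\infty\leq r$ constraint rigidly pins down the block-to-block assignment and forbids fractional matchings; since the total masses in matched blocks must then coincide, one recovers an actual measure isomorphism. Handling the boundary behaviour at the exact level $r$ (closed balls $\uX(x,x')\leq r$ versus strict inequality) and confirming the attainment of the minimum in the compact (countable, but possibly infinite) setting are the technical points requiring the most care, and I would lean on \Cref{thm:compact ultra-dendro} and \Cref{rem:corresponding dendrogram} to translate freely between blocks, closed balls, and quotient points throughout.
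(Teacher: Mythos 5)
Your proposal is correct and follows essentially the same route as the paper: a block-diagonal (normalized product) coupling built from the level-$t$ isomorphism gives the upper bound, descending a coupling to the finite quotients and using the $\Lambda_\infty\leq r$ constraint to rule out mass splitting across blocks gives the lower bound, and local constancy plus finiteness of the quotients yields attainment of the infimum (with the $t^*=0$ case handled by $\ugw{\infty}$ being a metric). The step you flag as the main obstacle is resolved exactly as you describe, so there is no gap.
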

	\begin{remark} The weighted quotients $\X_t$ and $\Y_t$ can be considered as vertex weighted, rooted trees and thus it is possible to verify whether $\X_t \cong_w \Y_t$ in polynomial time \citep{aho1974design}. In consequence, we obtain an polynomial time algorithm for the calculation of $\ugw{\infty}$. See \Cref{subsubsec:p equals infty} for details.
	\end{remark}
	
	The representations of $\ugh$  in \Cref{thm:ultrametric GH-distance} and $\ugw{\infty}$ in \Cref{thm:ugw-infty-eq} strongly resemble themselves. As a direct consequence of both \Cref{thm:ultrametric GH-distance} and \Cref{thm:ugw-infty-eq}, we obtain the following comparison between the two metrics
  \begin{corollary}\label{coro:ugw>ugh}
	Let $\X,\Y\in\mathcal{U}^w$. Then, it holds that
		\begin{equation}\label{eq:ugw geq ugh}
	u_{\mathrm{GW},\infty}(\X,\Y)\geq \ugh(X,Y).	\end{equation}
	\end{corollary}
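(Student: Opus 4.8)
The plan is to read the inequality directly off the two structural characterizations, \Cref{thm:ultrametric GH-distance} and \Cref{thm:ugw-infty-eq}, by comparing the sets of admissible levels $t$ appearing on their right-hand sides. Both theorems express the respective quantity as an infimum (indeed a minimum in the $\ugw{\infty}$ case) over those $t\geq 0$ at which the level-$t$ quotients ``agree''; the only difference lies in the notion of agreement. For $\ugh$ one requires an isometry $X_t\cong Y_t$ of the underlying quotient ultrametric spaces, whereas for $\ugw{\infty}$ one requires a measure-preserving isomorphism $\X_t\cong_w\Y_t$ of the weighted quotients.

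The key — and essentially only — observation I would make is that weighted isomorphism is a strictly stronger requirement than isometry. By definition, $\X_t\cong_w\Y_t$ means that there exists an isometry $\varphi:(X_t,u_{X_t})\to(Y_t,u_{Y_t})$ with $\varphi_\#\mu_{X_t}=\mu_{Y_t}$; forgetting the measure, this same $\varphi$ already witnesses $X_t\cong Y_t$. Consequently,
\[\{t\geq 0 \mid \X_t\cong_w\Y_t\}\subseteq\{t\geq 0 \mid X_t\cong Y_t\}.\]
Since passing to the infimum over a smaller set can only increase its value, I would then conclude
\[\ugh(X,Y)=\inf\{t\geq 0 \mid X_t\cong Y_t\}\leq\inf\{t\geq 0 \mid \X_t\cong_w\Y_t\}=\ugw{\infty}(\X,\Y),\]
where the first equality is \Cref{thm:ultrametric GH-distance} and the last is \Cref{thm:ugw-infty-eq}. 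This is precisely \Cref{eq:ugw geq ugh}.

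There is no real obstacle beyond unwinding the definition of $\cong_w$: the entire content is the above set inclusion together with the monotonicity of the infimum under inclusion. The only point I would treat with a sentence of care is that the right-hand side of \Cref{thm:ugw-infty-eq} is attained as a minimum, so that the corresponding infimum is well defined and the comparison is legitimate; but this attainment is already supplied by the cited theorem, so nothing further is needed.
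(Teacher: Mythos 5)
Your argument is correct and is exactly the intended one: the paper states the corollary as a direct consequence of \Cref{thm:ultrametric GH-distance} and \Cref{thm:ugw-infty-eq}, and the content of that deduction is precisely your observation that $\X_t\cong_w\Y_t$ implies $X_t\cong Y_t$, so the infimum for $\ugw{\infty}$ is taken over a smaller set. Nothing is missing.
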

	
The inequality in \Cref{eq:ugw geq ugh} is sharp and we illustrate this as follows. By \citet[Corollary 5.8]{memoli2019gromov} we know that if the considered ultrametric spaces $(X,u_X)$ and $(Y,u_Y)$ have different diameters (w.l.o.g. $\diam{X}<\diam{Y}$), then $\ugh(X,Y)=\diam{Y}$. The same statement also holds for $\ugw{\infty}$
\begin{corollary}\label{coro:uGW trivial case}
		Let $\X,\Y\in\mathcal{U}^w$ be such that $\diam{X}<\diam{Y}$. Then,
		\[\ugw{\infty}(\X,\Y)=\diam{Y}=\ugh(X,Y).\]
	\end{corollary}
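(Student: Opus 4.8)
The plan is to sandwich $\ugw{\infty}(\X,\Y)$ between $\diam{Y}$ from below and from above. The lower bound is immediate from results already established: by \Cref{coro:ugw>ugh} we have $\ugw{\infty}(\X,\Y)\ge\ugh(X,Y)$, and since $\diam{X}<\diam{Y}$, the cited \citet[Corollary 5.8]{memoli2019gromov} gives $\ugh(X,Y)=\diam{Y}$. Hence $\ugw{\infty}(\X,\Y)\ge\diam{Y}$, and it only remains to prove the reverse inequality $\ugw{\infty}(\X,\Y)\le\diam{Y}$.

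For the upper bound I would argue directly from the definition of $\ugw{\infty}$ in \Cref{eq:def uGW}. Fix an arbitrary coupling $\mu\in\mathcal{C}(\muX,\muY)$ and any two pairs $(x,y),(x',y')\in\supp{\mu}$. Since $\Lambda_\infty(a,b)\le\max(a,b)$ for all $a,b\ge 0$ — this holds trivially, as $\Lambda_\infty(a,b)=\max(a,b)$ when $a\neq b$ and $\Lambda_\infty(a,b)=0$ when $a=b$ — we obtain
\[\Lambda_\infty\big(\uX(x,x'),\uY(y,y')\big)\le\max\big(\uX(x,x'),\uY(y,y')\big)\le\max\big(\diam{X},\diam{Y}\big)=\diam{Y}.\]
Taking the supremum over all such pairs shows $\mathrm{dis}_\infty^\mathrm{ult}(\mu)\le\diam{Y}$, and since $\mu$ was arbitrary, the infimum defining $\ugw{\infty}(\X,\Y)$ is also bounded by $\diam{Y}$. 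Combining the two bounds yields $\ugw{\infty}(\X,\Y)=\diam{Y}=\ugh(X,Y)$.

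There is essentially no serious obstacle here: the statement reduces to the elementary inequality $\Lambda_\infty(a,b)\le\max(a,b)$ together with the diameter comparison, plus the two previously established facts $\ugw{\infty}\ge\ugh$ and $\ugh(X,Y)=\diam{Y}$. The only point requiring a moment's care is the role of strictness: the upper bound $\mathrm{dis}_\infty^\mathrm{ult}(\mu)\le\diam{Y}$ in fact holds whenever $\diam{X}\le\diam{Y}$, so the hypothesis $\diam{X}<\diam{Y}$ is genuinely needed only in the lower bound, namely to invoke the cited computation of $\ugh$. As an alternative route to the upper bound one could instead invoke the structural representation \Cref{thm:ugw-infty-eq}: at level $t=\diam{Y}$ both weighted quotients $\X_t$ and $\Y_t$ collapse to the one-point space carrying unit mass, so $\X_t\cong_w\Y_t$ and therefore $\ugw{\infty}(\X,\Y)\le\diam{Y}$.
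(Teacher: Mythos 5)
Your proof is correct, but it takes a different route from the paper's for the key inequality $\ugw{\infty}(\X,\Y)\le\diam{Y}$. The paper proves both directions of the leftmost equality through the structural theorem (\Cref{thm:ugw-infty-eq}): at $t=\diam{Y}$ both weighted quotients collapse to the one-point space, so $\X_t\cong_w\Y_t$, while for $s\in(\diam{X},\diam{Y})$ one has $\X_s\cong_w *$ but $\Y_s\not\cong_w *$, forcing the minimum to be exactly $\diam{Y}$; the rightmost equality is then just the citation of Corollary 5.8 of M\'emoli et al. You instead obtain the upper bound by a completely elementary computation from the definition of $\mathrm{dis}_\infty^{\mathrm{ult}}$, using only $\Lambda_\infty(a,b)\le\max(a,b)$ and the diameter bounds on $\uX$ and $\uY$ — this avoids the structural theorem entirely for that direction and makes transparent that only $\diam{X}\le\diam{Y}$ is needed there. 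For the lower bound you route through \Cref{coro:ugw>ugh} and the same cited corollary for $\ugh$; since \Cref{coro:ugw>ugh} is itself derived from \Cref{thm:ultrametric GH-distance} and \Cref{thm:ugw-infty-eq}, you are not actually bypassing that machinery, merely reusing a corollary the paper has already recorded, which is perfectly legitimate given its placement before the statement in question. Your closing observation — that the strict inequality $\diam{X}<\diam{Y}$ is genuinely needed only for the lower bound — is a correct and worthwhile clarification that the paper's proof does not make explicit, and your noted alternative via \Cref{thm:ugw-infty-eq} is precisely the paper's argument.
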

	\begin{proof}
		The rightmost equality follows directly from Corollary 5.8 of \citet{memoli2019gromov}. As for the leftmost equality, let $t\coloneqq\diam Y$, then it is obvious that $\X_t\cong_w *\cong_w \Y_t$, where $*$ denotes the one point ultrametric measure space. Let $s\in(\diam X,\diam Y)$, then $\X_t\cong_w *$ whereas $\Y\not\cong_w*$. By \Cref{thm:ugw-infty-eq}, $\ugw{\infty}(\X,\Y)=t=\diam{Y}$.
	\end{proof}

	
	\subsection{The relation between \texorpdfstring{$\ugw{p}$}{the ultrametric Gromov-Wasserstein distance} and \texorpdfstring{$\usturm{p}$}{Sturm's ultrametric Gromov-Wasserstein distance}}\label{subsec:relation between ugw and usturm}
	In this section, we study the relation of $\usturm{p}$ and $\ugw{p}$, $1\leq p\leq, \infty$ and establish the topological equivalence between the two metrics. 
	
	\subsubsection{Lipschitz relation} We first study the Lipschitz relation between $\usturm{p}$ and $\ugw{p}$. For this purpose, we have to distinguish the cases $p<\infty$ and $p =\infty$.

\emph{The case \texorpdfstring{$p<\infty$}{p<infty}.} 
We start the consideration of this case by proving that it is essentially enough to consider the case $p=1$ (see \Cref{thm:snow-ugw}). To this end, we need to introduce some notation. For each $\alpha>0$, we define a function $S_\alpha:\Rp\rightarrow\Rp$ by $x\mapsto x^\alpha$. Given an ultrametric space $(X,\uX)$ and $\alpha>0$, we abuse the notation and denote by $S_\alpha(X)$ the new space $(X,S_\alpha\circ\uX)$. It is obvious that $S_\alpha(X)$ is still an ultrametric space. This transformation of metric spaces is also known as the \emph{snowflake transform} \citep{david1997fractured}. 
	Let $\X=\ummspaceX$ and $\Y=\ummspaceY$ denote two ultrametric measure spaces. Let $1\leq p<\infty$. We denote by $S_p(\X)$ the ultrametric measure space $(X,S_p\circ\uX,\muX)$. The snowflake transform can be used to relate $\ugw{p}(\X,\Y)$ as well as $\usturm{p}(\X,\Y)$ with $\ugw{1}(S_p(\X),S_p(\Y))$ and $\usturm{1}(S_p(\X),S_p(\Y))$, respectively. 
	\begin{theorem}\label{thm:snow-ugw}
		Let $\X,\Y\in\mathcal{U}^w$ and let $p\in [1,\infty)$. Then, we obtain
		\[\big(\ugw{p}(\X,\Y)\big)^p=\ugw{1}(S_p(\X),S_p(\Y))~ \text{ and }~\big(\ugw{p}^\mathrm{sturm}(\X,\Y)\big)^p=\ugw{1}^\mathrm{sturm}(S_p(\X),S_p(\Y)). \]	\end{theorem}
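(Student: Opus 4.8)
The plan is to reduce everything to a single pointwise algebraic identity relating $\Lambda_\infty$ to the snowflake map $S_p$. Since $S_p(t)=t^p$ is a continuous, strictly increasing bijection of $\Rp$, for all $a,b\geq 0$ one has $\big(\Lambda_\infty(a,b)\big)^p=\Lambda_\infty(a^p,b^p)$: if $a=b$ both sides vanish, and if $a\neq b$ (equivalently $a^p\neq b^p$) both sides equal $\max(a,b)^p=\max(a^p,b^p)$. This is essentially the only computation needed, and it is precisely the reason the $\Lambda_\infty$-construction is so compatible with snowflaking. I will also repeatedly use the elementary fact that a continuous increasing function commutes with infima of nonnegative quantities, so that $\inf_a \big(g(a)\big)^p=\big(\inf_a g(a)\big)^p$.

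For the first equality I would argue directly from \Cref{eq:def uGW}. The crucial structural point is that $\X$ and $S_p(\X)=(X,S_p\circ\uX,\muX)$ carry the \emph{same} measure, so the coupling set $\mathcal{C}(\muX,\muY)$ is unchanged by the snowflake transform. Applying the pointwise identity to the integrand of the $1$-ultra-distortion computed for $S_p(\X),S_p(\Y)$ shows that, for each $\mu\in\mathcal{C}(\muX,\muY)$, the quantity $\mathrm{dis}_1^\mathrm{ult}(\mu)$ of $S_p(\X),S_p(\Y)$ equals $\iint \big(\Lambda_\infty(\uX(x,x'),\uY(y,y'))\big)^p\,\mu(dx\times dy)\,\mu(dx'\times dy')=\big(\mathrm{dis}_p^\mathrm{ult}(\mu)\big)^p$. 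Taking the infimum over $\mu$ and commuting $t\mapsto t^p$ with it yields $\ugw{1}(S_p(\X),S_p(\Y))=\inf_\mu\big(\mathrm{dis}_p^\mathrm{ult}(\mu)\big)^p=\big(\inf_\mu \mathrm{dis}_p^\mathrm{ult}(\mu)\big)^p=\big(\ugw{p}(\X,\Y)\big)^p$.

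For the Sturm identity I would start from the pseudo-ultrametric representation of \Cref{lemma:pseudometric representation of usturm}, which writes $\usturm{p}$ as an infimum over $u\in\mathcal{D}^\mathrm{ult}(\uX,\uY)$ of $\pseudoWasser{p}^{(X\sqcup Y,u)}(\muX,\muY)$. The key step is that $u\mapsto S_p\circ u$ is a bijection from $\mathcal{D}^\mathrm{ult}(\uX,\uY)$ onto $\mathcal{D}^\mathrm{ult}(S_p\circ\uX,S_p\circ\uY)$: snowflaking preserves the strong triangle inequality in both directions because $t\mapsto t^p$ is an increasing bijection of $\Rp$, so $u(z,z'')\leq\max(u(z,z'),u(z',z''))$ holds iff the same inequality holds for $S_p\circ u$, and it manifestly sends the prescribed restrictions $\uX,\uY$ to $S_p\circ\uX,S_p\circ\uY$. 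For a fixed $u$ and its image $S_p\circ u$, the definition of the Wasserstein (pseudo)metric gives $\pseudoWasser{1}^{(X\sqcup Y,S_p\circ u)}(\muX,\muY)=\inf_\pi\int (S_p\circ u)\,d\pi=\big(\pseudoWasser{p}^{(X\sqcup Y,u)}(\muX,\muY)\big)^p$. Infimizing over the bijectively matched families and once more commuting $t\mapsto t^p$ with the infimum gives $\usturm{1}(S_p(\X),S_p(\Y))=\big(\usturm{p}(\X,\Y)\big)^p$.

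The only genuinely nontrivial point, which I would treat most carefully, is the bijection $u\mapsto S_p\circ u$ between the admissible pseudo-ultrametrics in the Sturm case; everything else is the pointwise identity plus the triviality that an increasing continuous function commutes with an infimum of nonnegative numbers. Concretely, I would verify that snowflaking a pseudo-ultrametric keeps it a pseudo-ultrametric (degeneracy and symmetry are immediate, and the strong triangle inequality is preserved as noted), and that the inverse $v\mapsto S_{1/p}\circ v=v^{1/p}$ lands back in $\mathcal{D}^\mathrm{ult}(\uX,\uY)$, so that the two infima genuinely range over matched sets.
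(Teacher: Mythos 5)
Your proposal is correct and follows essentially the same route as the paper: the pointwise identity $(\Lambda_\infty(a,b))^p=\Lambda_\infty(a^p,b^p)$ for the first equality, and the correspondence $u\mapsto S_p\circ u$ on metric couplings together with $\big(\pseudoWasser{p}^{(X\sqcup Y,u)}\big)^p=\pseudoWasser{1}^{(X\sqcup Y,S_p\circ u)}$ (the paper's Lemma on snowflaking the Wasserstein pseudometric) for the Sturm identity. Your explicit check that $u\mapsto S_p\circ u$ is a bijection between $\mathcal{D}^\mathrm{ult}(\uX,\uY)$ and $\mathcal{D}^\mathrm{ult}(S_p\circ\uX,S_p\circ\uY)$ is a point the paper leaves implicit, but the argument is the same.
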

We give full proof of \Cref{thm:snow-ugw} in \Cref{sec:proof of thm snow-ugw}. Based on this result, we can directly relate the metrics $\ugw{p}$ and $\usturm{p}$ by only considering the case $p=1$ and prove the following \Cref{thm:ugw<ugw-sturm} (see \Cref{sec:proof of thm ugw<ugw-sturm} for a detailed proof).
\begin{theorem}\label{thm:ugw<ugw-sturm}
		Let $\X,\Y\in\mathcal{U}^w$. Then, we have for $p\in[1,\infty)$ that 
		\[\ugw{p}(\X,\Y)\leq 2^\frac{1}{p}\,\ugw{p}^\mathrm{sturm}(\X,\Y).\]
	\end{theorem}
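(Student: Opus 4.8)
The plan is to reduce the general statement to the case $p=1$ using the snowflake identities of \Cref{thm:snow-ugw}, and then to prove the case $p=1$ directly by combining the pseudo-ultrametric representation of $\usturm{1}$ from \Cref{lemma:pseudometric representation of usturm} with a pointwise estimate coming from the ultrametric inequality. For the reduction, \Cref{thm:snow-ugw} gives $\big(\ugw{p}(\X,\Y)\big)^p=\ugw{1}(S_p(\X),S_p(\Y))$ and $\big(\usturm{p}(\X,\Y)\big)^p=\usturm{1}(S_p(\X),S_p(\Y))$. Hence, once the inequality $\ugw{1}\le 2\,\usturm{1}$ is established for all ultrametric measure spaces, applying it to the snowflaked spaces $S_p(\X)$ and $S_p(\Y)$ yields $\big(\ugw{p}(\X,\Y)\big)^p\le 2\,\big(\usturm{p}(\X,\Y)\big)^p$, and taking $p$-th roots gives the claim.

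To prove the case $p=1$, I would fix an arbitrary pseudo-ultrametric $u\in\mathcal{D}^\mathrm{ult}(\uX,\uY)$ and a coupling $\mu\in\mathcal{C}(\muX,\muY)$, noting that any such coupling is supported on $X\times Y$, so that $\pseudoWasser{1}^{(X\sqcup Y,u)}(\muX,\muY)=\inf_{\mu}\int_{X\times Y}u(x,y)\,\mu(dx\times dy)$. The key step is the pointwise estimate that, for all $x,x'\in X$ and $y,y'\in Y$,
\[\Lambda_\infty\big(\uX(x,x'),\uY(y,y')\big)\le\max\big(u(x,y),u(x',y')\big).\]
Since $u$ restricts to $\uX$ and $\uY$, this reads $\Lambda_\infty(u(x,x'),u(y,y'))\le\max(u(x,y),u(x',y'))$, and it follows from the ultrametric inequality applied to $u$: one writes $u(x,x')\le\max(u(x,y),u(y,y'),u(y',x'))$ and symmetrically $u(y,y')\le\max(u(x,y),u(x,x'),u(x',y'))$, and then checks that whenever $u(x,x')\neq u(y,y')$ the larger of the two cannot be realized through the ``middle'' distance and is therefore dominated by $\max(u(x,y),u(x',y'))$; the equality case gives $\Lambda_\infty=0$ and is trivial.

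Integrating this estimate against $\mu\otimes\mu$, applying $\max(a,b)\le a+b$, and using that $\mu$ is a probability measure gives
\[\mathrm{dis}_1^\mathrm{ult}(\mu)\le\iint_{X\times Y\times X\times Y}\big(u(x,y)+u(x',y')\big)\,\mu(dx\times dy)\,\mu(dx'\times dy')=2\int_{X\times Y}u(x,y)\,\mu(dx\times dy).\]
Taking the infimum over $\mu$ on both sides yields $\ugw{1}(\X,\Y)\le 2\,\pseudoWasser{1}^{(X\sqcup Y,u)}(\muX,\muY)$ for every $u\in\mathcal{D}^\mathrm{ult}(\uX,\uY)$, and then taking the infimum over $u$ together with \Cref{lemma:pseudometric representation of usturm} gives $\ugw{1}(\X,\Y)\le 2\,\usturm{1}(\X,\Y)$, completing the case $p=1$ and hence the theorem. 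I expect the main point requiring care to be the pointwise ultrametric estimate, where one must rule out the middle distances $u(x,x')$ and $u(y,y')$ from realizing the relevant maximum so that only the cross terms survive; the factor $2$ then enters inevitably through $\max\le$ sum, and everything else is a routine integration and passage to infima.
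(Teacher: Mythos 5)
Your proposal is correct and follows essentially the same route as the paper: reduce to $p=1$ via the snowflake identities of \Cref{thm:snow-ugw}, establish the pointwise bound $\Lambda_\infty(\uX(x,x'),\uY(y,y'))\le\max(u(x,y),u(x',y'))$ for a metric coupling $u$ via the strong triangle inequality (your "the middle distance cannot realize the maximum" argument is the same contradiction the paper runs through Claim 1 in its proof), and then integrate $\max\le$ sum against $\mu\otimes\mu$ and pass to infima using \Cref{lemma:pseudometric representation of usturm}. The only cosmetic difference is that the paper fixes optimal $u$ and $\mu$ upfront (via \Cref{prop:usturm_optimal}) whereas you work with arbitrary ones and infimize at the end, which is equivalent.
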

The subsequent example verifies that the coefficient in \Cref{thm:ugw<ugw-sturm} is tight.
	\begin{example} For each $n\in\mathbb N$, let $\X_n$ be the three-point space $\Delta_3(1)$ (i.e. the 3-point metric labeled by $\{x_1,x_2,x_3\}$ where all distances are 1) with a probability measure $\muX^n$ such that $\muX^n(x_1)=\muX^n(x_2)=\frac{1}{2n}$ and $\muX^n(x_3)=1-\frac{1}{n}$. Let $Y=*$ and $\muY$ be the only probability measure on $Y$. Then, it is routine (using \Cref{prop:ugw and one point space} from \Cref{sec:ugw and one point space}) to check that $\ugw{1}(\X_n,\Y)=\frac{2}{n}\left(1-\frac{3}{4n}\right)$ and $\ugw{1}^\mathrm{sturm}(\X_n,\Y)=\frac{1}{n}$. Therefore, we have 
		\[\lim_{n\rightarrow\infty}\frac{\ugw{1}(\X_n,\Y)}{\ugw{1}^\mathrm{sturm}(\X_n,\Y)}=2. \]
	\end{example}
	
\begin{example}[$\usturm{p}$ and $\ugw{p}$ are not bi-Lipschitz equivalent]\label{ex:non bi lipschitz}
Following \cite[Remark 5.17]{memoli2011gromov}, we verify in \Cref{sec: ex non bi lipschitz} that for any positive integer $n$
\[\usturm{p}\lc\hat{\Delta}_n(1),\hat{\Delta}_{2n}(1)\rc\geq \frac{1}{4}\,\,\text{and}\,\,\ugw{p}\lc\hat{\Delta}_n(1),\hat{\Delta}_{2n}(1)\rc\leq \lc\frac{3}{2n}\rc^\frac{1}{p}.\]
Here, $\hat{\Delta}_n(1)$ denotes the $n$-point metric measure space with interpoint distance $1$ and the uniform probability measure.
Thus, there exists no constant $C>0$ such that $\usturm{p}(\X,\Y)\leq C\cdot\ugw{p}(\X,\Y)$ holds for every input spaces $\X$ and $\Y$. Hence, $\usturm{p}$ and $\ugw{p}$ are not bi-Lipschitz equivalent.
\end{example}

\emph{The case \texorpdfstring{$p=\infty$}{infinity}.}
Next, we consider the relation between $\usturm{\infty}$ and $\ugw{\infty}$. By taking the limit $p\rightarrow\infty$ in \Cref{thm:ugw<ugw-sturm}, one might expect that $\usturm{\infty}\geq \ugw{\infty}$. In fact, we prove that the equality holds (for the full proof see \Cref{sec:proof of thm ugw infty and sturm ugw infty}).
	
	\begin{theorem}\label{thm:ugw infty and sturm ugw infty}
		Let $\X,\Y\in\mathcal{U}^w$.
		Then, it holds that \[\usturm{\infty}(\X,\Y)=\ugw{\infty}(\X,\Y).\]
	\end{theorem}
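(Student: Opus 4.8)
\textbf{Proof proposal for \Cref{thm:ugw infty and sturm ugw infty}.}

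The plan is to prove the two inequalities separately. One direction is free: by \Cref{prop:ugw-properties}(1) combined with the limit statements in \Cref{prop:usturm_basic}(3) and \Cref{prop:ugw-properties}(3), or more directly by taking $p\to\infty$ in \Cref{thm:ugw<ugw-sturm}, we obtain $\ugw{\infty}\le\usturm{\infty}$ since the factor $2^{1/p}\to 1$. Thus the real content is the reverse inequality $\usturm{\infty}(\X,\Y)\le\ugw{\infty}(\X,\Y)$, which is somewhat surprising because for finite $p$ the two quantities genuinely differ (the preceding examples show the constant $2^{1/p}$ is tight and that the two are not bi-Lipschitz). So the equality at $p=\infty$ must be a degeneracy special to $\Lambda_\infty$.

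The approach I would take is to exploit the structural representation of $\ugw{\infty}$ from \Cref{thm:ugw-infty-eq}, namely $\ugw{\infty}(\X,\Y)=\min\{t\ge 0\mid \X_t\cong_w\Y_t\}$. Write $t^*\coloneqq\ugw{\infty}(\X,\Y)$, so that there is a measure-preserving isometry $f:\X_{t^*}\to\Y_{t^*}$ between the weighted quotients. The goal is to manufacture from $f$ a pseudo-ultrametric $u$ on $X\sqcup Y$ (i.e. an element of $\mathcal{D}^{\mathrm{ult}}(\uX,\uY)$, using the representation of $\usturm{\infty}$ in \Cref{lemma:pseudometric representation of usturm}) together with a coupling of $\muX$ and $\muY$ whose support has $u$-diameter controlled by $t^*$. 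Concretely, I would set the cross distances so that a point $x\in X$ lying in a quotient block $[x]_{t^*}$ is placed at distance $\le t^*$ from every $y\in Y$ whose block $f([x]_{t^*})$ corresponds under the isometry; for blocks not matched at level $t^*$ one uses the ambient ultrametric values, which are automatically $\ge t^*$ and hence consistent with the ultrametric (strong triangle) inequality. The isometry $f$ on the quotients guarantees that this prescription respects the within-$X$ and within-$Y$ ultrametrics and satisfies the strong triangle inequality, so $u$ is a genuine pseudo-ultrametric. Pulling back the identification given by $f$ block-by-block produces a coupling $\mu$ supported on pairs $(x,y)$ with $u(x,y)\le t^*$; crucially, for $p=\infty$ the Wasserstein cost only sees $\sup_{(x,y)\in\supp\mu}u(x,y)$, so this coupling certifies $\WasserRp{\infty}^{(X\sqcup Y,u)}(\muX,\muY)\le t^*$, giving $\usturm{\infty}(\X,\Y)\le t^*=\ugw{\infty}(\X,\Y)$.

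I expect the main obstacle to be verifying that the cross-distance prescription built from $f$ actually yields a valid pseudo-ultrametric on the full (possibly infinite, compact) disjoint union — that is, checking the strong triangle inequality across all mixed triples $x,x'\in X$, $y\in Y$ (and symmetrically), and confirming the construction is compatible with taking a coupling concentrated near the diagonal of $f$. The place where this could fail is when two points of $X$ lie in the same level-$t^*$ block but one wants them matched to distinct $Y$-points; here the gap between $\usturm{}$ and $\ugw{}$ at finite $p$ lived, and the reason it closes at $p=\infty$ is precisely that collapsing a whole block to diameter $\le t^*$ costs nothing extra in the $\sup$-norm. I would make this rigorous either by the direct construction above or, more cleanly, by invoking the quotient structure: since $\X_{t^*}\cong_w\Y_{t^*}$, one can embed both $\X$ and $\Y$ into a common ultrametric space obtained by gluing along $f$ at level $t^*$ (an instance of the $Z_A$ construction preceding \Cref{thm:compact usturm A Phi representation}), and then the transport that realizes the isomorphism of quotients moves mass only within matched blocks, i.e. over $u$-distances $\le t^*$. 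This reduces the claim to the already-established alternative representation of $\usturm{}$ and the structural theorem, which together do the heavy lifting.
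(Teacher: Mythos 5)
Your proposal is correct and follows essentially the same route as the paper: the hard inequality $\usturm{\infty}\le\ugw{\infty}$ is obtained exactly as in the paper's proof, by invoking \Cref{thm:ugw-infty-eq} to get an isomorphism of the weighted quotients at level $t^*=\ugw{\infty}(\X,\Y)$ and gluing $X$ and $Y$ along it (cross-distances equal to $t^*$ within matched blocks, quotient distances otherwise) to produce an admissible ultrametric together with a block-diagonal coupling of sup-cost $\le t^*$. The only cosmetic difference is in the easy direction, where you pass to the limit in $\ugw{p}\le 2^{1/p}\usturm{p}$ rather than using the pointwise bound $\Lambda_\infty(\uX(x,x'),\uY(y,y'))\le\max(u(x,y),u(x',y'))$ as the paper does, but both arguments are valid and rest on results already established.
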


	One application of \Cref{thm:ugw infty and sturm ugw infty} is to explicitly derive the minimizing pair $(A,\phi)\in \mathcal{A}^*$ in \Cref{eq:usturm_maxiam_pair} for $p=\infty$ (see \Cref{sec:proof of thm optimal A and varphi (usturm)} for an explicit construction):
	\begin{theorem}\label{thm:optimal A and varphi (usturm)}
		Let $\X,\Y\in\mathcal{U}^w$. Let $s\coloneqq u^\mathrm{sturm}_{\mathrm{GW},\infty}(\X,\Y)$ and assume that $s>0$. Then, there exists $(A,\phi)
		\in\mathcal{A}$ defined in \Cref{eq:definition of A}
		such that
		\[\usturm\infty(\X,\Y)=d_{\mathrm{W},\infty}^{Z_A}(\muX,\muY),\]
		where $Z_A$ denotes the ultrametric space defined in \Cref{subsubsec:alt rep for Sturms GW dist}.
	\end{theorem}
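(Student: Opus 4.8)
The plan is to extract the optimal pair $(A,\phi)$ directly from the measure-preserving isometry of weighted quotients guaranteed by the structural results for $\ugw{\infty}$. First I would invoke \Cref{thm:ugw infty and sturm ugw infty} to rewrite $s=\usturm{\infty}(\X,\Y)=\ugw{\infty}(\X,\Y)$, and then \Cref{thm:ugw-infty-eq}, whose minimum is attained, to conclude that $\X_s\cong_w\Y_s$. Fix a measure-preserving isometry $f\colon X_s\to Y_s$. Since $s>0$, the quotients are finite, so $X$ splits into finitely many closed balls of radius $s$, the points $X^{(1)},\dots,X^{(k)}$ of $X_s$; set $Y^{(i)}\coloneqq f(X^{(i)})$, the corresponding balls of $Y$. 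As $f$ is measure preserving, $m_i\coloneqq\muX(X^{(i)})=\muY(Y^{(i)})$, and $m_i>0$ because $\muX,\muY$ have full support; moreover $f$ preserves the inter-block distances.

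I would then construct the pair. Pick one representative $a_i\in X^{(i)}$ and one $b_i\in Y^{(i)}$ for each $i$, and set $A\coloneqq\{a_1,\dots,a_k\}$ with $\phi(a_i)\coloneqq b_i$. Because points in distinct level-$s$ balls lie at distance strictly larger than $s$, the ultrametric inequality makes the inter-block distance independent of the chosen representatives, so $\uX(a_i,a_j)$ coincides with the $u_{X_s}$-distance between the quotient points $X^{(i)},X^{(j)}$; since $f$ is an isometry this equals $\uY(b_i,b_j)$. Hence $\phi$ is an isometric embedding and $A$ is finite, thus closed, giving $(A,\phi)\in\mathcal{A}$.

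Finally I would evaluate $d_{\mathrm{W},\infty}^{Z_A}(\muX,\muY)$ for this pair and show it equals $s$. The lower bound is immediate: by the infimum representation of \Cref{thm:compact usturm A Phi representation} (valid also for $p=\infty$), $d_{\mathrm{W},\infty}^{Z_A}(\muX,\muY)\geq\usturm{\infty}(\X,\Y)=s$. For the matching upper bound I would partition $Z_A=X\sqcup(Y\setminus\phi(A))$ into the regions $R_i\coloneqq X^{(i)}\sqcup\big(Y^{(i)}\setminus\{b_i\}\big)$, which are pairwise disjoint and cover $Z_A$. Unwinding the three defining cases of $u_{Z_A}$—using $\diam{X^{(i)}}\leq s$, $\diam{Y^{(i)}}\leq s$, and evaluating rule (3) at the gluing representative $a=a_i$—shows $\diam{R_i}\leq s$. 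Since both pushforwards assign mass exactly $m_i$ to $R_i$ (all of $X^{(i)}$ for the $X$-copy; the point $b_i$ is glued to $a_i\in X^{(i)}$ and the rest of $Y^{(i)}$ stays, for the $Y$-copy), the block-diagonal coupling $\mu=\sum_{i=1}^{k}\tfrac{1}{m_i}(\muX|_{R_i})\otimes(\muY|_{R_i})$ is a valid coupling whose support lies in $\bigcup_i R_i\times R_i$, where $u_{Z_A}\leq s$. This gives $d_{\mathrm{W},\infty}^{Z_A}(\muX,\muY)\leq s$, and equality follows.

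The hard part will be the diameter estimate $\diam{R_i}\leq s$: it requires a careful case analysis of $u_{Z_A}$ for the cross pairs $x\in X^{(i)}\setminus\{a_i\}$ and $y\in Y^{(i)}\setminus\{b_i\}$, where one must apply rule (3) and observe that choosing $a=a_i$ already certifies $u_{Z_A}(x,y)\leq\max(\uX(x,a_i),\uY(b_i,y))\leq s$; the remaining bookkeeping (disjointness of the $R_i$, the block-by-block matching of masses, and the positivity $m_i>0$) is routine.
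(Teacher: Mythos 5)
Your construction of the pair $(A,\phi)$ coincides with the paper's: both arguments pass through $\usturm{\infty}=\ugw{\infty}$ (\Cref{thm:ugw infty and sturm ugw infty}), extract from \Cref{thm:ugw-infty-eq} a measure-preserving isometry $\X_s\cong_w\Y_s$ (the minimum there is attained), and take one representative per level-$s$ ball, with the isometric-embedding property of $\phi$ following from the representative-independence of inter-block distances. Where you genuinely diverge is the verification that $d_{\mathrm{W},\infty}^{Z_A}(\muX,\muY)\leq s$. The paper invokes the closed-form expression of \Cref{lm:winfty-finite} and checks that $\muX(B)=\muY(B)$ for every ball $B\in V(Z_A)$ with $\diam{B^*}>s$; you instead exhibit an explicit block-diagonal coupling supported on $\bigcup_i R_i\times R_i$ with $\diam{R_i}\leq s$ and read the bound directly off \Cref{eq:def Wasserstein infinity}. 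Your route is more elementary in that it bypasses the Strassen-type machinery behind \Cref{lm:winfty-finite}, at the price of the case analysis on $u_{Z_A}$, which is correct as you describe it: the only cross pairs inside $R_i$ are $(a_i,y)$, handled by rule (2), and $(x,y)$ with $x\in X^{(i)}\setminus A$, handled by rule (3) evaluated at $a=a_i$; the $R_i$ do partition $Z_A$ and the masses match. Two small points of hygiene: the lower bound $d_{\mathrm{W},\infty}^{Z_A}(\muX,\muY)\geq s$ needs only \Cref{eq:ultra Sturm} (the space $Z_A$ with its canonical embeddings is admissible in that infimum), so you need not appeal to \Cref{thm:compact usturm A Phi representation}, which is stated for $p<\infty$; and the measures you write as $\muX|_{R_i}$ and $\muY|_{R_i}$ must be read as restrictions of the pushforwards of $\muX$ and $\muY$ under the canonical embeddings into $Z_A$ --- exactly as your parenthetical indicates, since the atom of $\muY$ at $b_i$ lands on $a_i\in X^{(i)}\subseteq R_i$, which is what makes the $Y$-mass of $R_i$ equal to $m_i$. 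With those readings the argument is complete.
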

	
	\subsubsection{Topological equivalence between \texorpdfstring{$\ugw p$}{the ultrametric Gromov-Wasserstein distance} and \texorpdfstring{$\usturm p$}{Sturm's ultrametric Gromov-Wasserstein distance}} \citet{memoli2011gromov} proved the topological equivalence between $\dgw p$ and $\dsturm p$. We establish an analogous result for $\ugw p$ and $\usturm p$. To this end, we recall  the \emph{modulus of mass distribution}.
	
	\begin{definition}[{\citet[Def. 2.9]{{greven2009convergence}}}]\label{def:modulus of mass distribution}
	Given $\delta>0$ we define \emph{the modulus of mass distribution} of $\X\in\mathcal{U}^w$ as
	\begin{equation}\label{eq:modulus}
	    v_\delta(\X)\coloneqq\inf\left\{ \eps>0|\,\muX\left(\left\{x:\,\muX\left(B_\eps^\circ(x)\right)\leq\delta\right\}\right)\leq \eps\right\},
	\end{equation}
	where $B_\eps^\circ(x)$ denotes the \emph{open} ball centered at $x$ with radius $\eps$.
	\end{definition}
	
	We note that $v_\delta(\X)$ is non-decreasing, right-continuous and bounded above by 1. Furthermore, it holds that $\lim_{\delta\searrow 0}v_\delta(\X)=0$ \citep[Lemma 6.5]{greven2009convergence}. With \Cref{def:modulus of mass distribution} at hand, we derive the following theorem.
	\begin{theorem}\label{thm:equivalence}
	Let $\X,\Y\in\mathcal{U}^w$, $p\in[1,\infty)$ and $\delta\in\left(0,\frac{1}{2}\right)$. Then, whenever $\ugw p(\X,\Y)<\delta^5$ we have
	\[\usturm p(\X,\Y)\leq \left(4\cdot\min(v_\delta(\X),v_\delta(\Y))+\delta\right)^\frac{1}{p}\cdot M, \]
	where $M\coloneqq 2\cdot\max(\diam X,\diam Y)+54$.
	\end{theorem}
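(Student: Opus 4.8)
The plan is to reduce everything to \Cref{lemma:pseudometric representation of usturm}: since $\usturm p(\X,\Y)=\inf_{u\in\mathcal{D}^\mathrm{ult}(\uX,\uY)}\pseudoWasser p^{(X\sqcup Y,u)}(\muX,\muY)$, it suffices to \emph{exhibit a single} pseudo-ultrametric $u$ on $X\sqcup Y$ extending $\uX,\uY$ for which the transport cost of some coupling is small. First I would take an optimal coupling $\mu\in\mathcal{C}(\muX,\muY)$ for $\ugw p$, whose existence is guaranteed by \Cref{prop:ugw-ext-opt}, so that $\iint_{(X\times Y)^2}\big(\Lambda_\infty(\uX(x,x'),\uY(y,y'))\big)^p\,d\mu\,d\mu=\big(\mathrm{dis}_p^\mathrm{ult}(\mu)\big)^p=\ugw p(\X,\Y)^p<\delta^{5p}$. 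Using $\mu$ itself as a transport plan one then has $\usturm p(\X,\Y)\le\big(\int_{X\times Y}u(x,y)^p\,d\mu(x,y)\big)^{1/p}$, so the whole problem becomes the construction of a good $u$ and the estimation of this single integral.

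Next I would extract a large ``good'' set by two nested Markov inequalities. Setting $g(x,y):=\int\big(\Lambda_\infty(\uX(x,x'),\uY(y,y'))\big)^p\,d\mu(x',y')$, we have $\int g\,d\mu<\delta^{5p}$, so the set $G:=\{(x,y)\in\supp\mu:g(x,y)\le\delta^{4p}\}$ satisfies $\mu(G^c)<\delta$, and for each $(x,y)\in G$ the $\mu$-measure of partners at distortion exceeding $\delta$ is at most $\delta^{3p}\le\delta^3$. In parallel I would invoke \Cref{def:modulus of mass distribution}: assuming without loss of generality $v_\delta(\X)\le v_\delta(\Y)$, for any $\eps>v_\delta(\X)$ the thick set $X_{\mathrm{th}}:=\{x:\muX(B_\eps^\circ(x))>\delta\}$ has $\muX(X_{\mathrm{th}})\ge 1-\eps$. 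Putting $S:=G\cap(X_{\mathrm{th}}\times Y)$ gives $\mu(S^c)<\delta+\eps$.

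The crux, and what I expect to be the main obstacle, is upgrading the \emph{averaged} distortion control on $G$ to \emph{uniform} pairwise control on $S$, namely proving that $S$ is a genuine $\rho$-correspondence with $\rho:=\max(\eps,\delta)$: for all $(x_1,y_1),(x_2,y_2)\in S$, $\Lambda_\infty(\uX(x_1,x_2),\uY(y_1,y_2))\le\rho$. This is exactly where the modulus is indispensable. Since $x_1$ is thick, $\mu(B_\eps^\circ(x_1)\times Y)>\delta$; since both points lie in $G$, their two low-distortion partner-sets each have measure $>1-\delta^3$; because $\delta<\tfrac12$ forces $\delta>2\delta^3$, these three sets overlap, producing a witness $(x^*,y^*)\in\supp\mu$ with $x^*\in B_\eps^\circ(x_1)$ and distortion $\le\delta$ to both points. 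Chaining the ultrametric inequality through $x^*$ (and through a symmetric witness near $x_2$), together with the elementary fact that $\Lambda_\infty(a,b)\le\delta$ forces $a\le\max(b,\delta)$, yields the claim. With $\rho$-goodness in hand I would define $u|_{X\times X}=\uX$, $u|_{Y\times Y}=\uY$, and $u(x,y):=\inf_{(x',y')\in S}\max(\uX(x,x'),\rho,\uY(y,y'))$; the only nontrivial ultrametric inequalities are of the form $\uX(x_1,x_2)\le\max(u(x_1,y),u(x_2,y))$, and these hold \emph{precisely} because $S$ is a $\rho$-correspondence (the floor $\rho$ guarantees $u(x_i,y)\ge\rho$, which absorbs the goodness defect).

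Finally I would bound the integral. For $(x,y)\in S$, choosing $(x',y')=(x,y)$ in the infimum gives $u(x,y)\le\rho$; for $(x,y)\notin S$ the crude bound $u(x,y)\le\max(\diam X,\diam Y,\rho)$ holds. Hence $\int u^p\,d\mu\le\rho^p+\max(\diam X,\diam Y,\rho)^p\,(\delta+\eps)$, and letting $\eps\downarrow v_\delta(\X)$ and taking $p$-th roots yields a bound of the announced shape $\big(c\cdot\min(v_\delta(\X),v_\delta(\Y))+\delta\big)^{1/p}\cdot M$. The specific value $\delta^5$ is exactly what the two nested Markov steps require so that the thick ball (mass $>\delta$) meets both good-partner sets (each of co-measure $\le\delta^3$); the explicit constants $4$ and $M=2\max(\diam X,\diam Y)+54$ then emerge from careful bookkeeping of the union bounds, the triangle-inequality chains, and the coarse off-$S$ estimate, and are not claimed to be optimal.
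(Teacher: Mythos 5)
Your proposal is correct, and it shares the paper's overall skeleton — reduce via \Cref{lemma:pseudometric representation of usturm} to exhibiting one pseudo-ultrametric, build it from a subset $S\subseteq X\times Y$ of uniformly small distortion using exactly the construction of \Cref{lm:ultra-coupling}, then split the integral over $S$ and its complement — but the way you manufacture $S$ is genuinely different. The paper follows M\'emoli's Proposition 5.3 template: it extracts a finite $\tfrac{\eps}{2}$-separated net $x_1,\dots,x_N$ of centers whose $\eps$-balls each carry mass $>\delta$, pairs each $x_i$ with a partner ball $B^Y_{2(\eps+\delta)}(y_i)$ capturing a $(1-\delta^2)$-fraction of $B^X_\eps(x_i)\times Y$, and proves both the existence of the $y_i$ and the uniform bound $\Gamma^\infty_{X,Y}(x_i,y_i,x_j,y_j)\le 6(\eps+\delta)$ by contradiction against the $\delta^5$ hypothesis; the final set is the finite collection $\{(x_i,y_i)\}$ and the "good region" is $L=\bigcup_i S_i$. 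You instead take $S$ to be an almost-full-measure subset of $\supp\mu$ obtained from two nested Markov inequalities intersected with the $v_\delta$-thick set, and upgrade averaged to uniform control by a common-witness overlap argument ($\delta>2\delta^3$ for $\delta<\tfrac12$) followed by the ultrametric isosceles chaining through the witness; I checked the case analysis (both witness distances $\le\delta$, versus $u_X(x_2,x^*)=u_Y(y_2,y^*)=c$ with $c\le\rho$ or $c>\rho$) and it closes correctly, yielding $\Lambda_\infty(u_X(x_1,x_2),u_Y(y_1,y_2))\le\rho=\max(\eps,\delta)$. Your route buys a cleaner separation between the measure-theoretic step (Markov) and the metric step (witness chaining), avoids the net construction and the two proofs by contradiction, and produces a slightly better uniform constant ($\rho$ versus $6(\eps+\delta)$), which is why your bookkeeping lands comfortably inside $M=2\max(\diam X,\diam Y)+54$; the paper's route has the small advantage that its $S$ is finite and manifestly nonempty, whereas you should add one line disposing of the degenerate case $\delta+\eps\ge 1$ (where your $S$ could be empty) via the trivial bound $\usturm{p}(\X,\Y)\le\max(\diam X,\diam Y)\le M$ obtained from the metric coupling that sets all cross-distances equal to $\max(\diam X,\diam Y)$.
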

	
	\begin{remark}
    Since it holds that $\lim_{\delta\searrow0}v_\delta(\X)=0$ and that $2^{-{1}/{p}}\usturm{p}\geq\ugw{p}$ (see \Cref{thm:ugw<ugw-sturm}), the above theorem gives the topological equivalence between $\ugw{p}$ and $\usturm {p}$, $1\leq p<\infty$ (the topological equivalence between $\usturm{\infty}$ and $\ugw{\infty}$ holds trivially thanks to \Cref{thm:ugw infty and sturm ugw infty}).
\end{remark}

	The proof of the \Cref{thm:equivalence} follows the same strategy used for proving Proposition 5.3 in \cite{memoli2011gromov} and we refer to \Cref{app:proof of equivalence} for the details.

	\subsection{Topological and geodesic properties}\label{sec:topology and geodesic properties}
	In this section, we consider the topology induced by $\ugw{p}$ and $\usturm{p}$ on $\mathcal{U}^w$ and discuss the geodesic properties of both $\ugw p$ and $\usturm p$ for $1\leq p\leq \infty$.

 	\subsubsection{Completeness and separability} We study completeness and separability of the two metrics $\ugw p$ and $\usturm p$, $1\leq p\leq \infty$, on $\mathcal{U}^w$. To this end, we derive the subsequent theorem whose proof is postponed to \Cref{sec:proof of thm complete and separable}.

	\begin{theorem}\label{thm: complete and separable}
	\begin{enumerate}
	    \item For $p\in[1,\infty)$, the metric space $(\mathcal{U}^w,\ugw p)$ is neither complete nor separable.
	    \item For $p\in[1,\infty)$, the metric space $\left(\mathcal{U}^w,\usturm p\right)$ is neither complete nor separable.
	    \item $(\mathcal{U}^w,\ugw \infty)=(\mathcal{U}^w,\usturm \infty)$ is complete but not separable.
	\end{enumerate}
	 
	\end{theorem}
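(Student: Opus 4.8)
The three assertions split into a non-separability claim valid for all $p$ and two completeness claims that must be handled separately. For non-separability I would exhibit a single uncountable set that is uniformly separated in \emph{every} one of these metrics at once. Extending the computation in \Cref{ex:notation two point space}, one checks that for $a\neq b$ the two–point spaces satisfy $\usturm{p}(\hat{\Delta}_2(a),\hat{\Delta}_2(b))=\ugw{p}(\hat{\Delta}_2(a),\hat{\Delta}_2(b))=2^{-1/p}\max(a,b)$ (the optimal coupling matches one pair of points and moves the remaining mass $\tfrac12$ across the forced distance $\max(a,b)$; for $p=\infty$ the value $\max(a,b)$ also follows from \Cref{thm:ugw-infty-eq}). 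Hence the family $\{\hat{\Delta}_2(a)\mid a\in[1,2]\}$ is uncountable and any two of its members are at distance at least $2^{-1/p}>0$ in $\usturm{p}$ and in $\ugw{p}$ (at least $1$ when $p=\infty$). Since a metric space containing an uncountable uniformly separated subset cannot be separable, this settles non-separability in all three items.

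For the incompleteness in (1) and (2) I would produce a Cauchy sequence whose only conceivable limit fails to be compact. Let $\Omega$ be the ultrametric space on $\{q_k\}_{k\geq 1}$ with $u(q_i,q_j)=1-\tfrac1{\max(i,j)}$ for $i\neq j$; as all its points are pairwise at distance $\geq\tfrac12$, $\Omega$ is complete but not totally bounded, hence not compact. Put $w_k=2^{-k}$ and let $\X_n\in\mathcal{U}^w$ be the finite space on $\{q_1,\dots,q_n\}$ carrying mass $2^{-k}$ on $q_k$ for $k<n$ and the residual mass $2^{-(n-1)}$ on $q_n$. Embedding $\X_n$ and $\X_m$ ($m\geq n$) into $\{q_1,\dots,q_m\}$ and leaving the common mass fixed gives $\usturm{p}(\X_n,\X_m)\leq\bigl(2^{-(n-1)}\bigr)^{1/p}\to 0$, so $(\X_n)$ is Cauchy for $\usturm{p}$ and, by \Cref{thm:ugw<ugw-sturm}, for $\ugw{p}$. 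To exclude a limit in $\mathcal{U}^w$ I would use the distance distribution $\nu_\X:=(u_X)_\#(\muX\otimes\muX)\in\mathcal{P}(\Rp)$: pushing any coupling forward under $((x,y),(x',y'))\mapsto(u_X(x,x'),u_Y(y,y'))$ yields $\ugw{p}(\X,\Y)\geq \WasserRp{p}(\nu_\X,\nu_\Y)$, the inequality underlying $\uSLB{p}$. Thus if $\X_n\to\X'$ in either metric, then $\nu_{\X_n}\to\nu_{\X'}$ in $\WasserRp{p}$; but a tail estimate shows $\nu_{\X_n}\to\nu_\Omega$, and since $\WasserRp{p}$ is a genuine metric we would get $\nu_{\X'}=\nu_\Omega$, a measure whose support has supremum $1$ carrying zero mass. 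On the other hand, every $\X'=(X',u',\mu')\in\mathcal{U}^w$ with more than one point has a strictly positive atom at its (attained) diameter $D$: if $u'(x_0,x_0')=D$ then for $t<D$ the balls $[x_0]_t,[x_0']_t$ are disjoint and of positive measure, and by the isosceles property every cross pair realizes distance exactly $D$, so $\nu_{\X'}(\{D\})\geq\mu'([x_0]_t)\,\mu'([x_0']_t)>0$. This forces $\nu_{\X'}\neq\nu_\Omega$, a contradiction, so $(\X_n)$ has no limit.

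For the completeness in (3) I would reconstruct the limit from the structural representation $\ugw{\infty}(\X,\Y)=\usturm{\infty}(\X,\Y)=\min\{t\geq 0\mid\X_t\cong_w\Y_t\}$ of \Cref{thm:ugw-infty-eq}, using that $\ugw{\infty}$ is an ultrametric (\Cref{thm:ugw-p-metric}). Given a Cauchy sequence $(\X_n)$, for each $t>0$ ultrametricity yields $N_t$ with $\ugw{\infty}(\X_n,\X_m)\leq t$, and because quotients are compatible under coarsening this forces the weighted quotients $(\X_n)_t$ to be eventually isomorphic to a fixed finite space $\mathcal{Q}_t$, with $\mathcal{Q}_t$ a weighted quotient of $\mathcal{Q}_s$ whenever $s\leq t$. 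I would then assemble $\{\mathcal{Q}_t\}_{t>0}$ into a proper dendrogram on the inverse limit $X_\infty=\varprojlim_t\mathcal{Q}_t$, obtain a compact ultrametric $(X_\infty,u_\infty)$ from the bijection of \Cref{thm:compact ultra-dendro}, and take $\mu_\infty$ to be the inverse limit of the pushforward measures. By construction $(\X_\infty)_t\cong_w\mathcal{Q}_t\cong_w(\X_n)_t$ for $n\geq N_t$, so $\ugw{\infty}(\X_n,\X_\infty)\leq t$ and hence $\X_n\to\X_\infty$, proving completeness.

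The two–point computation and the Cauchy estimate are routine. The genuinely delicate step is the limit construction in (3): one must verify that $\{\mathcal{Q}_t\}$ satisfies all axioms of \Cref{def:proper dendrogram} — in particular that nested nonempty blocks have nonempty intersection, i.e.\ compactness of $X_\infty$, which follows from finiteness of each $\mathcal{Q}_t$ by a König-type argument — and that the pushforward measures cohere into a Borel probability measure whose weighted quotients reproduce the $\mathcal{Q}_t$ exactly. Establishing this compatibility, rather than the separation or Cauchy estimates, is where the main work lies.
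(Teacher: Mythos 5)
Your proposal is correct, and two of its three ingredients take genuinely different routes from the paper. The non-separability argument coincides with the paper's: its proof uses the same uncountable family $\{\hat{\Delta}_2(a)\}_{a\in[1,2]}$ and computes $\ugw{p}\bigl(\hat{\Delta}_2(a),\hat{\Delta}_2(b)\bigr)=2^{-1/p}\Lambda_\infty(a,b)\geq 2^{-1/p}$ via $\uSLB{p}$ and the diagonal coupling, deducing the $\usturm{p}$ case from $\ugw{p}\leq 2^{1/p}\usturm{p}$. For incompleteness the paper uses two separate examples --- the uniform spaces $\hat{\Delta}_{2^n}(1)$ for $\ugw{p}$, citing Sturm's computation, and an increasing family of finite subsets of $(\Rp,\Lambda_\infty)$ accumulating at $1$ for $\usturm{p}$ --- and in each case it names ``the'' limit as a non-compact space without explicitly excluding every compact limit. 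Your single construction covers both metrics at once and closes that loophole: the bound $\ugw{p}\geq\uSLB{p}=\WasserRp{p}(\nu_{\X},\nu_{\Y})$ pins down the distance distribution of any putative limit as $\nu_{\Omega}$, and your observation that a compact ultrametric measure space always has a positive atom at its attained diameter (via the isosceles property and full support) rules this out; this is more self-contained than the paper's argument. For completeness of $\ugw{\infty}$ the paper instead first obtains the limit ultrametric space $X$ from completeness of $(\mathcal{U},\ugh)$ via \Cref{coro:ugw>ugh}, then transports the quotient measures onto $X$ and extracts a weak cluster point using compactness of $\mathcal{P}(X)$, and finally verifies $\X_\eps\cong_w(\X_n)_\eps$ by a stabilization argument over the finite set of attained masses. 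Your inverse-limit assembly of the stabilized weighted quotients $\mathcal{Q}_t$ is a viable alternative, but it must build the compact limit space, coherent bonding maps, and the limit measure from scratch and check all axioms of \Cref{def:proper dendrogram}; you correctly identify this as the main work, and it is exactly the work the paper sidesteps by borrowing Zarichnyi's completeness of $\ugh$ and weak compactness of $\mathcal{P}(X)$. If you do carry out your route, restrict to a countable sequence $t_k\downarrow 0$ when forming the inverse system so that only countably many bonding isomorphisms need to be chosen compatibly.
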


	\subsubsection{Geodesic property}
	
	A \emph{geodesic} in a metric space $(X,d_X)$ is a continuous function $\gamma:[0,1]\rightarrow X$ such that for each $s,t\in[0,1]$, $d_X(\gamma(s),\gamma(t))=|s-t|\cdot d_X(\gamma(0),\gamma(1))$. We say a metric space is geodesic if for any two distinct points $x,x'\in X$, there exists a geodesic $\gamma:[0,1]\rightarrow X$ such that $\gamma(0)=x$ and $\gamma(1)=x'$. For any $p\in[1,\infty)$, the notion of $p$-geodesic is introduced in \cite{memoli2019gromov}: A $p$-geodesic in a metric space $(X,d_X)$ is a continuous function $\gamma:[0,1]\rightarrow X$ such that for each $s,t\in[0,1]$, $d_X(\gamma(s),\gamma(t))=|s-t|^{1/p}\cdot d_X(\gamma(0),\gamma(1))$. Similarly, we say a metric space is $p$-geodesic if for any two distinct points $x,x'\in X$, there exists a $p$-geodesic $\gamma:[0,1]\rightarrow X$ such that $\gamma(0)=x$ and $\gamma(1)=x'$. Note that a $1$-geodesic is a usual geodesic and a $1$-geodesic space is a usual geodesic space. The subsequent theorem establishes ($p$-)geodesic properties of $\left(\mathcal{U}^w,\usturm p\right)$ for $p\in[1,\infty)$. A full proof is given in \Cref{sec:proof of prop usturm 1 geodesic}.
	\begin{theorem}\label{prop:usturm 1 geodesic}
For any $p\in[1,\infty)$, the space $\left(\mathcal{U}^w,\usturm p\right)$ is $p$-geodesic.
	\end{theorem}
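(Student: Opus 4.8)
The plan is to construct an explicit $p$-geodesic between any two distinct $\X=\ummspaceX$ and $\Y=\ummspaceY$ by transporting mass along an optimal coupling inside a single fixed optimal ambient ultrametric space. First I would invoke \Cref{lemma:pseudometric representation of usturm} together with the existence of optimizers (\Cref{prop:usturm_optimal}) to fix a pseudo-ultrametric $u\in\mathcal{D}^\mathrm{ult}(\uX,\uY)$ on $W\coloneqq X\sqcup Y$ and a coupling $\pi\in\mathcal{C}(\muX,\muY)$ that are jointly optimal, so that $L\coloneqq\usturm{p}(\X,\Y)=\pseudoWasser{p}^{(W,u)}(\muX,\muY)=\big(\int_{W\times W}u(z,z')^p\,\pi(dz\times dz')\big)^{1/p}$. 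Passing to the metric quotient if necessary, $(W,u)$ is a compact ultrametric space into which both $\X$ and $\Y$ embed via measure-preserving isometric embeddings.

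Next I would define the interpolating family by fibrewise mass-splitting along $\pi$: for $t\in[0,1]$ set
\[\nu_t\coloneqq\int_{W\times W}\big((1-t)\,\delta_z+t\,\delta_{z'}\big)\,\pi(dz\times dz')\in\mathcal{P}(W),\]
and let $\X_t\coloneqq\big(\supp{\nu_t},u,\nu_t\big)\in\mathcal{U}^w$. Since $\nu_0$ and $\nu_1$ are the two marginals of $\pi$, one checks $\X_0\cong_w\X$ and $\X_1\cong_w\Y$. The crucial estimate is the upper bound $\usturm{p}(\X_s,\X_t)\le|s-t|^{1/p}\,L$: on the one hand, because $\X_s$ and $\X_t$ both sit inside the single ultrametric space $(W,u)$, the definition of $\usturm{p}$ gives $\usturm{p}(\X_s,\X_t)\le\pseudoWasser{p}^{(W,u)}(\nu_s,\nu_t)$; on the other hand, feeding the sub-coupling that, on each $\pi$-fibre $(z,z')$, keeps $\min(1-s,1-t)$ mass at $z$ and $\min(s,t)$ mass at $z'$ and moves the remaining $|s-t|$ mass between them, yields $\pseudoWasser{p}^{(W,u)}(\nu_s,\nu_t)^p\le|s-t|\int_{W\times W}u(z,z')^p\,\pi(dz\times dz')=|s-t|\,L^p$.

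Finally I would upgrade this one-sided bound to the exact $p$-geodesic identity using that $\usturm{p}$ is a $p$-metric (\Cref{thm:sturms um}): for $0\le s\le t\le1$ the $p$-triangle inequality gives $L^p\le\usturm{p}(\X_0,\X_s)^p+\usturm{p}(\X_s,\X_t)^p+\usturm{p}(\X_t,\X_1)^p\le\big(s+(t-s)+(1-t)\big)L^p=L^p$, forcing equality throughout and hence $\usturm{p}(\X_s,\X_t)=|s-t|^{1/p}\,L$. Continuity of $t\mapsto\X_t$ is immediate from this identity, so $\gamma(t)\coloneqq\X_t$ is the desired $p$-geodesic. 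As a sanity check, \Cref{thm:snow-ugw} reduces the whole statement to the case $p=1$, where this $\gamma$ is an ordinary geodesic and the mass-splitting construction specializes to the familiar linear interpolation of measures.

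I expect the main obstacle to be the verification that each $\X_t$ genuinely lies in $\mathcal{U}^w$ and that the endpoints are correctly recovered: one must confirm that $\nu_t$ is a well-defined Borel probability measure, that $\supp{\nu_t}$ is compact with $u$ restricting to a genuine (not merely pseudo-) ultrametric after quotienting, and that the optimal-coupling and optimal-pseudo-ultrametric machinery of \Cref{prop:usturm_optimal} indeed furnishes $u$ and $\pi$ simultaneously in the compact setting. The geometric estimates themselves are routine once these measure-theoretic points are settled.
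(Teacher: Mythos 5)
Your proof is correct, but it takes a genuinely different route from the paper's. The paper first settles $p=1$ by invoking \Cref{coro:usturm_optimal} to realize $\usturm{1}(\X,\Y)$ as a Wasserstein distance inside one compact ultrametric space $Z$, then pushing a $W_1(Z)$-geodesic (whose existence is quoted from \citet[Theorem 5.1]{bottou2018geometrical}) forward into $\mathcal{U}^w$ by restricting to supports; the case $p>1$ is then obtained structurally, via the snowflake isometry $S_p\left(\mathcal{U}^w,\usturm p\right)\cong \left(\mathcal{U}^w,\usturm 1\right)$ (\Cref{coro:iso-snow}) combined with the fact that the $1/p$-snowflake of a geodesic space is $p$-geodesic (\Cref{lm:snow p geod}). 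You instead give a single explicit construction valid for all $p$ at once: your $\nu_t$ is simply the convex combination $(1-t)\muX+t\muY$ viewed inside the optimal ambient space $(W,u)$ furnished by \Cref{prop:usturm_optimal}, and the fibre-wise sub-coupling along the optimal $\pi$ yields the key bound $\pseudoWasser{p}^{(W,u)}(\nu_s,\nu_t)^p\le|s-t|\,L^p$, which the $p$-triangle inequality of \Cref{thm:sturms um} upgrades to the exact identity. What each approach buys: the paper's argument is shorter given the two cited lemmas and exposes the structural relationship between the $\usturm p$ for different $p$, whereas yours is self-contained (no appeal to geodesicity of $W_1$ or to the snowflake machinery), fully explicit about the interpolating spaces, and uniform in $p$; note that the ``upgrade to equality'' step you spell out is also needed, though left implicit, in the paper's $p=1$ argument. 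The measure-theoretic points you flag at the end are indeed the only things to check, and they all go through: $\nu_t$ is Borel since it is a convex combination of $\muX$ and $\muY$ on $W$, its support is a closed subset of the compact metric quotient of $(W,u)$ and hence a compact ultrametric space with fully supported measure, and \Cref{prop:usturm_optimal} does deliver $u$ and $\pi$ simultaneously.
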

	
\begin{remark}
Due to the fact that a $p$-geodesic space cannot be geodesic when $p>1$ (cf. \Cref{lm:p metric not geodesic}), $\left(\mathcal{U}^w,\usturm p\right)$ is not geodesic for all $p> 1$.
\end{remark}

\begin{remark}
Though the geodesic properties of $\left(\mathcal{U}^w,\usturm p\right)$, $1\leq p< \infty$ are clear, we remark that geodesic properties of $\left(\mathcal{U}^w,\ugw p\right)$, $1\leq p<\infty$, still remain unknown to us.
\end{remark}

\begin{remark}[The case $p=\infty$]
Being an ultrametric space itself (cf. \Cref{thm:ugw-p-metric}), $\left(\mathcal{U}^w,\ugw \infty\right)$ ($=\left(\mathcal{U}^w,\usturm \infty\right)$) is \emph{totally disconnected}, i.e., any subspace with at least two elements is disconnected \cite{semmes2007introduction}. This in turn implies that each continuous curve in $\left(\mathcal{U}^w,\ugw \infty\right)$ is constant. Therefore, $\left(\mathcal{U}^w,\ugw \infty\right)$ is not a $p$-geodesic space for any $p\in[1,\infty)$.
\end{remark}

	
	\section{Lower bounds for \texorpdfstring{$\ugw{p}$}{the ultrametric Gromov-Wasserstein distance}}\label{sec:lower bounds}
	Let $\X=\ummspaceX$ and $\Y=\ummspaceY$ be two ultrametric measure spaces. The metrics $\usturm{p}$ and $\ugw{p}$ respect the ultrametric structure of the spaces $\X$ and $\Y$. Thus, one would hope that comparing ultrametric measure spaces with $\usturm{p}$ or $\ugw{p}$ is more meaningful than doing it with the usual Gromov-Wasserstein distance or Sturm's distance. Unfortunately, for $p<\infty$, the computation of both $\usturm{p}$ and $\ugw{p}$ is complicated and for $p=\infty$ both metrics are extremely sensitive to differences in the diameters of the considered spaces (see \Cref{coro:uGW trivial case}). Thus, it is not feasible to use these metrics in many applications. However, we can derive meaningful lower bounds for $\ugw{p}$ (and hence also for $\usturm{p}$) that resemble those of the Gromov-Wasserstein distance. Naturally, the question arises whether these lower bounds are better/sharper than the ones of the usual Gromov-Wasserstein distance in this setting. This question is addressed throughout this section and will be readdressed in \Cref{sec:computational aspects} as well as \Cref{sec:phylogenetic tree shapes}.
	
	In \cite{memoli2011gromov}, the author introduced three lower bounds for $\dgw{p}$ that are computationally less expensive than the calculation of $\dgw{p}$. We will briefly review these three lower bounds and then define candidates for the corresponding lower bounds for $\ugw{p}$.  In the following, we always assume $p\in[1,\infty]$.\\
	
	\paragraph{\textbf{{First lower bound}}} Let $s_{X,p}:X\rightarrow\Rp$, $x\mapsto\norm{u_X(x,\cdot)}_{L^p(\mu_X)}$. Then, the first lower bound $\dFLB{p}(\X,\Y)$ for $\dgw{p}(\X,\Y)$ is defined as follows
	\[\dFLB{p}(\X,\Y)\coloneqq\frac{1}{2}\inf_{\mu\in\mathcal{C}(\mu_X,\mu_Y)}\norm{\Lambda_1(s_{X,p}(\cdot),s_{Y,p}(\cdot))}_{L^p(\mu)}.\]
	
Following our intuition of replacing $\Lambda_1$ with $\Lambda_\infty$, we define the ultrametric version of $\dFLB{}$ as
	\[\uFLB{p}(\X,\Y)\coloneqq\inf_{\mu\in\mathcal{C}(\mu_X,\mu_Y)}\norm{\Lambda_\infty(s_{X,p}(\cdot),s_{Y,p}(\cdot))}_{L^p(\mu)}.\]
	
	\paragraph{\textbf{Second lower bound}}
	The second lower bound $\dSLB{p}(\X,\Y)$ for $\dgw{p}(\X,\Y)$ is given as
	\[\mathbf{SLB}_p(\X,\Y)\coloneqq\frac{1}{2}\inf_{\gamma\in\mathcal{C}(\muX\otimes \muX,\muY\otimes\muY)}\norm{\Lambda_1(u_X,u_Y)}_{L^p(\gamma)}.\]

	Thus, we define the ultrametric second lower bound between two ultrametric measure spaces $\X$ and $\Y$ as follows:
	
	\[\uSLB{p}(\X,\Y)\coloneqq\inf_{\gamma\in\mathcal{C}(\muX\otimes \muX,\muY\otimes\muY)}\norm{\Lambda_\infty(\uX,\uY)}_{L^p(\gamma)}.\]
	
	\paragraph{\textbf{Third lower bound}}
	Before we introduce the final lower bound, we have to define several functions. First, let $\Gamma^1_{X,Y}:X\times Y\times X\times Y\rightarrow\Rp$, $(x,y,x',y')\mapsto\Lambda_1(u_X(x,x'),u_Y(y,y'))$ and let $\Omega_p^1:X\times Y\rightarrow\Rp$, $p\in[1,\infty]$, be given by
	\[\Omega_p^1(x,y)\coloneqq\inf_{\mu\in\mathcal{C}(\mu_X,\mu_Y)}\norm{\Gamma_{X,Y}^1(x,y,\cdot,\cdot)}_{L^p(\mu)}. \]
	Then, the third lower bound $\dTLB{p}$ is given as
	\[\dTLB{p}(\X,\Y)\coloneqq\frac{1}{2}\inf_{\mu\in\mathcal{C}(\mu_X,\mu_Y)}\norm{\Omega_p^1(\cdot,\cdot)}_{L^p(\mu)}. \]
	Analogously to the definition of previous ultrametric versions, we define $\Gamma^\infty_{X,Y}:X\times Y\times X\times Y\rightarrow\Rp$, $(x,y,x',y')\mapsto\Lambda_\infty(u_X(x,x'),u_Y(y,y'))$. Further, for $p\in[1,\infty]$, let $\Omega_p^\infty:X\times Y\rightarrow\Rp$ be given by
	\[\Omega_p^\infty(x,y)\coloneqq\inf_{\mu\in\mathcal{C}(\mu_X,\mu_Y)}\norm{\Gamma_{X,Y}^\infty(x,y,\cdot,\cdot)}_{L^p(\mu)}. \]
	Then, the ultrametric third lower bound between two ultrametric measure spaces $\X$ and $\Y$ is defined as
	\[\uTLB{p}(\X,\Y)\coloneqq\inf_{\mu\in\mathcal{C}(\mu_X,\mu_Y)}\norm{\Omega_p^\infty(\cdot,\cdot)}_{L^p(\mu)}. \]
	\subsection{Properties and computation of the lower bounds}
		Next, we examine the quantities $\uFLB{},\uSLB{}$ and $\uTLB{}$ more closely. Since $\Lambda_\infty(a,b)\geq\Lambda_1(a,b)=|a-b|$ for any $a,b\geq 0$, it is easy to conclude that $\uFLB{p}\geq \dFLB{p}$, $\uSLB{p}\geq \dSLB{p}$ and $\uTLB{p}\geq \dTLB{p}$. Moreover, the three ultrametric lower bounds satisfy the following theorem (for a complete proof see \Cref{sec:proof of thm comparison with original}).
	\begin{theorem}\label{thm:comparison with original}
		Let $\X,\Y\in\mathcal{U}^w$ and let $p\in[1,\infty]$.
		\begin{enumerate}
		\item $\ugw{\infty}(\X,\Y)\geq \uFLB{\infty}(\X,\Y)$.
			\item $\ugw{p}(\X,\Y)\geq \uTLB{p}(\X,\Y)\geq\uSLB{p}(\X,\Y)$.
		\end{enumerate}
	\end{theorem}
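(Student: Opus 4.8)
The plan is to treat the two assertions separately, using the special geometry of ultrametric spaces for (1) and a gluing-of-couplings argument for (2).

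For (1), the crucial observation is that in a compact ultrametric space every point has eccentricity equal to the diameter. Indeed, for fixed $x\in X$ one has $s_{X,\infty}(x)=\norm{\uX(x,\cdot)}_{L^\infty(\muX)}=\sup_{x'}\uX(x,x')=\diam{X}$: the inequality ``$\leq$'' is clear, while if $\uX(a_0,b_0)=\diam{X}$ (attained by compactness) the ultrametric inequality forces $\max(\uX(x,a_0),\uX(x,b_0))\geq\diam{X}$, giving ``$\geq$''. Since full support turns the essential supremum into a genuine one, the functions $s_{X,\infty}\equiv\diam{X}$ and $s_{Y,\infty}\equiv\diam{Y}$ are constant, so that $\uFLB{\infty}(\X,\Y)=\Lambda_\infty(\diam{X},\diam{Y})$ independently of the coupling. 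If $\diam{X}=\diam{Y}$ this equals $0$ and the inequality is trivial; otherwise, say $\diam{X}<\diam{Y}$, I would invoke \Cref{coro:uGW trivial case} to conclude $\ugw{\infty}(\X,\Y)=\diam{Y}=\uFLB{\infty}(\X,\Y)$, so in fact equality holds. A self-contained alternative avoiding the corollary: since $\pi_Y(\supp{\mu})=Y$ for every $\mu\in\mathcal{C}(\muX,\muY)$, pick $y,y'$ with $\uY(y,y')=\diam{Y}$ together with lifts $(x,y),(x',y')\in\supp{\mu}$; then $\uX(x,x')\leq\diam{X}<\diam{Y}$ forces $\Lambda_\infty(\uX(x,x'),\uY(y,y'))=\diam{Y}$, whence $\mathrm{dis}_\infty^\mathrm{ult}(\mu)\geq\diam{Y}$.

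For the chain in (2) I would first dispatch $\ugw p\geq\uTLB p$. Fix $\mu\in\mathcal{C}(\muX,\muY)$ and apply Fubini to write, for $p<\infty$,
\[\big(\mathrm{dis}_p^\mathrm{ult}(\mu)\big)^p=\int_{X\times Y}\norm{\Gamma_{X,Y}^\infty(x,y,\cdot,\cdot)}_{L^p(\mu)}^p\,\mu(dx\times dy).\]
Because $\mu$ is itself a coupling, the definition of $\Omega_p^\infty$ as an infimum over $\mathcal{C}(\muX,\muY)$ gives $\norm{\Gamma_{X,Y}^\infty(x,y,\cdot,\cdot)}_{L^p(\mu)}\geq\Omega_p^\infty(x,y)$ pointwise, so $\mathrm{dis}_p^\mathrm{ult}(\mu)\geq\norm{\Omega_p^\infty}_{L^p(\mu)}\geq\uTLB p(\X,\Y)$; infimizing over $\mu$ yields the bound. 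The case $p=\infty$ is identical with suprema over $\supp{\mu}$ replacing the integrals.

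The remaining and most delicate link is $\uTLB p\geq\uSLB p$, which amounts to interchanging an infimum with an integral. Given $\mu\in\mathcal{C}(\muX,\muY)$ and $\eps>0$, I would select for each $(x,y)$ a near-optimal coupling $\nu_{x,y}\in\mathcal{C}(\muX,\muY)$ with $\norm{\Gamma_{X,Y}^\infty(x,y,\cdot,\cdot)}_{L^p(\nu_{x,y})}\leq\Omega_p^\infty(x,y)+\eps$ and glue them into $\gamma(dx\times dx'\times dy\times dy')\coloneqq\mu(dx\times dy)\,\nu_{x,y}(dx'\times dy')$. A direct marginal check (integrating out $(y,y')$ returns $\muX\otimes\muX$, integrating out $(x,x')$ returns $\muY\otimes\muY$) shows $\gamma\in\mathcal{C}(\muX\otimes\muX,\muY\otimes\muY)$, and Fubini gives $\norm{\Lambda_\infty(\uX,\uY)}_{L^p(\gamma)}^p\leq\int(\Omega_p^\infty+\eps)^p\,d\mu$; letting $\eps\downarrow0$ and infimizing over $\mu$ produces $\uSLB p\leq\uTLB p$ (the $p=\infty$ case is analogous with essential suprema). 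The main obstacle is precisely the measurable dependence $(x,y)\mapsto\nu_{x,y}$ needed for $\gamma$ to be well defined: this is a measurable selection problem, which I would resolve by noting that $\mathcal{C}(\muX,\muY)$ is a fixed weakly compact convex set, that $\Lambda_\infty$ is lower semicontinuous (its only discontinuities, on the diagonal, are downward jumps), and hence that $\nu\mapsto\norm{\Gamma_{X,Y}^\infty(x,y,\cdot,\cdot)}_{L^p(\nu)}$ admits jointly measurable $\eps$-minimizers by a standard selection theorem. Finally, I note that the weaker bound $\ugw p\geq\uSLB p$ requires no selection at all, since the product coupling $\mu\otimes\mu$ is already a member of $\mathcal{C}(\muX\otimes\muX,\muY\otimes\muY)$ and realizes $\mathrm{dis}_p^\mathrm{ult}(\mu)$; this serves as a consistency check on the whole chain.
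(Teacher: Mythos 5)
Your proposal is correct and follows essentially the same route as the paper: part (1) rests on the same key fact that every point of a compact ultrametric space has eccentricity equal to the diameter (so $\uFLB{\infty}(\X,\Y)=\Lambda_\infty(\diam{X},\diam{Y})$, which is then dominated by $\ugw{\infty}$), and part (2) uses the same Fubini/pointwise-infimum argument for $\ugw{p}\geq\uTLB{p}$ together with the same gluing of fiberwise near-optimal couplings for $\uTLB{p}\geq\uSLB{p}$. The only substantive difference is that the paper performs the gluing after pushing forward to the spectrum $S=\spec{X}\cup\spec{Y}\subseteq(\Rp,\Lambda_\infty)$ via \Cref{prop:flb-slb-w-form}, so that the measurable-selection issue you correctly flag as the main obstacle is discharged by Villani's result on measurable selection of optimal transport plans, whereas you work directly on $X\times Y\times X\times Y$ and appeal to a generic selection theorem.
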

	\begin{remark}
		Interestingly, it turns out that $\uFLB{p}$ is not a lower bound of $\ugw{p}$ in general when $p<\infty$. For example, let $X=\{x_1,x_2,\ldots,x_n\}$ and $Y=\{y_1,\ldots,y_n\}$ and define $\uX$ such that $\uX(x_1,x_2)=1$ and $\uX(x_i,x_j)=2\delta_{i\neq j}$ for $(i,j)\neq (1,2)$, $(i,j)\neq (2,1)$ and $i,j=1,\ldots,n$. Let $\uY(y_i,y_j)=2\delta_{i\neq j}$, $i,j=1,\ldots,n$, and let $\muX$ and $\muY$ be uniform measures on $X$ and $Y$, respectively. Then, $\ugw{1}(\X,\Y)\leq \frac{4}{n^2}$ whereas $\uFLB{1}(\X,\Y)=\frac{4n-4}{n^2}$ which is greater than $\ugw{1}(\X,\Y)$ as long as $n>2$. Moreover, we have in this case that $\uFLB{1}(\X,\Y)=O\left(\frac{1}{n}\right)$ whereas $\ugw{1}(\X,\Y)=O\left(\frac{1}{n^2}\right)$. Hence, there exists no constant $C>0$ such that $\uFLB{1}\leq C\cdot\ugw 1$ in general.
	\end{remark}
	\begin{remark}
	    There exist ultrametric measure spaces $\X$ and $\Y$ such that $\uTLB{p}(\X,\Y)=0$ whereas $\ugw{p}(\X,\Y)>0$ (examples described in \cite[Figure 8]{memoli2011gromov} will serve the purpose). Furthermore, there are spaces $\X$ and $\Y$ such that $\uSLB{p}(\X,\Y)=0$ whereas $\uTLB{p}(\X,\Y)>0$ (see \Cref{sec:uSLB zero uTLB greater zero}). The analogous statement holds true for $\dTLB{p}$ and $\dSLB{p}$, which  are nevertheless useful in various applications (see e.g. \cite{gellert2019substrate}).
	\end{remark}
	From the structure of $\uSLB{p}$ and $\uTLB{p}$ it is obvious that their computations leads to different optimal transport problems (see e.g.  \cite{villani2003topics}). However, in analogy to \citet[Theorem 3.1]{chowdhury2019gromov} we can rewrite $\uSLB{p}$ and $\uTLB{p}$ in order to further simplify their computation. The full proof of the subsequent proposition is given in \Cref{sec:proof of prop flb-slb-w-form}.
	\begin{proposition}\label{prop:flb-slb-w-form}
		Let $\X,\Y\in\mathcal{U}^w$ and let $p\in[1,\infty]$. Then, we find that
		\begin{enumerate}
			\item $ \uSLB{p}(\X,\Y)=d_{\mathrm{W},p}^{(\Rp,\Lambda_\infty)}\lc(\uX)_\#(\muX\otimes\muX),(\uY)_\#(\muY\otimes\muY)\rc;$
			\item For each $x,y\in X\times Y$, $\Omega_p^\infty(x,y)= d_{\mathrm{W},p}^{(\Rp,\Lambda_\infty)}\left(u_X(x,\cdot)_\#\muX,u_Y(y,\cdot)_\#\muY\right)$. 
			
            \end{enumerate}
	\end{proposition}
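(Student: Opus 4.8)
The plan is to recognize that both identities are instances of a single general fact about how the Wasserstein distance behaves under pushforwards: for measurable maps $f\colon X\to Z$ and $g\colon Y\to Z$ into a common compact metric space $(Z,d_Z)$, probability measures $\alpha\in\mathcal{P}(X)$ and $\beta\in\mathcal{P}(Y)$, and $p\in[1,\infty]$, one has
\[
d^{(Z,d_Z)}_{\mathrm{W},p}\lc f_\#\alpha,\,g_\#\beta\rc=\inf_{\pi\in\mathcal{C}(\alpha,\beta)}\norm{d_Z(f(\cdot),g(\cdot))}_{L^p(\pi)}.
\]
This is the analogue of \cite[Theorem 3.1]{chowdhury2019gromov}. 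To obtain part (1), I would apply this with $Z=(\Rp,\Lambda_\infty)$, with $X$ and $Y$ replaced by $X\times X$ and $Y\times Y$, with $\alpha=\muX\otimes\muX$ and $\beta=\muY\otimes\muY$, and with $f=\uX$, $g=\uY$ viewed as maps into $\Rp$. Since $\Lambda_\infty$ is precisely the ultrametric on $\Rp$, the integrand $d_Z(f,g)$ becomes $\Lambda_\infty(\uX,\uY)$ and the pushforwards become $(\uX)_\#(\muX\otimes\muX)$ and $(\uY)_\#(\muY\otimes\muY)$, giving exactly $\uSLB{p}(\X,\Y)=\WasserRp{p}\lc(\uX)_\#(\muX\otimes\muX),(\uY)_\#(\muY\otimes\muY)\rc$. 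For part (2) I would fix $(x,y)$ and apply the same identity with $Z=(\Rp,\Lambda_\infty)$, $\alpha=\muX$, $\beta=\muY$, and $f=u_X(x,\cdot)$, $g=u_Y(y,\cdot)$; here $d_Z(f(x'),g(y'))=\Lambda_\infty(u_X(x,x'),u_Y(y,y'))=\Gamma^\infty_{X,Y}(x,y,x',y')$, so the infimum defining $\Omega^\infty_p(x,y)$ matches the right-hand side.

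The easy inclusion is $\geq$: given any $\pi\in\mathcal{C}(\alpha,\beta)$, the pushforward $\nu\coloneqq(f\times g)_\#\pi$ is a coupling of $f_\#\alpha$ and $g_\#\beta$, and the change-of-variables formula yields $\int_{X\times Y} d_Z(f(x),g(y))^p\,d\pi=\int_{Z\times Z} d_Z^p\,d\nu$ (with the obvious modification for $p=\infty$), so the infimum over $\pi$ dominates the Wasserstein distance.

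The hard part will be the reverse inequality, i.e. lifting an arbitrary coupling $\nu$ of the pushforwards back to a coupling of $\alpha$ and $\beta$ of the same cost. Since all spaces involved are compact metric (hence Polish), I would disintegrate $\alpha$ along $f$ into kernels $\{\alpha_z\}$ concentrated on $f^{-1}(z)$ and $\beta$ along $g$ into $\{\beta_w\}$ concentrated on $g^{-1}(w)$, and set
\[
\pi\coloneqq\int_{Z\times Z}\alpha_z\otimes\beta_w\,d\nu(z,w).
\]
Computing the marginals shows $\pi\in\mathcal{C}(\alpha,\beta)$, while the fiber concentration of $\alpha_z,\beta_w$ gives $(f\times g)_\#(\alpha_z\otimes\beta_w)=\delta_{(z,w)}$ and hence $(f\times g)_\#\pi=\nu$; consequently $\int_{X\times Y} d_Z(f,g)^p\,d\pi=\int_{Z\times Z} d_Z^p\,d\nu$, which yields the reverse inequality after taking the infimum over $\nu$. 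The points requiring care are the measurability of the disintegration kernels (guaranteed by the Polish setting) and the $p=\infty$ case, where one replaces the integrated cost by the essential supremum (equivalently the supremum over the support) and checks that the lift preserves it; the fiberwise product structure ensures that the support of $\pi$ maps onto the support of $\nu$, so the value is unchanged.
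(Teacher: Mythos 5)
Your proof is correct and follows essentially the same route as the paper: both reduce each identity, via the change-of-variables formula, to the fact that $(f\times g)_\#\mathcal{C}(\alpha,\beta)=\mathcal{C}(f_\#\alpha,g_\#\beta)$. The only difference is that the paper invokes this fact as a black box (\Cref{lm:pushforward_coupling}, i.e.\ Lemma 3.2 of Chowdhury--M\'emoli), whereas you reprove its nontrivial direction by the standard disintegration construction $\pi=\int\alpha_z\otimes\beta_w\,d\nu(z,w)$, which is exactly how that lemma is established.
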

	\begin{remark}
	Since we have by \Cref{thm:closed-form-w-infty-real} an explicit formula for the Wasserstein distance on $(\Rp,\Lambda_\infty)$ between finitely supported probability measures, these alternative representations of the lower bound $\uSLB{p}$ and the cost functional $\Omega_p^\infty$ drastically reduce the computation time of $\uSLB{p}$ and $\uTLB{p}$, respectively. In particular, we note that this allows us to compute $\uSLB{p}$, $1\leq p\leq \infty$, between finite ultrametric measure spaces $\X$ and $\Y$ with $|X|=m$ and $|Y|=n$ in $O((m\vee n)^2)$ steps.
	\end{remark}

\Cref{prop:flb-slb-w-form} allows us to direclty compare the two lower bounds $\uSLB{1}$ and $\dSLB{1}$. 

\begin{corollary}\label{coro:representation of SLB1}
For any finite ultrametric measure spaces $\X$ and $\Y$, we have that 
\begin{equation}\label{eq:slbu-slb}
    \uSLB{1}(\X,\Y)=\mathbf{SLB}_1(\X,\Y)+\frac{1}{2}\int_\mathbb{R}t\,\left|(\uX)_\#(\muX\otimes\muX)-(\uY)_\#(\muY\otimes\muY)\right|(dt).
\end{equation}
	\end{corollary}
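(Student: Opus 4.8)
The plan is to reduce both second lower bounds to Wasserstein distances between the \emph{same} pair of pushforward measures on the half-line $(\Rp,\Lambda_\infty)$, and then to split the $1$-Wasserstein distance on $(\Rp,\Lambda_\infty)$ via \Cref{rmk:int-w-inf-real}. First I would abbreviate $\alpha\coloneqq(\uX)_\#(\muX\otimes\muX)$ and $\beta\coloneqq(\uY)_\#(\muY\otimes\muY)$. Since $\X$ and $\Y$ are finite, $\alpha$ and $\beta$ are finitely supported probability measures on $(\Rp,\Lambda_\infty)$, so the explicit formulas and the decomposition of \Cref{rmk:int-w-inf-real} are applicable. By \Cref{prop:flb-slb-w-form}~(1) we immediately have $\uSLB{1}(\X,\Y)=\WasserRp{1}(\alpha,\beta)$.

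Next I would establish the analogous representation $\dSLB{1}(\X,\Y)=\tfrac12\,d_{\mathrm{W},1}^{(\R,\Lambda_1)}(\alpha,\beta)$ for the (non-ultrametric) second lower bound. This rests on exactly the same cost-factoring argument that underlies \Cref{prop:flb-slb-w-form}~(1): the integrand $\Lambda_1(\uX(x,x'),\uY(y,y'))$ depends on the variables $((x,x'),(y,y'))$ only through the pair $(\uX(x,x'),\uY(y,y'))$. Hence pushing any coupling $\gamma\in\mathcal{C}(\muX\otimes\muX,\muY\otimes\muY)$ forward under the map $((x,x'),(y,y'))\mapsto(\uX(x,x'),\uY(y,y'))$ produces a coupling of $\alpha$ and $\beta$ with the same transport cost, and conversely every coupling of $\alpha$ and $\beta$ arises in this way; therefore the infimum defining $\dSLB{1}$ equals $\tfrac12$ times the ordinary $1$-Wasserstein distance between $\alpha$ and $\beta$ on $\R$. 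This is precisely the statement of \cite[Theorem 3.1]{chowdhury2019gromov}, which may simply be cited.

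Finally I would apply \Cref{rmk:int-w-inf-real} to the finitely supported measures $\alpha,\beta\in\mathcal{P}(\Rp)$, which gives $\WasserRp{1}(\alpha,\beta)=\tfrac12\big(d_{\mathrm{W},1}^{(\R,\Lambda_1)}(\alpha,\beta)+\int_\R t\,|\alpha-\beta|(dt)\big)$. Substituting the two identities from the previous steps yields $\uSLB{1}(\X,\Y)=\dSLB{1}(\X,\Y)+\tfrac12\int_\R t\,|\alpha-\beta|(dt)$, and rewriting $\alpha,\beta$ in terms of the original pushforwards is exactly \Cref{eq:slbu-slb}. The only genuinely substantive point is the factoring step for $\dSLB{1}$; everything else is bookkeeping, and the finiteness of $\X$ and $\Y$ guarantees that the closed-form half-line results of \Cref{rmk:int-w-inf-real} genuinely apply, so I do not expect a real obstacle here.
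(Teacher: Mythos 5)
Your argument is correct and is essentially the paper's own proof: the paper also obtains the identity by combining \Cref{prop:flb-slb-w-form}~(1) with the decomposition of \Cref{rmk:int-w-inf-real}, with the representation $\dSLB{1}(\X,\Y)=\tfrac12\,d_{\mathrm{W},1}^{(\mathbb{R},\Lambda_1)}(\alpha,\beta)$ (the analogue of \cite[Theorem 3.1]{chowdhury2019gromov}) left implicit. You have merely made that intermediate step explicit, which is a faithful filling-in rather than a different route.
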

	\begin{proof}
		The claim follows directly from \Cref{prop:flb-slb-w-form} and \Cref{rmk:int-w-inf-real}.
	\end{proof}
	
	This corollary implies that $\uSLB{p}$ is more rigid than $\dSLB{p}$, since the second summand on the right hand side of \Cref{eq:slbu-slb} is sensitive to distance perturbations. This is also illustrated very well in the subsequent example.
	\begin{example}
	Recall notations from \Cref{ex:notation two point space}. For any $d,d'>0$, we let $X\coloneqq\Delta_2(d)$ and let $Y\coloneqq\Delta_2(d')$. Assume that $X$ and $Y$ have underlying sets $\{x_1,x_2\}$ and $\{y_1,y_2\}$, respectively. Define $\muX\in\mathcal{P}(X)$ and $\muY\in\mathcal{P}(Y)$ as follows. Let $\alpha_1,\alpha_2\geq 0$ be such that $\alpha_1+\alpha_2=1$. Let $\muX(x_1)=\muY(y_1)\coloneqq\alpha_1$ and let $\muX(x_2)=\muY(y_2)\coloneqq\alpha_2$. Then, it is easy to verify that
	\begin{enumerate}
	    \item $\ugw{1}(\X,\Y)=\uSLB{1}(\X,\Y)=2\alpha_1\alpha_2\Lambda_\infty(d,d').$
	    \item $\dgw{1}(\X,\Y)=\dSLB{1}(\X,\Y)=\alpha_1\alpha_2\Lambda_1(d,d')=\alpha_1\alpha_2|d-d'|.$
	    \item $\frac{1}{2}\int_\mathbb{R}t\,\left|(\uX)_\#(\muX\otimes\muX)-(\uY)_\#(\muY\otimes\muY)\right|(dt)=\alpha_1\alpha_2(d+d')\delta_{d\neq d'}$.
	\end{enumerate}
	From 1 and 2 we observe that both second lower bounds are tight.
	Moreover, since we obviously have that $(d+d')\delta_{d\neq d'}+ |d-d'|=2\Lambda_\infty(d,d')$, we have also verified \Cref{eq:slbu-slb} through this example.
	Unlike $\dSLB{1}(\X,\Y)$ being proportional to $|d-d'|$, as long as $d\neq d'$, even if $|d-d'|$ is small, $\Lambda_\infty(d,d')=\max(d,d')$ which results in a large value of $\uSLB{1}(\X,\Y)$ when $d$ and $d'$ are large numbers. This example illustrates that $\uSLB{1}$ (and hence $\ugw{1}$) is rigid with respect to distance perturbation. 
	\end{example}

\section{\texorpdfstring{$\ugw p$}{The ultrametric Gromov-Wasserstein distance} on ultra-dissimilarity spaces}\label{sec:ultra-dissimilarity spaces}   A natural generalization of ultrametric spaces is provided by \emph{ultra-dissimilarity spaces}. These spaces naturally occur when working with {symmetric  ultranetworks} (see \cite{smith2016hierarchical}) or phylogenetic tree data (see \cite{semple2003phylogenetics}). In this section, we will introduce these spaces and briefly illustrate to what extend the results for $\ugw{p}$ can be adapted for ultra-dissimilarity measure spaces. We start by formally introducing \emph{ultra-dissimilarity spaces}. 
\begin{definition}[Ultra-dissimilarity spaces] \label{def:ultra dissimilarity}
		An \emph{ultra-dissimilarity} space is a couple $(X,\uX)$ consisting of a set $X$ and a function $\uX:X\times X\rightarrow\Rp$ satisfying the following conditions for any $x,y,z\in X$:
		\begin{enumerate}
			\item  $\uX(x,y)=\uX(y,x)$;
			\item  $\uX(x,y)\leq\max(\uX(x,z),\uX(z,y)); $
			\item $\max(\uX(x,x),\uX(y,y))\leq \uX(x,y)$ and the equality holds if and only if $x=y$.
		\end{enumerate}
	\end{definition}

\begin{remark}
Note that when $(X,u_X)$ is an ultrametric space the third condition is trivially satisfied.
\end{remark} 

In the following, we restrict ourselves to \emph{finite} ultra-dissimilarity spaces to avoid technical issues in topology (see \cite{chowdhury2019metric,chowdhury2019gromov} for a more complete treatment of infinite spaces). One important aspect of ultra-dissimilarity spaces is the connection with the so-called \emph{treegrams} \citep{smith2016hierarchical,memoli2019gromov}, {which can be regarded as generalized} dendrograms. For a finite set $X$, let $\mathbf{SubPart}(X)$ denote the collection of all \emph{subpartitions} of $X$: Any partition $P'$ of a non-empty subset $X'\subseteq X$ is called a subpartition of $X$. Given two subpartitions $P_1,P_2$, we say $P_1$ is coarser than $P_2$ if each block in $P_2$ is contained in some block in $P_1$.

\begin{definition}[Treegrams]\label{def:treegram}
A \emph{treegram} $T_X:[0,\infty)\rightarrow \mathbf{SubPart}(X)$ is a map parametrizing a nested family of subpartitions over the same set $X$ and satisfying the following conditions:
\begin{enumerate}
	\item For any $0\leq s<t<\infty$, $T_X(t)$ is coarser than $T_X(s)$; 
	\item There exists $t_X>0$ such that for any $t\geq t_X$, $T_X(t)=\{X\}$;
	\item For each $t\geq 0$, there exists $\eps>0$ such that $T_X(t)=T_X(t')$ for all $t'\in[t,t+\eps]$;
	\item For each $x\in X$, there exists $t_x\geq 0$ such that $\{x\}$ is a block in $T_X(t_x)$.
\end{enumerate}
\end{definition}
Similar to \Cref{thm:compact ultra-dendro}, which correlates ultrametrics to dendrograms, there exists an equivalence relation between ultra-dissimilarity functions and treegrams on a finite set (see \Cref{fig:treegram} for an illustration).

\begin{proposition}[\citet{smith2016hierarchical}]\label{prop:ultradis-tree}
		Given a finite set $X$, denote by $\mathcal{U}_\mathrm{dis}(X)$ the collection of all ultrametric dissimilarity functions on $X$ and by $\mathcal{T}(X)$ the collection of all treegrams over $X$. Then, there exists a bijection $\Delta_X:\mathcal{T}(X)\rightarrow\mathcal{U}_\mathrm{dis}(X)$.
\end{proposition}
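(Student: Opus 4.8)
The plan is to follow the template of \Cref{thm:compact ultra-dendro}, constructing an explicit candidate for $\Delta_X$ together with its inverse and checking that the two are mutually inverse. Since $X$ is finite, none of the topological conditions (5)--(7) of \Cref{def:proper dendrogram} arise and the argument is purely combinatorial; the one genuinely new feature compared to the dendrogram setting is that points of a treegram are \emph{born} at possibly positive times, which will correspond to the (now possibly positive) diagonal values $u_X(x,x)$.

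First I would define $\Delta_X$. Given a treegram $T_X\in\mathcal{T}(X)$, set
\[u_{T_X}(x,x')\coloneqq\inf\{t\geq 0\,|\,x,x'\text{ lie in a common block of }T_X(t)\},\]
where for $x=x'$ this reads ``$x$ lies in some block of $T_X(t)$''. Using the coarsening property (1) together with right-continuity (3), I would first observe that the set of admissible $t$ is an interval of the form $[t_0,\infty)$, so the infimum is attained and $x,x'$ actually share a block at $t=u_{T_X}(x,x')$. I would then verify the three axioms of \Cref{def:ultra dissimilarity}: symmetry is immediate; the strong triangle inequality follows from transitivity of the relation ``share a common block of $T_X(t)$'' at the level $t=\max(u_{T_X}(x,z),u_{T_X}(z,y))$; and for the third axiom I would use property (4) together with coarsening to show that the birth time $t_x\coloneqq u_{T_X}(x,x)$ is exactly the smallest $t$ at which $\{x\}$ appears as a block, whence any distinct $y$ present at that level cannot yet have merged with $x$, giving the strict inequality $u_{T_X}(x,y)>\max(t_x,t_y)$ for $x\neq y$.

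Next I would build the inverse. Given $u\in\mathcal{U}_\mathrm{dis}(X)$, define for each $t\geq 0$ the relation $x\sim_t x'\iff u(x,x')\leq t$ on the set $X_t\coloneqq\{x\in X\,|\,u(x,x)\leq t\}$ of points already born by time $t$, and let $T_X^u(t)$ be the subpartition of $X$ consisting of the $\sim_t$-classes. Reflexivity of $\sim_t$ on $X_t$ holds because $u(x,x)\leq t$ there, symmetry is axiom (1), and transitivity is the strong triangle inequality (2); thus $T_X^u(t)\in\mathbf{SubPart}(X)$. I would then check the four treegram axioms: coarsening and right-continuity follow from monotonicity of $\{t:u(x,x')\leq t\}$ and finiteness of $X$ (which makes the value set of $u$ finite, so each level map is locally constant to the right); condition (2) follows by taking $t_X>\max_{x,x'}u(x,x')$; and condition (4) holds at $t=u(x,x)$, since the ``iff'' clause in axiom (3) forces $u(x,y)>u(x,x)$ for every $y\neq x$, so no $y$ born by time $u(x,x)$ is yet $\sim_t$-equivalent to $x$ and $\{x\}$ is a genuine block.

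Finally I would check that the two constructions are mutually inverse. Starting from $u$, the value $u_{T_X^u}(x,x')$ is the least $t$ with $x,x'$ in a common block of $T_X^u(t)$, which by axiom (3) reduces to the least $t$ with $u(x,x')\leq t$, namely $u(x,x')$ itself; conversely, starting from $T_X$, the level sets of $u_{T_X}$ reproduce both the supports and the blocks of $T_X(t)$ by the attainment observation above. The main obstacle I anticipate is not any single hard estimate but rather the careful bookkeeping of the diagonal/birth-time data: at each stage one must verify that the ``born'' set $X_t$ and the strictness clause of axiom (3) interact correctly, since this is precisely the part with no analogue in \Cref{thm:compact ultra-dendro}, where every point is born at time $0$.
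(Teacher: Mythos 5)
Your proof is correct; the paper itself gives no proof of this proposition (it is quoted from \citet{smith2016hierarchical}), and your argument is precisely the natural adaptation of the paper's proof of \Cref{thm:compact ultra-dendro} in the appendix, with the one genuinely new ingredient --- the interplay between the birth times $u_X(x,x)$, the strictness clause in axiom (3) of \Cref{def:ultra dissimilarity}, and condition (4) of \Cref{def:treegram} --- handled correctly in both directions. The only point you pass over silently is that when every $u_X(x,x)>0$ your level $T^u_X(t)$ is the \emph{empty} subpartition for small $t$, which the paper's definition of $\mathbf{SubPart}(X)$ (partitions of non-empty subsets) does not literally admit; this is a defect of the stated definition rather than of your construction.
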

\begin{figure}
    \centering
    \includegraphics[width=0.8\textwidth]{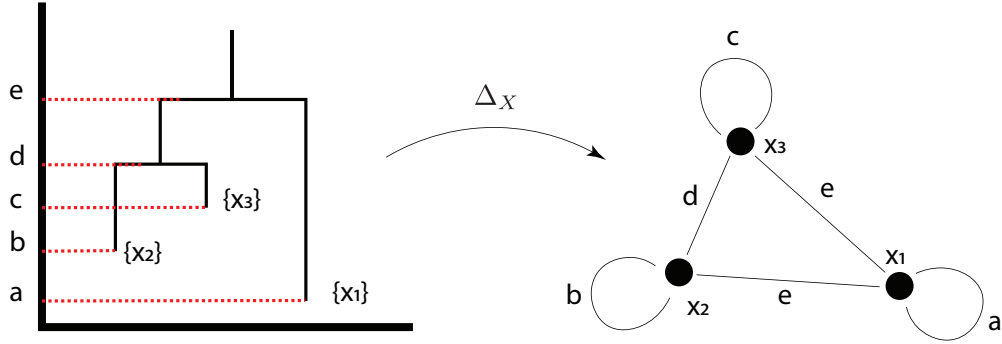}
    \caption{\textbf{Treegrams:} Relation between ultra-dissimilarity functions and treegrams}
    \label{fig:treegram}
\end{figure}

An \emph{ultra-dissimilarity measure space} is a triple $\mathcal{X}=(X,u_X,\mu_X)$ where $(X,u_X)$ is an ultra-dissimilarity space and $\mu_X$ is a probability measure fully supported on $X$.
Just as for metric spaces or metric measure spaces, it is important to have a notion of isomorphism between ultra-dissimilarity spaces.
\begin{definition}[Isomorphism]\label{def:isomorphism ultra-dissimilarity}
Given two ultra-dissimilarity measure spaces  $\X$ and $\Y$, we say they are \emph{isomorphic}, denoted $\X\cong_w\Y$, if there is a bijective function $f:X\rightarrow Y$ such that $f_\#\mu_X=\mu_Y$ and for any $x,x'\in X$ it holds $u_Y(f(x),f(x'))=u_X(x,x')$. The collection of all isomorphism classes of ultra-dissimilarity spaces is denoted by $\ultradiscol$.
\end{definition}
Given the previous results it is straightforward to show that $\ugw{p}$, $1\leq p\leq \infty$, is a metric on the isomorphism classes of $\ultradiscol$. For the complete proof of the subsequent statement, we refer to \Cref{sec: proof of thm ugw-p-metric-dis}.
\begin{theorem}\label{thm:ugw-p-metric-dis}
	The ultrametric Gromov-Wasserstein distance $\ugw{p}$ is a $p$-metric on $\ultradiscol$.
\end{theorem}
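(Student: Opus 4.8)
The plan is to verify the $p$-metric axioms for $\ugw{p}$ on $\ultradiscol$, reusing as much of the proof of \Cref{thm:ugw-p-metric} as possible. Non-negativity and symmetry are immediate: $\mathrm{dis}_p^\mathrm{ult}(\mu)\geq 0$ for every coupling, and transposing $\mu\in\mathcal{C}(\muX,\muY)$ to $\mu^{T}\in\mathcal{C}(\muY,\muX)$ leaves the ultra-distortion unchanged since $\Lambda_\infty$ is symmetric in its two arguments. The $p$-triangle inequality carries over essentially verbatim from the ultrametric case: given optimal couplings for $\ugw{p}(\X,\Y)$ and $\ugw{p}(\Y,\Z)$ (which exist by finiteness, cf.\ \Cref{prop:ugw-ext-opt}), I would glue them along $\Y$ into a measure on $X\times Y\times Z$, push it forward to a coupling in $\mathcal{C}(\muX,\muZ)$, and bound its ultra-distortion using the pointwise inequality $\Lambda_\infty(a,c)\leq\max(\Lambda_\infty(a,b),\Lambda_\infty(b,c))\leq(\Lambda_\infty(a,b)^p+\Lambda_\infty(b,c)^p)^{1/p}$ together with Minkowski's inequality (for $p=\infty$ the $L^p$ step is replaced by the supremum, yielding the strong ultrametric inequality). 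The only properties of the underlying spaces this uses are symmetry and the strong triangle inequality, i.e.\ conditions (1) and (2) of \Cref{def:ultra dissimilarity}; the possibly nonzero diagonal values never enter.

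The only genuinely new point is the identity of indiscernibles, and this is where condition (3) of \Cref{def:ultra dissimilarity} becomes essential. The direction $\X\cong_w\Y\Rightarrow\ugw{p}(\X,\Y)=0$ is easy: if $f:X\to Y$ realizes the isomorphism, then the coupling $(\mathrm{id},f)_\#\muX$ supported on the graph of $f$ has vanishing ultra-distortion, because $\uY(f(x),f(x'))=\uX(x,x')$ for all $x,x'$. For the converse I would use that, the spaces being finite, $\mathcal{C}(\muX,\muY)$ is compact and $\mathrm{dis}_p^\mathrm{ult}$ is continuous, so the infimum is attained by some optimal $\mu$. If $\ugw{p}(\X,\Y)=0$ then $\mathrm{dis}_p^\mathrm{ult}(\mu)=0$, which forces $\uX(x,x')=\uY(y,y')$ for all $(x,y),(x',y')\in\supp{\mu}$, since $\Lambda_\infty$ vanishes exactly on the diagonal.

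From here I would show that $\supp{\mu}$ is the graph of a bijection. Fix $x\in X$; by full support of $\muX$ there is at least one $y$ with $(x,y)\in\supp{\mu}$. If $(x,y_1),(x,y_2)\in\supp{\mu}$, then applying the vanishing-distortion identity to the pairs with $x'=x$ gives $\uY(y_1,y_1)=\uY(y_2,y_2)=\uY(y_1,y_2)=\uX(x,x)$; but condition (3) of \Cref{def:ultra dissimilarity} requires $\max(\uY(y_1,y_1),\uY(y_2,y_2))<\uY(y_1,y_2)$ whenever $y_1\neq y_2$, so $y_1=y_2$. Thus $x\mapsto y$ defines a map $f$, and the symmetric argument on the second marginal shows $f$ is a bijection; the relation $\uX(x,x')=\uY(f(x),f(x'))$ makes it an ultra-dissimilarity isometry, and since $\mu$ sits on the graph of $f$ with marginals $\muX,\muY$ we obtain $f_\#\muX=\muY$, hence $\X\cong_w\Y$. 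The main obstacle is exactly this last step: deducing that optimality together with vanishing distortion forces the support to be a bijective graph. The ``equality iff $x=y$'' clause in condition (3) is precisely what rules out the collisions $y_1\neq y_2$ that a mere ultra-\emph{dissimilarity} (as opposed to an ultrametric, where the diagonal is zero) might otherwise permit.
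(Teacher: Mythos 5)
Your proposal is correct and follows essentially the same route as the paper: the symmetry and $p$-triangle inequality are inherited from the ultrametric case via glued optimal couplings, and the identity of indiscernibles is reduced to showing that a zero-distortion optimal coupling is supported on the graph of a measure-preserving isometric bijection, with condition (3) of \Cref{def:ultra dissimilarity} (the ``equality iff $x=y$'' clause) used exactly as the paper does to rule out two distinct $y_1,y_2$ paired with the same $x$.
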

\begin{remark}
    Since $\ugw{p}$ translates to a metric on $\ultradiscol$, it is clear that it admits the lower bounds introduced in \Cref{sec:lower bounds}. 
\end{remark}

	\section{Computational aspects}\label{sec:computational aspects}
	In this section, we investigate algorithms for approximating/calculating $\ugw{p}$, $1\leq p\leq \infty$. Furthermore, we evaluate for $p<\infty$ the performance of the computationally efficient lower bound $\uSLB{}$ introduced in \Cref{sec:lower bounds} and compare our findings to the results of the classical Gromov-Wasserstein distance $d_{\mathrm{GW},p}$ (see \Cref{eq:Gromov Wasserstein}). Matlab implementations of the presented algorithms and comparisons are available at \url{https://github.com/ndag/uGW}.

	\subsection{Algorithms}
	Let $\X=\ummspaceX$ and $\Y=\ummspaceY$ be two finite ultrametric measure spaces with cardinalities $m$ and $n$, respectively. 
	
	\subsubsection{The case \texorpdfstring{$p<\infty$}{p finite}}\label{subsubsec:p smaller infty} We have already noted in \Cref{rem:computational complexity ugw} that calculating $\ugw{p}(\X,\Y)$ for $p<\infty$ yields a non-convex quadratic program (which is an NP-hard problem in general \citep{pardalos1991quadratic}). Solving this is not feasible in practice. However, in many practical applications it is sufficient to work with good approximations. Therefore, we propose to approximate $\ugw{p}(\X,\Y)$ for $p<\infty$ via conditional gradient descent. To this end, we note that the gradient $G$ that arises from \Cref{eq:distortion ult} can in the present setting be expressed with the following partial derivative with respect to $\mu\in\mathcal{C}(\muX,\muY)$
	\begin{equation}\label{eq:ugw gradient}
	G_{i,j}=2\sum_{k=1}^m\sum_{l=1}^{n}(\Lambda_\infty(\uX(x_i,x_k),\uY(y_j,y_l)))^p\mu_{kl}, \quad\forall 1\leq i\leq m,1\leq j\leq n.
	\end{equation}
	As we deal with a non-convex minimization problem, the performance of the gradient descent strongly depends on the starting coupling $\mu^{(0)}$. Therefore, we follow the suggestion of \citet{chowdhury2020generalize} and employ a Markov Chain Monte Carlo Hit-And-Run sampler to obtain multiple random start couplings. Running the gradient descent from each point in this ensemble greatly improves the approximation in many cases. For a precise description of the proposed procedure, we refer to \Cref{algo:gradient descent}.
	\begin{algorithm}[htb]
				\caption{$\ugw{p}(X,Y,p,N,L)$}\label{algo:gradient descent}
		\begin{algorithmic} 
		
		\STATE{\emph{//Create a list of random couplings}}
		\STATE{couplings =CreateRandomCouplings(N)}
		\STATE{stat\_points = cell(N)}
		\FOR{i=1:N}
			\STATE{$\mu^{(0)}=$couplings\{$i$\}}
			\FOR{j=1:L}
				\STATE{$G=$ Gradient from \Cref{eq:ugw gradient} w.r.t. $\mu^{(j-1)}$}
				\STATE{$\tilde{\mu}^{(j)}=$ Solve OT with ground loss $G$}
				\STATE{$\gamma^{(j)}=\frac{2}{j+2}$}
				\STATE{\emph{//Alt. find $\gamma\in[0,1]$ that minimizes $\mathrm{dis}_p^\mathrm{ult}\Big(\mu^{(j-1)}+\gamma\big( \tilde{\mu}^{(j)}-\mu^{(j-1)}\big) \Big)$} }
				\STATE{$\mu^{(j)}=(1-\gamma^{(j)})\mu^{(j-1)}+\gamma^{(j)}\tilde{\mu}^{(j)}$}	
			\ENDFOR
			\STATE{stat\_points\{$i$\}= $\mu^{(L)}$}
		\ENDFOR
		\STATE{Find $\mu^*$ in stat\_points that minimizes $\mathrm{dis}_p^\mathrm{ult}(\mu)$}
		\STATE{result =$\mathrm{dis}_p^\mathrm{ult}(\mu^*)$}

	\end{algorithmic}
	\end{algorithm}
		\subsubsection{The case \texorpdfstring{$p=\infty$}{p infinite}} \label{subsubsec:p equals infty} 
			For $p=\infty$, it follows by \Cref{thm:ugw-infty-eq} that
	\begin{equation}\ugw{\infty}(\X,\Y)=\inf\left\lbrace t\geq 0 \,|\,\X_t \cong_w \Y_t\right\rbrace.\label{eq:p infty algo idea}\end{equation}
	This identity allows us to construct a polynomial time algorithm for $\ugw{\infty}(\X,\Y)$ based on the ideas of \citet[Sec. 8.2.2]{memoli2019gromov}. More precisely, let  $\spec{X}\coloneqq\{\uX(x,x')|\,x,x'\in X\}$ denote the spectrum of $X$. Then, it is evident that in order to find the infimum in \Cref{eq:p infty algo idea}, we only have to check $\X_t \cong_w \Y_t$ for each $t\in\spec{X}\cup\spec{Y}$, starting from the largest to the smallest and $\ugw{\infty}$ is given as the smallest $t$ such that $\X_t\cong_w \Y_t$. This can be done in polynomial time by considering $\X_t$ and $\Y_t$ as labeled, weighted trees (e.g. by using a slight modification of the algorithm in Example 3.2 of \cite{aho1974design}). This gives rise to a simple algorithm (see \Cref{algo:ugw infty}) to calculate $\ugw{\infty}$.
		\begin{algorithm}[H]
		\caption{$\ugw{\infty}(\X,\Y)$}\label{algo:ugw infty}
		\begin{algorithmic}
		\STATE{spec = sort($\spec{X}\cup\spec{Y}$, 'descent')}
		\FOR{$i=1:\mathrm{length(spec)}$}
			\STATE{$t=\mathrm{spec}(i)$}
			\IF{$\X_t\ncong_w \Y_t$}
				\RETURN $\mathrm{spec}(i-1)$
			\ENDIF
		\ENDFOR
		\RETURN $0$
		\end{algorithmic}
	\end{algorithm}


	\subsection{The relation between \texorpdfstring{$\ugw{1}$, $\ugw{\infty}$}{the ultrametric Gromov-Wasserstein distance} and \texorpdfstring{$\uSLB{1}$}{its second lower bound}}\label{sec:ugw toy examples}
	In order to understand how $\ugw{p}$ (or at least its approximation), $\ugw{\infty}$ and $\uSLB{p}$ are influenced by small changes in the structure of the considered ultrametric measure spaces, we exemplarily consider the ultrametric measure spaces $ \X_i=(X_i,d_{X_i},\mu_{X_i})$, $1\leq i \leq 4$, displayed in \Cref{fig:umm spaces}. These ultrametric measure spaces differ only by one characteristic (e.g. one side length or the equipped measure). Exemplarily, we calculate $\ugw{1}(\X_i,\X_j)$ (approximated with \Cref{algo:gradient descent}, where $L=5000$ and $N=40$), $\uSLB{1}(\X_i,\X_j)$ and $\ugw{\infty}(\X_i,\X_j)$, $1\leq i,j\leq 4$. The results suggest that $\uSLB{1}$ and $\ugw{1}$ are influenced by the change in the diameter of the spaces the most (see \Cref{tab:exemplary comparison} and \Cref{tab:exemplary comparison uslb} in \Cref{app:ugw toy examples} for the complete results). Changes in the metric influence $\uSLB{1}$ in a similar fashion as $\ugw{1}$, while changes in the measure have less impact on $\uSLB{1}$. Further, we observe that $\ugw{\infty}$ attains for almost all comparisons the maximal possible value. Only the comparison of $\X_1$ with $\X_3$, where the only small scale structure of the space was changed, yields a value that is smaller than the maximum of the diameters of the considered spaces. 
	\begin{figure}
		\centering
		\includegraphics[width=0.8\textwidth]{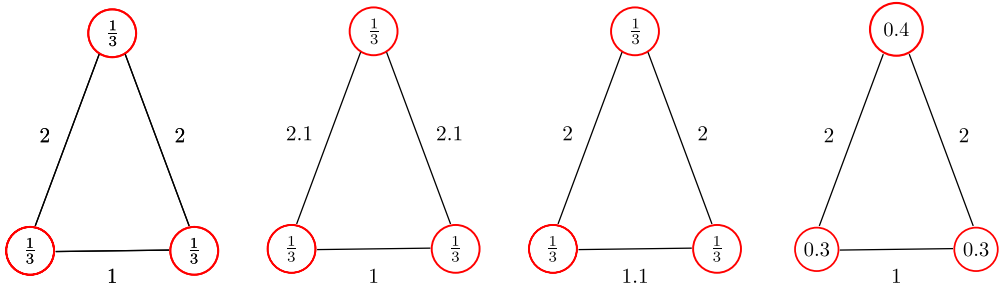}
		\caption{\textbf{Ultrametric measure spaces:} Four non-isomorphic ultrametric measure spaces denoted (from left to right) as  $\X_i=(X_i,d_{X_i},\mu_{X_i})$, $1\leq i \leq 4$.
		}\label{fig:umm spaces}
	\end{figure}
	
\subsection{Comparison of \texorpdfstring{$\ugw{1}$, $\uSLB{1}$, $\dgw{1}$ and $\dSLB{1}$}{the ultrametric and the usual Gromov-Wasserstein distance and Second Lower Bounds}}\label{subsec:relation to the GW-dist}
In the remainder of this section, we will demonstrate the differences between $\ugw{1}$, $\uSLB{1}$, $\dgw{1}$ and $\dSLB{1}$. To this end, we first compare the metric measure spaces in \Cref{fig:umm spaces} based on $\dgw{1}$ and $\dSLB{1}$. We observe that $\dgw{1}$ (approximated in the same manner as $\ugw{1}$) and $\dSLB{1}$ are hardly influenced by the differences between the ultrametric measure spaces $\X_i$, $1\leq i\leq 4$. In particular, it is remarkable that $\dgw{1}$ is affected the most by the changes made to the measure and not the metric structure (see  \Cref{tab:exemplary comparison GW} in \Cref{sec:details from subsec relation to the GW-dist} for the complete results).

Next, we consider the differences between the aforementioned quantities more generally. For this purpose, we generate 4 ultrametric spaces $Z_{k}$, $1\leq k\leq 4$, with totally different dendrogram structures, whose diameters are between 0.5 and 0.6 (for the precise construction of these spaces see \Cref{sec:details from subsec relation to the GW-dist}). For each $t=0,0.2,0.4,0.6$, we perturb each $Z_k$ independently to generate 15 ultrametric spaces $Z^{i}_{k,t}$, $1\leq i\leq 15$, such that $(Z^{i}_{k,t})_t\equiv (Z_{k})_t$ for all $i$. The spaces $Z^{i}_{k,t}$ are called \emph{pertubations of $Z_{k}$ at level $t$} (see \Cref{fig:randomly sampled ultrametric measure spaces} for an illustration and see \Cref{sec:details from subsec relation to the GW-dist} for more details). The spaces $Z^{i}_{k,t}$ are endowed with the uniform probability measure and we obtain a collection of ultrametric measure spaces $\Z^{i}_{k,t}$. Naturally, we refer to $k$ as the class of the ultrametric measure space $\Z^{i}_{k,t}$. We compute for each $t$ the quantities $\ugw{1}$, $\uSLB{1}$, $\dgw{1}$ and $\dSLB{1}$ among the resulting $60$ ultrametric measure spaces. The results, where the spaces have been ordered lexicographically by $(k,i)$, are visualized in \Cref{fig:noise ums}. As previously, we observe that $\ugw{1}$ and $\uSLB{1}$ as well as $\dgw{1}$ and  $\dSLB{1}$ behave in a similar manner. More precisely, we see that both $\dgw{1}$ and $\dSLB{1}$ discriminate well between the different classes and that their behavior does not change too much for an increasing level of perturbation. On the other hand, $\ugw{1}$ and $\uSLB{1}$ are very sensitive to the level of perturbation. For small $t$ they discriminate better than $\dgw{1}$ and $\dSLB{1}$ between the different classes and pick up clearly that the perturbed spaces differ. However, if the level of perturbation becomes too large both quantities start to discriminate between spaces from the same class (see \Cref{fig:noise ums}). 
\begin{figure}[htb]
		\centering

		\includegraphics[width=\textwidth]{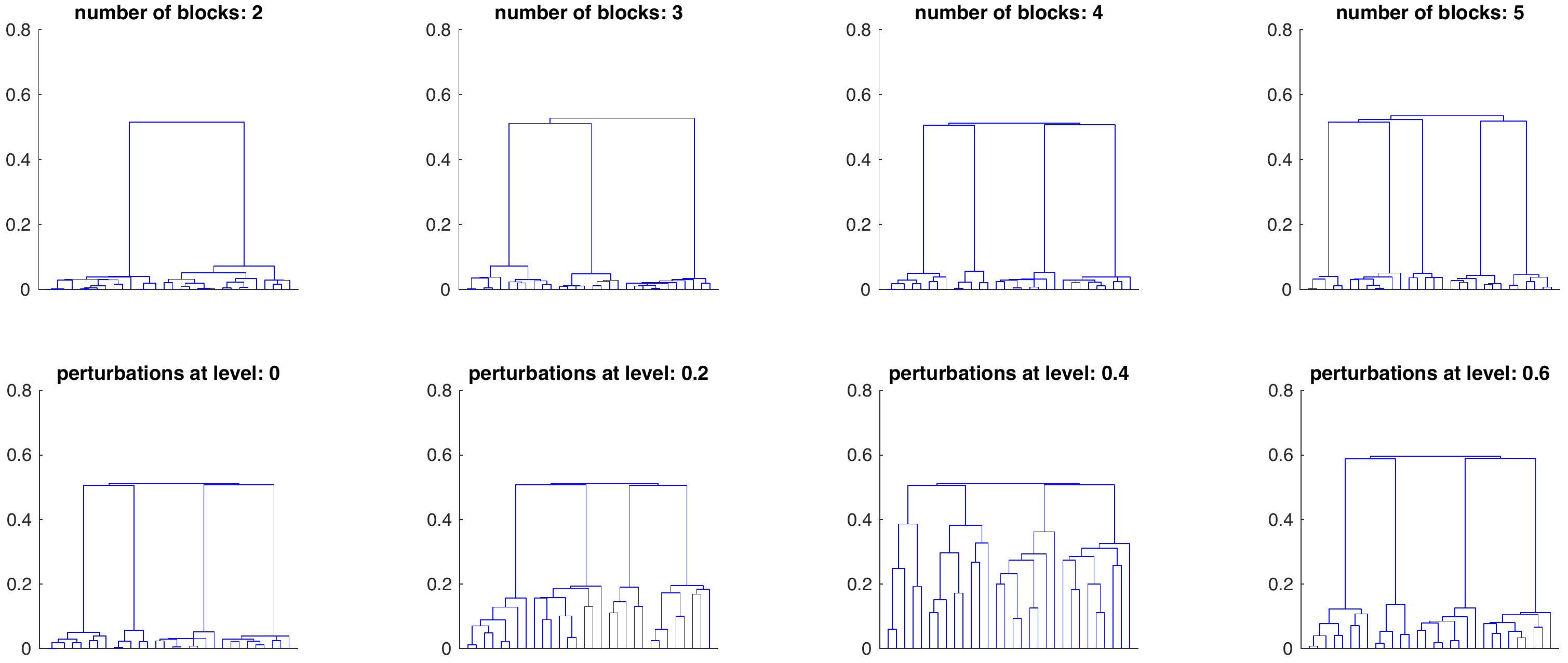}
		
		\caption{\textbf{Randomly sampled ultrametric measure spaces:} Illustration of $Z_k$ for $k=2,3,4,5$ (top row) and instances for perturbations of $Z_4$ with respect to perturbation level $t\in\{0,0.2,0.4,0.6\}$ (bottom row). } \label{fig:randomly sampled ultrametric measure spaces}
\end{figure}

	\begin{figure}[htb]
		\centering
		\includegraphics[width=\textwidth]{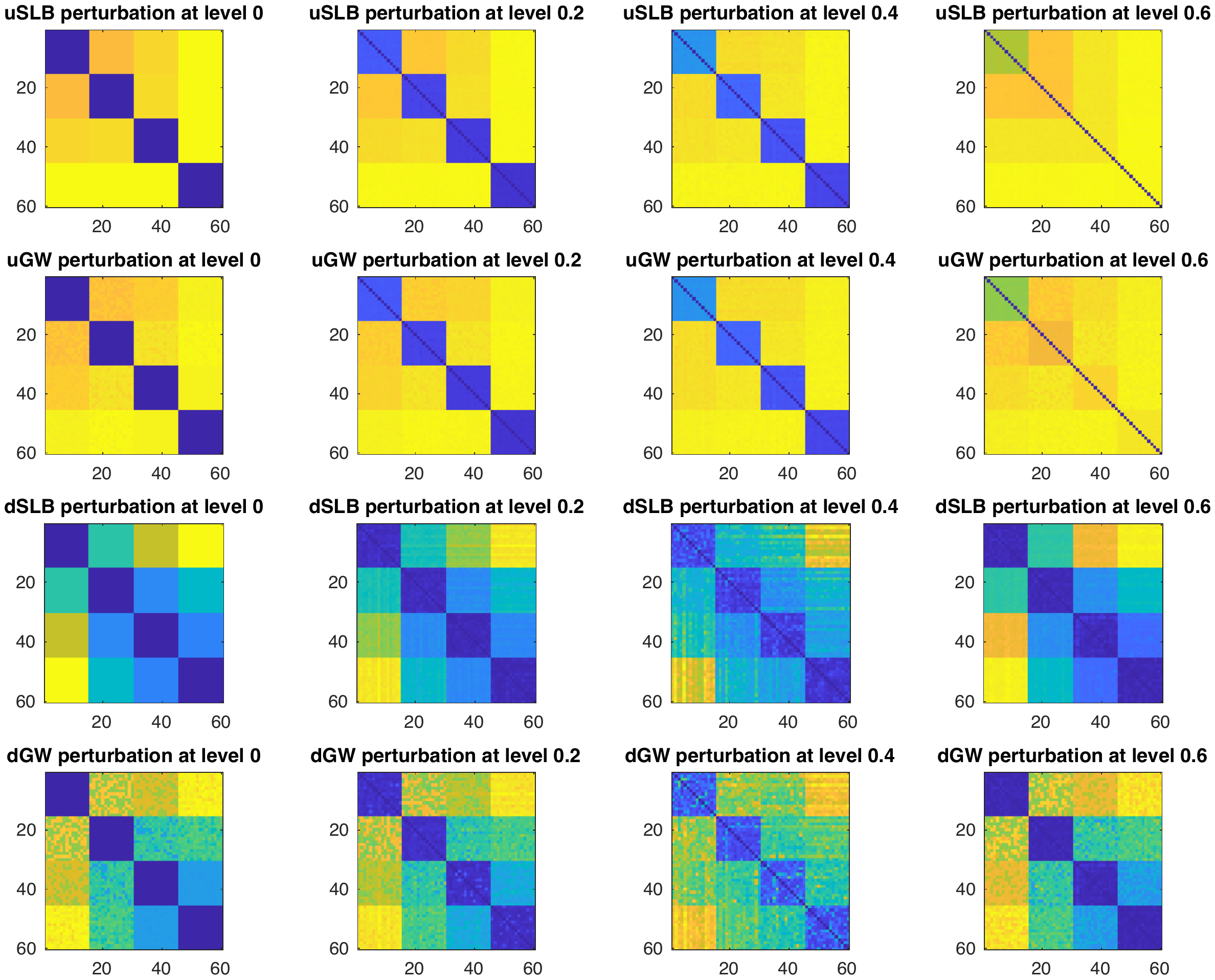}
		\caption{\textbf{$\ugw{1}/\uSLB{1}$ and $\dgw{1}/\dSLB{1}$ among randomly generated ultrametric measure spaces:} Heatmap representations of $\uSLB{1}(\Z^i_{n,t},\Z^{i'}_{n',t})$ (top row), $\ugw{1}(\Z^i_{n,t},\Z^{i'}_{n',t})$ (second row), $\dSLB{1}(\Z^i_{n,t},\Z^{i'}_{n',t})$ (third row) and $\dgw{1}(\Z^i_{n,t},\Z^{i'}_{n',t})$ (bottom row), $k,k'\in\{2,\dots,5\}$ and $i,i'\in\{1,\ldots,15\}$.
		}\label{fig:noise ums}
	\end{figure}
 
In conclusion, $\ugw{1}$ and $\uSLB{1}$ are sensitive to differences in the large scales of the considered ultrametric measure spaces. While this leads (from small $t$) to good discrimination in the above example, it also highlights that they are (different from $\dgw{1}$ and $\dSLB{1}$) susceptible to large scale noise.

\section{Phylogenetic tree shapes}\label{sec:phylogenetic tree shapes}
Rooted phylogenetic trees (for a formal definition see e.g.,  \cite{semple2003phylogenetics}) are a common tool to visualize and analyze the evolutionary relationship between different organisms. In combination with DNA sequencing, they are an important tool to study the rapid evolution of different pathogens. It is well known that the (unweighted) shape of a phylogenetic tree, i.e., the tree's connectivity structure without referring to its labels or the length of its branches, carries important information about macroevolutionary processes (see e.g.,  \cite{mooers1997inferring, blum2006random,dayarian2014infer, wu2016joint}). In order to study the evolution of  and the relation between different pathogens, it is of great interest to compare the shapes of phylogenetic trees created on the basis of different data sets. Currently, the number of tools for performing phylogenetic tree shape comparison is quite limited and the development of new methods for this is an active field of research \citep{colijn2018metric,morozov2018extension,kim2019metric,liu2020polynomial}. It is well known that certain classes of phylogenetic trees (as well as their respective tree shapes) can be identified as ultrametric spaces \citep[Sec. 7]{semple2003phylogenetics}. On the other hand, general phylogenetic trees are closely related to treegrams (see \Cref{def:treegram}). In the following, we will use this connection and demonstrate exemplarily that the computationally efficient lower bound $\uSLB{1}$ has some potential for comparing phylogenetic tree shapes. In particular, we contrast it to the metric defined for this application in Equation (4) of \citet{colijn2018metric}, in the following denoted as $\colijn$, and study the behavior of $\dSLB{1}$ in this framework.

	 		\begin{figure}[htb]
        \centering
		\includegraphics[width=0.7\textwidth]{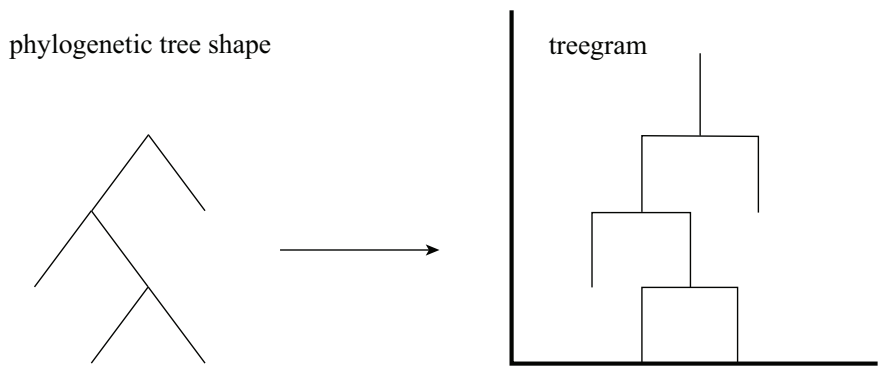}
		
		\caption{\textbf{Transforming a phylogenetic tree shape into an ultra-dissimilarity space:} In this figure, we illustrate the treegram corresponding to the ultra-dissimilarity space generated by \Cref{eq:tree shape distance} with respect to the phylogenetic tree shape on the left. Note that the treegram preserves the tree structure and the smallest birth time of points is exactly 0.}
		\label{fig:phlogenetric tree to treegram}
	\end{figure}

	In this section, we reconsider phylogenetic tree shape comparisons from \citet{colijn2018metric} and thereby study HA protein sequences from human influenza A (H3N2) (data downloaded from NCBI on 22 January 2016). More precisely, we investigate the relation between two samples of size 200 of phylogenetic tree shapes with 500 tips. Phylogenetic trees from the first sample are based on a random subsample of size 500 of 2168 HA-sequences that were collected in the USA between March 2010 and September 2015, while trees from the second sample are based on a random subsample of size 500 of 1388 HA-sequences gathered in the tropics between January 2000 and October 2015 (for the exact construction of the trees see \cite{colijn2018metric}). Although both samples of phylogenetic trees are based on HA protein sequences from human influenza A, we expect them to be quite different. On the one hand, influenza A is highly seasonal outside the tropics (where this seasonal variation is absent) with the majority of cases occurring in the winter \citep{russell2008global}. On the other hand, it is well known that the undergoing evolution of the HA protein causes a `ladder-like' shape of long-term influenza phylogenetic trees \citep{koelle2010two,volz2013viral,westgeest2012genetic,luksza2014predictive} that is typically less developed in short term data sets. Thus, also the different collection period of the two data sets will most likely influence the respective phylogenetic tree shapes.
	
	In order to compare the phylogenetic tree shapes of the resulting 400 trees, we have to transform the phylogenetic tree shapes into ultra-dissimilarity measure spaces $\mathcal{X}_i=(X_i,u_{X_i},\mu_{X_i})$, $1\leq i\leq 400$. To this end, we discard all the lables, denote by $X_i$ the tips of the $i$'th phylogenetic tree and refer to the corresponding tree shape as $\mathcal{T}_i$. Next, we define the ultra-dissimilarities $u_{X_i}$ on $X_i$, $1\leq i \leq 400$. For this purpose, we set all edge length in the considered phylogenetic trees to one and construct $u_{X_i}$ as follows: let $x^i_1,x^i_2\in X_i$  and let $a^i_{1,2}$ be the most recent common ancestor of $x^i_1$ and $x^i_2$. Let $d^i_{a_{1,2}}$ be the length of the shortest path from $a^i_{1,2}$ to the root, let $d^i_1$ be the length of the shortest path from $x^i_1$ to the root and let $d^i$ be the length of the longest shortest path from any tip to the root. Then, we define for any $x^i_1,x^i_2\in X_i$
	 \begin{equation}\label{eq:tree shape distance}
	     u_{X_i}(x^i_1,x^i_2)=\begin{cases}d^i- d^i_{a_{1,2}}& \text{if }x^i_1\neq x^i_2\\
	 d^i-d^i_1&\text{if }x^i_1= x^i_2, \end{cases}
	 \end{equation}
	 and weight all tips in $X_i$ equally (i.e. $\mu_{X_i}$ is the uniform measure on $X_i$). This naturally transforms the collection of phylogenetic tree shapes $\mathcal{T}_i$, $1\leq i\leq 400$, into a collection of ultra-dissimilarity spaces (see \Cref{fig:phlogenetric tree to treegram} for an illustration), which allows us to directly apply $\uSLB{1}$ to compare them (once again we exemplarily choose $p=1$).

	In \Cref{fig:uSLB and colijn phylo} we contrast our findings for the comparisons of the shapes $\mathcal{T}_i$, $1\leq i \leq 400$, to those obtained by computing the metric $\colijn$  described in \cite{colijn2018metric}. The top row of \Cref{fig:uSLB and colijn phylo} visualizes the dissimilarity matrix for the comparisons of all 400 phylogenetic tree shapes (the first 200 entries correspond to the tree shapes from the US-influenza and the second 200 correspond to the ones from the tropic influenza) obtained by applying $\uSLB{1}$ as heat map (left) and as multidimensional scaling plot (right). The heat map shows that the collection of US trees is divided into a large group $\groupUSone\coloneqq(\mathcal{T}_i)_{1\leq i\leq 161}$, that is well separated from the phylogenetic tree shapes based on tropical data $\grouptrop\coloneqq(\mathcal{T}_i)_{201\leq i\leq 400}$, and a smaller subgroup $\groupUStwo\coloneqq(\mathcal{T}_i)_{162\leq i\leq 200}$, that seems to be more similar (in the sense of $\uSLB{1}$) to the tropical phylogenetic tree shapes. In the following $\groupUSone$ and $\groupUStwo$ are referred to as \emph{US main} and \emph{US secondary group}, respectively. This division is even more evident in the MDS-plot on the right (black points represent trees shapes from the US main group, blue points trees shapes from the US secondary group and red points trees shapes based on the tropical data). 
	 \begin{framed}
We remark that in order to highlight the subgroups the US tree shapes have been reordered according to the output permutation of a single linkage dendrogram (w.r.t. $\uSLB{1}$) based on the US tree submatrix created by \citet{Matlabbasicversion} and that the tropical tree shapes have been reordered analogously.
\end{framed}
	 
	 The second row of \Cref{fig:uSLB and colijn phylo} displays the analogous plots for $\colijn$. It is noteworthy, that the coloring in the MDS-plot of the left is the same, i.e., $T_1\in\groupUSone$ is represented by a black point, $T_2\in\groupUStwo$ by a blue one and $T_3\in\grouptrop$ by a red one. Interestingly, the analysis based on these plots differs from the previous one. Using $\colijn$ to compare the phylogenetic tree shapes at hand, we can split the data into two clusters, where one corresponds to the US data and the other one to the tropical data, with only a small overlap (see the MDS-plot in the second row of \Cref{fig:uSLB and colijn phylo} on the right). In particular, we notice that $\colijn$ does not clearly distinguish between the US groups $\groupUSone$ and $\groupUStwo$.

	 \begin{figure}[htb]
		\centering
		\begin{subfigure}[c]{0.4\textwidth}
			\centering
			\includegraphics[width=0.8\textwidth]{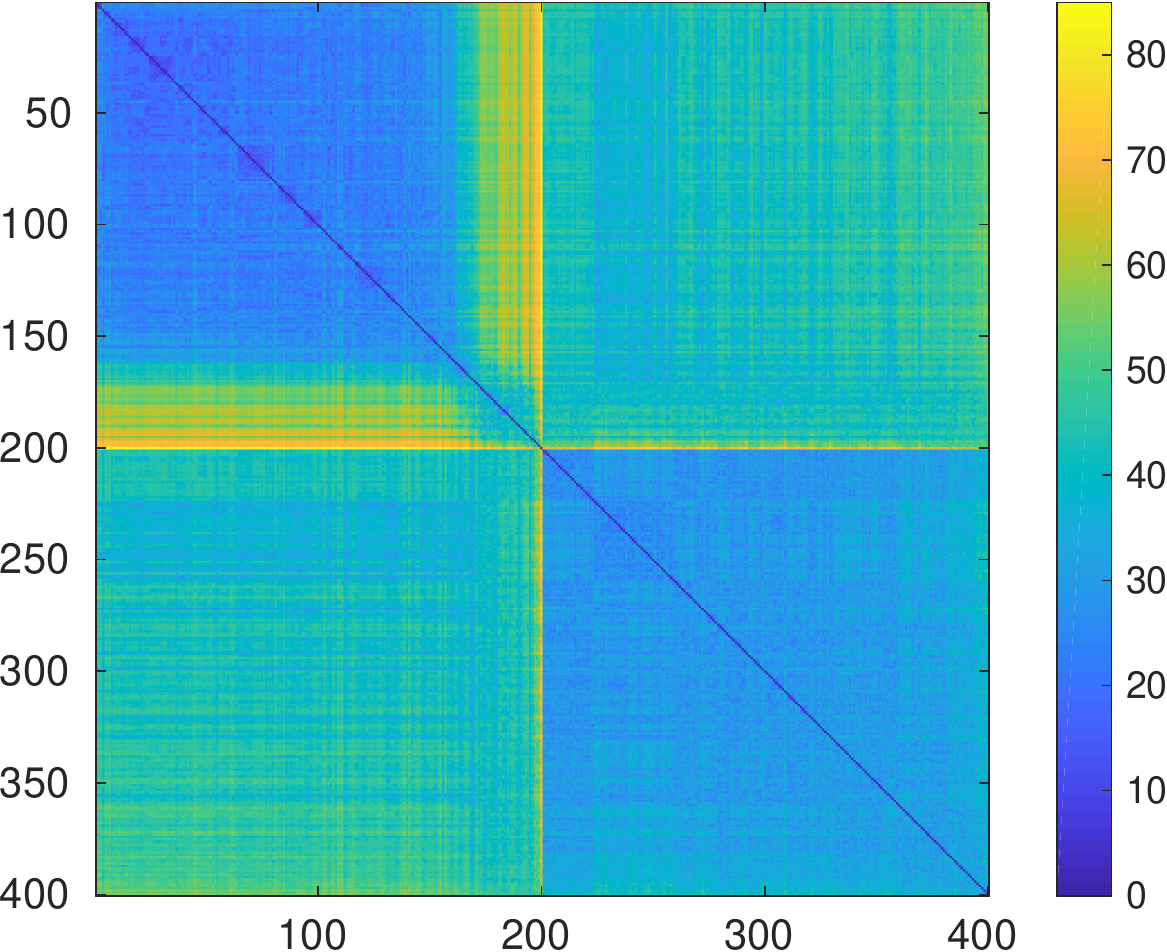}
		\end{subfigure}
			\begin{subfigure}[c]{0.4\textwidth}
			\centering
			\includegraphics[width=0.8\textwidth]{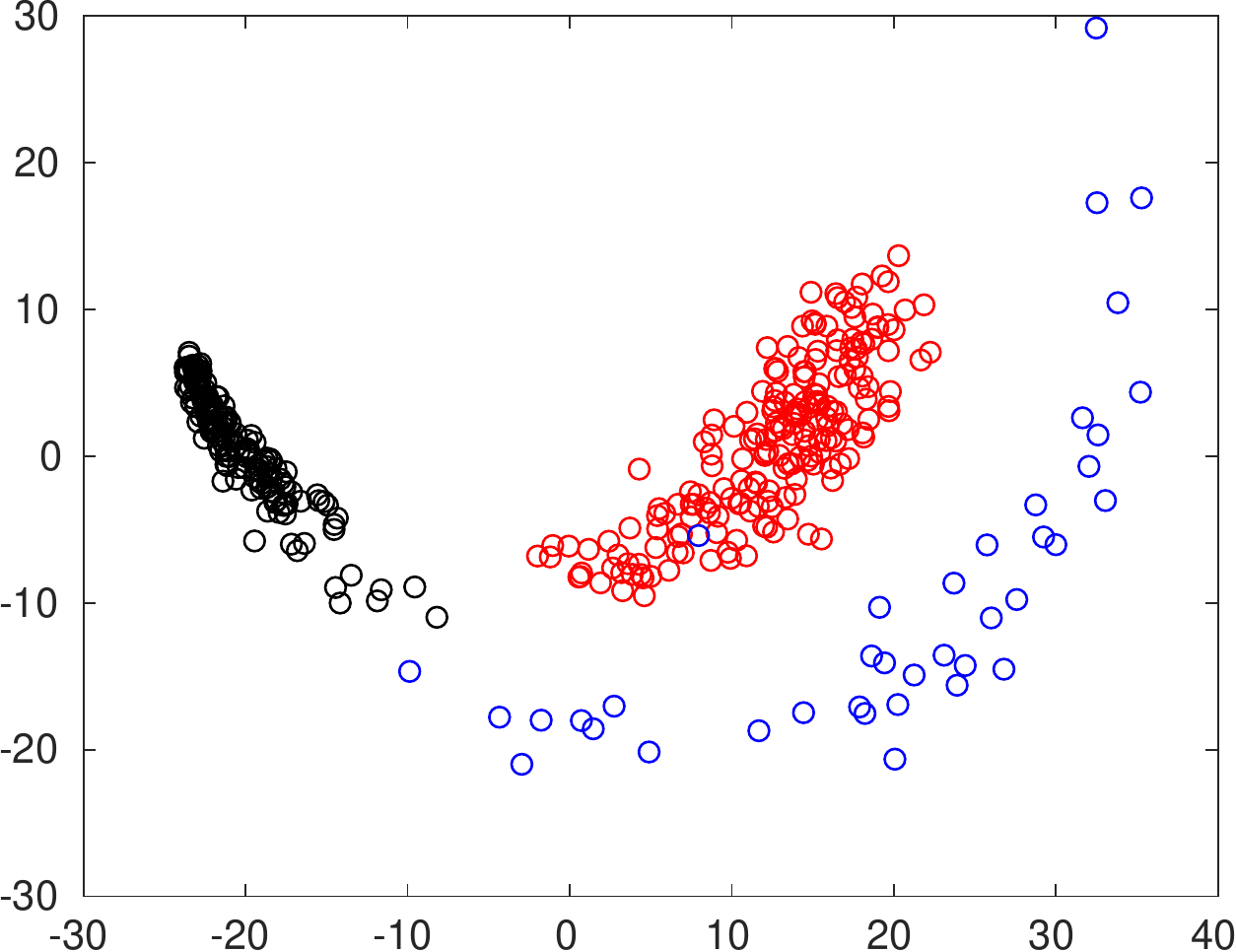}
		\end{subfigure}
		\vspace*{3mm}\\
	\begin{subfigure}[c]{0.4\textwidth}
			\centering
			\includegraphics[width=0.8\textwidth]{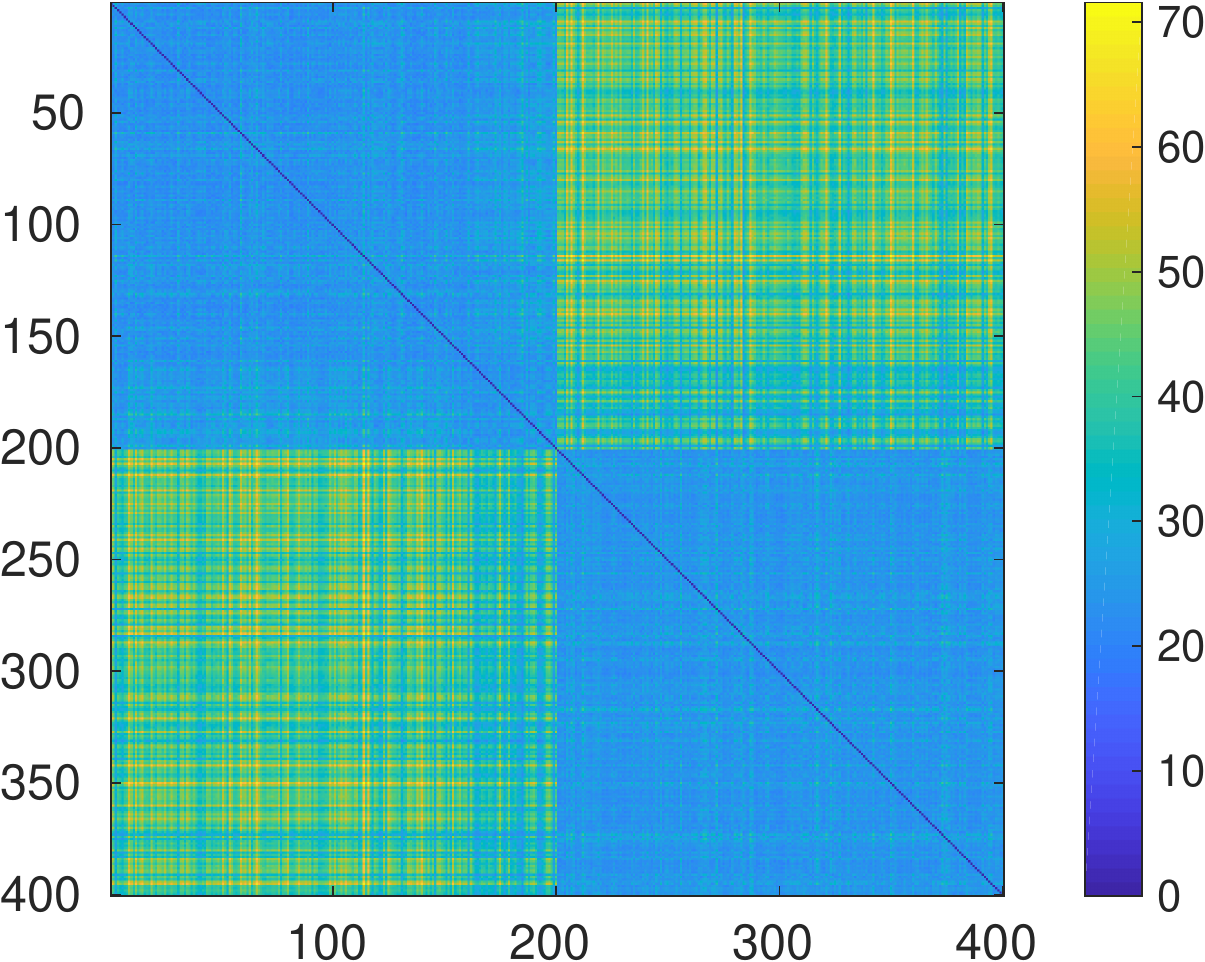}
		\end{subfigure}
		\begin{subfigure}[c]{0.4\textwidth}
			\centering
			\includegraphics[width=0.8\textwidth]{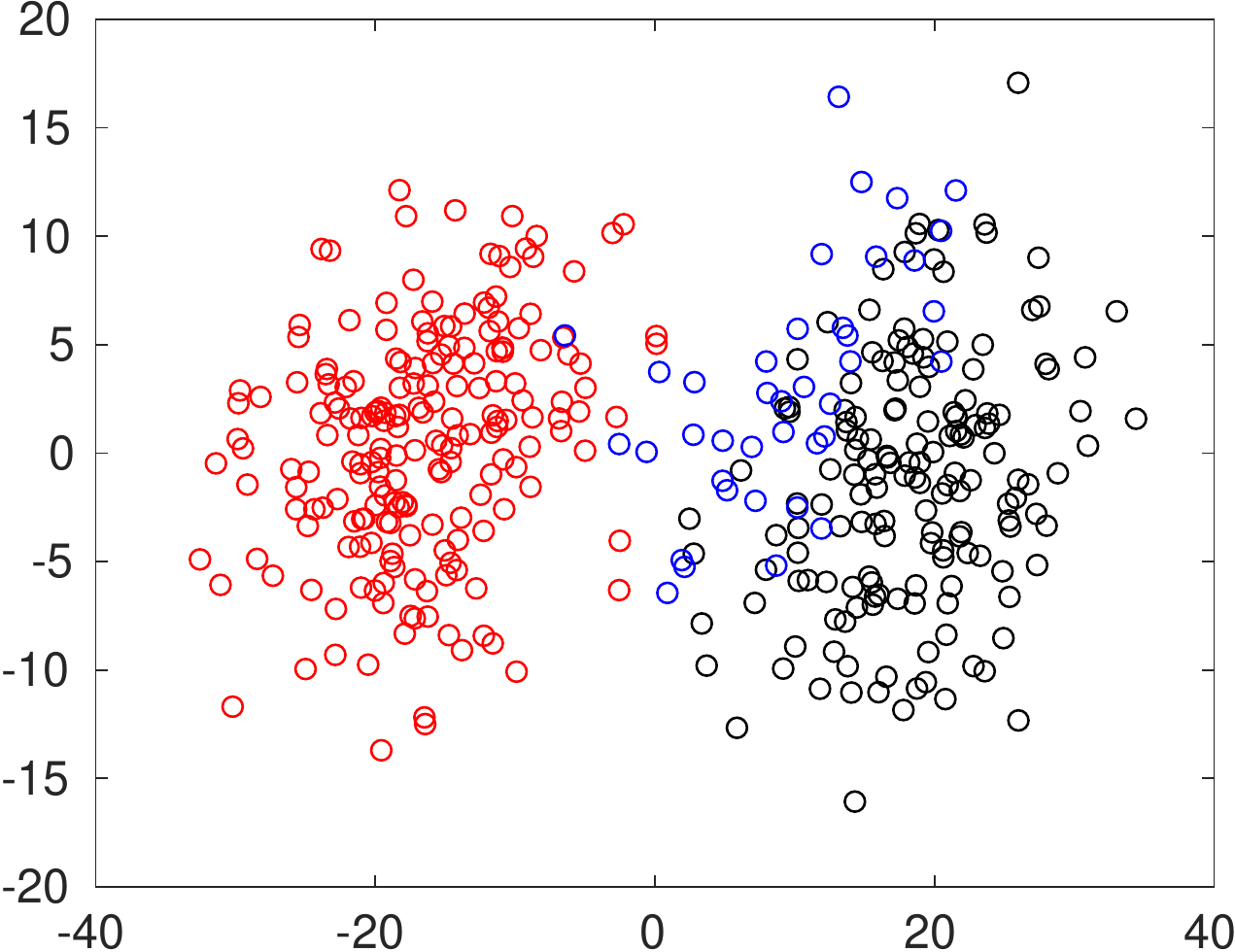}
		\end{subfigure}
 		\caption{\textbf{Phylogenetic tree shape comparison:} Visualization of the dissimilarity matrices for the comparison of the phylogenetic tree shapes $\mathcal{T}_i$, $1\leq i \leq 400$, based on $\uSLB{1}$ (top row) and $\colijn$ (bottom row) as heat maps (left) and MDS-plots (right). 
}
		\label{fig:uSLB and colijn phylo}
	\end{figure}
	
In order to analyze the different findings of $\uSLB{1}$ and $\colijn$, we collect and compare different characteristics of the tree shapes in the groups $\mathcal{G}_i$, $1\leq i\leq 3$. More precisely, we concentrate on various ``metric" properties of the considered ultra-dissimilarity spaces like $\frac{1}{500^2|\mathcal{G}_i|}\sum_{\mathcal{T}_i\in \mathcal{G}_i}\sum_{x,x'\in X_i}u_{X_i}(x,x')$ (``mean average distance") or $\frac{1}{|\mathcal{G}_i|}\sum_{\mathcal{T}_i\in \mathcal{G}_i}\max\{u_{X_i}(x,x')|x,x'\in X_i\}$  (``mean maximal distance"), $1\leq i\leq 3$,  (these influence $\uSLB{1}$ strongly) as well as the mean numbers of certain connectivity structures, like the 4- and 5-structures (these influence $\colijn$, for a formal definition see \cite{colijn2018metric}). Theses values (see \Cref{fig:phylo property illustration}) show that the mean average distance and the mean maximal distance differ drastically between the two groups of the US tree shapes. The tree shapes in these two groups are completely different from a metric perspective and the values for the secondary US group strongly resemble those of the tropic tree shapes. On the other hand, the connectivity characteristics do not change too much between the US main and secondary group. Hence, the metric $\colijn$ does not clearly divide the US trees into two groups, although the differences are certainly present. When carefully checking the phylogenetic trees, the reasons for the differences between trees in the US main group and US secondary group are not immediately apparent. Nevertheless, it is remarkable that trees from the secondary US cluster generally contain more samples from California and Florida (on average 1.92 and 0.88 more) and less from Maryland, Kentucky and Washington (on average 0.73, 0.83 and 0.72 less). 

\begin{table}[htb]
\centering
	\begin{tabular}{|c|c|c|c|}
		\hline 	\rule{0pt}{10pt} & USA (main group) &USA (secondary group)& Tropics   \\\hline
	Mean Avg. Dist. &36.16  &61.88    &  53.45	 \\
	Mean Max. Dist.  & 56.12 &    86.13   & 
   94.26	  \\
	Mean Num. of 4-Struc. & 15.61&     14.08  &7.81	\\
	Mean Num. of 5-Struc. & 28.04&     27.97  &35.82	\\\hline
	\end{tabular}
	\medskip
	\caption{\textbf{Tree shape characteristics:} The means of several metric and connectivity characteristics of the ultra-dissimilarity spaces $\X_i$ and the corresponding phylogenetic tree shapes $\mathcal{T}_i$, $1\leq i \leq 400$, for the three groups $\mathcal{G}_i$, $1\leq i\leq 3$.}\label{fig:phylo property illustration}
\end{table}

To conclude this section, we remark that using $\dSLB{1}$ instead of $\uSLB{1}$ for comparing the ultra-dissimilarity spaces $\X_i$, $1\leq i\leq 400$, gives comparable results (cf. \Cref{fig:SLB phylo}, coloring and ordering as previously). Nevertheless, we observe (as we already have in \Cref{sec:computational aspects}) that $\uSLB{1}$ is more discriminating than $\dSLB{1}$. 
Furthermore, we mention that so far we have only considered unweighted phylogenetic tree shapes. However, the branch lengths of the considered phylogenetic trees are relevant in many examples, because they can for instance reflect the (inferred) genetic distance between evolutionary events \citep{colijn2018metric}. While the branch lengths cannot easily be included in the metric $\colijn$,
the modeling of phylogenetic tree shapes as ultra-dissimilarity spaces is extremely flexible. It is straightforward to include branch lengths into the comparisons or to put emphasis on specific features (via weights on the corresponding tips). However, this is beyond the scope of this illustrative data analysis.

\begin{figure}[htb]
    \centering
	\begin{subfigure}[c]{0.4\textwidth}
	\centering
	\includegraphics[width=0.8\textwidth]{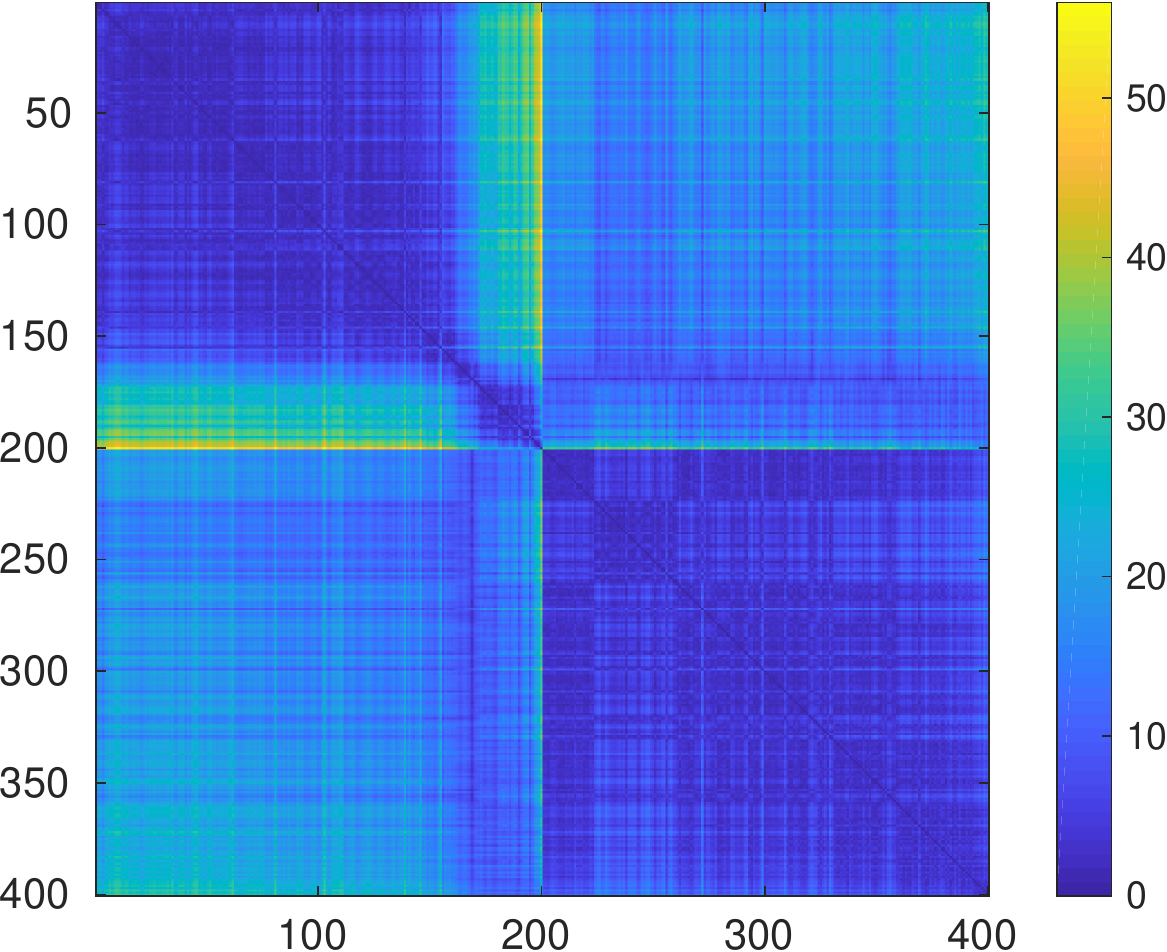}
	\end{subfigure}
	\begin{subfigure}[c]{0.4\textwidth}
	\centering
	\includegraphics[width=0.8\textwidth]{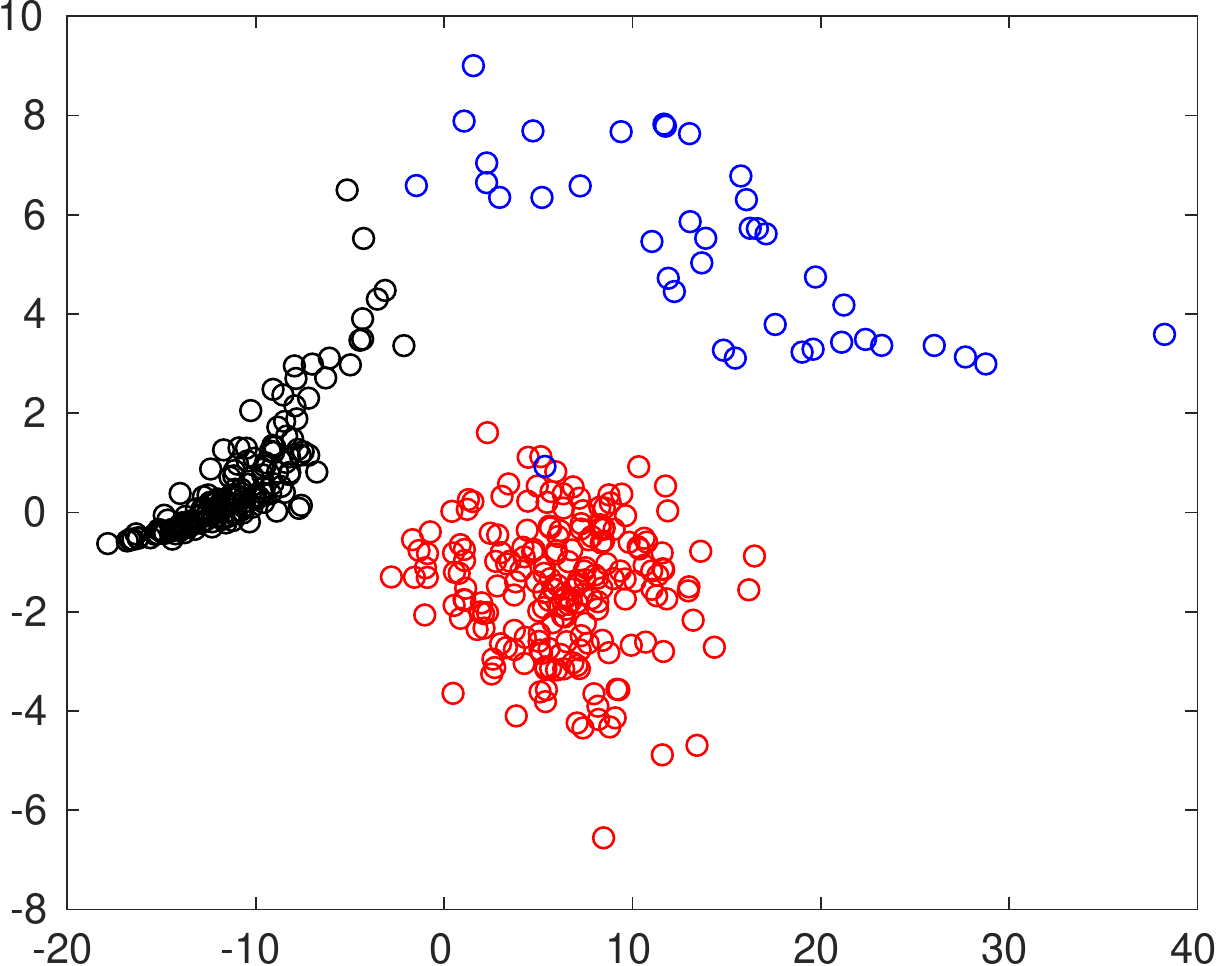}
 	\end{subfigure}
	\caption{\textbf{Phylogenetic tree shape comparison based on $\dSLB{1}$:} Representation of the dissimilarity matrices for the comparisons of the ultra-dissimilarity spaces $\X_i$, $1\leq i \leq 400$, based on $\dSLB{1}$ as heat maps (left) and MDS-plots (right).} \label{fig:SLB phylo}
\end{figure}

\section{Concluding remarks}
Since we suspect that computing  $\ugw{p}$ and $\usturm p$ for finite $p$ leads to NP-hard problems, it seems interesting to identify suitable collections of ultrametric measure spaces where these distances can be computed in polynomial time as done for the Gromov-Hausdorff distance in \cite{memoli2019gromov}.

\subsection*{Acknowledgements}
We are grateful to Prof. Colijn for sharing the data from \cite{colijn2018metric} with us. F.M. and Z.W. acknowledge funding from the NSF  under grants NSF CCF 1740761, NSF DMS 1723003, and NSF RI 1901360. A.M. and C.W. gratefully acknowledge support by the DFG Research Training Group 2088 and Cluster of Excellence MBExC 2067. F.M. and A.M. thank the Mathematisches Forschungsinstitut Oberwolfach. Conversations which eventually led to this project were initiated during the 2019 workshop ``Statistical and Computational Aspects of Learning with Complex Structure".

	\bibliographystyle{plainnat.bst}
	
	\bibliography{ugwbib}

\appendix
\addtocontents{toc}{\protect\setcounter{tocdepth}{0}}
\section{Missing details from Section \ref{sec:preliminaries}}\label{sec:details preliminaries}

\subsection{Proofs from Section \ref{sec:preliminaries}}\label{sec:proof of thm in preliminaries}
In this section we give the proofs of  various results form \Cref{sec:preliminaries}.

\subsubsection{Proof of \Cref{thm:compact ultra-dendro}}\label{proof:thm:compact ultra-dendro}
Recall that for a given $\theta\in\mathcal{D}(X)$, we define $u_\theta:X\times X\rightarrow\Rp$ as follows
	\[u_\theta(x,x')\coloneqq\inf\{t\geq 0|\,x \text{ and }x'\text{ belong to the same block of }\theta(t)\}. \]
	It is straightforward to verify that $u_\theta$ is an ultrametric. For any Cauchy sequence $\{x_n\}_{n\in\mathbb N}$ in $(X,u_\theta)$, let $D_i\coloneqq\sup_{m,n\geq i}u_\theta(x_m,x_n)$ for each $i\in\mathbb N$. Then, each $D_i<\infty$ and $\lim_{i\rightarrow\infty}D_i=0$. By definition of $u_\theta$, we have that for each $i\in\mathbb N $ the set $\{x_n\}_{n=i}^\infty$ is contained in the block $[x_i]_{D_i}\in\theta(D_i)$. Let $X_i\coloneqq [x_i]_{D_i}$ for each $i\in\mathbb N $. Then, obviously we have that $X_j\subseteq X_i$ for any $1\leq i<j$. By condition (7) in \Cref{def:proper dendrogram}, we have that $\bigcap_{i\in\mathbb N}X_i\neq\emptyset$. Choose $x_*\in \bigcap_{i\in\mathbb N}X_i$, then it is easy to verify that $x_*=\lim_{n\rightarrow\infty}x_n$ and thus $(X,u_\theta)$ is a complete space. To prove that $(X,u_\theta)$ is a compact space, we need to verify that for each $t>0$, $X_t$ is a finite space (cf. \Cref{lm:quotient-finite}). Since $\theta(t)$ is finite by condition (6) in \Cref{def:proper dendrogram}, we have that $X_t=\{[x]_t|\,x\in X\}=\theta(t)$ is finite and thus $X$ is compact. Therefore, we have proved that $u_\theta\in\mathcal{U}(X)$. Based on this, the map $\Delta_X:\mathcal{D}(X)\to\mathcal{U}(X)$ by $\theta\mapsto u_\theta$ is well-defined. 
	
	Now given $u\in\mathcal{U}(X)$, we define a map $\theta_u:[0,\infty)\rightarrow\mathbf{Part}(X)$ as follows:
	for each $t\geq 0$, consider the equivalence relation $\sim_t$ with respect to $u$, i.e., $x\sim_t x'$ if and only if $u(x,x')\leq t$. This is actually the same equivalence relation defined in \Cref{sec:ultrametric Gromov-Hausdorff} for introducing quotient ultrametric spaces. We then let $\theta_u(t)$ to be the partition induced by $\sim_t$, i.e., $\theta_u(t)=X_t.$ It is not hard to show that $\theta_u$ satisfies conditions (1)--(5) in \Cref{def:proper dendrogram}. Since $X$ is compact, then $\theta_u(t)=X_t$ is finite for each $t>0$ and thus $\theta_u$ satisfies condition (6) in \Cref{def:proper dendrogram}. Now, let $\{t_n\}_{n\in\mathbb N}$ be a decreasing sequence such that $\lim_{n\rightarrow\infty}t_n=0$ and let $X_n\in \theta_X(t_n)$ such that for any $1\leq n<m$, $X_m\subseteq X_n$. Since each $X_n=[x_n]_{t_n}$ for some $x_n\in X$, $X_n$ is a compact subset of $X$. Since $X$ is also complete, we have that $\bigcap_{n\in\mathbb N}X_n\neq\emptyset$. Therefore, $\theta_u$ satisfies condition (7) in \Cref{def:proper dendrogram} and thus $\theta_u\in\mathcal{D}(X)$. Then, we define the map $\Upsilon_X:\mathcal{U}(X)\to \mathcal{D}(X)$ by $u\mapsto \theta_u$. 
	
	It is easy to check that $\Upsilon_X$ is the inverse of $\Delta_X$ and thus we have established that  $\Delta_X:\mathcal{D}(X)\rightarrow\mathcal{U}(X)$ is bijective.
	\subsubsection{Proof of Lemma \ref{lm:winfty-finite}}\label{sec:proof of lm winfty-finite}
	First of all, we show that the right hand side of \Cref{eq:ultra Wasserstein infinity} is well defined. More precisely, we employ \Cref{lm:quotient-finite} to prove that the supremum \[\sup_{B\in V(X)\backslash\{X\}\,\text{ and }\,\alpha(B)\neq\beta(B)}\diam{B^*}\] is attained. For arbitrary $B_0\in V(X)\backslash\{X\}$ such that $\alpha(B_0)\neq\beta(B_0)$, we have that $\diam{B^*_0}>0$. By \Cref{lm:quotient-finite} the spaces $X_t$ are finite for $t>0$. Since $V(X)=\{[x]_t|\,x\in X,t> 0\}=\bigcup_{t>0}X_t$, there are only finitely many $B\in V(X)\backslash\{X\}$ such that $\diam B\geq \diam{B^*_0}$ and thus $\diam{B^*}\geq \diam{B^*_0}$. This implies that the supremum is attained and thus 
	\begin{equation}\label{eq:w infinity max}
	    \sup_{B\in V(X)\backslash\{X\}\,\text{ and }\,\alpha(B)\neq\beta(B)}\diam{B^*}=\max_{B\in V(X)\backslash\{X\}\,\text{ and }\,\alpha(B)\neq\beta(B)}\diam{B^*}.
	\end{equation}
	Let $B_1$ denote the maximizer in \Cref{eq:w infinity max} and let $\delta\coloneqq\diam{B_1^*}$. It is easy to see that for any $x\in X$, $\alpha([x]_\delta)=\beta([x]_\delta)$.
	
	By Strassen's theorem (see for example \cite[Theorem 11.6.2]{dudley2018real}),
	\begin{equation}\label{eq:strassen}
	    d_{\mathrm{W},\infty}(\alpha,\beta)=\inf\{r\geq 0|\,\text{for any closed subset }A\subseteq X,\,\alpha(A)\leq \beta(A^r)\},
	\end{equation}
	    where $A^r\coloneqq\{x\in X|\,u_X(x,A)\leq r\}$.
	    
	    Since $\alpha(B_1)\neq\beta(B_1)$, we assume without loss of generality that $\alpha(B_1)>\beta(B_1)$. By definition of $B_1^*$, it is obvious that $(B_1)^\delta=B_1^*$ (recall: $\delta\coloneqq\diam{B_1^*}$) and $(B_1)^r=B_1$ for all $0\leq r<\delta$. Therefore, $\alpha(B_1)\leq \beta((B_1)^r)$ only when $r\geq\delta$. By \Cref{eq:strassen}, this implies that $d_{\mathrm{W},\infty}(\alpha,\beta)\geq \delta$. Conversely, for any closed set $A$, we have that $A^\delta=\bigcup_{x\in A}[x]_\delta$. For two closed balls in ultrametric spaces, either one includes the other or they have no intersection. Therefore, there exists a subset $S\subseteq A$ such that $[x]_\delta\cap[x']_\delta=\emptyset$ for all $x,x'\in S$ and $x\neq x'$, and that $A^\delta=\bigsqcup_{x\in S}[x]_\delta$. Then, $\alpha(A)\leq \alpha(A^\delta)=\sum_{x\in S}\alpha([x]_\delta)=\sum_{x\in S}\beta([x]_\delta)=\beta(A^\delta)$. Hence, $d_{\mathrm{W},\infty}(\alpha,\beta)\leq\delta$ and thus 
		\[d_{\mathrm{W},\infty}(\alpha,\beta)=\max_{B\in V(X)\backslash\{X\}\text{ and }\alpha(B)\neq\beta(B)}\diam{B^*}. \]

\subsection{Technical issues from Section \ref{sec:preliminaries}}

In the following, we address various technical issues from \Cref{sec:preliminaries}.
\subsubsection{Synchronized rooted trees}\label{sec:synchronized rooted tree}
	A \emph{synchronized rooted tree}, is a combinatorial tree $T=(V,E)$ with a root $o\in V$ and a height function $h:V\rightarrow[0,\infty)$ such that $h^{-1}(0)$ coincides with the leaf set and $h(v)< h(v^*)$ for each $v\in V\backslash\{o\}$, where $v^*$ is the parent of $v$. Similar as in \Cref{thm:compact ultra-dendro} that there exists a correspondence between ultrametric spaces and dendrograms, an ultrametric space $X$ uniquely determines a synchronized rooted tree $T_X$ \citep{kloeckner2015geometric}. 
	
	Now given a compact ultrametric space $(X,\uX)$, we construct the corresponding sychronized rooted tree $T_X$ via the dendrogram $\theta_X$ associated with $\uX$. Recall from \Cref{sec:explicit formulat} that $V(X)\coloneqq\bigcup_{t>0}\theta_X(t)$. For each $B\in V(X)\backslash\{X\}$, denote by $B^*$ the smallest element in $V(X)$ such that $B\subsetneqq B^*$, whose existence is guaranteed by the following lemma:
\begin{lemma}\label{lemma:existence of B^*}
		Let $X$ be a compact ultrametric space and let $V(X)=\bigcup_{t>0}\theta_X(t)$, where $\theta_X$ is as defined in \Cref{rem:corresponding dendrogram}. For each $B\in V(X)$ such that $B\neq X$, there exists $B^*\in V(X)$ such that $B^*\neq B$ and $B^*\subseteq B'$ for all $B'\in V(X)$ with $B\subsetneqq B'$.
	\end{lemma}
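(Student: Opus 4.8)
The plan is to produce $B^*$ as the minimal element of the family $\mathcal{F}_B \coloneqq \{B' \in V(X) \mid B \subsetneqq B'\}$ of balls strictly containing $B$. By \Cref{rem:corresponding dendrogram} every element of $V(X)$ is a closed ball, so I would first write $B = [x]_s$ for some $x \in X$ and some $s > 0$. The key structural fact to invoke is that in an ultrametric space any two closed balls are either nested or disjoint, and every point of a ball is itself a center; in particular all members of $\mathcal{F}_B$ contain $x$ and are therefore totally ordered by inclusion. Note also that $\mathcal{F}_B \neq \emptyset$, since $X = [x]_t \in V(X)$ for all $t$ beyond the threshold $T$ of condition (3) in \Cref{def:proper dendrogram}, while $B \neq X$.

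To pin down the minimum I would set $r \coloneqq \inf\{t \geq 0 \mid [x]_t \supsetneqq B\}$. Since $[x]_t \subseteq [x]_s = B$ for every $t \leq s$, this infimum is at least $s > 0$, so the candidate $B^* \coloneqq [x]_r$ lies in $V(X) = \bigcup_{t>0}\theta_X(t)$. The crux of the argument, and the step I expect to be the main obstacle, is to show that the infimum is \emph{attained}, i.e. that $[x]_r \supsetneqq B$ rather than merely $[x]_r = B$ with strictly larger balls appearing only for $t > r$. Here I would use the right-continuity condition (4) of proper dendrograms: choosing $\eps > 0$ with $\theta_X(t) = \theta_X(r)$ for all $t \in [r, r+\eps]$ forces $[x]_t = [x]_r$ throughout this interval; since $r$ is the infimum, there is some $t \in [r, r+\eps)$ with $[x]_t \supsetneqq B$, whence $[x]_r = [x]_t \supsetneqq B$. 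This simultaneously yields $B^* \neq B$.

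Finally I would verify minimality. Given any $B' \in \mathcal{F}_B$, the inclusion $x \in B \subseteq B'$, together with the fact that every point of an ultrametric ball is a center, lets me write $B' = [x]_{t'}$ for some $t' > 0$. Then $[x]_{t'} = B' \supsetneqq B$ shows that $t'$ belongs to the set defining $r$, so $t' \geq r$, and monotonicity of the balls in the radius gives $B^* = [x]_r \subseteq [x]_{t'} = B'$. Thus $B^*$ is contained in every element of $\mathcal{F}_B$, which is exactly the asserted minimality. The only genuinely delicate point throughout is the attainment of the infimum, which is precisely what the right-continuity axiom (4) for proper dendrograms is designed to deliver.
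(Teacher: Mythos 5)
Your proof is correct, but it takes a genuinely different route from the paper's. You locate $B^*$ as $[x]_r$ with $r\coloneqq\inf\{t\geq 0\mid [x]_t\supsetneqq B\}$ and prove attainment of the infimum via the right-continuity axiom (condition (4) of \Cref{def:proper dendrogram}), whereas the paper works with $\delta\coloneqq\diam{B}$, invokes compactness through \Cref{lm:quotient-finite} to get that the quotient $X_\delta$ is finite, and then sets $\delta^*\coloneqq\min\{u_{X_\delta}([x]_\delta,[x']_\delta)\mid [x']_\delta\neq[x]_\delta\}$ and $B^*\coloneqq[x]_{\delta^*}$. The paper's argument buys an explicit description of $B^*$ --- its diameter is the minimal distance from $B$ to its complement --- which is convenient downstream (e.g.\ in the synchronized rooted tree construction and the Kloeckner formula), but it leans on finiteness of the quotient and needs a small extra observation when $\diam{B}=0$. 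Your argument is more axiomatic: it uses only the nested-or-disjoint structure of ultrametric balls together with conditions (3) and (4) of the dendrogram axioms, handles singleton blocks uniformly, and correctly identifies that the only delicate point is that the infimum is attained, which is exactly what right-continuity delivers. Both proofs are complete; yours is arguably the more elementary and the more portable of the two.
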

	\begin{proof}
	Let $\delta\coloneqq\diam{B}$. Let $x\in B$, then $B=[x]_\delta$. By \Cref{lm:quotient-finite}, $X_\delta$ is a finite set. Consider $\delta^*\coloneqq\min\{u_{X_\delta}([x]_\delta,[x']_\delta)|\,[x']_\delta\neq [x]_\delta\}$. Let $B^*\coloneqq [x]_{\delta^*}$, then $B^*$ is the smallest element in $V(X)$ containing $B$ under inclusion. Indeed, $B^*\neq B$ and if $B\subseteq B'$ for some $B'\in V(X)$, then $B'=[x]_r$ for some $r> \delta$. It is easy to see that for all $\delta<r<\delta^*$, $[x]_r=[x]_\delta$. Therefore, if $B'\neq B$, we must have that $r\geq \delta^*$ and thus $B^*=[x]_{\delta^*}\subseteq[x]_r=B'$.
	\end{proof}
	
	Now, we define a combinatorial tree $T_X=(V_X,E_X)$ as follows: we let $V_X\coloneqq V(X)$; for any distinct $B,B'\in V_X$, we let $(B,B')\in E_X$ iff either $B=(B')^*$ or $B'=B^*$. We choose $X\in V_X$ to be the root of $T_X$, then any $B\neq X$ in $V_X$ has a unique parent $B^*$. We define $h_X:V_X\rightarrow[0,\infty)$ such that $h_X(B)\coloneqq\frac{\diam B}{2}$ for any $B\in V_X$. Now, $T_X$ endowed with the root $X$ and the height function $h_X$ is a synchronized rooted tree. It is easy to see that $X$ can be isometrically identified with $h_X^{-1}(0)$ of the so-called \emph{metric completion} of $T_X$ (see \cite[Section 2.3]{kloeckner2015geometric} for details). With this construction \Cref{lemma:Wasserstein on ultrametric spaces} follows directly from \cite[Lemma 3.1]{kloeckner2015geometric}.
	
	\subsection{\texorpdfstring{$d^{(\Rp,\Lambda_\infty)}_{\mathrm{W},p}$}{The Wasserstein distance on the ultrametric real line} between compactly supported measures}\label{sec:extension to compactly supported measures}
	Next, we demonstrate that \Cref{thm:closed-form-w-infty-real} extends naturally to the case of compactly supported probability measures in $(\Rp,\Lambda_\infty)$. For this purpose, it is important to note that compact subsets of $(\Rp,\Lambda_\infty)$ have a very particular structure as shown by the subsequent lemma. 
\begin{lemma}\label{lm:compact of R}
Let $X\subseteq\mathbb (\Rp,\Lambda_\infty)$. $X$ is a compact subset if and only if $X$ is either a finite set or a countable set with $0$ being the unique cluster point (w.r.t. the usual Euclidean distance $\Lambda_1$).
\end{lemma}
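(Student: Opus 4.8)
The plan is to first pin down the (rather peculiar) topology that $\Lambda_\infty$ induces on $\Rp$, since once this is understood both implications become short covering arguments. First I would record two elementary computations. For any $x>0$ and any $y\neq x$ one has $\Lambda_\infty(x,y)=\max(x,y)\ge x$, so the open ball $B_x^\circ(x)=\{x\}$; hence \emph{every strictly positive point of $(\Rp,\Lambda_\infty)$ is isolated}. On the other hand, for $\eps>0$ the ball around the origin is $B_\eps^\circ(0)=\{y\in\Rp:\Lambda_\infty(0,y)<\eps\}=[0,\eps)$, which is exactly the Euclidean $\eps$-neighbourhood of $0$ in $\Rp$. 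Thus $(\Rp,\Lambda_\infty)$ consists of a discrete cloud of isolated points together with one ``Euclidean'' point $0$, and the whole question reduces to how a subset $X$ accumulates at $0$.

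For the reverse implication I would argue as follows. If $X$ is finite it is compact for trivial reasons. Otherwise I read the hypothesis in the way that is natural for this space, namely $X=\{0\}\cup S$ with $S\subseteq(0,\infty)$ such that $S\cap[\eps,\infty)$ is finite for every $\eps>0$ (equivalently: $0$ is the unique Euclidean cluster point, $0\in X$, and $X$ is bounded, i.e.\ $S$ is a null sequence). Given any open cover $\mathcal U$, choose $U_0\in\mathcal U$ with $0\in U_0$ and pick $\eps>0$ with $[0,\eps)\subseteq U_0$. Only finitely many points $x_1,\dots,x_k$ of $X$ satisfy $x_i\ge\eps$; covering each $\{x_i\}$ (an open singleton) by one member of $\mathcal U$ produces a finite subcover, so $X$ is compact.

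For the forward implication, suppose $X$ is compact. For each $n\in\N$, the family consisting of $[0,1/n)$ together with the singletons $\{x\}$ for $x\in X\cap[1/n,\infty)$ is an open cover of $X$, using isolation of the positive points. Since a point $\ge 1/n$ can only be covered by its own singleton, any finite subcover forces $X\cap[1/n,\infty)$ to be finite. Consequently $X\setminus\{0\}=\bigcup_n\big(X\cap[1/n,\infty)\big)$ is a countable union of finite sets, hence countable, and $X$ is bounded (take $n=1$). The same finiteness shows that no $p>0$ can be a Euclidean cluster point of $X$, so $0$ is the only candidate. If $X$ is infinite, Bolzano--Weierstrass yields a Euclidean cluster point, which must therefore be $0$; and since a compact subset of a metric space is closed, $0\in X$. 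Hence $X$ is either finite or a countable set accumulating only at $0$, as claimed.

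The hard part is not any single estimate but the forward covering argument, and in particular recognising that the sharp quantitative statement forced by the ultrametric is ``$X\cap[\eps,\infty)$ is finite for every $\eps>0$''; this one fact simultaneously delivers countability, boundedness, the cluster-point property, and $0\in X$. This is also the point where the subtlety of the statement surfaces: the reverse implication genuinely needs $X$ to be a null sequence together with its limit $0$ (so that only finitely many points escape each $[0,\eps)$), not merely a countable set admitting $0$ as a cluster point, and I would make this reading explicit when invoking the hypothesis.
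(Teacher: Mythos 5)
Your proof is correct, and it isolates the same pivotal quantitative fact as the paper --- that compactness of $X$ forces (and, under the right reading of the hypothesis, follows from) the finiteness of $X\cap[\eps,\infty)$ for every $\eps>0$ --- but it gets there by a different technical route. The paper routes everything through the quotient spaces $X_\eps$ and \Cref{lm:quotient-finite}: compactness is characterized as completeness plus finiteness of every $X_\eps$, completeness is checked by classifying $\Lambda_\infty$-Cauchy sequences (eventually constant or tending to $0$), and the finiteness of $X\cap(\eps,\infty)$ is read off from the fact that $[x]_\eps=\{x\}$ whenever $x>\eps$. You instead make the topology of $(\Rp,\Lambda_\infty)$ fully explicit (every positive point is isolated; neighborhoods of $0$ are the Euclidean ones) and run direct open-cover arguments in both directions. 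This is more elementary and self-contained --- it avoids the sequential-completeness and total-boundedness machinery entirely --- at the cost of not exercising the quotient spaces $X_\eps$, which the paper reuses heavily elsewhere. Your closing observation is also well taken and is, if anything, a point on which your write-up is more careful than the paper's: for the reverse implication one must read the hypothesis as saying that $0\in X$ and that only finitely many points of $X$ lie in $[\eps,\infty)$ for each $\eps>0$ (i.e.\ $X$ is a null sequence together with its limit). A countable unbounded set such as $\{0\}\cup\{1/n\}_{n\in\N}\cup\N$ has $0$ as its unique Euclidean cluster point yet fails to be compact in $(\Rp,\Lambda_\infty)$, since its infinitely many isolated points in $[1,\infty)$ admit no finite subcover; the paper's proof implicitly uses the stronger reading when it asserts that $X_\eps$ is finite, so making that reading explicit, as you do, is a genuine improvement.
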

\begin{proof}
     If $X$ is finite, then obviously $X$ is compact. Assume that $X$ is a countable set with $0$ being the unique cluster point (w.r.t. the usual Euclidean distance $\Lambda_1$). If $\{x_n\}_{n\in\mathbb{N}}\subseteq X$ is a Cauchy sequence with respect to $\Lambda_\infty$, then either $x_n$ is a constant when $n$ is large or $\lim_{n\rightarrow\infty}x_n=0$. In either case, the limit of $\{x_n\}_{n\in\mathbb{N}}$ belongs to $X$ and thus $X$ is complete. Now for any $\eps>0$, by Lemma \ref{lm:quotient-finite}, $X_\eps$ is a finite set. Denote $X_\eps=\{[x_1]_\eps,\ldots,[x_n]_\eps\}$. Then, $\{x_1,\ldots,x_n\}$ is a finite $\eps$-net of $X$. Therefore, $X$ is totally bounded and thus $X$ is compact.
	    
	    Now, assume that $X$ is compact. Then, for any $\eps>0$, $X_\eps$ is a finite set. Suppose $X_\eps=\{[x_1]_\eps,\ldots,[x_n]_\eps\}$ where $0\leq x_1<x_2<\cdots<x_n$. Further, we have that $\Lambda_\infty(x_i,x_j)=x_j$ whenever $1\leq i<j\leq n$. This implies that
	    \begin{enumerate}
	        \item $x_i>\eps$ for all $2\leq i\leq n$;
	        \item $[x_i]_\eps=\{x_i\}$ for all $2\leq i\leq n$.
	    \end{enumerate}
	    Therefore, $X\cap (\eps,\infty)=\{x_2,\ldots,x_n\}$ is a finite set. Since $\eps>0$ is arbitrary, $X$ is an at most countable set and has no cluster point (w.r.t. the usual Euclidean distance $\Lambda_1$) other than $0$. If $X$ is countable, then $0$ must be a cluster point and by compactness of $X$, we have that $0\in X$.	
	    \end{proof}
Based on the special structure of compact subsets of $(\Rp,\Lambda_\infty)$, we derive the following extension of \Cref{thm:closed-form-w-infty-real}. 
\begin{theorem}[$d^{(\Rp,\Lambda_\infty)}_{\mathrm{W},p}$ between compactly supported measures]\label{thm:closed-form-w-infty-real-compact}
		Suppose $\alpha,\beta$ are supported on a countable subset $X\coloneqq \{0\}\cup\{x_i|\,i\in\mathbb N\}$ of $\mathbb{R}_{\geq 0}$ such that $0<\ldots< x_n<x_{n-1}<\ldots<x_1$ and $0$ is the only cluster point with respect to the usual Euclidean distance. Let $\alpha_i\coloneqq \alpha(\{x_i\})$ for $i\in\mathbb N$ and $\alpha_0\coloneqq\alpha(\{0\})$. Similarly, let $\beta_i\coloneqq \beta(\{x_i\})$ and $\beta_0\coloneqq\beta(\{0\})$. Then for $p\in[1,\infty)$,
		\begin{equation}\label{eq:dp} d^{(\Rp,\Lambda_\infty)}_{\mathrm{W},p}(\alpha,\beta)=2^{-\frac{1}{p}}\left(\sum_{i=2}^{\infty}\left|\sum_{j=i}^\infty(\alpha_j-\beta_j)\right|\cdot|x_{i-1}^p-x_{i}^p|+\sum_{i=1}^\infty|\alpha_i-\beta_i|\cdot x_i^p\right)^\frac{1}{p}.
		\end{equation}
	Let $F_\alpha$ and $F_\beta$ denote the cumulative distribution functions of $\alpha$ and $\beta$, respectively. Then, we obtain
	\[d_{\mathrm{W},\infty}^{(\Rp,\Lambda_\infty)}(\alpha,\beta)=\max\left(\max_{2\leq i<\infty, F_\alpha(x_i)\neq F_\beta(x_i)}x_{i-1},\max_{1\leq i<\infty, \alpha_i\neq\beta_i}x_i\right).\]	
		
	\end{theorem}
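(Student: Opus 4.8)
The plan is to prove the theorem by a direct application of Kloeckner's formula (\Cref{lemma:Wasserstein on ultrametric spaces}, for $p<\infty$) and of \Cref{lm:winfty-finite} (for $p=\infty$), once the abstract data $V(X)$ and the parent map $B\mapsto B^*$ appearing there have been made fully explicit for the space at hand. The first step is to invoke \Cref{lm:compact of R} to observe that $X=\{0\}\cup\{x_i\}_{i\in\mathbb N}$ is a genuine compact ultrametric subspace of $(\Rp,\Lambda_\infty)$, so that both cited results apply verbatim; after that the argument is purely combinatorial bookkeeping, exactly as in the proof of \Cref{thm:closed-form-w-infty-real}, but carried out for an infinite ball structure.

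Second, I would determine $V(X)$ and the parent map. Since $\Lambda_\infty(x,x')=\max(x,x')$ for $x\neq x'$, every closed ball of positive radius is a ``lower set'' $B_k\coloneqq\{x\in X\,:\,x\le x_k\}=\{0\}\cup\{x_j\,:\,j\ge k\}$, and in addition there are the isolated singletons $\{x_i\}$, $i\ge1$; here $B_1=X$ and $\diam{B_k}=x_k$. Crucially, because $0$ is the unique cluster point, the singleton $\{0\}$ is \emph{not} a ball in $V(X)$ (cf. \Cref{lm:vx characterization}), even though $0$ lies in every $B_k$. Reading off the dendrogram jumps (in the spirit of \Cref{rem:corresponding dendrogram} and \Cref{lemma:existence of B^*}) then gives the parent map: $\{x_i\}^*=B_i$, so $\diam{\{x_i\}^*}=x_i$, and, for $k\ge2$, $B_k^*=B_{k-1}$, so $\diam{B_k^*}=x_{k-1}$.

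Third, I would substitute this data into the two formulas. Splitting the index set $V(X)\setminus\{X\}$ into the singletons and the balls $B_k$ with $k\ge2$, each singleton contributes $x_i^p\,|\alpha_i-\beta_i|$, while each $B_k$ contributes $(x_{k-1}^p-x_k^p)\,|\alpha(B_k)-\beta(B_k)|$, where $\alpha(B_k)=F_\alpha(x_k)$ is precisely the cumulative mass on the lower set (the weights being $|F_\alpha(x_k)-F_\beta(x_k)|$, which one may further rewrite in terms of tail sums using the normalization $\sum_i\alpha_i=\sum_i\beta_i=1$, with the $0$-atom accounted for through this constraint). Assembling the two contributions and multiplying by $2^{-1}$ yields the claimed expression for $p<\infty$. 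For $p=\infty$ the identical ball bookkeeping fed into \Cref{lm:winfty-finite} produces the stated maximum, the selection condition $\alpha(B_k)\neq\beta(B_k)$ becoming $F_\alpha(x_k)\neq F_\beta(x_k)$ and the singleton condition becoming $\alpha_i\neq\beta_i$.

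Finally, the one genuinely new point relative to \Cref{thm:closed-form-w-infty-real} — and the step I expect to require the most care — is the convergence of the now-infinite series. I would bound the singleton sum by $x_1^p\sum_i(\alpha_i+\beta_i)\le 2x_1^p$ and the ball sum by the telescoping estimate $\sum_{k\ge2}(x_{k-1}^p-x_k^p)\le x_1^p$, using $|\alpha(B_k)-\beta(B_k)|\le 1$ and $x_k\to0$; hence both sums are finite and \Cref{lemma:Wasserstein on ultrametric spaces} applies with no convergence obstruction. The corresponding attainment of the maxima in the $p=\infty$ case is immediate, since the qualifying index sets are subsets of $\mathbb N$ and therefore possess least elements, at which the largest values $x_{i-1}$ (resp. $x_i$) are achieved.
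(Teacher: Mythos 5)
Your proposal follows the paper's proof exactly: the paper's entire argument consists of identifying $V(X)$ as $\{B_i\}_{i\in\mathbb N}\cup\{\{x_i\}\}_{i\in\mathbb N}$ with $B_i=\{0\}\cup\{x_j\,:\,j\ge i\}$ and then citing \Cref{lemma:Wasserstein on ultrametric spaces} and \Cref{lm:winfty-finite}, and your determination of the parent map, the exclusion of the singleton $\{0\}$ via \Cref{lm:vx characterization}, and the convergence check for the infinite series (which the paper omits entirely) are all correct.

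The one step that does not quite close is your final rewriting of the ball weights. Your bookkeeping correctly gives the weight of $B_i$ as $|\alpha(B_i)-\beta(B_i)|=|F_\alpha(x_i)-F_\beta(x_i)|$, which includes the atom at $0$. But this is not equal to the tail sum $\bigl|\sum_{j=i}^\infty(\alpha_j-\beta_j)\bigr|$ appearing in \Cref{eq:dp}: the two differ by $\alpha_0-\beta_0$, and the normalization $\alpha_0+\sum_{j\ge1}\alpha_j=1$ turns $F_\alpha(x_i)-F_\beta(x_i)$ into the head sum $-\sum_{j=1}^{i-1}(\alpha_j-\beta_j)$, not into the tail sum over $j\ge i$. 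A concrete check: for $\alpha=\delta_0$ and $\beta=\delta_{x_1}$ the true distance is $x_1$, which is what the $F$-based weights yield (the balls $B_i$, $i\ge2$, all carry weight $1$ and telescope to $x_1^p$, matching the singleton contribution $x_1^p$), whereas the displayed tail-sum formula returns $2^{-1/p}x_1$. So your derivation is the correct one --- consistent with the finite-support version \Cref{thm:closed-form-w-infty-real} and with the $p=\infty$ case, both of which are phrased through $F$ --- and the ``rewrite via normalization'' clause should either be dropped, keeping the weights as $|F_\alpha(x_i)-F_\beta(x_i)|$, or the displayed tail sum must be understood as incorporating the $0$-atom.
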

	
	\begin{proof}
	Note that  $V(X)=\{\{0\}\cup\{x_j|\,j\geq i\}|\,i\in\mathbb N\}\cup\{\{x_i\}|\,i\in\mathbb{N}\}$ (recall that each set corresponds to a closed ball). Thus, we conclude the proof by applying Lemma \ref{lemma:Wasserstein on ultrametric spaces} and \Cref{lm:winfty-finite}.
	\end{proof}
	\subsubsection{Closed-form solution for \texorpdfstring{$d_{\mathrm{W},p}^{(\Rp,\Lambda_q)}$}{the Wasserstein distance on the p-metric real line}}\label{sec:closed form solution}
In the following, we will derive the subsequent theorem. 
	\begin{theorem}\label{thm:closed form solution}
		Given $1\leq p,q <\infty$ and two compactly supported probability measures $\alpha$ and $\beta$ on $\Rp$, we have that 
		\[d_{\mathrm{W},p}^{(\Rp,\Lambda_q)}(\alpha,\beta)\leq\left(\int_0^1\Lambda_q(F_\alpha^{-1}(t),F_\beta^{-1}(t))^pdt\right)^\frac{1}{p}.\]
		When $q\leq p$, the equality holds whereas when $q>p$, the equality does not hold in general.
	\end{theorem}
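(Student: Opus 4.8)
The plan is to reduce the computation on $(\Rp,\Lambda_q)$ to the ordinary Wasserstein problem on the real line through the increasing bijection $S_q:\Rp\to\Rp$, $x\mapsto x^q$, and then to exploit the one-dimensional closed form from \Cref{rem:closed-form}. First I would record the pointwise cost identity $\Lambda_q(x,y)^p=|x^q-y^q|^{p/q}$ and push couplings forward under $S_q\times S_q$. Since $S_q$ is a bijection of $\Rp$, the map $\mu\mapsto(S_q\times S_q)_\#\mu$ is a bijection between $\mathcal{C}(\alpha,\beta)$ and $\mathcal{C}((S_q)_\#\alpha,(S_q)_\#\beta)$ that transforms the transport functional, so that
\[ \left(d_{\mathrm{W},p}^{(\Rp,\Lambda_q)}(\alpha,\beta)\right)^p=\inf_{\nu\in\mathcal{C}((S_q)_\#\alpha,(S_q)_\#\beta)}\int_{\Rp\times\Rp}|u-v|^{p/q}\,d\nu(u,v). \]
Writing $r\coloneqq p/q$, the right-hand side is precisely the $r$-transport cost on $(\R,\Lambda_1)$ between the two pushforward measures.

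Next I would evaluate this transport functional at the monotone (quantile) coupling $\nu^{\mathrm{mono}}\coloneqq\big(G_\alpha^{-1},G_\beta^{-1}\big)_\#\mathrm{Leb}_{[0,1]}$, where $G_\alpha,G_\beta$ denote the cumulative distribution functions of $(S_q)_\#\alpha$ and $(S_q)_\#\beta$. Because $S_q$ is increasing, the quantile functions transform as $G_\alpha^{-1}(t)=(F_\alpha^{-1}(t))^q$, and likewise for $\beta$; hence
\[ \int|u-v|^{p/q}\,d\nu^{\mathrm{mono}}=\int_0^1\big|(F_\alpha^{-1}(t))^q-(F_\beta^{-1}(t))^q\big|^{p/q}\,dt=\int_0^1\Lambda_q\big(F_\alpha^{-1}(t),F_\beta^{-1}(t)\big)^p\,dt. \]
Since $\nu^{\mathrm{mono}}$ is admissible, taking the infimum and then the $p$-th root yields the claimed inequality for all $p,q\in[1,\infty)$, with no convexity assumption needed.

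For the equality statement when $q\le p$ I would observe that $r=p/q\ge 1$, so the one-dimensional closed form of \Cref{rem:closed-form}, applied with exponent $r$ to the measures $(S_q)_\#\alpha$ and $(S_q)_\#\beta$, shows that the monotone coupling is in fact optimal for the cost $|u-v|^{r}$. Consequently the infimum above is attained at $\nu^{\mathrm{mono}}$ and the inequality becomes an equality. (Equivalently, for $r\ge1$ the map $s\mapsto|s|^r$ is convex, so $|u-v|^r$ is submodular, which is the classical condition guaranteeing optimality of the monotone rearrangement on the line.)

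The genuinely interesting part is the failure of equality when $q>p$, i.e.\ $r<1$, where $|u-v|^r$ is concave off the diagonal and the monotone coupling is generally suboptimal. Here I would exhibit an explicit counterexample rather than analyze the optimal plan in general. Taking $p=1$, $q=2$, $\alpha=\tfrac12\delta_0+\tfrac12\delta_1$ and $\beta=\tfrac12\delta_{\sqrt{10}}+\tfrac12\delta_{\sqrt{11}}$ on $\Rp$, the monotone coupling matches $0\leftrightarrow\sqrt{10}$ and $1\leftrightarrow\sqrt{11}$ and realizes the quantile value $\tfrac12(\sqrt{10}+\sqrt{10})=\sqrt{10}$, whereas the transposed coupling matching $0\leftrightarrow\sqrt{11}$ and $1\leftrightarrow\sqrt{10}$ gives $\tfrac12(\sqrt{11}+\sqrt{9})=\tfrac12(\sqrt{11}+3)<\sqrt{10}$. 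Thus $d_{\mathrm{W},1}^{(\Rp,\Lambda_2)}(\alpha,\beta)$ is strictly smaller than the quantile integral, so equality fails. The main obstacle is precisely locating such a minimal example: for two-atom measures with equal masses the monotone coupling is often still optimal even in the concave regime, and one must separate the two supports enough that transposing two long transport rays becomes cheaper; once the example is found, its verification is a one-line numerical comparison.
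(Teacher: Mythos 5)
Your argument is correct, and for the inequality and the equality case it follows the same route as the paper: reduce the cost on $(\Rp,\Lambda_q)$ to the cost $|u-v|^{p/q}$ on the line via the snowflake map $S_q$ and the pushforward-of-couplings lemma, evaluate at the quantile coupling to get the upper bound, and for $q\le p$ invoke the one-dimensional closed form with exponent $r=p/q\ge 1$ to see that the quantile coupling is optimal. One small merit of your write-up is that you phrase the reduction as an identity of transport \emph{functionals} valid for all $p,q$, rather than as an identity of Wasserstein distances (the paper's corresponding lemma is stated only for $q\le p$, since $d_{\mathrm{W},p/q}$ is a distance only when $p/q\ge 1$); this lets you use the same reduction for the upper bound in the regime $q>p$ as well. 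Where you genuinely diverge is the strictness claim for $q>p$: the paper starts from the degenerate case $p=1$, $q=\infty$ with $\alpha_0=\tfrac12\delta_1+\tfrac12\delta_2$, $\beta_0=\tfrac12\delta_2+\tfrac12\delta_3$ and then argues by continuity in $(p,q)$ that the strict inequality persists for $p$ near $1$ and $q$ large but finite, whereas you exhibit a single explicit finite-parameter counterexample at $p=1$, $q=2$. Your example checks out: the transposed coupling costs $\tfrac12(\sqrt{11}+3)$ against the quantile value $\sqrt{10}$, and $\sqrt{11}+3<2\sqrt{10}$ is equivalent to $6\sqrt{11}<20$, i.e.\ $11<100/9$, which holds. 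This is more self-contained than the paper's argument, which leaves the joint continuity of both sides in $(p,q)$ as an unproved "it is not hard to see" step. The only cosmetic quibble is your parenthetical appeal to "submodularity": the standard hypothesis for optimality of the monotone rearrangement on $\R$ is that the cost has the form $h(u-v)$ with $h$ convex, which is exactly what $r\ge1$ gives you; the submodularity language is not needed and is slightly loosely used, but nothing in the proof depends on it.
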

	
One important ingredient for the proof of \Cref{thm:closed form solution} is Lemma 3.2 of \citet{chowdhury2019gromov} which we restate here for convenience.
	\begin{lemma}[{\citet[Lemma 3.2]{chowdhury2019gromov}}]\label{lm:pushforward_coupling}
		Let $X,Y$ be two Polish metric spaces and let $f:X\rightarrow\mathbb{R}$ and $g:Y\rightarrow\mathbb{R}$ be measurable maps. Denote by $f\times g:X\times Y\rightarrow\mathbb{R}^2$ the map $(x,y)\mapsto(f(x),g(y))$. Then, for any $\muY\in\mathcal{P}(X)$ and $\muY\in\mathcal{P}(Y)$
		\[(f\times g)_\#\mathcal{C}(\muX,\muY)=\mathcal{C}(f_\#\muY,g_\#\muY). \]
	\end{lemma}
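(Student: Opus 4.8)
The plan is to prove the set equality by establishing the two inclusions separately, with the forward inclusion being an elementary marginal computation and the reverse inclusion relying on disintegration. Throughout I write the intended right-hand side $\mathcal{C}(f_\#\mu_X,g_\#\mu_Y)$.

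First, for the inclusion $(f\times g)_\#\mathcal{C}(\mu_X,\mu_Y)\subseteq\mathcal{C}(f_\#\mu_X,g_\#\mu_Y)$, I would take any coupling $\mu\in\mathcal{C}(\mu_X,\mu_Y)$ and simply verify that the two marginals of $(f\times g)_\#\mu$ are correct. For a Borel set $A\subseteq\mathbb{R}$, the computation
\[(f\times g)_\#\mu(A\times\mathbb{R})=\mu\big(f^{-1}(A)\times Y\big)=\mu_X\big(f^{-1}(A)\big)=f_\#\mu_X(A)\]
uses only the definition of pushforward, the identity $(f\times g)^{-1}(A\times\mathbb{R})=f^{-1}(A)\times Y$, and the fact that $\mu$ has first marginal $\mu_X$; the symmetric computation handles the second marginal. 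Hence $(f\times g)_\#\mu$ lies in $\mathcal{C}(f_\#\mu_X,g_\#\mu_Y)$.

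The substance lies in the reverse inclusion. Given $\nu\in\mathcal{C}(f_\#\mu_X,g_\#\mu_Y)$, I must produce a coupling $\mu\in\mathcal{C}(\mu_X,\mu_Y)$ with $(f\times g)_\#\mu=\nu$. Since $X$ and $Y$ are Polish, the disintegration theorem applies: I would disintegrate $\mu_X$ along $f$ to obtain an $(f_\#\mu_X)$-measurable family $\{\mu_X^s\}_{s\in\mathbb{R}}\subseteq\mathcal{P}(X)$ with $\mu_X^s$ concentrated on the fiber $f^{-1}(s)$ and $\mu_X=\int_{\mathbb{R}}\mu_X^s\,(f_\#\mu_X)(ds)$, and likewise disintegrate $\mu_Y$ along $g$ into $\{\mu_Y^t\}_{t\in\mathbb{R}}$. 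I would then glue these fiberwise product measures against $\nu$, defining
\[\mu\coloneqq\int_{\mathbb{R}\times\mathbb{R}}\big(\mu_X^s\otimes\mu_Y^t\big)\,\nu(ds\times dt)\in\mathcal{P}(X\times Y).\]

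It remains to verify the two required properties of $\mu$. For the marginals, testing against a Borel set $A\subseteq X$ and using $\mu_Y^t(Y)=1$ together with the fact that $\nu$ has first marginal $f_\#\mu_X$ gives $\mu(A\times Y)=\int_{\mathbb{R}}\mu_X^s(A)\,(f_\#\mu_X)(ds)=\mu_X(A)$ by the defining property of the disintegration, and symmetrically $\mu(X\times B)=\mu_Y(B)$, so $\mu\in\mathcal{C}(\mu_X,\mu_Y)$. For the pushforward identity, I would test $(f\times g)_\#\mu$ against product sets $B\times C$ with $B,C\subseteq\mathbb{R}$ Borel: here the concentration property $\mu_X^s(f^{-1}(B))=\mathbbm{1}_B(s)$ for $(f_\#\mu_X)$-a.e.\ $s$ (and the analogue for $g$) collapses the integral to $\int_{\mathbb{R}\times\mathbb{R}}\mathbbm{1}_B(s)\,\mathbbm{1}_C(t)\,\nu(ds\times dt)=\nu(B\times C)$. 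Since product sets form a $\pi$-system generating the Borel $\sigma$-algebra of $\mathbb{R}^2$, this yields $(f\times g)_\#\mu=\nu$. The main obstacle is the careful invocation of the disintegration theorem, in particular the concentration of $\mu_X^s$ on $f^{-1}(s)$, which is exactly what makes the pushforward computation collapse correctly, together with the measurability of the fiberwise assignment needed for the integral defining $\mu$ to make sense; both are guaranteed by the Polish assumption.
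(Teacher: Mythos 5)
Your argument is correct. Note that the paper does not prove this lemma at all --- it restates it verbatim from \citet[Lemma 3.2]{chowdhury2019gromov} and defers to that reference --- so there is no internal proof to compare against; your two-step argument (the elementary marginal computation for the forward inclusion, and disintegration of $\mu_X$ along $f$ and $\mu_Y$ along $g$ followed by gluing the fiber measures against $\nu$ for the reverse inclusion) is the standard proof and is essentially the one given in the cited source. The only points requiring care, namely the measurability of $(s,t)\mapsto(\mu_X^s\otimes\mu_Y^t)(E)$ and the fact that the concentration identity $\mu_X^s(f^{-1}(B))=\mathbbm{1}_B(s)$ holds $\nu$-a.e.\ because $\nu$ has first marginal $f_\#\mu_X$, are exactly the ones you flag, and both are supplied by the Polish hypothesis.
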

	Based on \Cref{lm:pushforward_coupling}, we can show the following auxiliary result.
	\begin{lemma}\label{lm:snowflake wasserstein pq}
	    Let $1\leq q\leq p<\infty$. Assume that $\alpha$ and $\beta$ are compactly supported probability measures on $\Rp$. Then,
	    \[\left(d_{\mathrm{W},p}^{(\Rp,\Lambda_q)}(\alpha,\beta)\right)^p =\left(d_{\mathrm{W},\frac{p}{q}}^{(\Rp,\Lambda_1)}((S_q)_\#\alpha,(S_q)_\#\beta)\right)^\frac{p}{q}, \]
	    where $S_q:\Rp\rightarrow\Rp$ taking $x$ to $x^q$ is the $q$-snowflake transform defined in \Cref{subsec:relation between ugw and usturm}.
	\end{lemma}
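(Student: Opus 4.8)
The plan is to reduce the identity to an algebraic relation between the two ground costs, followed by a change of variables on the space of couplings. The starting point is the pointwise identity
\[
\Lambda_q(x,y)^p = \big|x^q-y^q\big|^{p/q} = \Lambda_1\big(S_q(x),S_q(y)\big)^{p/q},\qquad x,y\in\Rp,
\]
which follows directly from $\Lambda_q(a,b)=|a^q-b^q|^{1/q}$, $\Lambda_1(a,b)=|a-b|$ and $S_q(x)=x^q$. Since $\alpha,\beta$ are compactly supported, every integrand appearing below is bounded and hence integrable against any coupling, so all of the optimal transport problems are finite and the manipulations are justified.

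First I would unfold the definition of the $p$-Wasserstein distance with ground cost $\Lambda_q$ and insert the identity above, obtaining
\[
\big(d_{\mathrm{W},p}^{(\Rp,\Lambda_q)}(\alpha,\beta)\big)^p
=\inf_{\mu\in\mathcal{C}(\alpha,\beta)}\int_{\Rp\times\Rp}\Lambda_1\big(S_q(x),S_q(y)\big)^{p/q}\,\mu(dx\times dy).
\]
Next I would apply the change-of-variables formula for pushforwards to the map $T\coloneqq S_q\times S_q$: for $h(u,v)\coloneqq\Lambda_1(u,v)^{p/q}$ one has $\int h\circ T\,d\mu=\int h\,d(T_\#\mu)$, so the right-hand integral equals $\int \Lambda_1(u,v)^{p/q}\,\nu(du\times dv)$ with $\nu\coloneqq (S_q\times S_q)_\#\mu$.

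The key structural step is then to invoke \Cref{lm:pushforward_coupling} with $X=Y=\Rp$ and $f=g=S_q$, which gives the set identity $(S_q\times S_q)_\#\mathcal{C}(\alpha,\beta)=\mathcal{C}\big((S_q)_\#\alpha,(S_q)_\#\beta\big)$. In particular the assignment $\mu\mapsto\nu$ maps $\mathcal{C}(\alpha,\beta)$ \emph{onto} $\mathcal{C}\big((S_q)_\#\alpha,(S_q)_\#\beta\big)$, so infimizing over $\mu$ on the left is the same as infimizing over $\nu$ on the right. This yields
\[
\big(d_{\mathrm{W},p}^{(\Rp,\Lambda_q)}(\alpha,\beta)\big)^p
=\inf_{\nu\in\mathcal{C}((S_q)_\#\alpha,(S_q)_\#\beta)}\int \Lambda_1(u,v)^{p/q}\,\nu(du\times dv),
\]
and the right-hand side is by definition $\big(d_{\mathrm{W},p/q}^{(\Rp,\Lambda_1)}((S_q)_\#\alpha,(S_q)_\#\beta)\big)^{p/q}$, exactly the claimed identity.

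The hypothesis $q\le p$ enters precisely at this last step: it guarantees $p/q\ge 1$, so that $d_{\mathrm{W},p/q}^{(\Rp,\Lambda_1)}$ is a genuine Wasserstein distance of admissible order and the recognition of the right-hand side is legitimate. I do not expect any serious obstacle here; the only points requiring care are the surjectivity of the coupling correspondence (supplied verbatim by \Cref{lm:pushforward_coupling}) and the measurability and integrability bookkeeping, both of which are routine given that $S_q$ is continuous and the measures are compactly supported.
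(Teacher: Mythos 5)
Your proposal is correct and follows essentially the same route as the paper: rewrite $\Lambda_q(x,y)^p$ as $|S_q(x)-S_q(y)|^{p/q}$, push the coupling forward under $S_q\times S_q$, invoke \Cref{lm:pushforward_coupling} to identify the image of $\mathcal{C}(\alpha,\beta)$ with $\mathcal{C}((S_q)_\#\alpha,(S_q)_\#\beta)$, and use $p/q\geq 1$ to recognize the resulting infimum as the order-$p/q$ Wasserstein distance. No gaps.
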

	\begin{proof}
		\begin{align*}
		\left(d_{\mathrm{W},p}^{(\Rp,\Lambda_q)}(\alpha,\beta)\right)^p &= \inf_{\mu\in\mathcal{C}(\alpha,\beta)}\int_{\Rp\times \Rp}(\Lambda_q(x,y))^p\mu(dx\times dy)\\
		&=\inf_{\mu\in\mathcal{C}(\alpha,\beta)}\int_{\Rp\times \Rp}|S_q(x)-S_q(y)|^\frac{p}{q}\mu(dx\times dy)\\
		&=\inf_{\mu\in\mathcal{C}(\alpha,\beta)}\int_{\Rp\times \Rp}|s-t|^\frac{p}{q}(S_q\times S_q)_\#\mu(ds\times dt)\\
		&=\left(d_{\mathrm{W},\frac{p}{q}}^{(\Rp,\Lambda_1)}((S_q)_\#\alpha,(S_q)_\#\beta)\right)^\frac{p}{q},
		\end{align*}
		where we use $\frac{p}{q}\geq 1$ and Lemma \ref{lm:pushforward_coupling} in the last equality. 
	\end{proof}
	With \Cref{lm:snowflake wasserstein pq} at our disposal, we can demonstrate \Cref{thm:closed form solution}.
	\begin{proof}[Proof of \Cref{thm:closed form solution}]
		We first note that $d_{\mathrm{W},p}^{(\Rp,\Lambda_q)}(\alpha,\beta)=\inf_{(\xi,\eta)}\big(\mathbb{E}(\Lambda_q(\xi,\eta)^p)\big)^\frac{1}{p}$, where $\xi$ and $\eta$ are two random variables with marginal distributions $\alpha$ and $\beta$, respectively. Moreover, let $\zeta$ be the random variable uniformly distributed on $[0,1]$, then $F_\alpha^{-1}(\zeta)$ has distribution function $F_\alpha$ and $F_\beta^{-1}(\zeta)$ has distribution function $F_\beta$ (see for example \citet{vallender1974calculation}). Let $\xi=F_\alpha^{-1}(\zeta)$ and $\eta=F_\beta^{-1}(\zeta)$, then we have
		\[d_{\mathrm{W},p}^{(\Rp,\Lambda_q)}(\alpha,\beta)\leq\big(\mathbb{E}(\Lambda_q(\xi,\eta)^p)\big)^\frac{1}{p}=\left(\int_0^1\Lambda_q(F_\alpha^{-1}(t),F_\beta^{-1}(t))^pdt\right)^\frac{1}{p}. \]
		
		Next, we assume that $q\leq p$. By \Cref{lm:snowflake wasserstein pq}, we have that
		\begin{align*}
		\left(d_{\mathrm{W},p}^{(\Rp,\Lambda_q)}(\alpha,\beta)\right)^p =\left(d_{\mathrm{W},\frac{p}{q}}^{(\Rp,\Lambda_1)}((S_q)_\#\alpha,(S_q)_\#\beta)\right)^\frac{p}{q}.
		\end{align*}
		Then,
		\[\left(d_{\mathrm{W},\frac{p}{q}}^{(\Rp,\Lambda_1)}((S_q)_\#\alpha,(S_q)_\#\beta)\right)^\frac{p}{q}=\int_0^1|F_{\alpha,q}^{-1}(t)-F_{\beta,q}^{-1}(t)|^\frac{p}{q}dt, \]
		where $F_{\alpha,q}$ and $F_{\beta,q}$ are distribution functions of $(S_q)_\#\alpha$ and $(S_q)_\#\beta$, respectively. It is easy to verify that  $F_{\alpha,q}(t)=(F_\alpha^{-1}(t))^q$ and $F_{\beta,q}(t)=(F_\beta^{-1}(t))^q$. Therefore, 
		\[d_{\mathrm{W},p}^{(\Rp,\Lambda_q)}(\alpha,\beta)= \left(\int_0^1\Lambda_q(F_\alpha^{-1}(t),F_\beta^{-1}(t))^pdt\right)^\frac{1}{p}\]
		
		Finally, we demonstrate that for $q>p$ the equality does not hold in general. We first consider the extreme case $p=1$ and $q=\infty$ (though we require $q<\infty$ in the assumptions of the theorem, we relax this for now). Let $\alpha_0=\frac{1}{2}\delta_1+\frac{1}{2}\delta_2$ and $\beta_0 = \frac{1}{2}\delta_2+\frac{1}{2}\delta_3$ where $\delta_x$ means the Dirac measure at point $x\in\Rp$. Then, we have that 
		\[d_{\mathrm{W},1}^{(\Rp,\Lambda_\infty)}(\alpha_0,\beta_0)=\frac{3}{2}<\frac{5}{2}=\int_0^1\Lambda_\infty(F_\alpha^{-1}(t),F_\beta^{-1}(t))dt. \]
		It is not hard to see that both $d_{\mathrm{W},p}^{(\Rp,\Lambda_q)}(\alpha_0,\beta_0)$ and $\left(\int_0^1\Lambda_q(F_\alpha^{-1}(t),F_\beta^{-1}(t))^pdt\right)^\frac{1}{p}$ are continuous with respect to $p\in[1,\infty)$ and $q\in[1,\infty]$. Then, for $p$ close to 1 and $q<\infty$ large enough, and in particular, $p<q$, we have that
		\[d_{\mathrm{W},p}^{(\Rp,\Lambda_q)}(\alpha_0,\beta_0)< \left(\int_0^1\Lambda_q(F_\alpha^{-1}(t),F_\beta^{-1}(t))^pdt\right)^\frac{1}{p}.\]
		
	\end{proof}
\subsubsection{Miscellaneous}
In the remainder of this section, we collect several technical results that find implicit or explicit usage throughout \Cref{sec:preliminaries}. 
\begin{lemma}\label{lm:quotient-finite}
Let	$X$ be a complete ultrametric space. Then, $X$ is compact ultrametric space if and only if for any $t>0$, $X_t$ is a finite space. 
\end{lemma}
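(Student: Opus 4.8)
The plan is to prove the two implications separately, using the classical fact that a metric space is compact if and only if it is complete and totally bounded. Since $X$ is assumed complete throughout, the reverse implication will reduce to deducing total boundedness from the finiteness of every quotient $X_t$, while the forward implication will exploit the special feature of ultrametric geometry that $X_t$ is literally the collection of closed balls of radius $t$, and that such balls are clopen and pairwise disjoint.

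For the forward direction, suppose $X$ is compact and fix $t>0$. Recall from \Cref{rem:corresponding dendrogram} that each class $[x]_t$ is the closed ball $B_t(x)=\{x'\in X\,|\,\uX(x,x')\leq t\}$. First I would check that each such ball is open: if $x'\in B_t(x)$ and $z\in B_t^\circ(x')$, then the ultrametric inequality gives $\uX(x,z)\leq\max(\uX(x,x'),\uX(x',z))\leq t$, so $B_t^\circ(x')\subseteq B_t(x)$; hence $B_t(x)$ contains an open neighborhood of each of its points. Since distinct classes are disjoint, the family $X_t=\{[x]_t\,|\,x\in X\}$ is a partition of $X$ into nonempty open sets, i.e.\ an open cover. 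By compactness it admits a finite subcover, but a partition has no proper subcover, so $X_t$ must itself be finite.

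For the reverse direction, assume $X$ is complete and that $X_t$ is finite for every $t>0$; I would show $X$ is totally bounded, which together with completeness yields compactness. Given $\eps>0$, apply the hypothesis with $t=\eps/2$ and write $X_{\eps/2}=\{[x_1]_{\eps/2},\dots,[x_n]_{\eps/2}\}$ for finitely many representatives $x_1,\dots,x_n$. Every $x\in X$ lies in some class $[x_i]_{\eps/2}$, which means $\uX(x,x_i)\leq\eps/2<\eps$, so $\{x_1,\dots,x_n\}$ is a finite $\eps$-net and $X$ is totally bounded. The argument is elementary and the only points needing care are structural rather than computational: in the forward direction one must use that ultrametric balls are clopen so that the quotient genuinely forms an open cover with no proper subcover, and in the reverse direction one must honestly invoke completeness, since total boundedness alone is insufficient. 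Indeed, the set $\{1/n\,|\,n\in\mathbb N\}\subseteq(\Rp,\Lambda_\infty)$ has finite quotient at every level $t>0$ yet fails to be compact, as it omits its unique cluster point $0$ (cf.\ \Cref{lm:compact of R}).
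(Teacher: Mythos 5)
Your proof is correct and follows essentially the same route as the paper: the reverse direction is verbatim the paper's argument (a finite $X_\eps$ yields a finite $\eps$-net, hence total boundedness, hence compactness together with completeness). For the forward direction the paper simply cites \cite[Lemma 2.3]{wan2020novel}, whereas you supply the standard self-contained argument that closed $t$-balls are clopen, so $X_t$ is a pairwise-disjoint open cover admitting no proper subcover and must therefore be finite; this fills in a step the paper outsources but is not a genuinely different approach.
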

\begin{proof}
    {\citet[Lemma 2.3]{wan2020novel}} proves that whenever $X$ is compact, $X_t$ is finite for any $t>0$.
	    
	    Conversely, we assume that $X_t$ is finite for any $t>0$. We only need to prove that $X$ is totally bounded. For any $\eps>0$, $X_\eps$ is a finite set and thus there exists $x_1,\ldots,x_n\in X$ such that $X_\eps=\left\{[x_1]_\eps,\ldots,[x_n]_\eps\right\}.$ Now, for any $x\in X$, there exists $x_i$ for some $i=1,\ldots,n$ such that $x\in[x_i]_\eps$. This implies that $u_X(x,x_i)\leq \eps$. Therefore, the set $\{x_1,\ldots,x_n\}\subseteq X$ is an $\eps$-net of $X$. Then, $X$ is totally bounded and thus compact. 
\end{proof}

\begin{lemma}\label{lm:vx characterization}
$V(X)$ is the collection of all closed balls in $X$ except for singletons $\{x\}$ such that $x$ is a cluster point in $X$. In particular, $X\in V(X)$ and for any $x\in X$, if $x$ is not a cluster point, then $\{x\}\in V(X)$.
	\end{lemma}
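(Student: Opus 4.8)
The plan is to translate everything into the language of closed balls via \Cref{rem:corresponding dendrogram}, which identifies each block $[x]_t$ with the closed ball $B_t(x)=\{x'\in X:\uX(x,x')\leq t\}$. Under this identification $V(X)=\bigcup_{t>0}\theta_X(t)=\{B_t(x):x\in X,\,t>0\}$ is exactly the collection of closed balls of \emph{positive} radius, so the task reduces to matching this against the collection of all closed balls with the cluster-point singletons removed. I would split the closed balls of $X$ into non-singletons and singletons and treat the two regimes separately, using throughout the standard ultrametric facts that every point of a closed ball is a center (if $y\in B_t(x)$ then $B_t(y)=B_t(x)$) and that any two closed balls are either nested or disjoint.

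First I would show the easy inclusion, namely that every closed ball other than a cluster-point singleton belongs to $V(X)$. If a closed ball $B$ has at least two points, then in any representation $B=B_t(x)$ the radius must satisfy $t>0$ (a radius-zero ball is a singleton), so $B=[x]_t\in V(X)$. If $B=\{x\}$ is a singleton with $x$ \emph{not} a cluster point, then by the very definition of an isolated point there is some $\eps>0$ with $B_\eps(x)=\{x\}$, whence $\{x\}=[x]_\eps\in V(X)$. This already yields the ``in particular'' clause for non-cluster points, and the claim $X\in V(X)$ follows from condition (3) of \Cref{def:proper dendrogram}, which provides $T>0$ with $\theta_X(T)=\{X\}$, so that $X=[x]_T\in V(X)$ for any $x$.

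The reverse direction carries the real content: I must rule out that a cluster-point singleton $\{x\}$ ever appears in $V(X)$. Suppose toward a contradiction that $\{x\}=[x']_t$ for some $x'\in X$ and some $t>0$. Since $x\in B_t(x')$, the center property gives $B_t(x')=B_t(x)$, so $B_t(x)=\{x\}$ with $t>0$; this says that no point other than $x$ lies within distance $t$ of $x$, i.e.\ $x$ is isolated, contradicting that $x$ is a cluster point. I expect this step to be the main obstacle, not because the algebra is hard but because it requires pinning down the correct characterization of a cluster point in the metric setting: $x$ is a cluster point precisely when it is not isolated, equivalently when $B_t(x)\supsetneq\{x\}$ for every $t>0$. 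Establishing this equivalence --- using that in a metric space a neighborhood of $x$ meeting $X$ in only finitely many points can be shrunk to isolate $x$ --- is the delicate bookkeeping underlying the argument.

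Combining the two inclusions shows that $V(X)$ is exactly the set of closed balls with the cluster-point singletons deleted, which is the assertion of the lemma; the two ``in particular'' statements are recorded along the way.
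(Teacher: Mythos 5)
Your proposal is correct and follows essentially the same route as the paper: identify $V(X)$ with the closed balls of positive radius, note that every multi-point ball and every isolated-point singleton arises this way (with $X$ itself obtained at a sufficiently large level), and exclude cluster-point singletons because $\{x\}=[x']_t$ with $t>0$ would force $B_t(x)=\{x\}$ and hence isolate $x$. The only cosmetic difference is that the paper realizes $X=[x]_{\diam X}$ rather than invoking condition (3) of \Cref{def:proper dendrogram}, and it simply takes for granted the equivalence between ``cluster point'' and ``non-isolated'' that you flag as the delicate step.
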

	\begin{proof}
	    Given any $t>0$ and $x\in X$, $[x]_t=B_t(x)=\{x'\in X|\, u_X(x,x')\leq t\}$. Therefore, $V(X)$ is a collection of closed balls in $X$. On the contrary, any closed ball $B_t(x)$ with positive radius $t>0$ coincides with $[x]_t\in\theta_X(t)$ and thus belongs to $V(X)$. Now, for any singleton $\{x\}=B_0(x)$. If $x$ is not a cluster point, then there exists $t>0$ such that $B_t(x)=\{x\}$ which implies that $\{x\}\in V(X)$. If $x$ is a cluster point, then for any $t>0$, $\{x\}\subsetneqq B_t(x)=[x]_t$. In particular, this implies that $\{x\}\neq [x]_t$ for all $t>0$ and thus $\{x\}\notin V(X)$. In conclusion, $V(X)$ is the collection of all closed balls in $X$ except for singletons $\{x\}$ such that $x$ is a cluster point in $X$.
	    
	    If $X$ is a one point space, then obviously $X\in V(X)=\{X\}$. Otherwise, let $\delta\coloneqq\diam X>0$, then for any $x\in X$ we have that $X=[x]_\delta\in V(X)$. As for singletons $\{x\}$ where $x\in X$ is not a cluster point, we have proved above that $\{x\}\in V(X)$.
	\end{proof}

\section{Missing details from Section \ref{sec:ultrametric GW distance}}\label{sec:details sec3}

\subsection{Proofs from Section \ref{subsec:Sturms ultrametric GW distance}}\label{sec:proofs sec31}
Next, we give the missing proofs of the results stated in \Cref{subsec:Sturms ultrametric GW distance}.
\subsubsection{Proof of \Cref{prop:usturm_basic}}\label{sec:proof of prop usturm_basic}
		\begin{enumerate}
			\item This directly follows from the definitions of $\usturm{p}$ and $\dsturm{p}$ (see \Cref{eq:ultra Sturm} and  \Cref{eq:stdSturm}).
			\item This simply follows from Jensen's inequality.
			
			\item By (2), we know that $\{\usturm{n}(\X,\Y)\}_{n\in\mathbb{N}}$ is an increasing sequence with a finite upper bound $\usturm{\infty}(\X,\Y)$. Therefore, $L\coloneqq\lim_{n\rightarrow\infty}\usturm{n}(\X,\Y)$ exists and $L\leq \usturm{\infty}(\X,\Y)$. 
			
			Next, we come to the opposite inequality. By \Cref{prop:usturm_optimal}, there exist $u_n\in\mathcal{D}^\mathrm{ult}(\uX,\uY)$ and $\mu_n\in\mathcal{C}(\muX,\muY)$ such that 
			\[\left(\int_{X\times Y}(u_n(x,y))^n\mu_n(dx\times dy)\right)^\frac{1}{n}= \usturm{n}(\X,\Y). \]
			By \Cref{lm:measure coupling compact} and \Cref{lm:metric coupling bounded integral}, the sequence $\{u_n\}_{n\in\N}$ uniformly converges to some $u\in\mathcal{D}^\mathrm{ult}(\uX,\uY)$ and $\{\mu_n\}_{n\in\N}$ weakly converges to some $\mu\in\mathcal{C}(\muX,\muY)$ (after taking appropriate subsequences of both sequences). Let $M:=\sup_{(x,y)\in \supp{\mu}}u(x,y)$. Let $\eps>0$ and let $U=\{(x,y)\in X\times Y\,|\,u(x,y)> M-\eps \}.$ Then, $\mu(U)>0$. Since $U$ is open, it follows that there exists a small $\eps_1>0$ such that $\mu_n(U)>\mu(U)-\eps_1>0$ for all $n$ large enough (see e.g. \citet[Thm. 2.1]{billingsleyConvergenceProbabilityMeasures2013}). Moreover, by uniform convergence of the sequence $\{u_n\}_{n\in\N}$, we have $|u(x,y)-u_n(x,y)|\leq\eps$ for any $(x,y)\in X\times Y$ when $n$ is large enough. Therefore, we obtain for $n$ large enough
			\begin{align*}
			\left(\int_{X\times Y}(u_n(x,y))^n\mu_n(dx\times dy)\right)^\frac{1}{n}\geq (\mu_n(U))^\frac{1}{n}(M-2\eps)\geq(\mu(U)-\eps_1)^\frac{1}{n}(M-2\eps).
			\end{align*}
			Letting $n\rightarrow\infty$, we obtain $L\geq M-2\eps$. Since $\eps>0$ is arbitrary, we obtain $L\geq M\geq \usturm{\infty}(\X,\Y)$.
		\end{enumerate}

	\subsubsection{Proof of \Cref{thm:sturms um}}\label{sec:proof of thm sturms um}
In this section, we devote to prove \Cref{thm:sturms um}. To this end, we will first verify the existence of optimal metrics and optimal couplings in \Cref{eq:pseudometric def of usturm}.

\begin{proposition}[Existence of optimal couplings]\label{prop:usturm_optimal}
	Let $\X=\ummspaceX$ and $\Y=\ummspaceY$ be {compact} ultrametric measure spaces. Then, there always exist $u\in\mathcal{D}^\mathrm{ult}(\uX,\uY)$ and $\mu\in\mathcal{C}(\muX,\muY)$ such that for $1\leq p<\infty$
		\[\usturm{p}(\X,\Y)=\left(\int_{X\times Y}(u(x,y))^p\mu(dx\times dy)\right)^\frac{1}{p}\]
		and such that 
		\[\usturm{\infty}(\X,\Y)=\sup_{(x,y)\in\supp{\mu}}u(x,y).\]
	\end{proposition}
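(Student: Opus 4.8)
The plan is to prove this existence statement by the direct method: fix $p$, take a minimizing sequence of pairs $(u_n,\mu_n) \in \mathcal{D}^\mathrm{ult}(\uX,\uY) \times \mathcal{C}(\muX,\muY)$ for the functional in \Cref{eq:pseudometric def of usturm}, extract subsequences converging in each coordinate, and show that the limiting pair is again admissible and attains the infimum. The first reduction I would make is to pass to pseudo-ultrametrics whose cross distances are uniformly bounded. Setting $D \coloneqq \max(\diam X, \diam Y)$, one checks that for any $u \in \mathcal{D}^\mathrm{ult}(\uX,\uY)$ the truncation $\tilde u \coloneqq \min(u, D)$ is still a pseudo-ultrametric with the same restrictions to $X \times X$ and $Y \times Y$, the key identity being $\min(\max(s,t),D) = \max(\min(s,D),\min(t,D))$, which shows that truncation preserves the ultrametric inequality. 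Since $\tilde u \le u$ pointwise and the restrictions are already bounded by $D$, truncation does not increase the functional, so I may assume $0 \le u_n \le D$ throughout.

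Next I would establish compactness in both coordinates. For the couplings, $X \times Y$ is compact, so $\mathcal{C}(\muX,\muY)$ is tight and weakly closed, hence weakly sequentially compact (this is exactly \Cref{lm:measure coupling compact}); I extract a subsequence with $\mu_n \to \mu$ weakly. For the pseudo-ultrametrics, the crucial observation is that the family $\{u_n\}$ is equicontinuous on $X \times Y$: iterating the ultrametric inequality gives $u(x,y) \le \max(\uX(x,x'), u(x',y'), \uY(y,y'))$, and a short case analysis shows that $\uX(x,x') \le \eps$ together with $\uY(y,y') \le \eps$ forces $|u(x,y) - u(x',y')| \le \eps$. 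Combined with the uniform bound $u_n \le D$, the Arzel\`a--Ascoli theorem yields a uniformly convergent subsequence $u_n \to u$ on the compact space $X \times Y$ (the content of \Cref{lm:metric coupling bounded integral}); the ultrametric inequalities are closed conditions and thus persist in the limit, so $u \in \mathcal{D}^\mathrm{ult}(\uX,\uY)$. For $1 \le p < \infty$ I would then conclude by continuity of the functional along the subsequence: since $t \mapsto t^p$ is uniformly continuous on $[0,D]$, $(u_n)^p \to u^p$ uniformly, and combining this with the weak convergence $\mu_n \to \mu$ applied to the bounded continuous integrand $u^p$ gives $\int_{X\times Y} (u_n)^p \, d\mu_n \to \int_{X\times Y} u^p \, d\mu$, so $(u,\mu)$ attains $\usturm{p}(\X,\Y)$.

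For $p = \infty$ the functional $\mu \mapsto \sup_{(x,y)\in\supp\mu} u(x,y)$ is no longer continuous under weak convergence, and this is where the main difficulty lies. Here I would argue via the portmanteau theorem: writing $s \coloneqq \usturm{\infty}(\X,\Y)$, minimality gives $u_n \le s + \eps$ on $\supp{\mu_n}$ for large $n$, hence $\mu_n(\{u_n > s+\eps\}) = 0$. For fixed $\eps > 0$, uniform convergence ensures that the open set $U \coloneqq \{u > s+2\eps\}$ is eventually contained in $\{u_n > s+\eps\}$, so $\mu_n(U) = 0$ and, by the portmanteau inequality for open sets, $\mu(U) \le \liminf_n \mu_n(U) = 0$. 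Since $U$ is open and $\mu$-null, it is disjoint from $\supp{\mu}$, giving $u \le s + 2\eps$ on $\supp{\mu}$; letting $\eps \to 0$ shows $\sup_{(x,y)\in\supp{\mu}} u(x,y) \le s$, while the reverse inequality holds because $(u,\mu)$ is admissible. Thus $(u,\mu)$ is optimal for $p=\infty$ as well. The main obstacle I anticipate is precisely this interplay between the support of the limit coupling and weak convergence in the $p=\infty$ case; the $p<\infty$ argument, by contrast, is a routine continuity computation once the two compactness lemmas are in place.
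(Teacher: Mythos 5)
Your proposal is correct and follows essentially the same route as the paper's proof: take a minimizing sequence for the representation in \Cref{eq:pseudometric def of usturm}, use weak sequential compactness of $\mathcal{C}(\muX,\muY)$ (\Cref{lm:measure coupling compact}) and Arzel\`a--Ascoli compactness of the pseudo-ultrametric couplings (\Cref{lm:metric coupling bounded integral}), and pass to the limit. Two of your touches go slightly beyond the paper's write-up --- the truncation $\min(u,D)$ as a clean, self-contained substitute for the paper's pointwise-boundedness argument inside \Cref{lm:metric coupling bounded integral}, and the explicit portmanteau treatment of the $p=\infty$ case, which the paper dismisses as ``similar'' --- but both are refinements of the same argument rather than a different proof.
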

	\begin{proof}
	    The following proof is a suitable adaptation from proof of Lemma 3.3 in \cite{sturm2006geometry}. We will only prove the claim for the case $p<\infty$ since the case $p=\infty$ can be shown in a similar manner. Let $u_n\in\mathcal{D}^\mathrm{ult}(\uX,\uY)$ and $\mu_n\in\mathcal{C}(\muX,\muY)$ be such that 
		\[\left(\int_{X\times Y}(u_n(x,y))^p\mu_n(dx\times dy)\right)^\frac{1}{p}\leq \usturm{p}(\X,\Y)+\frac{1}{n}. \]
		By \Cref{lm:measure coupling compact}, $\{\mu_n\}_{n\in\N}$ weakly converges (after taking an appropriate subsequence) to some $\mu\in\mathcal{C}(\muX,\muY)$. By \Cref{lm:metric coupling bounded integral}, $\{u_n\}_{n\in\N}$ uniformly converges (after taking an appropriate subsequence) to some $u\in\mathcal{D}^\mathrm{ult}(\uX,\uY)$. Then, it is easy to verify that
		\[\left(\int_{X\times Y}(u(x,y))^p\mu(dx\times dy)\right)^\frac{1}{p}\leq \usturm{p}(\X,\Y). \]
	\end{proof}
	
	As a direct consequence of the proposition, we get the subsequent result.
	\begin{corollary}\label{coro:usturm_optimal}
	Fix $1\leq p\leq\infty$. Let $\X=\ummspaceX$ and $\Y=\ummspaceY$ be {compact} ultrametric measure spaces. Then, there exist a compact ultrametric space $Z$ and isometric embeddings $\phi:X\hookrightarrow Z$ and $\psi:Y\hookrightarrow Z$ such that
	\[\usturm{p}(\X,\Y)=d_{\mathrm{W},p}^Z(\phi_\#\muX,\psi_\#\muY).\]
	\end{corollary}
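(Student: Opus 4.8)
The plan is to feed the optimal data furnished by \Cref{prop:usturm_optimal} into a metric identification that turns the associated pseudo-ultrametric into a genuine compact ultrametric space. First I would invoke \Cref{prop:usturm_optimal} to obtain $u\in\mathcal{D}^\mathrm{ult}(\uX,\uY)$ and $\mu\in\mathcal{C}(\muX,\muY)$ realizing the distance, i.e. $\usturm{p}(\X,\Y)=\left(\int_{X\times Y}u(x,y)^p\,\mu(dx\times dy)\right)^{1/p}$ for $p<\infty$, and $\usturm{\infty}(\X,\Y)=\sup_{(x,y)\in\supp{\mu}}u(x,y)$ for $p=\infty$.

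Next I would form the quotient $Z\coloneqq(X\sqcup Y)/\!\sim$, where $a\sim b\iff u(a,b)=0$; the pseudo-ultrametric $u$ descends to a genuine ultrametric $u_Z$ on $Z$. Define $\phi\colon X\to Z$ and $\psi\colon Y\to Z$ as the compositions of the canonical inclusions into $X\sqcup Y$ with the quotient projection. Because $u|_{X\times X}=\uX$ is an honest ultrametric, distinct points of $X$ remain distinct and equidistant in $Z$, so $\phi$ is an isometric embedding, and likewise $\psi$; in particular $d_Z(\phi(x),\psi(y))=u(x,y)$ for all $(x,y)$. Compactness of $Z$ follows since $Z=\phi(X)\cup\psi(Y)$ is a union of two isometric (hence continuous) images of the compact spaces $X$ and $Y$, and a finite union of compact sets is compact.

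Then I would sandwich the two quantities. For the upper bound, push $\mu$ forward to $\nu\coloneqq(\phi\times\psi)_\#\mu$, which is a coupling of $\phi_\#\muX$ and $\psi_\#\muY$ by the standard coupling-pushforward identity (cf. \Cref{lm:pushforward_coupling}); a change of variables using $d_Z(\phi(x),\psi(y))=u(x,y)$ gives $d_{\mathrm{W},p}^Z(\phi_\#\muX,\psi_\#\muY)^p\le\int_{Z\times Z}d_Z^p\,d\nu=\int_{X\times Y}u^p\,d\mu=\usturm{p}(\X,\Y)^p$, with the $p=\infty$ case identical upon replacing the integral by $\sup_{\supp{\nu}}$. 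For the lower bound, $(Z,u_Z)$ is an admissible common ultrametric space with isometric embeddings $\phi,\psi$, so the definition of $\usturm{p}$ in \eqref{eq:ultra Sturm} forces $\usturm{p}(\X,\Y)\le d_{\mathrm{W},p}^Z(\phi_\#\muX,\psi_\#\muY)$. Combining the two inequalities yields the asserted equality.

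The only delicate points, none of them hard, are checking that the metric identification produces a \emph{compact} ultrametric space and that the inclusions remain isometric embeddings after quotienting; both rest on the fact that $u$ restricts to $\uX$ and to $\uY$ on the respective factors, so no two points of $X$ (nor any two points of $Y$) are collapsed. I expect the main conceptual step to be simply recognizing that the two inequalities close the loop, one supplied by the explicit optimal coupling $\mu$ and the other by $Z$ being a competitor in the infimum defining $\usturm{p}$.
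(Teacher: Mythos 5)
Your proposal is correct and follows essentially the route the paper intends: the paper treats this corollary as an immediate consequence of \Cref{prop:usturm_optimal} combined with the metric identification of a pseudo-ultrametric space developed in \Cref{sec:Wasserstein pseudometric}, where \Cref{eq:Wasserstein pseudo metric vs induced Wasserstein} already records that the Wasserstein pseudometric on $(X\sqcup Y,u)$ equals the Wasserstein distance on the induced quotient metric space. Your quotient construction, the isometry and compactness checks, and the two-inequality sandwich are exactly the content that the paper leaves implicit.
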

	
	Before we come to the proof of \Cref{thm:sturms um}, it remains to establish another auxiliary result. We ensure that the Wasserstein pseudometric of order $p$ on a compact pseudo-ultrametric space $(X,u_X)$ is for $p\in[1,\infty)$ a $p$-pseudometric and for $p=\infty$ a pseudo-ultrametric, i.e., we prove for $1\leq p<\infty$ that for all $\alpha_1,\alpha_2,\alpha_3\in \mathcal{P}(X)$
	\[\pseudoWasser{p}^{(X,u_X)}(\mu_1,\mu_3)\leq\left(\left(\pseudoWasser{p}^{(X,u_X)}(\mu_1,\mu_2)\right)^p+\left(\pseudoWasser{p}^{(X,u_X)}(\mu_2,\mu_3)\right)^p \right)^{1/p}\]
	and for $p=\infty$ that for all $\alpha_1,\alpha_2,\alpha_3\in \mathcal{P}(X)$
    \[\pseudoWasser{p}^{(X,u_X)}(\mu_1,\mu_3)\leq\max\left(\pseudoWasser{p}^{(X,u_X)}(\mu_1,\mu_2),\pseudoWasser{p}^{(X,u_X)}(\mu_2,\mu_3) \right).\]
	\begin{lemma}\label{lemma:wasserstein p-metrics}
		Let $(X,\uX)$ be a compact pseudo-ultrametric space. Then, for $1\leq p\leq\infty $ the $p$-Wasserstein metric $\pseudoWasser{p}^{(X,\uX)}$ is a $p$-pseudometric on $\mathcal{P}(X)$. In particular, when $p=\infty$, it is an pseudo-ultrametric on $\mathcal{P}(X)$.  \end{lemma}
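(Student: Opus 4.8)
The plan is to reduce the whole statement to the pointwise (strong) triangle inequality satisfied by the pseudo-ultrametric $u_X$ together with a gluing construction for couplings. The easy axioms come for free: non-negativity is clear since the integrand $u_X$ is non-negative; symmetry of $\pseudoWasser{p}^{(X,u_X)}$ follows from the symmetry of $u_X$ and the fact that swapping the two coordinates turns a coupling of $(\mu_1,\mu_2)$ into one of $(\mu_2,\mu_1)$; and the vanishing on the diagonal follows by testing with the diagonal coupling $(\mathrm{id}\times\mathrm{id})_\#\mu$, on whose support $u_X(x,x)=0$. Thus the only substantial point is the triangle-type inequality, for which I fix $\mu_1,\mu_2,\mu_3\in\mathcal P(X)$.

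First I would invoke the existence of optimal couplings (which holds by weak compactness of $\mathcal C(\cdot,\cdot)$ over the compact space $X$ together with lower semicontinuity of the cost, as in the argument for \Cref{prop:usturm_optimal}) to select $\mu_{12}\in\mathcal C(\mu_1,\mu_2)$ and $\mu_{23}\in\mathcal C(\mu_2,\mu_3)$ realizing $\pseudoWasser{p}^{(X,u_X)}(\mu_1,\mu_2)$ and $\pseudoWasser{p}^{(X,u_X)}(\mu_2,\mu_3)$; alternatively one may take $\eps$-optimal couplings and let $\eps\to0$. Next I would apply the gluing lemma: disintegrating $\mu_{12}$ and $\mu_{23}$ along their common marginal $\mu_2$ yields a measure $\pi\in\mathcal P(X\times X\times X)$ with $(\mathrm{proj}_{12})_\#\pi=\mu_{12}$ and $(\mathrm{proj}_{23})_\#\pi=\mu_{23}$. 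Its projection $\mu_{13}\coloneqq(\mathrm{proj}_{13})_\#\pi$ is then a coupling of $\mu_1$ and $\mu_3$ and serves as a competitor in the infimum defining $\pseudoWasser{p}^{(X,u_X)}(\mu_1,\mu_3)$. The key pointwise input is that, since $u_X$ is a pseudo-ultrametric, $u_X(x_1,x_3)\le\max(u_X(x_1,x_2),u_X(x_2,x_3))\le\big(u_X(x_1,x_2)^p+u_X(x_2,x_3)^p\big)^{1/p}$ for all $x_1,x_2,x_3\in X$ and all $p\in[1,\infty]$.

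For $1\le p<\infty$ the estimate is then a one-line computation: using $\mu_{13}$ as a competitor, changing variables to $\pi$, and applying the pointwise bound,
\[
\big(\pseudoWasser{p}^{(X,u_X)}(\mu_1,\mu_3)\big)^p\le\int_{X^3} u_X(x_1,x_3)^p\,d\pi\le\int_{X^3}\!\big(u_X(x_1,x_2)^p+u_X(x_2,x_3)^p\big)\,d\pi=\big(\pseudoWasser{p}^{(X,u_X)}(\mu_1,\mu_2)\big)^p+\big(\pseudoWasser{p}^{(X,u_X)}(\mu_2,\mu_3)\big)^p,
\]
where the last equality uses that the two integrands depend only on the $(1,2)$- resp.\ $(2,3)$-marginals of $\pi$. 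This is precisely the $p$-pseudometric inequality.

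For $p=\infty$ the main obstacle is that the cost is now a supremum over supports, so I must track supports through the gluing rather than integrate. The point is that $\mathrm{proj}_{12}(\supp{\pi})\subseteq\supp{\mu_{12}}$ and $\mathrm{proj}_{23}(\supp{\pi})\subseteq\supp{\mu_{23}}$, so on $\supp{\pi}$ we have $u_X(x_1,x_3)\le\max(u_X(x_1,x_2),u_X(x_2,x_3))\le\max\big(\pseudoWasser{\infty}^{(X,u_X)}(\mu_1,\mu_2),\pseudoWasser{\infty}^{(X,u_X)}(\mu_2,\mu_3)\big)$ by the choice of optimal couplings. Since $\supp{\mu_{13}}\subseteq\overline{\mathrm{proj}_{13}(\supp{\pi})}$ and $u_X$ is continuous, this uniform bound passes to $\supp{\mu_{13}}$, giving $\pseudoWasser{\infty}^{(X,u_X)}(\mu_1,\mu_3)\le\sup_{(x_1,x_3)\in\supp{\mu_{13}}}u_X(x_1,x_3)\le\max\big(\pseudoWasser{\infty}^{(X,u_X)}(\mu_1,\mu_2),\pseudoWasser{\infty}^{(X,u_X)}(\mu_2,\mu_3)\big)$, which is the pseudo-ultrametric inequality. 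I expect the support-tracking in this last step — in particular justifying the inclusions of the projected supports and the passage to the closure via continuity of $u_X$ on the compact space $X$ — to be the most delicate part of the argument.
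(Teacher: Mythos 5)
Your argument is correct and follows essentially the same route as the paper: optimal couplings, the Gluing Lemma, and the pointwise bound $u_X(x_1,x_3)^p\le u_X(x_1,x_2)^p+u_X(x_2,x_3)^p$ coming from the strong triangle inequality. The only difference is that you spell out the $p=\infty$ support-tracking (correctly) where the paper simply says the case follows by analogous arguments.
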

	\begin{proof} We prove the statement by adapting the proof of the triangle inequality for the $p$-Wasserstein distance (see e.g.,  \cite[Theorem 7.3]{villani2003topics}). We only prove the case when $p<\infty$ whereas the case $p=\infty$ follows by analogous arguments.\\
		
	Let $\alpha_1,\alpha_2,\alpha_3\in\mathcal{P}(X)$, denote by $\mu_{12}$ an optimal transport plan between $\alpha_1$ and $\alpha_2$ and by $\mu_{23}$ an optimal transport plan between $\alpha_2$ and $\alpha_3$ (see \cite[Theorem 4.1]{villani2008optimal} for the existence of $\mu_{12}$ and $\mu_{23}$). Furthermore, let $X_i$ be the support of $\alpha_i$, $1\leq i \leq 3$. Then, by the Gluing Lemma \citep[Lemma 7.6]{villani2003topics}
		there exists a measure $\mu\in\mathcal{P}(X_1\times X_2\times X_3)$ with marginals $\mu_{12}$ on $X_1\times X_2$ and $\mu_{23}$ on $X_2\times X_3$. Clearly, we obtain
		\begin{align*}
		\left(\pseudoWasser{p}^{(X,\uX)}(\alpha_1,\alpha_3)\right)^p \leq \int_{X_1\times X_2\times X_3}\! \uX^p\left(x,z\right)\,\mu(dx\times dy\times dz)\\ \leq \int_{X_1\times X_2\times X_3}\!\left( \uX^p\left(x,y\right)+\uX^p\left(y,z\right)\right)\,\mu(dx\times dy\times dz).
		\end{align*}
		Here, we used that $\uX$ is an ultrametric, i.e., in particular a $p$-metric \citep[Proposition 1.16]{memoli2019gromov}. With this we obtain that
		\begin{align*}
		\left(\pseudoWasser{p}^{(X,\uX)}(\alpha_1,\alpha_2)\right)^p\leq \int_{X_1\times X_2}\! \uX^p\left(x,y\right)\,\mu_{12}(dx\times dy)+ \int_{X_2\times X_3}\! \uX^p\left(y,z\right)\,\mu_{23}(dy\times dz)\\
		= \left(\pseudoWasser{p}^{(X,\uX)}(\alpha_1,\alpha_2)\right)^p+\left(\pseudoWasser{p}^{(X,\uX)}(\alpha_2,\alpha_3)\right)^p.
		\end{align*}
	\end{proof}
	With \Cref{prop:usturm_optimal} and \Cref{lemma:wasserstein p-metrics} at our disposal we are now ready to prove \Cref{thm:sturms um} which states that $\usturm{p}$ is indeed a $p$-metric on $\mathcal{U}^w$.

	\begin{proof}[Proof of \Cref{thm:sturms um}]
		It is clear that $\usturm{p}$ is symmetric and that $\usturm{p}(\X,\Y)=0$ if $\X\cong_w\Y$. Furthermore, we remark that $\usturm{p}(\X,\Y)\geq d_{\mathrm{GW},p}^{\mathrm{sturm}}(\X,\Y)$ by \Cref{prop:usturm_basic}. Since $d_{\mathrm{GW},p}^{\mathrm{sturm}}(\X,\Y)=0$ implies that $\X\cong_w\Y$ (\cite{sturm2012space}), we have that $\usturm{p}(\X,\Y)=0$ implies that $\X\cong_w\Y$. It remains to verify the $p$-triangle inequality. To this end, we only prove the case when $p<\infty$ whereas the case $p=\infty$ follows by analogous arguments.
		
		Let $\X,\Y,\Z\in\mathcal{U}^w$. Suppose $u_{XY}\in\mathcal{D}^\mathrm{ult}(\uX,\uY)$ and $u_{YZ}\in\mathcal{D}^\mathrm{ult}(\uY,\uZ)$ are optimal metric couplings such that
		\[\left(\usturm{p}(\X,\Y)\right)^p=\left(\pseudoWasser{p}^{(X\sqcup  Y,u_{XY})}(\muX,\muY)\right)^p \text{  and } \left(\usturm{p}(\Y,\Z)\right)^p=\left(\pseudoWasser{p}^{(Y\sqcup  Z,u_{YZ})}(\muY,\mu_Z)\right)^p.\]
		Further, define $u_{XYZ}$ on $X\sqcup Y\sqcup  Z$ as
		\[u_{XYZ}(x_1,x_2)=\begin{cases}u_{XY}(x_1,x_2) &x_1,x_2\in X\sqcup Y\\
		u_{YZ}(x_1,x_2) &x_1,x_2\in Y\sqcup Z\\
		\inf\{\max (u_{XY}(x_1,y),u_{YZ}(y,x_2))\,|\,y\in Y\} &x_1\in X,x_2\in Z\\
		\inf\{\max (u_{XY}(x_2,y),u_{YZ}(y,x_1))\,|\,y\in Y\} &x_1\in Z,x_2\in X.
		\end{cases}\]
		Then, by Lemma 1.1 of \citet{zarichnyi2005gromov} $u_{XYZ}$ is a pseudo-ultrametric on $X\sqcup Y\sqcup Z$ that coincides with $u_{XY}$ on $X\sqcup Y$ and with $u_{YZ}$ on $Y\sqcup Z$. With this we obtain by \Cref{lemma:wasserstein p-metrics} that 
		\begin{align*}
		&\left(\usturm{p}(\X,\Z)\right)^p\leq \left(\pseudoWasser{p}^{(X\sqcup Y\sqcup Z,u_{XYZ})}(\muX,\mu_Z)\right)^p
		\\ \leq& \left(\pseudoWasser{p}^{(X\sqcup Y\sqcup Z,u_{XYZ})}(\muX,\muY)\right)^p+\left(\pseudoWasser{p}^{(X\sqcup Y\sqcup Z,u_{XYZ})}(\muY,\mu_Z)\right)^p\\
		=& \left(\pseudoWasser{p}^{(X\sqcup Y,u_{XY})}(\muX,\muY)\right)^p+\left(\pseudoWasser{p}^{( Y\sqcup Z,u_{YZ})}(\muY,\mu_Z)\right)^p\\
		=&\left(\usturm{p}(\X,\Y)\right)^p+\left(\usturm{p}(\Y,\Z)\right)^p
		\end{align*}
		This gives the claim for $p<\infty$.
	\end{proof}

\subsubsection{Proof of \Cref{thm:compact usturm A Phi representation}}\label{sec:proof of compact usturm a phi}
In order to proof \Cref{thm:compact usturm A Phi representation}, we will first establish the statement for \emph{finite} ultrametric measure spaces. For this purpose, we need to introduce some notation. Given $\X,\Y\in\mathcal{U}^w$, let $\mathcal{D}^\mathrm{ult}_\mathrm{adm}(\uX,\uY)$ denote the collection of all admissible pseudo-ultrametrics on $X\sqcup Y$, where $u\in \mathcal{D}^\mathrm{ult}(\uX,\uY)$ is called \emph{admissible}, if there exists no $u^*\in \mathcal{D}^\mathrm{ult}(\uX,\uY)$ such that $u^*\neq u$ and $u^*(x,y)\leq u(x,y)$ for all $x,y\in X\sqcup Y$.
	
\begin{lemma}\label{lm:adm non empty}
For any $\X,\Y\in\mathcal{U}^w$, $\mathcal{D}^\mathrm{ult}_\mathrm{adm}(\uX,\uY)\neq\emptyset$. Moreover, 
	\[\usturm{p}(\X,\Y)=\inf_{u \in \mathcal{D}^\mathrm{ult}_\mathrm{adm}(\uX,\uY)} \pseudoWasser{p}^{(X\sqcup Y,u)}(\muX,\muY).\]
	\end{lemma}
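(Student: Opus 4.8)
The plan is to regard $\mathcal{D}^\mathrm{ult}(\uX,\uY)$ as a poset under the pointwise order $v\leq v'\Leftrightarrow v(a,b)\leq v'(a,b)$ for all $a,b\in X\sqcup Y$, to identify the admissible pseudo-ultrametrics with its minimal elements, and to produce such minimal elements below any prescribed element via Zorn's lemma. The equality of the two infima will then follow from the monotonicity of the Wasserstein pseudometric in the ground distance, together with the representation in \Cref{lemma:pseudometric representation of usturm}.

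First I would note that $\mathcal{D}^\mathrm{ult}(\uX,\uY)\neq\emptyset$: fixing $C\geq\max(\diam{X},\diam{Y})$ with $C>0$ and gluing $X$ to $Y$ by $u(x,y)\coloneqq C$ for $x\in X$, $y\in Y$, while keeping $\uX$ on $X\times X$ and $\uY$ on $Y\times Y$, yields an element of $\mathcal{D}^\mathrm{ult}(\uX,\uY)$; the strong triangle inequality is verified by a short case analysis in which every mixed triangle is dominated by $C$.

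The heart of the argument is a closure property of chains, which I expect to be the main obstacle, since the pointwise infimum of an \emph{arbitrary} family of pseudo-ultrametrics need not satisfy the ultrametric inequality. Given a chain $\{u_\alpha\}$ in $\mathcal{D}^\mathrm{ult}(\uX,\uY)$, I would set $u_\ast\coloneqq\inf_\alpha u_\alpha$ pointwise and check $u_\ast\in\mathcal{D}^\mathrm{ult}(\uX,\uY)$. The correct restrictions to $X\times X$ and $Y\times Y$, symmetry, and vanishing on the diagonal are immediate; the only delicate point is the ultrametric inequality. For this I would fix $a,b,c$ and $\eps>0$, pick indices witnessing $u_\ast(a,c)$ and $u_\ast(c,b)$ to within $\eps$, and — using precisely the chain hypothesis — pass to the pointwise-smaller of the two, obtaining one index $\gamma$ with $u_\gamma(a,c)<u_\ast(a,c)+\eps$ and $u_\gamma(c,b)<u_\ast(c,b)+\eps$. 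Then $u_\ast(a,b)\leq u_\gamma(a,b)\leq\max(u_\gamma(a,c),u_\gamma(c,b))<\max(u_\ast(a,c),u_\ast(c,b))+\eps$, and letting $\eps\to 0$ gives the inequality. The chain structure is exactly what lets one find a single $u_\gamma$ simultaneously near-optimal on both legs of the triangle.

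With this closure property established, for any $u\in\mathcal{D}^\mathrm{ult}(\uX,\uY)$ the set $S_u\coloneqq\{v\in\mathcal{D}^\mathrm{ult}(\uX,\uY):v\leq u\}$ is nonempty and every chain in $S_u$ has a lower bound $u_\ast$ lying in $S_u$ (since $u_\ast\leq u$), so Zorn's lemma furnishes a minimal element $u'\leq u$. Any $w\in\mathcal{D}^\mathrm{ult}(\uX,\uY)$ with $w\leq u'$ lies in $S_u$, hence $w=u'$ by minimality, so $u'$ is admissible; applying this to the glued metric of the second paragraph already yields $\mathcal{D}^\mathrm{ult}_\mathrm{adm}(\uX,\uY)\neq\emptyset$. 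Finally, $u'\leq u$ pointwise forces $(u')^p\leq u^p$, so coupling by coupling the integrand (resp. supremum for $p=\infty$) does not increase, and therefore $\pseudoWasser{p}^{(X\sqcup Y,u')}(\muX,\muY)\leq\pseudoWasser{p}^{(X\sqcup Y,u)}(\muX,\muY)$. Thus the admissible element below each $u$ never raises the Wasserstein cost, which combined with the trivial inequality from the inclusion $\mathcal{D}^\mathrm{ult}_\mathrm{adm}(\uX,\uY)\subseteq\mathcal{D}^\mathrm{ult}(\uX,\uY)$ and \Cref{lemma:pseudometric representation of usturm} delivers both inequalities and hence the asserted equality for all $p\in[1,\infty]$.
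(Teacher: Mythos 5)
Your proof is correct and follows essentially the same route as the paper: order $\mathcal{D}^\mathrm{ult}(\uX,\uY)$ pointwise, show that chains are closed under pointwise infimum, extract minimal (i.e.\ admissible) elements via Zorn's lemma, and conclude via monotonicity of the Wasserstein pseudometric in the ground distance. If anything, your write-up is more careful than the paper's, which only verifies the lower-bound property for decreasing \emph{sequences} rather than for arbitrary chains as Zorn's lemma formally requires; your $\eps$-argument exploiting the chain structure to find a single index near-optimal on both legs of the triangle is exactly the missing detail.
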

	
	\begin{proof}
	     If $\{u_n\}_{n\in\mathbb N}\subseteq \mathcal{D}^\mathrm{ult}(\uX,\uY)$ is a decreasing sequence (with respect to pointwise inequality), it is easy to verify that $u\coloneqq\inf_{n\in\mathbb N}u_n\in\mathcal{D}^\mathrm{ult}(\uX,\uY)$ and thus $u$ is a lower bound of $\{u_n\}_{n\in\mathbb N}$. Then, by Zorn's lemma $\mathcal{D}^\mathrm{ult}_\mathrm{adm}(\uX,\uY)\neq \emptyset$. Therefore, we obtain that
\[\usturm{p}(\X,\Y)=\inf_{u \in \mathcal{D}^\mathrm{ult}_\mathrm{adm}(\uX,\uY)} \pseudoWasser{p}^{(X\sqcup Y,u)}(\muX,\muY).\]
	\end{proof}

	Combined with \Cref{ex:u=0 A}, the following result implies that each $u\in\mathcal{D}^\mathrm{ult}_\mathrm{adm}(\uX,\uY)$ gives rise to an element in $\mathcal{A}$.
	
	\begin{lemma}\label{lm:u-1 neq empty}
	Given \emph{finite} spaces $\X,\Y\in\mathcal{U}^w$, for each $u\in \mathcal{D}^\mathrm{ult}_\mathrm{adm}(\uX,\uY)$, $u^{-1}(0)\neq\emptyset$.
	\end{lemma}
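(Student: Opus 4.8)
The plan is to argue by contradiction: assuming an admissible $u \in \mathcal{D}^\mathrm{ult}_\mathrm{adm}(\uX,\uY)$ with $u^{-1}(0)=\emptyset$, I would construct a strictly pointwise-smaller pseudo-ultrametric $u^* \in \mathcal{D}^\mathrm{ult}(\uX,\uY)$, contradicting admissibility. Write $Z := X\sqcup Y$. Since $X$ and $Y$ are finite and $u^{-1}(0)=\emptyset$, the minimal cross distance $m := \min\{u(x,y)\,|\,x\in X,\,y\in Y\} > 0$ is attained, say at $(x_0,y_0)$, and the finite set of distance values strictly below $m$ has a maximum $m'' := \max\{t\in\spec{Z}\,|\,t<m\}$, where $\spec{Z}=\{u(z,z')\,|\,z,z'\in Z\}$; note $0\le m''<m$ because $u(x_0,x_0)=0$.

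The core construction: let $B := \{z\in Z\,|\,u(x_0,z)\le m''\}$ and $C := \{z\in Z\,|\,u(y_0,z)\le m''\}$ be the two closed balls of radius $m''$. Because every cross distance is $\ge m>m''$, one has $B\subseteq X$ and $C\subseteq Y$; moreover a short ultrametric computation shows that $u(b,c)=m$ for all $b\in B,\,c\in C$, and that every point of $B$ (resp.\ $C$) is equidistant to any fixed point outside $B$ (resp.\ $C$) — the balls behave as single ``super-points'' with respect to external distances. Fixing any $m'\in(m'',m)$, I define $u^*$ to agree with $u$ everywhere except on the $B$–$C$ block, where I set $u^*|_{B\times C}=u^*|_{C\times B}:=m'$. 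By construction $u^*\le u$, and $u^*\ne u$ since $u^*(x_0,y_0)=m'<m$; crucially, $u^*$ leaves the within-$X$ and within-$Y$ distances untouched, so $u^*|_{X\times X}=\uX$ and $u^*|_{Y\times Y}=\uY$.

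It then remains to verify that $u^*$ is a genuine pseudo-ultrametric, i.e.\ that the strong (isosceles) triangle inequality survives the modification. I would check this triple by triple, the only nontrivial triples being those containing at least one modified $B$–$C$ pair; these fall into three cases: two vertices in $B$ and one in $C$, the symmetric case, and one vertex in $B$, one in $C$, and one outside $B\cup C$. In each case the equidistance property together with the spectral gap $m''<m'<m$ guarantees that the two largest of the three pairwise values remain equal. I expect this verification — specifically the mixed case $b\in B,\,c\in C,\,w\notin B\cup C$, where one must use that the isosceles condition for $u$ on the triple $\{b,c,w\}$ forces $u(x_0,w)=u(y_0,w)$ — to be the main (though entirely elementary) obstacle. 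The more conceptual difficulty, which the construction is designed to avoid, is the observation that one cannot simply lower the single distance $u(x_0,y_0)$: doing so generically destroys ultrametricity (a nearby within-$X$ neighbor of $x_0$ would retain distance $m$ to $y_0$, breaking the isosceles property), so one is forced to lower the entire block between the two balls $B$ and $C$ at once.
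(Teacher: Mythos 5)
Your argument is correct, and it takes a genuinely different route from the paper's. The paper also argues by contradiction from a minimizing cross pair $(x_0,y_0)$, but it then \emph{glues} $x_0$ to $y_0$ in one shot: it defines $u_{(x_0,y_0)}(x,y)\coloneqq\min\bigl(u(x,y),\max(\uX(x,x_0),\uY(y,y_0))\bigr)$ on cross pairs (the pointwise minimum of $u$ with the Zarichnyi-type amalgamation metric through the pair $(x_0,y_0)$), which is again in $\mathcal{D}^\mathrm{ult}(\uX,\uY)$, is pointwise $\leq u$, and already satisfies $u_{(x_0,y_0)}(x_0,y_0)=0$. You instead perform a strictly smaller perturbation: using the spectral gap $(m'',m)$ available in the finite setting, you lower only the $B$--$C$ block to some $m'\in(m'',m)$, never creating a new zero, and contradict admissibility that way. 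The paper's construction is shorter to state and directly produces the object that the surrounding argument actually wants (a metric coupling with a zero, hence a pair $(A,\varphi)\in\mathcal{A}$), whereas yours makes the verification of the strong triangle inequality more transparent and isolates the genuine obstruction (that one cannot lower a single cross distance in isolation); on the other hand, your use of the gap $m''<m$ is the one place where finiteness enters beyond the attainment of the minimum, so the paper's construction generalizes more readily. I checked the case analysis you deferred: $B\subseteq X$, $C\subseteq Y$, the constancy $u|_{B\times C}\equiv m$, the external equidistance of $B$ and $C$, and in the mixed triple $\{b,c,w\}$ the identity $u(x_0,w)=u(y_0,w)$ (either both exceed $m$ and are forced equal by the isosceles property, or both equal $m$ by the spectral gap) all hold, so every triple meeting a modified pair has its two largest values equal and the construction goes through.
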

	
	\begin{proof}
	     Assume otherwise that $u^{-1}(0)=\emptyset$. Then, $u$ is a metric (instead of pseudo-metric). Let $(x_0,y_0)\in X\times Y$ such that $u(x_0,y_0)=\min_{x\in X,y\in Y}u(x,y)$. The existence of $(x_0,y_0)$ is guaranteed by the finiteness of $X$ and $Y$.
We define $ u_{(x_0,y_0)}:X\sqcup Y\times X\sqcup Y\rightarrow\mathbb R_{\geq 0}$ as follows:
\begin{enumerate}
    \item $ u_{(x_0,y_0)}|_{X\times X}\coloneqq u_X$ and $ u_{(x_0,y_0)}|_{Y\times Y}\coloneqq u_Y$;
    \item For $(x,y)\in X\times Y$,
    \[ u_{(x_0,y_0)}(x,y)\coloneqq\min\left(u(x,y),\max(u_X(x,x_0),u_Y(y,y_0))\right); \]
    \item For any $(y,x)\in Y\times X$, $ u_{(x_0,y_0)}(y,x)\coloneqq  u_{(x_0,y_0)}(x,y)$.
\end{enumerate}

It is easy to verify that $u_{(x_0,y_0)}\in\mathcal{D}^\mathrm{ult}(\uX,\uY)$. Further, it is obvious that $u_{(x_0,y_0)}(x_0,y_0)=0<u(x_0,y_0)$ and that $u_{(x_0,y_0)}(x,y)\leq u(x,y)$ for all $x,y\in X\sqcup Y$ which contradicts with $u\in \mathcal{D}^\mathrm{ult}_\mathrm{adm}(\uX,\uY)$. Therefore, $u^{-1}(0)\neq\emptyset$.
	\end{proof}

\begin{theorem}\label{thm:usturm A Phi representation}
	Let $\X,\Y\in\mathcal{U}^w$ be \emph{finite} spaces. Then, we have for each $p\in[1,\infty)$ that
\begin{equation}\label{eq:usturm_pair}
\usturm{p}(\X,\Y)=\inf_{(A,\varphi)\in\mathcal{A}}d_{\mathrm{W},p}^{Z_A}\left({\left(\phi^X_{(A,\varphi)}\right)}_\#\muX,{\left(\psi^Y_{(A,\varphi)}\right)}_\#\muY\right).\end{equation}
	\end{theorem}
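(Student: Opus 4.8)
The plan is to prove the two inequalities separately, after first reducing the infimum in the definition of $\usturm{p}$ to one over \emph{admissible} pseudo-ultrametrics. By \Cref{lemma:pseudometric representation of usturm} and \Cref{lm:adm non empty} we have $\usturm{p}(\X,\Y)=\inf_{u\in\mathcal{D}^\mathrm{ult}_\mathrm{adm}(\uX,\uY)}\pseudoWasser{p}^{(X\sqcup Y,u)}(\muX,\muY)$, so it suffices to relate admissible pseudo-ultrametrics to pairs $(A,\varphi)\in\mathcal{A}$ and their ultrametric spaces $Z_A$.

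The inequality $\usturm{p}(\X,\Y)\le\inf_{(A,\varphi)\in\mathcal{A}}d_{\mathrm{W},p}^{Z_A}\bigl(\ldots\bigr)$ is the easy one. For each $(A,\varphi)\in\mathcal{A}$ the space $(Z_A,u_{Z_A})$ is a genuine ultrametric space into which $X$ and $Y$ embed isometrically via $\phi^X_{(A,\varphi)}$ and $\psi^Y_{(A,\varphi)}$ (this is the content of \cite[Lemma 1.1]{zarichnyi2005gromov}). Hence $(Z_A,\phi^X_{(A,\varphi)},\psi^Y_{(A,\varphi)})$ is an admissible competitor in the defining infimum \Cref{eq:ultra Sturm}, so $\usturm{p}(\X,\Y)\le d_{\mathrm{W},p}^{Z_A}\bigl((\phi^X_{(A,\varphi)})_\#\muX,(\psi^Y_{(A,\varphi)})_\#\muY\bigr)$; taking the infimum over $\mathcal{A}$ gives the claim.

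For the reverse inequality I would fix $\eps>0$ and, using the reduction above, choose $u\in\mathcal{D}^\mathrm{ult}_\mathrm{adm}(\uX,\uY)$ with $\pseudoWasser{p}^{(X\sqcup Y,u)}(\muX,\muY)\le\usturm{p}(\X,\Y)+\eps$. Since $X,Y$ are finite, \Cref{lm:u-1 neq empty} gives $u^{-1}(0)\neq\emptyset$, and \Cref{ex:u=0 A} produces a pair $(A,\varphi)\in\mathcal{A}$ with $A=\pi_X(u^{-1}(0))$ and $u(a,\varphi(a))=0$ for $a\in A$. The key claim is that $u$ \emph{coincides with the pullback of} $u_{Z_A}$ under the canonical identification $a\sim\varphi(a)$; granting this, the quotient map realizes $\phi^X_{(A,\varphi)},\psi^Y_{(A,\varphi)}$ and $\pseudoWasser{p}^{(X\sqcup Y,u)}(\muX,\muY)=d_{\mathrm{W},p}^{Z_A}\bigl((\phi^X_{(A,\varphi)})_\#\muX,(\psi^Y_{(A,\varphi)})_\#\muY\bigr)$, so letting $\eps\to0$ and taking the infimum over $\mathcal{A}$ finishes the proof. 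One half of the claim is routine: for $a\in A$ the identities $u(a,y)=\uY(\varphi(a),y)$ and, for $x\in A$, $u(x,y)=\uY(\varphi(x),y)$ follow from $u(a,\varphi(a))=0$ and the strong triangle inequality, and then for $x\in X\setminus A$, $y\in Y\setminus\varphi(A)$ the bound $u(x,y)\le\max(\uX(x,a),\uY(\varphi(a),y))$ for every $a\in A$ yields $u(x,y)\le u_{Z_A}(x,y)$. Thus $u$ is pointwise dominated by the pullback $\tilde u$ of $u_{Z_A}$.

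The main obstacle is the opposite bound $u\ge\tilde u$, i.e.\ showing that an admissible $u$ cannot be \emph{strictly} smaller than the Zarichnyi ultrametric $u_{Z_A}$ on the cross pairs $X\setminus A\times Y\setminus\varphi(A)$. The plan here is to exploit minimality directly: I expect that whenever $u(x_0,y_0)<u_{Z_A}(x_0,y_0)$ for some cross pair, the ``degenerate'' situation $\uX(x_0,a)=\uY(\varphi(a),y_0)$ for all $a\in A$ must occur, in which case $(A\cup\{x_0\},\varphi\cup\{x_0\mapsto y_0\})$ is again an element of $\mathcal{A}$; one can then build a pseudo-ultrametric $u^*\le u$ with $u^*\neq u$ by a capping/identification move of the same type used in the proof of \Cref{lm:u-1 neq empty}, contradicting admissibility of $u$. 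Equivalently, admissibility forces the pair $(A,\varphi)$ to be \emph{maximal} (no point of $X\setminus A$ can be isometrically adjoined), and for maximal pairs no degenerate cross pair exists, so the lower bounds coming from the strong triangle inequality match the upper bound $u_{Z_A}(x,y)$ and pin down $u=\tilde u$. Verifying that the modified $u^*$ genuinely restricts to $\uX$ and $\uY$ and remains a pseudo-ultrametric is the delicate point, since identifying $x_0$ with $y_0$ must not contract distances inside $X$ or $Y$; this is exactly where the finiteness of the spaces and the careful case analysis (as in \Cref{lm:u-1 neq empty} and \Cref{ex:u=0 A}) will be needed.
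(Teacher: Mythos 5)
Your overall architecture (reduction to admissible pseudo-ultrametrics via \Cref{lm:adm non empty}, the easy inequality via \cite[Lemma 1.1]{zarichnyi2005gromov}, and the extraction of $(A,\varphi)$ from $u^{-1}(0)$) matches the paper, and the half of your key claim that you actually prove, $u\le\tilde u$, is correct --- but it is the direction that is \emph{not} needed: to dominate the Wasserstein cost you only need $u\ge u_{Z_A}$ on $X\times Y$. The genuine gap is exactly the step you flag as delicate: that for an admissible $u$ with $A=\pi_X(u^{-1}(0))$, a cross pair with $u(x_0,y_0)<u_{Z_A}(x_0,y_0)$ (equivalently, by your correct degeneracy computation, a pair that could be isometrically adjoined to $(A,\varphi)$) lets you build $u^*\lneq u$ and so contradict admissibility. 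The capping $u^*(x,y)=\min\bigl(u(x,y),\max(\uX(x,x_0),\uY(y,y_0))\bigr)$ used in \Cref{lm:u-1 neq empty} is only valid there because $(x_0,y_0)$ is a \emph{global} minimizer of $u$; that minimality is what guarantees the capped function does not contract distances inside $X$ or inside $Y$ (the conditions $\uX(x,x')\le\max(u^*(x,y),u^*(x',y))$ and its $Y$-analogue, which are needed for $u^*$ to restrict to $\uX$ and $\uY$). At a general degenerate cross pair with $u(x_0,y_0)>0$ this verification does not go through as stated, and you give no substitute. So the claim that admissibility forces the zero-set pair to be maximal remains unproved, and with it the hard inequality.

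The paper's proof sidesteps this entirely by giving up both on equality and on keeping $A$ equal to the zero set: starting from $A_0=\pi_X(u^{-1}(0))$ it repeatedly adjoins to $A_i$ a pair $(x_{i+1},y_{i+1})$ that minimizes $u$ among the cross pairs where $u<u_{Z_{A_i}}$ (such a minimal bad pair is degenerate, hence adjoinable as an isometric extension of $\varphi_i$, even though $u(x_{i+1},y_{i+1})>0$, so the final $A$ is in general strictly larger than the zero set). Finiteness terminates the iteration at a pair $(A,\varphi)$ with $u\ge u_{Z_A}$ pointwise, and that inequality alone yields $\pseudoWasser{p}^{(X\sqcup Y,u)}(\muX,\muY)\ge d_{\mathrm{W},p}^{Z_A}\bigl((\phi^X_{(A,\varphi)})_\#\muX,(\psi^Y_{(A,\varphi)})_\#\muY\bigr)$, which is all the theorem needs; no second lowering of $u$ is ever attempted. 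To salvage your route you would have to supply the missing verification of the capping move at a non-minimal degenerate pair; otherwise, switch to the paper's ``enlarge $A$ instead of lowering $u$'' iteration.
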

	
	\begin{proof}
	    By \Cref{lm:adm non empty} it is sufficent to prove that each $u\in \mathcal{D}^\mathrm{ult}_\mathrm{adm}(\uX,\uY)$ induces $(A,\varphi)\in\mathcal{A}$ such that
\[ \pseudoWasser{p}^{(X\sqcup Y,u)}(\muX,\muY)\geq d_{\mathrm{W},p}^{Z_{A}}\left({\left(\phi^X_{(A,\varphi)}\right)}_\#\muX,{\left(\psi^Y_{(A,\varphi)}\right)}_\#\muY\right).\]
 Let $u\in \mathcal{D}^\mathrm{ult}_\mathrm{adm}(\uX,\uY)$.  We define $A_0\coloneqq\{x\in X|\,\exists y\in Y \text{ such that }u(x,y)=0\}$ ($A_0\neq\emptyset$ by \Cref{lm:u-1 neq empty}). By \Cref{ex:u=0 A}, the map $\varphi_0:A_0\rightarrow Y$ defined by taking $x$ to $y$ such that $u(x,y)=0$ is a well-defined isometric embedding. This means in particular that $(A_0,\varphi_0)\in \mathcal{A}$. 

If $u(x,y)\geq u_{Z_{A_0}}\left(\phi^X_{(A_0,\varphi_0)}(x),\psi^Y_{(A_0,\varphi_0)}(y)\right)$ holds for all $(x,y)\in X\times Y$, then we set $A\coloneqq A_0$ and $\varphi\coloneqq\varphi_0$. This gives
\[ \pseudoWasser{p}^{(X\sqcup Y,u)}(\muX,\muY)\geq d_{\mathrm{W},p}^{Z_{A}}\left({\left(\phi^X_{(A,\varphi)}\right)}_\#\muX,{\left(\psi^Y_{(A,\varphi)}\right)}_\#\muY\right).\]

Otherwise, there exists $(x,y)\in X\backslash A_0\times Y\backslash \varphi_0(A_0)$ such that \[u(x,y)< u_{Z_{A_0}}\left(\phi^X_{(A_0,\varphi_0)}(x),\psi^Y_{(A_0,\varphi_0)}(y)\right)\] (if $x\in A_0$ or $y\in \varphi_0(A_0)$, then $u(x,y)\geq u_{Z_{A_0}}\left(\phi^X_{(A_0,\varphi_0)}(x),\psi^Y_{(A_0,\varphi_0)}(y)\right)$ must hold). Let $(x_1,y_1)\in X\backslash A_0\times Y\backslash \varphi_0(A_0)$ be such that 
\begin{align*}
u(x_1,y_1)=\min&\Big\{ u(x,y)|\,(x,y)\in X\backslash A_0\times Y\backslash \varphi_0(A_0)\\&\text{ and }u(x,y)< u_{Z_{A_0}}\left(\phi^X_{(A_0,\varphi_0)}(x),\psi^Y_{(A_0,\varphi_0)}(y)\right)\Big\}>0.\end{align*}
The existence of $(x_1,y_1)$ follows from finiteness of $X$ and $Y$. It is easy to check that $\varphi_0$ extends to an isometry from $A_0\cup\{x_1\}$ to $\varphi_0(A_0)\cup\{y_1\}$ by taking $x_1$ to $y_1$. We denote the new isometry $\varphi_1$ and set $A_1\coloneqq A_0\cup\{x_1\}$. If for any $(x,y)\in X\times Y$, we have that $u(x,y)\geq u_{Z_{A_1}}\left(\phi^X_{(A_1,\varphi_1)}(x),\psi^Y_{(A_1,\varphi_1)}(y)\right)$, then we define $A\coloneqq A_1$ and $\varphi\coloneqq\varphi_1$. Otherwise, we continue the process to obtain $A_2, A_3,\dots$. This process will eventually stop since we are considering finite spaces. Suppose the process stops at $A_n$, then $A\coloneqq A_n$ and $\varphi\coloneqq\varphi_n$ satisfy that $u(x,y)\geq u_{Z_{A}}\left(\phi^X_{(A,\varphi)}(x),\psi^Y_{(A,\varphi)}(y)\right)$ for any $(x,y)\in X\times Y$. Therefore,
\[ \pseudoWasser{p}^{(X\sqcup Y,u)}(\muX,\muY)\geq d_{\mathrm{W},p}^{Z_{A}}\left({\left(\phi^X_{(A,\varphi)}\right)}_\#\muX,{\left(\psi^Y_{(A,\varphi)}\right)}_\#\muY\right).\] 
Since $u\in \mathcal{D}^\mathrm{ult}_\mathrm{adm}(\uX,\uY)$ is arbitrary, this gives the claim.
	\end{proof}

As a direct consequence of \Cref{thm:usturm A Phi representation}, we obtain that it is sufficient, as claimed in \Cref{rem:computation of usturm}, for finite spaces to infimize in \Cref{eq:usturm_pair} over the collection of all maximal pairs $\mathcal{A}^*\subseteq\mathcal{A}$. Recall that a pair $(A,\varphi_1)\in\mathcal{A}$ is denoted as \emph{maximal}, if for all pairs $(B,\varphi_2)\in \mathcal{A}$ with $A\subseteq B$ and $\varphi_2|_A=\varphi_1$ it holds $A=B$. 

\begin{corollary}\label{coro:usturm A Phi representation}
		Let $\X,\Y\in\mathcal{U}^w$ be \emph{finite} spaces. Then, we have for each $p\in[1,\infty]$ that
\begin{equation}\label{eq:usturm_maxiam_pair}
\usturm{p}(\X,\Y)=\inf_{(A,\varphi)\in\mathcal{A}^*}d_{\mathrm{W},p}^{Z_A}\left({\left(\phi^X_{(A,\varphi)}\right)}_\#\muX,{\left(\psi^Y_{(A,\varphi)}\right)}_\#\muY\right).\end{equation}
\end{corollary}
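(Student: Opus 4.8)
The plan is to prove the two inequalities separately. Since $\mathcal{A}^*\subseteq\mathcal{A}$, the bound $\usturm{p}(\X,\Y)\le\inf_{(A,\varphi)\in\mathcal{A}^*}d_{\mathrm{W},p}^{Z_A}\big((\phi^X_{(A,\varphi)})_\#\muX,(\psi^Y_{(A,\varphi)})_\#\muY\big)$ is immediate from \Cref{thm:usturm A Phi representation}. The substance lies in the reverse direction: I would show that replacing any pair $(A,\varphi)\in\mathcal{A}$ by a maximal pair extending it never increases the objective. Because $X$ is finite, any pair admits only finitely many one-point extensions, so iterating produces a maximal pair $(B,\varphi')\in\mathcal{A}^*$ with $A\subseteq B$ and $\varphi'|_A=\varphi$; each intermediate set is finite, hence closed, and the restriction of an isometric embedding is again one, so every intermediate pair lies in $\mathcal{A}$. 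It therefore suffices to treat a single extension step $B=A\cup\{x_1\}$ with $y_1\coloneqq\varphi'(x_1)\in Y\setminus\varphi(A)$ and then induct.

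For a single step, I would introduce the collapse map $q\colon Z_A\to Z_B$ that sends $y_1\in Y\setminus\varphi(A)\subseteq Z_A$ to $x_1\in X\subseteq Z_B$ and acts as the identity on all remaining points, where $Z_B=X\sqcup\big((Y\setminus\varphi(A))\setminus\{y_1\}\big)$. The two facts to establish are: (i) $q$ intertwines the canonical embeddings, i.e. $q\circ\phi^X_{(A,\varphi)}=\phi^X_{(B,\varphi')}$ and $q\circ\psi^Y_{(A,\varphi)}=\psi^Y_{(B,\varphi')}$, which follows by checking the cases $y\in\varphi(A)$, $y=y_1$, and $y\in Y\setminus\varphi'(B)$ against the definitions; and (ii) $q$ is $1$-Lipschitz, i.e. $u_{Z_B}(q(z),q(z'))\le u_{Z_A}(z,z')$ for all $z,z'\in Z_A$. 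Granting (i) and (ii), $(q\times q)_\#\mu$ couples $(\phi^X_{(B,\varphi')})_\#\muX$ and $(\psi^Y_{(B,\varphi')})_\#\muY$ whenever $\mu$ couples the two $Z_A$-measures, and $1$-Lipschitzness makes its transport cost no larger for every $p\in[1,\infty]$, since the cost is monotone under distance-nonincreasing pushforwards; choosing $\mu$ optimal yields $d_{\mathrm{W},p}^{Z_B}(\cdots)\le d_{\mathrm{W},p}^{Z_A}(\cdots)$, as required.

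The main obstacle is verifying (ii), which requires unwinding the three-part definition of $u_{Z_A}$. I would organize it by the position of the arguments. Pairs lying entirely in $X$ or entirely in $(Y\setminus\varphi(A))\setminus\{y_1\}$ are preserved isometrically, and for $x\in X\setminus B$ paired with $y\neq y_1$ the $Z_B$-distance is an infimum over the larger index set $B\supseteq A$, hence can only decrease. The genuinely new inequalities are: for $x_1$ paired with $y$, one compares $u_{Z_B}(x_1,y)=u_Y(y,y_1)$ with $u_{Z_A}(x_1,y)=\inf_{a\in A}\max(u_X(x_1,a),u_Y(\varphi(a),y))$ using the ultrametric inequality in $Y$ together with the isometry identity $u_Y(\varphi(a),y_1)=u_X(a,x_1)$ (valid since $\varphi'$ is isometric on $B$); and for $x\in X\setminus B$ paired with $y_1$, one uses the same identity to rewrite $u_{Z_A}(x,y_1)=\inf_{a\in A}\max(u_X(x,a),u_X(a,x_1))$ and then the strong triangle inequality $u_X(x,x_1)\le\max(u_X(x,a),u_X(a,x_1))$ to bound $u_{Z_B}(x,x_1)=u_X(x,x_1)$ from above by it. A short case analysis confirms equality or the desired inequality in every remaining configuration. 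Finally, composing the single-step maps along the extension chain (each compatible with the embeddings and each $1$-Lipschitz) and invoking \Cref{thm:usturm A Phi representation} gives $\inf_{\mathcal{A}^*}(\cdots)\le\usturm{p}(\X,\Y)$, completing the proof; the argument is uniform in $p\in[1,\infty]$, the case $p=\infty$ being handled identically.
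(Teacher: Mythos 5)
Your proposal is correct, and it is worth noting that it supplies more than the paper does: the paper states the corollary as a ``direct consequence'' of \Cref{thm:usturm A Phi representation} without recording why passing from $\mathcal{A}$ to the smaller set $\mathcal{A}^*$ loses nothing. The trivial inequality $\usturm{p}\leq\inf_{\mathcal{A}^*}(\cdots)$ is indeed immediate, and the substantive content is exactly the monotonicity you prove: if $(B,\varphi')$ extends $(A,\varphi)$, then the $Z_B$-cost is at most the $Z_A$-cost. Your single-step collapse map $q\colon Z_A\to Z_B$ is a clean way to package this; an equivalent formulation is that the pseudo-ultrametric on $X\sqcup Y$ pulled back from $Z_B$ is dominated pointwise by the one pulled back from $Z_A$, which combines with \Cref{lemma:pseudometric representation of usturm} to give the same conclusion. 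I checked your case analysis for the $1$-Lipschitz property: the only genuinely new configurations are $(x_1,y)$ with $y\in Y\setminus\varphi'(B)$, handled by the strong triangle inequality in $Y$ together with $u_Y(\varphi(a),y_1)=u_X(a,x_1)$, and $(x,y_1)$ with $x\in X\setminus B$, handled symmetrically in $X$; the pair $(x_1,y_1)$ itself is sent to a coincident pair, which is consistent with $1$-Lipschitzness since $u_{Z_A}(x_1,y_1)=\inf_{a\in A}u_X(x_1,a)\geq 0$. Together with the intertwining of the canonical embeddings (which holds because $\varphi'|_A=\varphi$) and finiteness of $X$ to terminate the extension chain, this closes the gap the paper leaves implicit, uniformly in $p\in[1,\infty]$.
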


By proving \Cref{thm:usturm A Phi representation}, we have verified \Cref{thm:compact usturm A Phi representation} for finite ultrametric measure spaces. In the following, we will use \Cref{thm:usturm A Phi representation} and weighted quotients to demonstrate \Cref{thm:compact usturm A Phi representation}. However, before we come to this, we need to establish the following two auxiliary results.
\begin{lemma}\label{lm:quotient ultrametric dW}
Let $X\in\mathcal{U}$ be a compact ultrametric space. Let $t>0$ and let $p\in[1,\infty)$. Then, for any $\alpha,\beta\in\mathcal{P}(X)$, we have that
\[\lc d_{\mathrm{W},p}^{X_t}(\alpha_t,\beta_t)\rc^p\geq\lc d_{\mathrm{W},p}^{X}(\alpha,\beta)\rc^p-t^p,\]
where $\alpha_t$ is the push forward of $\alpha$ under the canonical quotient map $Q_t:X\rightarrow X_t$ taking $x\in X$ to $[x]_t\in X_t$.
\end{lemma}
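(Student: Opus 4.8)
The plan is to prove the inequality by \emph{lifting} an arbitrary coupling on the quotient $X_t$ to a coupling on $X$, losing at most $t^p$ in the transport cost. First I would record two structural facts about the quotient of a compact ultrametric space at a level $t>0$. By \Cref{lm:quotient-finite} the set $X_t$ is \emph{finite}; write its blocks as $B_1,\dots,B_k$, where each $B_i=[x]_t$ is a closed ball of radius $t$. The two facts are: (i) every block has $u_X$-diameter at most $t$, since for $x,x'\in[x_0]_t$ the strong triangle inequality gives $u_X(x,x')\le\max(u_X(x,x_0),u_X(x_0,x'))\le t$; and (ii) for distinct blocks $B_i\neq B_j$ the cross distance $u_X(x,y)$ is \emph{constant} over $x\in B_i$, $y\in B_j$ and equals $u_{X_t}(B_i,B_j)$. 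Fact (ii) is the crucial ultrametric phenomenon, and I would prove it by the standard isosceles argument: if $x,x'\in B_i$ and $y\in B_j$ then $u_X(x,y)>t\ge u_X(x,x')$, so the strong triangle inequality forces $u_X(x',y)=u_X(x,y)$; varying the representatives then shows the value is constant and hence equals $u_{X_t}(B_i,B_j)$ by definition of the quotient ultrametric.

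Next I would construct the lift. Fix an arbitrary coupling $\pi\in\mathcal{C}(\alpha_t,\beta_t)$ and set $\pi_{ij}=\pi(\{B_i\}\times\{B_j\})$. The marginal constraints force $\pi_{ij}=0$ whenever $\alpha(B_i)=0$ or $\beta(B_j)=0$, so I may restrict the sum to indices with positive mass and define, with $\alpha|_{B_i}$ the restriction of $\alpha$ to $B_i$,
\[
\mu=\sum_{i,j}\frac{\pi_{ij}}{\alpha(B_i)\,\beta(B_j)}\,(\alpha|_{B_i})\otimes(\beta|_{B_j}),
\]
a finite sum of product measures. A direct marginal computation, summing over $j$ and using $\sum_j\pi_{ij}=\alpha_t(\{B_i\})=\alpha(B_i)$, shows that the first marginal of $\mu$ is $\sum_i\alpha|_{B_i}=\alpha$, and symmetrically the second is $\beta$, so $\mu\in\mathcal{C}(\alpha,\beta)$.

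Then I would estimate the cost of $\mu$ by splitting into diagonal and off-diagonal parts. Using facts (i) and (ii), the $(i,j)$-term with $i\neq j$ has constant integrand and contributes $u_{X_t}(B_i,B_j)^p\,\pi_{ij}$, while the $(i,i)$-term contributes at most $t^p\,\pi_{ii}$. Since $u_{X_t}(B_i,B_i)=0$ and $\sum_i\pi_{ii}\le\sum_{i,j}\pi_{ij}=1$, I obtain
\[
\big(d_{\mathrm{W},p}^{X}(\alpha,\beta)\big)^p\le\int u_X^p\,d\mu\le\sum_{i,j}u_{X_t}(B_i,B_j)^p\,\pi_{ij}+t^p=\int u_{X_t}^p\,d\pi+t^p.
\]
Taking the infimum over all $\pi\in\mathcal{C}(\alpha_t,\beta_t)$ yields $\big(d_{\mathrm{W},p}^{X}(\alpha,\beta)\big)^p\le\big(d_{\mathrm{W},p}^{X_t}(\alpha_t,\beta_t)\big)^p+t^p$, which rearranges to the stated bound. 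The main obstacle is genuinely fact (ii), namely verifying that interblock distances descend \emph{exactly} to $u_{X_t}$, together with the bookkeeping needed to ensure the lift is a bona fide coupling when some blocks carry no $\alpha$- or $\beta$-mass; the remaining cost split and the crude estimate $\sum_i\pi_{ii}\le 1$ are routine.
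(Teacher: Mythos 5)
Your proposal is correct and follows essentially the same route as the paper: both lift an arbitrary coupling $\pi\in\mathcal{C}(\alpha_t,\beta_t)$ to a coupling on $X$ via the identical sum of normalized product measures $\sum_{i,j}\frac{\pi_{ij}}{\alpha(B_i)\beta(B_j)}(\alpha|_{B_i})\otimes(\beta|_{B_j})$ and then bound the lifted cost. The only cosmetic difference is that the paper applies the single pointwise inequality $u_X(x,x')^p\leq u_{X_t}([x]_t,[x']_t)^p+t^p$ uniformly, whereas you split into diagonal and off-diagonal blocks using the exact interblock identity; both yield the same additive $t^p$ loss.
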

\begin{proof}
For any $\mu_t\in\mathcal{C}(\alpha_t,\beta_t)$, it is easy to see that there exists $\mu\in\mathcal{C}(\alpha,\beta)$ such that $\mu_t=\lc Q_t\times Q_t\rc_\#\mu$ where $Q_t\times Q_t:X\times X\rightarrow X_t\times X_t$ maps $(x,x')\in X\times X$ to $([x]_t,[x']_t)$. For example, suppose $X_t=\{[x_1]_t,\ldots,[x_n]_t\}$, then one can let 
\[\mu\coloneqq\sum_{i,j=1}^n\mu_t(([x_i]_t,[x_j]_t))\frac{\alpha|_{[x_i]_t}}{\alpha([x_i]_t)}\otimes\frac{\beta|_{[x_j]_t}}{\beta([x_j]_t)},\]
where $\alpha|_{[x_i]_t}$ is the restriction of $\alpha$ on $[x_i]_t$.

For any $x,x'\in X$, we have that $\lc u_X(x,x')\rc^p\leq \lc u_{X_t}([x]_t,[x']_t)\rc^p+t^p$. Then,
\begin{align*}
    \lc d_{\mathrm{W},p}^{X}(\alpha,\beta)\rc^p&\leq\int_{X\times X}\lc u_X(x,x')\rc^p\mu(dx\times dx')\\
    &\leq  \int_{X\times X}\lc \lc u_{X_t}([x]_t,[x']_t)\rc^p+t^p\rc\mu(dx\times dx')\\
    &=\int_{X\times X}\lc u_X(Q_t(x),Q_t(x'))\rc^p\mu(dx\times dx')+t^p\\
    &=\int_{X_t\times X_t}\lc u_{X_t}([x]_t,[x']_t)\rc^p\mu_t(d[x]_t\times d[x']_t)+t^p
\end{align*}
Infimizing over all $\mu_t\in\mathcal{C}(\alpha_t,\beta_t)$, we obtain that 
\[\lc d_{\mathrm{W},p}^{X_t}(\alpha_t,\beta_t)\rc^p\geq\lc d_{\mathrm{W},p}^{X}(\alpha,\beta)\rc^p-t^p.\]
\end{proof}

\begin{lemma}\label{lm:limit of quotient usturm}
Let $\X\in\mathcal{U}^w$ and let $p\in[1,\infty]$. Then, for any $t>0$, we have that
\[\usturm p(\X_t,\X)\leq t.\]
In particular, $\lim_{t\rightarrow 0}\usturm p(\X_t,\X)=0$.
\end{lemma}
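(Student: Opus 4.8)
The plan is to bound the infimum defining $\usturm{p}(\X_t,\X)$ from above by exhibiting a single convenient common ultrametric space together with a single coupling. The key observation is that one need not construct any new target space: the weighted quotient $(X_t,u_{X_t})$ embeds isometrically into $(X,u_X)$ itself. Indeed, since $t>0$ the space $X_t$ is finite, so I may fix a section $s\colon X_t\to X$ assigning to each block $c\in X_t$ a representative $s(c)\in c$. For distinct blocks $c\neq c'$ the points $s(c),s(c')$ lie in different closed balls of radius $t$, whence $u_X(s(c),s(c'))=u_{X_t}(c,c')$ by the definition of $u_{X_t}$ together with the representative-independence of interball distances in an ultrametric space; for $c=c'$ both sides vanish. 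Thus $s$ is an isometric embedding, and it is automatically measurable since $X_t$ is finite.

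First I would plug $s$, together with the identity embedding $\mathrm{id}\colon X\hookrightarrow X$, into the defining formula \eqref{eq:ultra Sturm}, taking the common ultrametric space to be $(X,u_X)$ itself. Since this is just one admissible choice in the infimum, it yields
\[\usturm{p}(\X_t,\X)\le d_{\mathrm{W},p}^{X}\big(s_\#\mu_{X_t},\muX\big).\]
Next I would estimate this Wasserstein distance. Writing $T\coloneqq s\circ Q_t\colon X\to X$, where $Q_t$ is the canonical quotient map $x\mapsto[x]_t$, we have $s_\#\mu_{X_t}=T_\#\muX$, so that $\pi\coloneqq(T\times\mathrm{id})_\#\muX$ is a coupling of $s_\#\mu_{X_t}$ and $\muX$ supported on the graph $\{(s([x]_t),x)\,:\,x\in X\}$. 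For each such pair the points $x$ and $s([x]_t)$ both lie in the block $[x]_t$, which is a closed ball of radius $t$, so $u_X\big(x,s([x]_t)\big)\le t$. Substituting $\pi$ into \eqref{eq:Wasserstein} gives, for $p<\infty$,
\[d_{\mathrm{W},p}^{X}\big(s_\#\mu_{X_t},\muX\big)\le\left(\int_X u_X\big(x,s([x]_t)\big)^p\,\muX(dx)\right)^{1/p}\le t,\]
and substituting it into \eqref{eq:def Wasserstein infinity} gives the analogous supremum bound $\le t$ for $p=\infty$. Chaining the two displays proves $\usturm{p}(\X_t,\X)\le t$ for every $t>0$, and the final assertion follows by letting $t\searrow0$ and invoking the non-negativity of $\usturm{p}$.

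There is essentially no hard step here once the isometric embedding $s$ is identified: the only things to verify are the representative-independence of $u_{X_t}$ (already needed for its well-definedness) and the trivial measurability of $s$ and $Q_t$. The real content — and the point I would emphasize — is that realizing $X_t$ sits isometrically inside $X$ entirely sidesteps the more laborious alternative of building a pseudo-ultrametric on the disjoint union $X\sqcup X_t$ and checking the ultrametric inequality across all mixed triples; both routes give the same bound, but the embedding route is the clean one and requires no case analysis.
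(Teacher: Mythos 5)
Your proof is correct, but it takes a genuinely different route from the paper's. The paper disposes of the lemma in two lines by leaning on heavier machinery already established: since $(\X_t)_t\cong_w\X_t$, the structural characterization $\ugw{\infty}(\X,\Y)=\min\{s\geq 0\,|\,\X_s\cong_w\Y_s\}$ from \Cref{thm:ugw-infty-eq}, combined with the equality $\usturm{\infty}=\ugw{\infty}$ from \Cref{thm:ugw infty and sturm ugw infty}, gives $\usturm{\infty}(\X_t,\X)\leq t$, and the monotonicity $\usturm{p}\leq\usturm{\infty}$ from \Cref{prop:usturm_basic}(2) then yields the bound for every $p$. You instead work directly from the definition: the section $s\colon X_t\to X$ is indeed an isometric embedding (representative-independence of interblock distances is exactly the well-definedness of $u_{X_t}$), the graph coupling $(s\circ Q_t\times\mathrm{id})_\#\muX$ has the right marginals, and its support lies on pairs within a common closed $t$-ball, so the cost is pointwise at most $t$ for every $p\in[1,\infty]$ (for $p=\infty$ one only needs that $s\circ Q_t$ is locally constant, hence its graph is closed). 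What your approach buys is self-containedness: it uses nothing beyond the definition of $\usturm{p}$ and the quotient construction, treats all $p$ uniformly without passing through the $p=\infty$ case, and makes manifest that the lemma does not depend on \Cref{thm:compact usturm A Phi representation} (in whose proof it is used) or on the $\ugw{\infty}$ theory. What the paper's route buys is brevity, at the cost of invoking two nontrivial theorems whose proofs are considerably longer than your direct argument.
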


\begin{proof}
It is obvious that $(\X_t)_t\cong_w\X_t$. Hence, it holds by \Cref{thm:ugw-infty-eq} that $\usturm\infty(\X_t,\X)\leq t$. By \Cref{prop:usturm_basic} we have that for any $p\in[1,\infty]$
\[\usturm p(\X_t,\X)\leq \usturm\infty(\X_t,\X)\leq t.\]
\end{proof}
With \Cref{lm:quotient ultrametric dW} and \Cref{lm:limit of quotient usturm} available, we can come to the proof of \Cref{thm:compact usturm A Phi representation}.

\begin{proof}[Proof of \Cref{thm:compact usturm A Phi representation}]
Clearly, it follows from the definition of $\usturm{p}$ (see \Cref{eq:ultra Sturm}) that
 \[\usturm{p}(\X,\Y)\leq\inf_{(A,\varphi)\in\mathcal{A}}d_{\mathrm{W},p}^{Z_A}\left({\left(\phi^X_{(A,\varphi)}\right)}_\#\muX,{\left(\psi^Y_{(A,\varphi)}\right)}_\#\muY\right)\] Hence, we focus on proving the opposite inequality.

Given any $t>0$, by \Cref{lm:quotient-finite}, both $\X_t$ and $\Y_t$ are finite spaces. By \Cref{thm:usturm A Phi representation} we have that
\[\usturm{p}(\X_t,\Y_t)=\inf_{(A_t,\varphi_t)\in \mathcal{A}_t}d_{\mathrm{W},p}^{Z_{A_t}}\left({\left(\phi^{X_t}_{(A_t,\varphi_t)}\right)}_\#(\muX)_t,{\left(\psi^{Y_t}_{(A_t,\varphi_t)}\right)}_\#(\muY)_t\right),\]
where
\[\mathcal{A}_t\coloneqq\{(A_t,\varphi_t)\,|\,\emptyset\neq A_t\subseteq X_t \text{ is closed and } \varphi_t:A_t\hookrightarrow Y_t \text{ is an isometric embedding } \}.\]

For any $(A_t,\varphi_t)\in \mathcal{A}_t$, assume that $A_t=\{[x_1]_t^X,\ldots,[x_n]_t^X\}$ and that $\varphi_t([x_i]_t)=[y_i]_t\in Y_t$ for all $i=1,\ldots,n$. Let $A\coloneqq\{x_1,\ldots,x_n\}$. Then, the map $\varphi:A\rightarrow Y$ defined by $x_i\mapsto y_i$ for $i=1,\ldots,n$ is an isometric embedding. Therefore, $(A,\varphi)\in \mathcal{A}$.

\begin{claim}
 $\lc(Z_A)_t,u_{(Z_A)_t}\rc\cong \lc Z_{A_t},u_{Z_{A_t}}\rc$.
\end{claim}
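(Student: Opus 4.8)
The plan is to exhibit an explicit isometry between $(Z_A)_t$ and $Z_{A_t}$ by descending a natural map $\Phi\colon Z_A\to Z_{A_t}$ to the quotient. Recall $Z_A=X\sqcup(Y\setminus\varphi(A))$ and $Z_{A_t}=X_t\sqcup(Y_t\setminus\varphi_t(A_t))$, where $A=\{x_1,\dots,x_n\}$, $\varphi(A)=\{y_1,\dots,y_n\}$, $A_t=\{[x_1]_t,\dots,[x_n]_t\}$ and $\varphi_t([x_i]_t)=[y_i]_t$. I would define $\Phi(x)\coloneqq[x]_t$ for $x\in X$, and for $y\in Y\setminus\varphi(A)$ set $\Phi(y)\coloneqq[x_i]_t$ if $[y]_t=[y_i]_t$ for some (necessarily unique) $i$, and $\Phi(y)\coloneqq[y]_t$ otherwise. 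Surjectivity of $\Phi$ is immediate, so it suffices to prove two statements: (a) $\Phi(z)=\Phi(z')$ if and only if $u_{Z_A}(z,z')\le t$; and (b) $u_{Z_{A_t}}(\Phi(z),\Phi(z'))=u_{Z_A}(z,z')$ whenever $u_{Z_A}(z,z')>t$. Together these show $\Phi$ factors through an isometric bijection $\overline{\Phi}\colon(Z_A)_t\to Z_{A_t}$.

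The key preliminary simplification is to observe that rules (2) and (3) defining $u_{Z_A}$ can be merged: for \emph{every} $x\in X$ and $y\in Y\setminus\varphi(A)$ one has $u_{Z_A}(x,y)=\min_{a\in A}\max(\uX(x,a),\uY(\varphi(a),y))$, since for an anchor $x=x_i$ the term $a=x_i$ already realises the minimum (any strictly smaller term would, via $\uY(y_i,y)\le\max(\uY(y_i,y_j),\uY(y_j,y))$ and the isometry $\uX(x_i,x_j)=\uY(y_i,y_j)$, contradict the ultrametric inequality). The analogous unified formula holds on $Z_{A_t}$. I would also repeatedly use three facts: the quotient identity $u_{X_t}([x]_t,[x']_t)=\uX(x,x')$ whenever $[x]_t\ne[x']_t$ (and its $Y$-analogue); the isometry $\uX(x_i,x_j)=\uY(y_i,y_j)$; and the ``isosceles'' property of ultrametrics, namely $u(a,c)=u(b,c)$ whenever $u(a,b)\le t<u(b,c)$.

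With these in hand, (a) and (b) reduce to a case analysis according to whether $z,z'$ lie in $X$ or in $Y\setminus\varphi(A)$. The within-$X$ and within-$Y$ cases are direct from the quotient identity and the isosceles property (used when one endpoint is glued to an anchor). The genuinely delicate case is the cross case $z=x\in X$, $z'=y\in Y\setminus\varphi(A)$, analysed through the unified formula $u_{Z_A}(x,y)=\min_i\max(\uX(x,x_i),\uY(y_i,y))$. Here I expect the main obstacle to be the subcase where $[y]_t=[y_k]_t\in\varphi_t(A_t)$ (so $\Phi(y)=[x_k]_t$ lands back in $X_t$): one must show that the minimum over $i$ collapses to $\uX(x,x_k)=u_{X_t}([x]_t,[x_k]_t)$, which requires ruling out that some other index $i$ produces a strictly smaller value; this is exactly where the isometry together with two nested applications of the ultrametric inequality is needed. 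The remaining cross subcases (where $[y]_t\notin\varphi_t(A_t)$) split by whether $[x]_t\in A_t$; the case $[x]_t\notin A_t$ follows immediately because both quotient distances equal the original ones term by term, while the case $[x]_t\in A_t$ is again handled by the isosceles argument.

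Finally, assembling (a) and (b) yields that $\overline{\Phi}$ is a well-defined bijection preserving the ultrametric, i.e.\ $(Z_A)_t\cong Z_{A_t}$, which proves the claim. I would remark that finiteness of $A$ (inherited from $A_t\subseteq X_t$) is precisely what lets us write $\min$ rather than $\inf$ throughout and keeps the collapsing arguments finite and clean.
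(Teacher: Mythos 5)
Your proposal is correct and takes essentially the same route as the paper: both exhibit the canonical map between $(Z_A)_t$ and $Z_{A_t}$ and verify well-definedness, surjectivity, and the isometry property by a case analysis, with the cross case handled through the $\min$--$\max$ formula for $u_{Z_A}$ and the isosceles property of ultrametrics. Your choices to define the map upstairs on $Z_A$ and descend, and to merge rules (2) and (3) into a single formula, are only cosmetic streamlinings of the paper's argument.
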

\begin{proof}[Proof of the Claim]
We define a map $\Psi:(Z_A)_t\rightarrow Z_{A_t}$ by $[x]_t^{Z_A}\mapsto [x]_t^X$ for $x\in X$ and $[y]_t^{Z_A}\mapsto [y]_t^Y$ for $y\in Y\backslash\varphi(A)$. We first show that $\Psi$ is well-defined. For any $x'\in X$, if $u_{Z_A}(x,x')\leq t$, then obviously we have that $u_X(x,x')=u_{Z_A}(x,x')\leq t$ and thus $[x]_t^X=[x']_t^X$. Now, assume that there exists $y\in Y\backslash\varphi(A)$ such that $u_{Z_A}(x,y)\leq t$, i.e., $[x]_t^{Z_A}=[y]_t^{Z_A}$. Then, by finiteness of $A$ and definition of $Z_A$, there exists $x_i\in A$ such that $u_{Z_A}(x,y)=\max\lc u_X(x,x_i),u_Y(\varphi(x_i),y)\rc\leq t$. This gives that
\[u_{Z_{A_t}}([x]_t^X,[y]_t^Y)\leq \max\lc u_{X_t}\lc[x]_t^X,[x_i]_t^X\rc,u_{Y_t}\lc[\varphi(x_i)]_t^Y,[y]_t^Y\rc\rc\leq t.\]
However, this happens only if $u_{Z_{A_t}}([x]_t^X,[y]_t^Y)=0$, that is, $[x]_t^X$ is identified with $[y]_t^Y$ under the map $\varphi_t$. Therefore, $\Psi$ is well-defined. 

It is easy to see from the definition that $\Psi$ is surjective. Thus, it suffices to show that $\Psi$ is an isometric embedding to finish the proof. For any $x,x'\in X$ such that $u_X(x,x')>t$, we have that
\[u_{(Z_A)_t}\lc[x]_t^{Z_A},[x']_t^{Z_A}\rc = u_{Z_A}(x,x')=u_X(x,x')=u_{X_t}\lc[x]_t^{X},[x']_t^{X}\rc=u_{Z_{A_t}}\lc[x]_t^{X},[x']_t^{X}\rc.\]
Similarly, for any $y,y'\in Y\backslash\varphi(A)$ such that $u_Y(y,y')>t$, we have that
\[u_{(Z_A)_t}\lc[y]_t^{Z_A},[y']_t^{Z_A}\rc =u_{Z_{A_t}}\lc[y]_t^{Y},[y']_t^{Y}\rc.\]
Now, consider $x\in X$ and $y\in Y\backslash\varphi(A)$. Assume that $u_{Z_A}(x,y)>t$ (otherwise $[x]_t^{Z_A}=[y]_t^{Z_A}$). Then, we have that
\begin{align*}
     u_{Z_A}\lc x,y\rc=\min_{i=1,\ldots,n}\max\lc u_{X}\lc x,x_i\rc,u_{Y}\lc \varphi(x_i),y\rc\rc>t.
\end{align*}
This implies that
\begin{align*}
     u_{Z_{A_t}}\lc [x]_t^X,[y]_t^Y\rc&=\min_{i=1,\ldots,n}\max\lc u_{X_t}\lc [x]_t^X,[x_i]_t^X\rc,u_{Y_t}\lc \varphi_t([x_i]_t^X),[y]_t^Y\rc\rc\\
     &=\min_{i=1,\ldots,n}\max\lc u_{X}\lc x,x_i\rc,u_{Y}\lc \varphi(x_i),y\rc\rc\\
     &=u_{Z_A}\lc x,y\rc=u_{(Z_A)_t}\lc [x]_t^{Z_A},[y]_t^{Z_A}\rc.
\end{align*}
Therefore, $\Psi$ is an isometric embedding and thus we conclude the proof.
\end{proof}

By \Cref{lm:quotient ultrametric dW} we have that
\begin{align*}
    &\lc d_{\mathrm{W},p}^{Z_{A_t}}\left({\left(\phi^{X_t}_{(A_t,\varphi_t)}\right)}_\#(\muX)_t,{\left(\psi^{Y_t}_{(A_t,\varphi_t)}\right)}_\#(\muY)_t\right)\rc^p\\
    \geq &\lc d_{\mathrm{W},p}^{Z_{A}}\left({\left(\phi^{X}_{(A,\varphi)}\right)}_\#\muX,{\left(\psi^{Y}_{(A,\varphi)}\right)}_\#\muY\right)\rc^p-t^p
\end{align*}
Therefore,
\begin{align*}
    \usturm{p}(\X_t,\Y_t)&=\inf_{(A_t,\varphi_t)\in \mathcal{A}_t}d_{\mathrm{W},p}^{Z_{A_t}}\left({\left(\phi^{X_t}_{(A_t,\varphi_t)}\right)}_\#(\muX)_t,{\left(\psi^{Y_t}_{(A_t,\varphi_t)}\right)}_\#(\muY)_t\right)\\
    &\geq \inf_{(A,\varphi)\in \mathcal{A}}\lc\lc d_{\mathrm{W},p}^{Z_{A}}\left({\left(\phi^{X}_{(A,\varphi)}\right)}_\#\muX,{\left(\psi^{Y}_{(A,\varphi)}\right)}_\#\muY\right)\rc^p-t^p\rc^\frac{1}{p}.
\end{align*}
Notice that the last inequality already holds when we only consider $(A,\varphi)$ corresponding to $(A_t,\varphi_t)\in\mathcal{A}_t$.

By \Cref{lm:limit of quotient usturm}, we have that
\[\usturm p(\X,\Y)=\lim_{t\rightarrow 0}\usturm p(\X_t,\Y_t)\geq \inf_{(A,\varphi)\in \mathcal{A}} d_{\mathrm{W},p}^{Z_{A}}\left({\left(\phi^{X}_{(A,\varphi)}\right)}_\#\muX,{\left(\psi^{Y}_{(A,\varphi)}\right)}_\#\muY\right),\]
which concludes the proof.
\end{proof}
\subsection{Proofs from Section \ref{subsec:the ultrametric GW distance}}\label{sec:proofs sec32}
In the following, we give the complete proofs of the results stated in  \Cref{subsec:the ultrametric GW distance}.
\subsubsection{Proof of \Cref{prop:ugw-properties}}\label{sec:proof:prop:ugw-properties}
		\begin{enumerate}
		\item This follows directly from the definitions of $\ugw{p}$ and $\dgw{p}$ (see \Cref{eq:def uGW} and \Cref{eq:Gromov Wasserstein}).  
	\item By Jensen's inequality we have that $\disu_p(\mu)\leq\disu_q(\mu)$ for any $\mu\in\mathcal{C}(\muX,\muY)$. Therefore, $\ugw{p}(\X,\Y)\leq\ugw{q}(\X,\Y) $. 
	\item By (2), we know that $\{\ugw{n}(\X,\Y)\}_{n\in\mathbb{N}}$ is an increasing sequence with a finite upper bound $\ugw{\infty}(\X,\Y)$. Therefore, $L\coloneqq\lim_{n\rightarrow\infty}\ugw{n}(\X,\Y)$ exists and  it holds $L\leq \ugw{\infty}(\X,\Y)$. 
	
	To prove the opposite inequality, by \Cref{prop:ugw-ext-opt}, there exists for each $n\in\mathbb{N}$ $\mu_n\in\mathcal{C}(\muX,\muY)$ such that 
		\[\left(\iint_{X\times Y\times X\times Y}\Lambda_\infty(\uX(x,x'),\uY(y,y'))^n\mu_n(dx\times dy)\mu_n(dx'\times dy')\right)^\frac{1}{n}= \ugw{n}(\X,\Y). \]
By \Cref{lm:measure coupling compact}, $\{\mu_n\}_{n\in\N}$  weakly converges (after taking an appropriate subsequence) to some $\mu\in\mathcal{C}(\muX,\muY)$. Let \[M=\sup_{(x,y),(x',y')\in \supp{\mu}}\Lambda_\infty(\uX(x,x'),\uY(y,y'))\] and for any given $\eps>0$ let
	\[U=\{((x,y),(x',y'))\in X\times Y\times X\times Y\,|\,\Lambda_\infty(\uX(x,x'),\uY(y,y'))> M-\eps \}.\]
	Then, we have $\mu\otimes\mu(U)>0$. As $\mu_n$ weakly converges to $\mu$, we have that $\mu_n\otimes\mu_n$ weakly converges to $\mu\otimes\mu$. Since $U$ is open, there exists a small $\eps_1>0$ such that $\mu_n\otimes\mu_n(U)>\mu\otimes \mu(U)-\eps_1>0$ for $n$ large enough (see e.g. \citet[Thm. 2.1]{billingsleyConvergenceProbabilityMeasures2013}). Therefore, 
	\begin{align*}
	&\left(\iint_{X\times Y\times X\times Y}\Lambda_\infty(\uX(x,x'),\uY(y,y'))^n\mu_n(dx\times dy)\mu_n(dx'\times dy')\right)^\frac{1}{n}\\
	\geq& (\mu_n\otimes\mu_n(U))^\frac{1}{n}(M-\eps)\geq(\mu\otimes\mu(U)-\eps_1)^\frac{1}{n}(M-\eps).
	\end{align*}
	Letting $n\rightarrow\infty$, we obtain $L\geq M-\eps$. Since $\eps>0$ is arbitrary, we obtain $L\geq M\geq \ugw{\infty}(\X,\Y)$. 
\end{enumerate}

\subsubsection{Proof of Theorem \ref{thm:ugw-p-metric}}\label{sec:proof of thm ugw-p-metric}
One main step to verify \Cref{thm:ugw-p-metric} is to demonstrate the existence of optimal couplings. 
\begin{proposition}\label{prop:ugw-ext-opt}
Let $\X=\ummspaceX$ and $\Y=\ummspaceY$ be compact ultrametric measure spaces. Then, for any $p\in[1,\infty]$, there always exists an optimal coupling $\mu\in\mathcal{C}(\muX,\muY)$ such that $\ugw{p}(\X,\Y)=\mathrm{dis}_p^\mathrm{ult}(\mu)$.
\end{proposition}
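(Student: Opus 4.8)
The plan is to apply the direct method of the calculus of variations: extract a weak limit from a minimizing sequence of couplings, and verify that the objective $\mathrm{dis}_p^{\mathrm{ult}}$ is lower semicontinuous along it. First I would fix a minimizing sequence $\{\mu_n\}_{n\in\N}\subseteq\mathcal{C}(\muX,\muY)$ with $\mathrm{dis}_p^{\mathrm{ult}}(\mu_n)\to\ugw{p}(\X,\Y)$. By \Cref{lm:measure coupling compact} the set $\mathcal{C}(\muX,\muY)$ is weakly compact, so after passing to a subsequence we may assume that $\mu_n$ converges weakly to some $\mu\in\mathcal{C}(\muX,\muY)$. It then suffices to prove $\mathrm{dis}_p^{\mathrm{ult}}(\mu)\leq\ugw{p}(\X,\Y)$, since the reverse inequality is immediate from the definition of $\ugw{p}$ as an infimum over couplings.

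The crux of the argument, and the main obstacle, is that $\Lambda_\infty$ is \emph{not continuous}: it jumps from $0$ on the diagonal $\{a=b\}$ to $\max(a,b)$ off it. However, $\Lambda_\infty:\Rp\times\Rp\to\Rp$ is \emph{lower semicontinuous}. Indeed, off the diagonal it agrees with the continuous map $\max$, while at a diagonal point $(c,c)$ one has $\Lambda_\infty(c,c)=0$, which is dominated by the liminf of $\Lambda_\infty$ along any approaching sequence. I would record this as a short lemma. Since $(x,x')\mapsto\uX(x,x')$ and $(y,y')\mapsto\uY(y,y')$ are continuous and $t\mapsto t^p$ is continuous and nondecreasing, it follows that the integrand $g(x,y,x',y')\coloneqq\big(\Lambda_\infty(\uX(x,x'),\uY(y,y'))\big)^p$ is a nonnegative lower semicontinuous function on $(X\times Y)\times(X\times Y)$.

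For $p<\infty$ I would then combine two standard facts about weak convergence on Polish spaces: first, $\mu_n\rightharpoonup\mu$ implies $\mu_n\otimes\mu_n\rightharpoonup\mu\otimes\mu$ on the product; second, the portmanteau theorem gives $\liminf_n\int g\,d(\mu_n\otimes\mu_n)\geq\int g\,d(\mu\otimes\mu)$ for every nonnegative lower semicontinuous $g$. Applying this to the integrand above yields $\mathrm{dis}_p^{\mathrm{ult}}(\mu)^p\leq\liminf_n\mathrm{dis}_p^{\mathrm{ult}}(\mu_n)^p=\ugw{p}(\X,\Y)^p$, which is the desired bound.

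For $p=\infty$ I would instead exploit the lower semicontinuity of the support under weak convergence: if $z\in\supp{\mu}$, then every open neighborhood $U$ of $z$ satisfies $\mu(U)>0$, whence $\mu_n(U)>0$ for all large $n$, so one may select points $z_n\in\supp{\mu_n}$ with $z_n\to z$. Thus, given any $(x,y),(x',y')\in\supp{\mu}$, there exist $(x_n,y_n),(x'_n,y'_n)\in\supp{\mu_n}$ converging to them, and the bound $\Lambda_\infty(\uX(x_n,x'_n),\uY(y_n,y'_n))\leq\mathrm{dis}_\infty^{\mathrm{ult}}(\mu_n)$ combined with the lower semicontinuity of $\Lambda_\infty$ and the continuity of $\uX,\uY$ gives $\Lambda_\infty(\uX(x,x'),\uY(y,y'))\leq\liminf_n\mathrm{dis}_\infty^{\mathrm{ult}}(\mu_n)=\ugw{\infty}(\X,\Y)$. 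Taking the supremum over all $(x,y),(x',y')\in\supp{\mu}$ then yields $\mathrm{dis}_\infty^{\mathrm{ult}}(\mu)\leq\ugw{\infty}(\X,\Y)$, completing the proof.
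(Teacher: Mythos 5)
Your proposal is correct, and it follows the paper's overall scheme (minimizing sequence, weak compactness of $\mathcal{C}(\muX,\muY)$ via \Cref{lm:measure coupling compact}, then passage to the limit) but diverges at the key analytic step. The paper proves in \Cref{lm:continuity-delta-infty} that the composite integrand $\Lambda_\infty(\uX,\uY)$ is actually \emph{continuous} on $X\times Y\times X\times Y$ — a fact special to ultrametric spaces, resting on the observation that $u_X(x_1,x_1')=u_X(x,x')$ exactly whenever $x_1,x_1'$ are sufficiently close to $x\neq x'$ — and then concludes $\mathrm{dis}_p^\mathrm{ult}(\mu)=\lim_n\mathrm{dis}_p^\mathrm{ult}(\mu_n)$ from weak convergence of $\mu_n\otimes\mu_n$ against a bounded continuous integrand. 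You instead use only the lower semicontinuity of $\Lambda_\infty$ on $\Rp\times\Rp$ (which is valid: $\Lambda_\infty$ is nonnegative and vanishes on the diagonal, and agrees with the continuous $\max$ off it), push it through the continuous maps $\uX,\uY$ and $t\mapsto t^p$, and invoke the portmanteau inequality for nonnegative lower semicontinuous integrands; for $p=\infty$ you replace this with lower semicontinuity of supports under weak convergence. Both arguments are sound. Yours is more robust — it would establish existence of optimizers for the $\Lambda_\infty$-cost over arbitrary compact metric measure spaces, not just ultrametric ones, and only yields the one-sided inequality $\mathrm{dis}_p^\mathrm{ult}(\mu)\leq\liminf_n\mathrm{dis}_p^\mathrm{ult}(\mu_n)$, which is all that is needed. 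The paper's route buys the stronger conclusion that $\mathrm{dis}_p^\mathrm{ult}$ is weakly continuous (not merely lower semicontinuous) on $\mathcal{C}(\muX,\muY)$, at the cost of a case analysis that genuinely exploits the strong triangle inequality.
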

\begin{proof}
    	We will only prove the claim for the case $p<\infty$ since the case $p=\infty$ can be proven in a similar manner. Let $\mu_n\in\mathcal{C}(\muX,\muY)$ be such that 
	\[\left(\iint_{X\times Y\times X\times Y}\Lambda_\infty(\uX(x,x'),\uY(y,y'))^p\,\mu_n(dx\times dy)\mu_n(dx'\times dy')\right)^\frac{1}{p}\leq \ugw{p}(\X,\Y)+\frac{1}{n}. \]
	By \Cref{lm:measure coupling compact}, $\{\mu_n\}_{n\in\N}$ weakly converges to some $\mu\in\mathcal{C}(\muX,\muY)$ (after taking an appropriate subsequence). Then, by the boundedness and continuity  of $\Lambda_\infty(\uX,\uY)$ on $X\times Y\times X\times Y$ (cf. Lemma \ref{lm:continuity-delta-infty}) as well as the weak convergence of $\mu_n\otimes \mu_n$, we have that that
		\[\mathrm{dis}_p^\mathrm{ult}(\mu)=\lim_{n\rightarrow\infty}\mathrm{dis}_p^\mathrm{ult}(\mu_n)\leq \ugw{p}(\X,\Y). \]
		Hence, $\ugw{p}(\X,\Y)=\mathrm{dis}_p^\mathrm{ult}(\mu)$.
\end{proof}

Based on \Cref{prop:ugw-ext-opt}, it is straightforward to prove \Cref{thm:ugw-p-metric}.
\begin{proof}[Proof of \Cref{thm:ugw-p-metric}]
		It is clear that $\ugw{p}$ is symmetric and that $\ugw{p}(\X,\Y)=0$ if $\X\cong_w\Y$. Furthermore, we remark that $\ugw{p}(\X,\Y)\geq d_{\mathrm{GW},p}(\X,\Y)$ by \Cref{prop:ugw-properties}. Since $\dgw{p}(\X,\Y)=0$ implies that $\X\cong_w\Y$ (see \cite{memoli2011gromov}), we have that $\ugw{p}(\X,\Y)=0$ implies that $\X\cong_w\Y$. It remains to verify the $p$-triangle inequality. To this end, we only prove the case when $p<\infty$ whereas the case $p=\infty$ follows by analogous arguments.
		
		Now let $\X,\Y,\mathcal{Z}$ be three ultrametric measure spaces. Let $\mu_{XY}\in\mathcal{C}(\muX,\muY)$ and $\mu_{YZ}\in\mathcal{C}(\muY,\mu_Z)$ be optimal (cf. \Cref{prop:ugw-ext-opt}). By the Gluing Lemma \citep[Lemma 7.6]{villani2003topics},
		there exists a measure $\mu_{XYZ}\in\mathcal{P}(X\times Y\times Z)$ with marginals $\mu_{XY}$ on $X\times Y$ and $\mu_{YZ}$ on $Y\times Z$. Further, we define $\mu_{XZ}=(\pi_{XZ})_\#\mu\in\mathcal{P}(X\times Z)$, where $\pi_{XZ}$ denotes the canonical projection $X\times Y\times Z\to X\times Z$. Then,
		\begin{align*}
		&(\ugw{p}(\X,\Z))^p\leq \iint_{X\times Z \times X\times Z}\big(\Lambda_\infty(\uX(x,x'),u_Z(z,z'))\big)^p\,\mu_{XZ}(dx\times dz)\,\mu_{XZ}(dx'\times dz')  \\
		=&\iint_{X\times Y\times Z \times X\times Y\times Z}\big(\Lambda_\infty(\uX(x,x'),\uZ(z,z'))\big)^p\,\mu_{XYZ}(dx\times dy\times dz)\,\mu_{XYZ}(dx'\times dy'\times dz')\\
		\leq& \iint_{X\times Y\times Z \times X\times Y\times Z}\big(\Lambda_\infty(\uX(x,x'),\uY(y,y'))\big)^p\,\mu_{XYZ}(dx\times dy\times dz)\,\mu_{XYZ}(dx'\times dy'\times dz')\\
		+& \iint_{X\times Y\times Z \times X\times Y\times Z}\big(\Lambda_\infty(\uY(y,y'),\uZ(z,z'))\big)^p\,\mu_{XYZ}(dx\times dy\times dz)\,\mu_{XYZ}(dx'\times dy'\times dz')\\
		=& \iint_{X\times Y \times X\times Y}\big(\Lambda_\infty(\uX(x,x'),\uY(y,y'))\big)^p\,\mu_{XY}(dx\times dy)\,\mu_{XY}(dx'\times dy')\\
		+& \iint_{ Y\times Z \times Y\times Z}\big(\Lambda_\infty(\uY(y,y'),\uZ(z,z'))\big)^p\,\mu_{YZ}(dy\times dz)\,\mu_{YZ}( dy'\times dz')\\
		=&(\ugw{p}(\X,\Y))^p+(\ugw{p}(\Y,\Z))^p,
		\end{align*}
		where the second inequality follows from the fact that $\Lambda_\infty$ in an ultrametric on $\Rp$ (cf. \cite[Remark 1.14]{memoli2019gromov}) and the observation that an ultrametric is automatically a $p$-metric for any $p\in[1,\infty]$ \citep[Proposition 1.16]{memoli2019gromov}. 
	\end{proof}

	\subsubsection{Proof of \Cref{thm:ugw-infty-eq}}\label{sec:proof of thm ugw-infty-eq}
	We first prove that 
	\begin{equation}\label{eq:ugw-infty=inf}
	    u_{\mathrm{GW},\infty}(\X,\Y)=\inf\left\lbrace t\geq 0 \,|\,\X_t \cong_w \Y_t\right\rbrace
	\end{equation}
    and then show that the infimum is attainable.
	
	Since $\X_0\cong_w \X$ and $\Y_0\cong_w \Y$, if $\X_0\cong_w\Y_0$, then $\X\cong_w\Y$ and thus by \Cref{thm:ugw-p-metric}
	\[\ugw \infty(\X,\Y)=0=\inf\left\lbrace t\geq 0 \,|\,\X_t \cong_w \Y_t\right\rbrace\]
	Now, assume that for some $t>0$, $\X_t\cong_w \Y_t$. By Lemma \ref{lm:quotient-finite}, for some $n\in\mathbb N$ we can write ${X}_t=\{[x_1]_t,\dots,[x_n]_t\}$ and ${Y}_t=\{[y_1]_t,\dots,[y_n]_t\}$ such that $u_{X_t}([x_i]_t,[x_j]_t)=u_{Y_t}([y_i]_t,[y_j]_t)$ and $\muX([x_i]_t)=\muY([y_i]_t)$. Let $\muX^i\coloneqq\muX|_{[x_i]_t}$ and $\muY^i\coloneqq\muY|_{[y_i]_t}$ for all $i=1,\dots,n$. Let $\mu\coloneqq\sum_{i=1}^n\muX^i\otimes\muY^i.$ It is easy to check that $\mu\in\mathcal{C}(\muX,\muY)$ and $\mathrm{supp}(\mu)=\bigcup_{i=1}^n[x_i]_t\times[y_i]_t.$ 
	Assume $(x,y)\in[x_i]_t\times[y_i]_t$ and $(x',y')\in[x_j]_t\times [y_j]_t$. If $i\neq j$, then $u_{X_t}([x_i]_t,[x_j]_t)=u_{Y_t}([y_i]_t,[y_j]_t)$ and thus
	\[\Lambda_\infty(\uX(x,x'),\uY(y,y'))=\Lambda_\infty(u_{X_t}([x_i]_t,[x_j]_t),u_{Y_t}([y_i]_t,[y_j]_t))=0.\]
	If $i=j$, then $\uX(x,x'),\uY(y,y')\leq t$ and thus $\Lambda_\infty(\uX(x,x'),\uY(y,y'))\leq t$. In either case, we have that
		\begin{align*}
		u_{\mathrm{GW},\infty}(\X,\Y)\leq \sup_{(x,y),(x',y')\in\mathrm{supp}(\mu)}\Lambda_\infty(\uX(x,x'),\uY(y,y'))\leq t.
		\end{align*}
	Therefore, $\ugw{\infty}(\X,\Y)\leq\inf\left\lbrace t\geq 0 \,|\,\X_t \cong_w \Y_t\right\rbrace.$ 
		
	Conversely, suppose $\mu\in\mathcal{C}(\muX,\muY)$ and let $t\coloneqq\sup_{(x,y),(x',y')\in\mathrm{supp}(\mu)}\Lambda_\infty(\uX(x,x'),\uY(y,y'))$. By \citet[Lemma 2.2]{memoli2011gromov}, we know that $\mathrm{supp}(\mu)$ is a correspondence between $X$ and $Y$. We define a map $f_t:X_t\rightarrow Y_t$ by taking $[x]_t^X\in X_t$ to $[y]_t^Y\in Y_t$ such that $(x,y)\in\mathrm{supp}(\mu)$. It is easy to check that $f_t$ is well-defined and moreover $f_t$ is an isometry (see for example the proof of \citet[Theorem 5.7]{memoli2019gromov}). Next, we prove that $f_t$ is actually an isomorphism between $\X_t$ and $\Y_t$. For any $[x]^X_t\in X_t$, let $y\in Y$ be such that $(x,y)\in \supp{\mu}$ (in this case, $[y]^Y_t=f_t([x]^X_t)$). If there exists $(x',y')\in\mathrm{supp}(\mu)$ such that $x'\in[x]^X_t$ and $y'\not\in[y]^Y_t$, then $\Lambda_\infty(\uX(x,x'),\uY(y,y'))=\uY(y,y')>t$, which is impossible. Consequently, $\mu([x]^X_t\times(Y\setminus [y]^Y_t))=0$ and similarly, $\mu((X\setminus [x]^X_t)\times[y]^Y_t)=0$. This yields that
	\[\muX([x]^X_t)=\mu([x]^Y_t\times Y)=\mu([x]^X_t\times[y]^Y_t)=\mu(X\times[y]^Y_t)=\muY([y]^Y_t).\]
	Therefore, $f_t$ is an isomorphism between $\X_t$ and $\Y_t$. Hence, we have that $\ugw{\infty}(\X,\Y)\geq\inf\left\lbrace t\geq 0 \,|\,\X_t \cong_w \Y_t\right\rbrace$ and hence $\ugw{\infty}(\X,\Y)=\inf\left\lbrace t\geq 0 \,|\,\X_t \cong_w \Y_t\right\rbrace.$ 
		
	Finally, we show that the infimum of $\inf\left\lbrace t\geq 0 \,|\,\X_t \cong_w \Y_t\right\rbrace$ is attainable. Let $\delta \coloneqq\inf\lbrace t\geq 0 \,|\,\X_t \cong_w \Y_t\rbrace$. If $\delta>0$, let $\{t_n\}_{n\in\mathbb{N}}$ be a decreasing sequence converging to $\delta$ such that $\X_{t_n}\cong_w \Y_{t_n}$ for all $t_n$. Since $\X_\delta$ and $\Y_\delta$ are finite spaces, we actually have that $\X_{t_n}=\X_{\delta}$ and $\Y_{t_n}=\Y_{\delta}$ when $n$ is large enough. This immediately implies that $\X_\delta\cong_w \Y_\delta$. Now, if $\delta=0$, then by \Cref{eq:ugw-infty=inf} we have that $\ugw{\infty}(\X,\Y)=\delta=0$. By \Cref{thm:ugw-p-metric}, $\X\cong_w\Y$. This is equivalent to $\X_\delta\cong_w\Y_\delta$. Therefore, the infimum of $\inf\left\lbrace t\geq 0 \,|\,\X_t \cong_w \Y_t\right\rbrace$ is always attainable.

\subsubsection{Proof of \Cref{thm:snow-ugw}}\label{sec:proof of thm snow-ugw}
	An important observation for the proof of \Cref{thm:snow-ugw} is that the snowflake transform relates the $p$-Wasserstein pseudometric on a pseudo-ultrametric space $X$ with the 1-Wasserstein pseudometric on the space $S_p(X)$, $1\leq p<\infty$.
	
    \begin{lemma}\label{lm:snow-w-dis}
		Given a pseudo-ultrametric space $(X,\uX)$ and $p\geq 1$, we have for any $\alpha,\beta\in \mathcal{P}(X)$ that
		\[\pseudoWasser{p}^{(X,\uX)}(\alpha,\beta)=\left(d_{\mathrm{W},1}^{S_p(X)}(\alpha,\beta)\right)^\frac{1}{p}. \]
	\end{lemma}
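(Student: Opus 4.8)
The plan is to simply unwind the definitions of the two Wasserstein (pseudo)metrics and observe that the underlying optimization problems coincide term for term. First I would recall that for $1\leq p<\infty$ the $p$-Wasserstein pseudometric satisfies, by its very definition (see \Cref{eq:def Wasserstein pseudpmetric p}),
\[
\left(\pseudoWasser{p}^{(X,\uX)}(\alpha,\beta)\right)^p = \inf_{\mu\in\mathcal{C}(\alpha,\beta)}\int_{X\times X}\left(\uX(x,y)\right)^p\,\mu(dx\times dy),
\]
while the $1$-Wasserstein distance on the snowflaked space $S_p(X)=(X,S_p\circ\uX)$ reads
\[
d_{\mathrm{W},1}^{S_p(X)}(\alpha,\beta) = \inf_{\mu\in\mathcal{C}(\alpha,\beta)}\int_{X\times X}(S_p\circ\uX)(x,y)\,\mu(dx\times dy).
\]

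The key point I would then stress is that the snowflake transform $S_p$ leaves the underlying set $X$ and the measures $\alpha,\beta$ completely unchanged, so the collection of admissible couplings $\mathcal{C}(\alpha,\beta)$ is literally the same set of probability measures on $X\times X$ in both problems. Moreover, by the definition $S_p(t)=t^p$, the two integrands agree pointwise, namely $(S_p\circ\uX)(x,y)=\uX(x,y)^p$. Hence the two infima are taken over the same feasible set with the same objective functional, so they are equal:
\[
\left(\pseudoWasser{p}^{(X,\uX)}(\alpha,\beta)\right)^p = d_{\mathrm{W},1}^{S_p(X)}(\alpha,\beta).
\]
Taking $p$-th roots on both sides yields the claimed identity.

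I do not anticipate any genuine obstacle here: the statement is a direct consequence of matching integrands over a common coupling set. The only minor points I would verify for completeness are that $S_p\circ\uX$ is indeed a (pseudo-)ultrametric, so that the right-hand side is well defined --- this holds because $S_p$ is strictly increasing with $S_p(0)=0$, and applying any such monotone reparametrization to $\uX$ preserves symmetry, the vanishing on the diagonal, and the strong (ultrametric) triangle inequality $\max$ --- and that the exponent $p$ appearing in the cost of $\pseudoWasser{p}$ matches exactly the ground cost $S_p\circ\uX$ of the $1$-Wasserstein problem on $S_p(X)$, which is immediate from the definition of $S_p$ in \Cref{subsec:relation between ugw and usturm}. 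This lemma will then feed directly into the proof of \Cref{thm:snow-ugw} by allowing one to transfer the snowflake identity from the level of single Wasserstein evaluations to the level of the distances $\usturm{p}$ and $\ugw{p}$.
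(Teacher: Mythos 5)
Your proposal is correct and follows essentially the same route as the paper: both rest on the single observation that $(S_p\circ u_X)(x,y)=u_X(x,y)^p$, so the two optimization problems have identical feasible sets and identical objectives. The only cosmetic difference is that the paper phrases the argument via two inequalities using optimal couplings $\mu_1,\mu_2$ (whose existence it justifies separately), whereas you observe directly that the infima coincide, which is marginally cleaner since it sidesteps the existence question entirely.
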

		
	\begin{remark} Since $S_p\circ\uX$ and $\uX$ induce the same topology and thus the same Borel sets on $X$, we have that $\mathcal{P}(X)=\mathcal{P}(S_p(X))$ and thus the expression $d_{\mathrm{W},1}^{S_p(X)}(\alpha,\beta)$ in the lemma is well defined.
	\end{remark}
	\begin{proof}[Proof of \Cref{lm:snow-w-dis}]
		Suppose $\mu_1,\mu_2\in\mathcal{C}(\alpha,\beta)$ are optimal for $\pseudoWasser{p}^X(\alpha,\beta)$ and $d_{\mathrm{W},1}^{S_p(X)}(\alpha,\beta)$, respectively (see \Cref{sec:Wasserstein pseudometric} for the existence of $\mu_1$ and $\mu_2$). Then,
		\begin{align*}
		\left(\pseudoWasser{p}^{(X,\uX)}(\alpha,\beta)\right)^p=\int_{X\times X}(\uX(x,y))^p\mu_1(dx\times dy)\\=\int_{X\times X}S_p(\uX)(x,y)\mu_1(dx\times dy)\geq d_{\mathrm{W},1}^{S_p(X)}(\alpha,\beta),
		\end{align*}
		and 
		\begin{align*}
		d_{\mathrm{W},1}^{S_p(X)}(\alpha,\beta)=\int_{X\times X}S_p(\uX)(x,y)\mu_2(dx\times dy)\\=\int_{X\times X}(\uX(x,y))^p\mu_2(dx\times dy)\geq \left(\pseudoWasser{p}^{(X,\uX)}(\alpha,\beta)\right)^p.
		\end{align*}
		Therefore, $\pseudoWasser{p}^{(X,\uX)}(\alpha,\beta)=\left(d_{\mathrm{W},1}^{S_p(X)}(\alpha,\beta)\right)^\frac{1}{p}. $
	\end{proof}
	With \Cref{lm:snow-w-dis} at our disposal we can prove \Cref{thm:snow-ugw}.
	\begin{proof}[Proof of \Cref{thm:snow-ugw}]
		Let $\mu\in\mathcal{C}(\muX,\muY)$. Then, 
		\begin{align*}
		&\iint_{X\times Y \times X\times Y}\big(\Lambda_\infty(\uX(x,x'),\uY(y,y'))\big)^p\,\mu(dx\times dy)\,\mu(dx'\times dy')\\
		=&\iint_{X\times Y \times X\times Y}\Lambda_\infty\big(\uX(x,x')^p,\uY(y,y')^p\big)\,\mu(dx\times dy)\,\mu(dx'\times dy').
		\end{align*}
		By infimizing over $\mu\in\mathcal{C}(\muX,\muY)$ on both sides, we obtain that
		\[(\ugw{p}(\X,\Y))^p=\ugw{1}(S_p(\X),S_p(\Y)).\]

		To prove the second part of the claim, let $u\in\mathcal{D}^\mathrm{ult}(\uX,\uY)$. By \Cref{lm:snow-w-dis} we have that

		\[\big(\pseudoWasser{p}^{(X\sqcup Y,u)}(\muX,\muY)\big)^p= d_{\mathrm{W},1}^{(S_p(X)\sqcup S_p(Y),S_p(u))}(\muX,\muY).\]
		Finally, infimizing over $u\in\mathcal{D}^\mathrm{ult}(\uX,\uY)$ yields
		\[\ugw{p}^\mathrm{sturm}(\X,\Y)^p=\ugw{1}^\mathrm{sturm}(S_p(\X),S_p(\Y)). \]
	\end{proof}
	
	As a direct consequence of \Cref{thm:snow-ugw}, we obtain the following relation between $(\mathcal{U}^w,\usturm 1)$ and $\left(\mathcal{U}^w,\usturm p\right)$ for $p\in[1,\infty)$.	
		\begin{corollary}\label{coro:iso-snow}
	For each $p\in[1,\infty)$, the metric space $(\mathcal{U}^w,\usturm 1)$ is isometric to the snowflake transform of $\left(\mathcal{U}^w,\usturm p\right)$, i.e., 
	\[S_p\left(\mathcal{U}^w,\usturm p\right)\cong \left(\mathcal{U}^w,\usturm 1\right) \]
	\end{corollary}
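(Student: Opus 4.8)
The plan is to exhibit the snowflake map $S_p$ itself as the realizing isometry. By definition, $S_p\left(\mathcal{U}^w,\usturm{p}\right)$ denotes the set $\mathcal{U}^w$ equipped with the function $S_p\circ\usturm{p}=(\usturm{p})^p$. Since $\usturm{p}$ is a $p$-metric by \Cref{thm:sturms um}, the function $(\usturm{p})^p$ satisfies the ordinary triangle inequality and is hence a genuine metric, so the left-hand side is a bona fide metric space. Thus it suffices to produce a distance-preserving bijection between $\left(\mathcal{U}^w,(\usturm{p})^p\right)$ and $\left(\mathcal{U}^w,\usturm{1}\right)$.

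First I would check that the assignment $\Phi\colon\X\mapsto S_p(\X)$ is a well-defined bijection of $\mathcal{U}^w$ onto itself. The map $S_p\colon\Rp\to\Rp$, $x\mapsto x^p$, is a strictly increasing bijection fixing $0$ with inverse $S_{1/p}$; consequently, applying it to an ultrametric preserves the strong triangle inequality by monotonicity, leaves the measure untouched, and induces the same topology (hence preserves compactness and full support). Therefore $S_p(\X)\in\mathcal{U}^w$, and $S_{1/p}$ furnishes a two-sided inverse, so $\Phi$ is a bijection. One also checks that $\Phi$ descends to isomorphism classes: any measure-preserving isometry $\varphi$ witnessing $\X\cong_w\Y$ remains a measure-preserving isometry after composing the distances with $S_p$, since $S_p\circ\uY(\varphi(x),\varphi(x'))=S_p\circ\uX(x,x')$.

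The isometry property then follows immediately from \Cref{thm:snow-ugw}: for any $\X,\Y\in\mathcal{U}^w$,
\[\usturm{1}\big(\Phi(\X),\Phi(\Y)\big)=\usturm{1}\big(S_p(\X),S_p(\Y)\big)=\big(\usturm{p}(\X,\Y)\big)^p=\big(S_p\circ\usturm{p}\big)(\X,\Y),\]
where the middle equality is exactly the second identity of \Cref{thm:snow-ugw}. Hence $\Phi$ preserves distances and the two spaces are isometric.

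I do not anticipate a genuine obstacle here, as the corollary is a direct repackaging of \Cref{thm:snow-ugw}; the only points requiring care are the bookkeeping verifications that $S_p$ is a bijection of $\mathcal{U}^w$ respecting isomorphism classes, and the observation that $(\usturm{p})^p$ is an honest metric, so that the snowflaked object on the left is a legitimate metric space.
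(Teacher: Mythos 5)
Your proof is correct and follows essentially the same route as the paper: both take the snowflake map $S_p:\mathcal{U}^w\to\mathcal{U}^w$ as the realizing bijection and invoke the second identity of \Cref{thm:snow-ugw} to conclude it is an isometry. The extra bookkeeping you supply (that $S_p$ preserves membership in $\mathcal{U}^w$ and descends to isomorphism classes, and that $(\usturm{p})^p$ is an honest metric) is left implicit in the paper but is accurate.
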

	\begin{proof}
	    Consider the snowflake transform map $S_p:\mathcal{U}^w\rightarrow\mathcal{U}^w$ sending $X\in \mathcal{U}^w$ to $S_p(X)\in\mathcal{U}^w$. It is obvious that $S_p$ is bijective. By \Cref{thm:snow-ugw}, $S_p$ is an isometry from $S_p\left(\mathcal{U}^w,\usturm p\right)$ to $ \left(\mathcal{U}^w,\usturm 1\right)$. Therefore, $S_p\left(\mathcal{U}^w,\usturm p\right)\cong \left(\mathcal{U}^w,\usturm 1\right) $.
	\end{proof}
	\subsection{Proofs from Section \ref{subsec:relation between ugw and usturm}}\label{sec:proofs sec33}
	Throughout the following, we demonstrate the open claims from \Cref{subsec:relation between ugw and usturm}.
	\subsubsection{Proof of \Cref{thm:ugw<ugw-sturm}}\label{sec:proof of thm ugw<ugw-sturm}
		First, we focus on the statement for $p=1$, i.e., on showing
		\begin{equation}\label{eq:ugw1<usturm1}
		    \ugw{1}(\X,\Y)\leq 2\,\ugw{1}^\mathrm{sturm}(\X,\Y).
		\end{equation}
		Let $u\in\mathcal{D}^\mathrm{ult}(\uX,\uY)$ and $\mu\in\mathcal{C}(\muX,\muY)$ be such that $\usturm 1(\X,\Y)=\int u(x,y)\mu(dx\times dy)$. The existence of $u$ and $\mu$ follows from \Cref{prop:usturm_optimal}
		
		\begin{claim}\label{clm:dlt infty ineq}
			For any $(x,y),(x',y')\in X\times Y$, we have
			\[\Lambda_\infty(\uX(x,x'),\uY(y,y'))\leq\max(u(x,y),u(x',y'))\leq u(x,y)+u(x',y'). \]
		\end{claim}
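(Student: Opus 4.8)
The plan is to exploit the strong (ultrametric) triangle inequality satisfied by the pseudo-ultrametric $u$ on the disjoint union $X\sqcup Y$, together with the definition of $\Lambda_\infty$. The second inequality $\max(u(x,y),u(x',y'))\leq u(x,y)+u(x',y')$ is immediate from the non-negativity of $u$, so essentially all the work lies in the first inequality.

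Write $a\coloneqq\uX(x,x')$ and $b\coloneqq\uY(y,y')$. Since $u\in\mathcal{D}^\mathrm{ult}(\uX,\uY)$ restricts to $\uX$ on $X$ and to $\uY$ on $Y$, we have $a=u(x,x')$ and $b=u(y,y')$. I would first dispose of the case $a=b$, where $\Lambda_\infty(a,b)=0$ and the inequality holds trivially because $u\geq 0$. As the statement is symmetric under exchanging the roles of the two factors (swapping $X\leftrightarrow Y$, $x\leftrightarrow y$, $x'\leftrightarrow y'$ leaves both sides invariant, using the symmetry of $\Lambda_\infty$ and of $u$), it then suffices to treat the case $a>b$, in which $\Lambda_\infty(a,b)=a$.

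The key step is to route the pair $x,x'$ through the points $y$ and $y'$ by applying the strong triangle inequality twice inside $X\sqcup Y$:
\[u(x,x')\leq\max\big(u(x,y),u(y,x')\big)\leq\max\big(u(x,y),u(y,y'),u(x',y')\big).\]
Now observe that $u(y,y')=b<a=u(x,x')$, so the maximum on the right cannot be attained by the middle term; hence it is attained by $u(x,y)$ or by $u(x',y')$, giving
\[a=u(x,x')\leq\max\big(u(x,y),u(x',y')\big),\]
which is precisely $\Lambda_\infty(a,b)\leq\max(u(x,y),u(x',y'))$.

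I do not anticipate a genuine obstacle: the argument is a short manipulation of the ultrametric inequality. The only points requiring care are that the strong triangle inequality must be invoked on the whole space $X\sqcup Y$ (rather than separately on $X$ or $Y$, where $y,y'$ do not live), and that one must use the strict inequality $u(y,y')<u(x,x')$ to eliminate the unwanted term from the maximum. The complementary case $b>a$ is handled verbatim after interchanging the two factors, so no separate computation is needed.
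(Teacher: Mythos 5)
Your proof is correct and rests on the same key ingredient as the paper's: the strong triangle inequality for $u$ applied to all four points in $X\sqcup Y$. The only difference is organizational — you chain $u(x,x')\leq\max(u(x,y),u(y,y'),u(y',x'))$ directly and discard the middle term, whereas the paper argues by contradiction via the isosceles property ($u(x,y')=u(y,y')=u(x',y)$ would be forced and then refuted) — so the two arguments are essentially the same.
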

		\begin{proof} 
			We only need to show that \[\Lambda_\infty(\uX(x,x'),\uY(y,y'))\leq\max(u(x,y),u(x',y')).\] If $\uX(x,x')=\uY(y,y')$, then there is nothing to prove. Otherwise, we assume without loss of generality that $\uX(x,x')<\uY(y,y')$. If $\max(u(x,y),u(x',y'))<\uY(y,y')$, then by the strong triangle inequality we must have $u(x,y')=\uY(y,y')=u(x',y)$. However, $u(x',y)\leq \max(\uX(x,x'),u(x,y))<\uY(y,y')$, which leads to a contradiction. Therefore, $\Lambda_\infty(\uX(x,x'),\uY(y,y'))\leq\max(u(x,y),u(x',y'))$.
		\end{proof}	
		By Claim \ref{clm:dlt infty ineq}, we have 
		\begin{align*}
		&\iint_{X\times Y \times X\times Y}\Lambda_\infty(\uX(x,x'),\uY(y,y'))\,\mu(dx\times dy)\,\mu(dx'\times dy')\\
		\leq &\iint_{X\times Y \times X\times Y}u(x,y)\,\mu(dx\times dy)\,\mu(dx'\times dy')\\
		+&\iint_{X\times Y \times X\times Y}u(x',y')\,\mu(dx\times dy)\,\mu(dx'\times dy')\\
		=&\int_{X\times Y}u(x,y)\,\mu(dx\times dy)+\int_{X\times Y}u(x',y')\,\mu(dx'\times dy')\leq 2\usturm 1(\X,\Y).
		\end{align*}
		Therefore, $\ugw{1}(\X,\Y)\leq 2\,\ugw{1}^\mathrm{sturm}(\X,\Y).$
		
		Applying \Cref{thm:snow-ugw} and \Cref{eq:ugw1<usturm1}, yields that for any $p\in[1,\infty)$
\[ \ugw{p}(\X,\Y)=\left(\ugw{1}(S_p(\X),S_p(\Y))\right)^\frac{1}{p} \leq\left(2\,\usturm{1}(S_p(\X),S_p(\Y))\right)^\frac{1}{p}=2^\frac{1}{p}\,\usturm{p}(\X,\Y).\]

\subsubsection{Proof of results in \Cref{ex:non bi lipschitz}}\label{sec: ex non bi lipschitz}
It follows from \cite[Remark 5.17]{memoli2011gromov} that 
\[\dsturm{p}\lc\hat{\Delta}_n(1),\hat{\Delta}_{2n}(1)\rc\geq \frac{1}{4}\,\,\text{and}\,\,\dgw{p}\lc\hat{\Delta}_n(1),\hat{\Delta}_{2n}(1)\rc\leq \frac{1}{2}\lc\frac{3}{2n}\rc^\frac{1}{p}.\]

Then, by \Cref{prop:usturm_basic}, we have that 
\[\usturm{p}\lc\hat{\Delta}_n(1),\hat{\Delta}_{2n}(1)\rc\geq\dsturm{p}\lc\hat{\Delta}_n(1),\hat{\Delta}_{2n}(1)\rc\geq \frac{1}{4}.\]

Let $\mu_n$ denote the uniform probability measure of $\hat{\Delta}_n(1)$. Since $\hat{\Delta}_n(1)$ has the constant interpoint distance 1, it is obvious that for any coupling $\mu\in\mathcal{C}(\mu_n,\mu_{2n})$,
\[\dis_p(\mu)  = \disu_p(\mu)\]
This implies that 
\[\ugw{p}\lc\hat{\Delta}_n(1),\hat{\Delta}_{2n}(1)\rc=2\,\dgw{p}\lc\hat{\Delta}_n(1),\hat{\Delta}_{2n}(1)\rc\leq \lc\frac{3}{2n}\rc^\frac{1}{p}.\]
	
	\subsubsection{Proof of \Cref{thm:ugw infty and sturm ugw infty}}\label{sec:proof of thm ugw infty and sturm ugw infty}
		First, we prove that $\usturm{\infty}(\X,\Y)\geq \ugw{\infty}(\X,\Y).$ Indeed, for any $u\in\mathcal{D}^\mathrm{ult}(\uX,\uY)$ and $\mu\in\mathcal{C}(\muX,\muY)$, we have that 
		\begin{align*}
		\sup_{(x,y)\in \supp{\mu}}u(x,y)&=\sup_{(x,y),(x',y')\in \supp{\mu}}\max(u(x,y),u(x',y'))\\
		&\geq\sup_{(x,y),(x',y')\in \supp{\mu}}\Lambda_\infty(\uX(x,x'),\uY(y,y')) \\
		&\geq \ugw{\infty}(\X,\Y),
		\end{align*}
		where the first inequality follows from Claim \ref{clm:dlt infty ineq} in the proof of \Cref{thm:ugw<ugw-sturm}. Then, by a standard limit argument, we conclude that $\usturm{\infty}(\X,\Y)\geq \ugw{\infty}(\X,\Y).$

		Next, we prove that $\usturm{\infty}(\X,\Y)\leq \min\{t\geq 0|\,\X_t\cong_w \Y_t\}$. Let $t> 0$ be such that $\X_t\cong_w \Y_t$ and let $\varphi:{\X}_t\rightarrow {\Y}_t$ denote such an isomorphism. Then, we define a function $u:X\sqcup Y\times X\sqcup Y\rightarrow\mathbb R_{\geq0}$ as follows:
		\begin{enumerate}
		    \item $u|_{X\times X}\coloneqq u_X$ and $u|_{Y\times Y}\coloneqq u_Y$;
		    \item for any $(x,y)\in X\times Y$, $u(x,y)\coloneqq\begin{cases}u_{Y_t}(\varphi([x]_t^X),[y]_t^Y),&\text{if }\varphi([x]_t^X)\neq[y]_t^Y\\
		t,&\text{if }\varphi([x]_t^X)=[y]_t^Y.\end{cases} $
		\item for any $(y,x)\in Y\times X$, $u(y,x)\coloneqq u(x,y)$.
		\end{enumerate}
		Then, it is easy to verify that $u\in\mathcal{D}^\mathrm{ult}(\uX,\uY)$ and that $u$ is actually an ultrametric. Let $Z\coloneqq (X\sqcup Y,u)$. By Lemma \ref{lm:winfty-finite}, we have 
		\[\usturm\infty(\X,\Y)\leq d_{\mathrm{W},\infty}^Z(\muX,\muY)=\max_{B\in V(Z)\backslash\{Z\}\text{ and }\muX(B)\neq\muY(B)}\diam{B^*}. \]
		We verify that $d_{\mathrm{W},\infty}^Z(\muX,\muY)\leq t$ in the following. It is obvious that $Z_t\cong X_t\cong Y_t$. Write $X_t=\{[x_i]_t^X\}_{i=1}^n$ and $Y_t=\{[y_i]_t^Y\}_{i=1}^n$ such that $[y_i]_t^Y=\varphi([x_i]_t^X)$ for each $i=1,\ldots,n$. Then, $[x_i]_t^{Z}=[y_i]_t^{Z}$ and $Z_t=\{[x_i]_t^{Z}|\,i=1,\ldots,n\}$. Since $\varphi$ is an isomorphism, for any $i=1,\dots,n$ we have that $\muX([x_i]_t^X)=\muY([y_i]_t^Y)$ and thus $\muX([x_i]_t^{Z})=\muY([y_i]_s^{Z})=\muY([x_i]_t^{Z})$ when $\muX$ and $\muY$ are regarded as pushforward measures under the inclusion map $X\hookrightarrow Z$ and $Y\hookrightarrow Z$, respectively. Now for any $B\in V(Z)$ (cf. \Cref{sec:explicit formulat}), if $\diam{B}\geq t$, then $B$ is the union of certain $[x_i]_t^{Z}$'s in $Z_t$ and thus $\muX(B)=\muY(B)$. If $\diam{B}< t$ and $\diam{B^*}> t$, then there exists some $x_i$ such that $B=[x_i]_s^{Z}$ and $[x_i]_s^{Z}=[x_i]_t^{Z}$ where $s\coloneqq\diam{B}$. This implies that $\muX(B)=\muY(B)$. In consequence, we have that
		$d_{\mathrm{W},\infty}^Z(\muX,\muY)\leq t $ and thus $\usturm{\infty}(\X,\Y)\leq d_{\mathrm{W},\infty}^{(X\sqcup Y,u)}(\muX,\muY)\leq t$. Therefore, $\usturm{\infty}(\X,\Y)\leq \inf\{t\geq 0|\,\X_t\cong_w \Y_t\}$.

		Finally, by invoking \Cref{thm:ugw-infty-eq}, we conclude that $\usturm{\infty}(\X,\Y)=\ugw{\infty}(\X,\Y).$

	\subsubsection{Proof of \Cref{thm:optimal A and varphi (usturm)}}\label{sec:proof of thm optimal A and varphi (usturm)}
	We prove the result via an explicit construction. By \Cref{thm:ugw infty and sturm ugw infty}, we have $s=\usturm\infty(\X,\Y)=\ugw{\infty}(\X,\Y)$. By \Cref{thm:ugw-infty-eq}, there exists an isomorpism $\varphi:\X_s\rightarrow \Y_s$. Since $s>0$, by \Cref{lm:quotient-finite}, both $\X_s$ and $\Y_s$ are finite spaces. Let $X_s=\{[x_1]_s^X,\dots,[x_n]_s^X\}$, $Y_s=\{[y_1]_s^Y,\dots,[y_n]_s^Y\}$ and assume $[y_i]_s^Y=\varphi([x_i]_s^X)$ for each $i=1,\ldots,n$. Let $A\coloneqq\{x_1,\dots,x_n\}$ and define $\phi:A\rightarrow Y$ by sending $x_i$ to $y_i$ for each $i=1,\ldots,n$. We prove that $(A,\phi)$ satisfies the conditions in the statement.

		Since $\varphi$ is an isomorphism, for any $1\leq i<j\leq n$,
		\[\uY(y_i,y_j)=u_{Y_s}([y_i]_s^Y,[y_j]_s^Y)= u_{Y_s}(\varphi([x_i]_s^X),\varphi([x_j]_s^X))=u_{X_s}([x_i]_s^X,[x_j]_s^X)=\uX(x_i,x_j).\]
		This implies that $\phi:A\rightarrow Y$ is an isometric embedding and thus $(A,\phi)\in\mathcal{A}$. 
		
		It is obvious that $(Z_A)_s$ is isometric to both $X_s$ and $Y_s$. In fact, $[x_i]_s^{Z_A}=[y_i]_s^{Z_A}$ in $Z_A$ for each $i=1,\ldots,n$ and $(Z_A)_s=\{[x_i]_s^{Z_A}|\,i=1,\ldots,n\}$. Since $\varphi$ is an isomorphism,  for any $i=1,\dots,n$ we have that $\muX([x_i]_s^X)=\muY([y_i]_s^Y)$ and thus $\muX([x_i]_s^{Z_A})=\muY([y_i]_s^{Z_A})=\muY([x_i]_s^{Z_A})$ when $\muX$ and $\muY$ are regarded as pushforward measures under the inclusion maps $X\rightarrow Z_A$ and $Y\rightarrow Z_A$, respectively. Now for any $B\in V(Z_A)$ (cf. \Cref{sec:explicit formulat}), if $\diam{B}\geq s$, then $B$ is the union of certain $[x_i]_s^{Z_A}$'s and thus $\muX(B)=\muY(B)$. If otherwise $\diam{B}< s$ and $\diam{B^*}> s$, then there exists $x_i$ such that $B=[x_i]_t^{Z_A}$ and $[x_i]_t^{Z_A}=[x_i]_s^{Z_A}$ where $t\coloneqq\diam{B}$. This implies that $\muX(B)=\muY(B)$. By Lemma \ref{lm:winfty-finite}, we have $ d_{\mathrm{W},\infty}^{Z_A}(\muX,\muY)\leq s$ and thus $ d_{\mathrm{W},\infty}^{Z_A}(\muX,\muY)=s$ since $d_{\mathrm{W},\infty}^{Z_A}(\muX,\muY)$ is an upper bound for $s=\usturm\infty(\X,\Y)$ due to \Cref{eq:ultra Sturm}.

\subsubsection{Proof of \Cref{thm:equivalence} }\label{app:proof of equivalence}
In this section, we prove \Cref{thm:equivalence} by slightly modifying the proof of Proposition 5.3 in \cite{memoli2011gromov}. 
\begin{lemma}\label{lm:ultra-coupling}
Let $(X,\uX)$ and $(Y,\uY)$ be compact ultrametric spaces and let $S\subseteq X\times Y$ be non-empty. Assume that $\sup_{(x,y),(x',y')\in S}\Lambda_\infty(u_X(x,x'),u_Y(y,y'))\leq\eta.$ Define $u_S:X\sqcup Y\times X\sqcup Y\rightarrow\Rp$ as follows:
\begin{enumerate}
    \item $u_S|_{X\times X}\coloneqq u_X$ and $u_S|_{Y\times Y}\coloneqq u_Y$;
    \item for any $(x,y)\in X\times Y$, $u_S(x,y)\coloneqq\inf_{(x',y')\in S}\max\left(u_X(x,x'),u_Y(y,y'),\eta\right). $
    \item for any $(x,y)\in X\times Y$, $u_S(y,x)\coloneqq u_S(x,y)$.
\end{enumerate}
Then, $u_S\in\mathcal{D}^\mathrm{ult}(\uX,\uY)$ and $u_S(x,y)\leq \eta$ for all $(x,y)\in S$.
	\end{lemma}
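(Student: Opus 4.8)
The plan is to verify directly that $u_S$ is a pseudo-ultrametric on $X\sqcup Y$ restricting to $u_X$ on $X$ and to $u_Y$ on $Y$; by the definition of $\mathcal{D}^\mathrm{ult}(u_X,u_Y)$ this is all that membership requires. Nonnegativity, symmetry, vanishing on the diagonal, and the two restriction identities are built into the definition of $u_S$ (and $S\neq\emptyset$ guarantees the defining infimum is over a nonempty set and hence finite), so the entire content is the strong triangle inequality. The final bound is immediate: for $(x,y)\in S$ one takes $(x',y')=(x,y)$ in the defining infimum to get $u_S(x,y)\leq\max(u_X(x,x),u_Y(y,y),\eta)=\eta$. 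I would also record at the outset the monotonicity observation that every term in the infimum defining a cross distance is at least $\eta$, so $u_S(x,y)\geq\eta$ for all $(x,y)\in X\times Y$; this is used repeatedly below.

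For the inequality $u_S(a,c)\leq\max(u_S(a,b),u_S(b,c))$ I would split into cases by the $X/Y$-types of the three points, exploiting the symmetry of $u_S$, the symmetry of the inequality under swapping the endpoints $a,c$, and the full $X\leftrightarrow Y$ symmetry of the construction. When all three points lie in $X$ (resp.\ $Y$) the inequality holds because $u_X$ (resp.\ $u_Y$) is an ultrametric. The \emph{single-crossing} case, say $a=x,\ b=x'\in X$ and $c=y\in Y$, reduces to a one-line computation: for each $(x_1,y_1)\in S$, using $u_X(x,x_1)\leq\max(u_X(x,x'),u_X(x',x_1))$ we get
\[\max(u_X(x,x_1),u_Y(y,y_1),\eta)\leq\max\bigl(u_X(x,x'),\ \max(u_X(x',x_1),u_Y(y,y_1),\eta)\bigr),\]
and taking the infimum over $(x_1,y_1)\in S$ while pulling the constant $u_X(x,x')$ out of the outer maximum yields $u_S(x,y)\leq\max(u_X(x,x'),u_S(x',y))$. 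Its mirror, with two points in $Y$ and one in $X$, is identical after interchanging $X$ and $Y$.

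The main obstacle, and the only step invoking the hypothesis on $S$, is the \emph{bridging} case $a=x,\ c=x''\in X$ and $b=y\in Y$, where I must show $u_X(x,x'')\leq\max(u_S(x,y),u_S(x'',y))$. I would fix $\varepsilon>0$, pick near-optimal witnesses $(x_1,y_1),(x_2,y_2)\in S$ realizing $u_S(x,y)$ and $u_S(x'',y)$ to within $\varepsilon$, and expand via the ultrametric inequality
\[u_X(x,x'')\leq\max\bigl(u_X(x,x_1),\ u_X(x_1,x_2),\ u_X(x_2,x'')\bigr).\]
The outer two terms are controlled directly by the chosen witnesses. The middle term is where the assumption $\Lambda_\infty(u_X(x_1,x_2),u_Y(y_1,y_2))\leq\eta$ enters: unwinding the definition of $\Lambda_\infty$, either $u_X(x_1,x_2)=u_Y(y_1,y_2)$, whence $u_X(x_1,x_2)=u_Y(y_1,y_2)\leq\max(u_Y(y,y_1),u_Y(y,y_2))$ is bounded by the witnesses, or $\max(u_X(x_1,x_2),u_Y(y_1,y_2))\leq\eta$, whence $u_X(x_1,x_2)\leq\eta\leq u_S(x,y)$ by the monotonicity observation. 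Either way $u_X(x_1,x_2)\leq\max(u_S(x,y),u_S(x'',y))+\varepsilon$, so the whole maximum is bounded by $\max(u_S(x,y),u_S(x'',y))+\varepsilon$; letting $\varepsilon\to0$ closes this case, and its $X\leftrightarrow Y$ mirror is analogous. Exhausting the cases establishes that $u_S$ is a pseudo-ultrametric, hence $u_S\in\mathcal{D}^\mathrm{ult}(u_X,u_Y)$.
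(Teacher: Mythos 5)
Your proof is correct. It departs from the paper's only in being self-contained: the paper dismisses the membership claim $u_S\in\mathcal{D}^{\mathrm{ult}}(u_X,u_Y)$ with a one-line appeal to Lemma~1.1 of Zarichnyi and writes out only the final bound, by exactly the substitution $(x',y')=(x,y)$ that you use. Your direct verification of the strong triangle inequality is complete and each step checks out: the single-crossing case is indeed just pulling a constant out of an infimum of maxima, and the bridging case is the only one that uses the hypothesis on $S$. Your dichotomy on $\Lambda_\infty(u_X(x_1,x_2),u_Y(y_1,y_2))\leq\eta$ --- either the two distances coincide, and the $Y$-side ultrametric inequality controls the middle term via the chosen witnesses, or both are at most $\eta$, and the floor $u_S\geq\eta$ on cross-distances absorbs it --- is precisely the reason the $\eta$-term is built into the definition of $u_S$. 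This is a worthwhile expansion rather than redundancy: the cited lemma, as it is used elsewhere in the paper, amalgamates two ultrametric spaces along an exact isometric identification, whereas here $S$ is only an $\eta$-distorted subset and the cross-metric carries the extra $\eta$ inside the maximum, so the phrase ``essentially follows'' conceals exactly the adaptation you carry out explicitly. The paper's route buys brevity; yours makes visible where each hypothesis enters.
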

	\begin{proof}
	    That $u_S\in\mathcal{D}^\mathrm{ult}(\uX,\uY)$ essentially follows by \citet[Lemma 1.1]{zarichnyi2005gromov}. It remains to prove the second half of the statement. For $(x,y)\in S$, we set $(x',y')\coloneqq(x,y)$. This yields
	    \[u_S(x,y)\leq\max(u_X(x,x'),u_Y(y,y'),\eta)=\max(0,0,\eta)=\eta.\]
	   
	\end{proof}

\begin{proof}[Proof of \Cref{thm:equivalence}]
Let $\mu\in\mathcal{C}(\muX,\muY)$ be a coupling such that $\norm{\Gamma_{X,Y}^\infty}_{L^p(\mu\otimes\mu)}<\delta^5$. Set $\eps\coloneqq4v_\delta(X)\leq 4$. 
    
By \citet[Claim 10.1]{memoli2011gromov}, there exist a positive integer $N\leq [1/\delta]$ and points $x_1,\ldots,x_N$ in $X$ such that $\min_{i\neq j}u_X(x_i,x_j)\geq \frac{\eps}{2}$, $\min_i\muX\lc B_\eps^X(x_i)\rc >\delta$ and $\muX\lc\bigcup_{i=1}^NB_\eps^X(x_i)\rc\geq 1-\eps$.

\begin{claim}\label{claim 1}
For every $i=1,\ldots,N$ there exists $y_i\in Y$ such that
\[\mu\left(B_\eps^X(x_i)\times B_{2(\eps+\delta)}^Y(y_i)\right)\geq (1-\delta^2)\muX\lc B_\eps^X(x_i)\rc. \]
\end{claim}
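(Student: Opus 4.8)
The plan is to localize the distortion integral to the ball $C_i\coloneqq B_\eps^X(x_i)$ and to exploit the ultrametric structure twice. First, since $C_i$ is a ball of radius $\eps$, the strong triangle inequality forces $\diam{C_i}\le\eps$, so $u_X(x,x')\le\eps$ for all $x,x'\in C_i$. Consequently, for points $(x,y),(x',y')$ lying over $C_i$ whose $Y$-coordinates satisfy $u_Y(y,y')>\eps$, the value $\Lambda_\infty(u_X(x,x'),u_Y(y,y'))$ equals $u_Y(y,y')$; in other words, over $C_i$ a large distortion can only be produced by $Y$-points that are far apart. Since the total distortion $\norm{\Gamma_{X,Y}^\infty}_{L^p(\mu\otimes\mu)}$ is assumed smaller than $\delta^5$, this should force the $\mu$-mass sitting over $C_i$ to concentrate into a single small ball of $Y$.

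To make this precise I would first pass to the fibre over $C_i$: set $\mu_i\coloneqq\mu|_{C_i\times Y}$, let $\nu_i\coloneqq(\pi_Y)_\#\mu_i$ be its $Y$-marginal, and write $a_i\coloneqq\mu_X(C_i)>\delta$ for its total mass (using that $\mu$ has $X$-marginal $\muX$). Fix the target radius $r\coloneqq 2(\eps+\delta)$. Because $Y$ is a compact ultrametric space, the quotient $Y_r$ is finite (\Cref{lm:quotient-finite}), so $Y$ splits into finitely many disjoint closed balls $D_1,\dots,D_K$ of radius $r$; set $m_k\coloneqq\nu_i(D_k)$, so that $\sum_k m_k=a_i$. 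Two points of $Y$ lie in distinct blocks $D_k$ exactly when their $u_Y$-distance exceeds $r$.

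The core estimate is then a second-moment computation. For pairs $((x,y),(x',y'))\in(C_i\times Y)^2$ with $y,y'$ in distinct balls we have $u_Y(y,y')>r\ge\eps\ge u_X(x,x')$, hence $\Gamma_{X,Y}^\infty(x,y,x',y')=u_Y(y,y')>r$; bounding this contribution from below by $r^p$ and the rest by $0$ gives
\[
r^p\Big(a_i^2-\sum_{k}m_k^2\Big)=r^p\,(\nu_i\otimes\nu_i)\big(\{u_Y>r\}\big)\le\iint_{(C_i\times Y)^2}(\Gamma_{X,Y}^\infty)^p\,d\mu\,d\mu\le\norm{\Gamma_{X,Y}^\infty}_{L^p(\mu\otimes\mu)}^p<\delta^{5p}.
\]
With $M\coloneqq\max_k m_k$ and $\sum_k m_k^2\le M a_i$, this reads $a_i(a_i-M)\le(\delta^5/r)^p$. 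Choosing $y_i$ to be the centre of a maximizing block $D_{k^*}=B_r^Y(y_i)$, the conclusion $\mu(C_i\times B_{2(\eps+\delta)}^Y(y_i))=\nu_i(D_{k^*})=M\ge(1-\delta^2)a_i$ will follow once I show $a_i-M\le\delta^2 a_i$.

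The main (and essentially only) obstacle is bookkeeping the exponents in this last step uniformly over $p\in[1,\infty)$. Here I would use $a_i>\delta$ and $r\ge 2\delta>\delta$ to get $(\delta^5/r)^p\le\delta^{4p}\le\delta^4<\delta^2 a_i^2$, whence $a_i-M\le(\delta^5/r)^p/a_i\le\delta^2 a_i$, as needed. Everything else is soft: the ultrametric geometry does the real work by making the blocks $D_k$ genuinely disjoint and the within-ball $X$-distances automatically at most $\eps$, so that smallness of the distortion translates directly into concentration of $\nu_i$.
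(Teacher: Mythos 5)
Your proof is correct, but it is organized differently from the paper's. The paper argues by contradiction: if no ball $B_{2(\eps+\delta)}^Y(y)$ captures a $(1-\delta^2)$-fraction of the mass over $B_\eps^X(x_i)$, then for \emph{every} $y$ the complementary set $Q_i(y)=B_\eps^X(x_i)\times\lc Y\setminus B_{2(\eps+\delta)}^Y(y)\rc$ has $\mu$-mass at least $\delta^2\muX\lc B_\eps^X(x_i)\rc$; integrating this over $(x,y)\in B_\eps^X(x_i)\times Y$ gives $\mu\otimes\mu$-mass at least $\delta^4$ of quadruples with $\uY(y,y')\geq 2(\eps+\delta)$, on which $\Gamma_{X,Y}^\infty\geq 2\delta$, and after reducing to the $L^1$ norm via Jensen this contradicts $\norm{\Gamma_{X,Y}^\infty}_{L^p(\mu\otimes\mu)}<\delta^5$. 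You instead argue directly: you partition $Y$ into the finitely many disjoint closed balls of radius $r=2(\eps+\delta)$ (using \Cref{lm:quotient-finite}, i.e.\ the ultrametric structure of $Y$), and run the second-moment estimate $\sum_k m_k^2\leq M a_i$ to show that $r^p\,a_i(a_i-M)<\delta^{5p}$, keeping the exponent $p$ throughout and disposing of it at the end via $r\geq\delta$, $\delta<1$ and $a_i>\delta$. Both arguments rest on the same mechanism (pairs over $B_\eps^X(x_i)$ whose $Y$-coordinates are farther than $r$ apart force $\Gamma_{X,Y}^\infty=\uY(y,y')>r$, since $\uX(x,x')\leq\eps<r$), but yours buys a genuine finite decomposition of the fibre measure and an explicit quantitative deficiency bound $a_i-M<(\delta^5/r)^p/a_i$, at the cost of invoking the ultrametric partition of $Y$, which the paper's version of this particular claim does not need. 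All your individual steps check out, including the choice of $y_i$ as any point of the maximizing block (so that the block equals the closed ball $B_{2(\eps+\delta)}^Y(y_i)$, which is the ball the claim refers to) and the final exponent bookkeeping.
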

\begin{proof}
Assume the claim is false for some $i$ and let $Q_i(y)=B_\eps^X(x_i)\times\lc Y\setminus B_{2(\eps+\delta)}^Y(y)\rc$. Then, as $\mu\in\mathcal{C}(\muX,\muY)$ it holds
\begin{align*}
    \muX\lc B_\eps^X(x_i)\rc =&\mu\lc B_\eps^X(x_i)\times Y\rc  \\
    =&\mu\lc B_\eps^X(x_i)\times B_{2(\eps+\delta)}^Y(y)\rc+\mu\lc B_\eps^X(x_i)\times \lc Y\setminus B_{2(\eps+\delta)}^Y(y)\rc\rc.
\end{align*}
Consequently, we have that $\mu(Q_i(y))\geq \delta^2\muX\lc B_\eps^X(x_i)\rc $. Further, let
\[\mathcal{Q}_i\coloneqq \left\{(x,y,x',y')\in X\times Y\times X\times Y\,|\,x,x'\in B_\eps^X(x_i)
\text{ and } \uY(y,y')\geq 2(\eps+\delta)\right\}.\]
Clearly, it holds for $(x,y,x',y')\in \mathcal{Q}_i$ that 
\[\Gamma_{X,Y}^\infty(x,y,x',y')=\Lambda_\infty\lc u_X(x,x'),u_Y(y,y')\rc=u_Y(y,y')\geq 2\delta.\]
Further, we have that $\mu\otimes\mu (\mathcal{Q}_i)\geq \delta^4$. Indeed, it holds
\begin{align*}
    \mu\otimes\mu(\mathcal{Q}_i)=&\int_{B_\eps^X(x_i)\times Y}\!\int_{Q_i(y)}\!1\,\mu(dx'\times dy')\,\mu(dx\times dy)\\
    =&\int_{B_\eps^X(x_i)\times Y}\!\mu(Q_i(y))\mu(dx\times dy)\\
    =&\muX\lc B_\eps^X(x_i)\rc \int_Y\!\mu(Q_i(y))\muY(dy)\\
    \geq& \left(\muX\lc B_\eps^X(x_i)\rc \right)^2\delta^2\\
    \geq &\delta^4.
\end{align*}
However, this yields that 
\[\norm{\Gamma^\infty_{X,Y}}_{L^p(\mu\otimes\mu)}\geq \norm{\Gamma^\infty_{X,Y}}_{L^1(\mu\otimes\mu)}\geq \norm{\Gamma^\infty_{X,Y}\mathds{1}_{\mathcal{Q}_i}}_{L^1(\mu\otimes\mu)}\geq 2\delta \cdot\mu\otimes\mu(\mathcal{Q}_i)\geq 2\delta^5,\]
which contradicts $\norm{\Gamma_{X,Y}^\infty}_{L^p(\mu\otimes\mu)}<\delta^5$.
    \end{proof}
Define for each $i=1,\ldots,N$
\[S_i\coloneqq B_\eps^X(x_i)\times B_{2(\eps+\delta)}^Y(y_i). \]
Then, by Claim \ref{claim 1}, $\mu(S_i)\geq \delta(1-\delta^2)$, for all $i=1,\ldots,N$.
\setcounter{claimcount}{1}

\begin{claim}\label{claim 2}
$\Gamma_{X,Y}^\infty(x_i,y_i,x_j,y_j)\leq 6(\eps+\delta)$ for all $i,j=1,\ldots,N$.
\end{claim}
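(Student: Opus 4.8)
The plan is to prove Claim~\ref{claim 2} by combining the triangle-type control on the auxiliary points $x_i,y_i$ with the defining properties of the balls $B_\eps^X(x_i)$ and $B_{2(\eps+\delta)}^Y(y_i)$, and then exploiting the bound on $\norm{\Gamma^\infty_{X,Y}}_{L^p(\mu\otimes\mu)}$ over the sets $S_i\times S_j$. First I would fix $i\neq j$ (the case $i=j$ being trivial since then $\Gamma^\infty_{X,Y}(x_i,y_i,x_i,y_i)=0$) and observe that by Claim~\ref{claim 1} the product set $S_i\times S_j$ carries positive $\mu\otimes\mu$-mass, namely $\mu(S_i)\mu(S_j)\geq \delta^2(1-\delta^2)^2$, which is bounded below by a fixed power of $\delta$. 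The idea is that on this product set the quantity $\Gamma^\infty_{X,Y}(x,y,x',y')$ is forced to stay close to $\Gamma^\infty_{X,Y}(x_i,y_i,x_j,y_j)$, because moving $x$ within $B_\eps^X(x_i)$ changes $u_X$ by at most $\eps$ (ultrametric inequality), and likewise for the $Y$-coordinates within their respective balls.

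The core estimate I would carry out is the following. For $(x,y)\in S_i$ and $(x',y')\in S_j$, the ultrametric (strong triangle) inequality gives $u_X(x,x')\geq u_X(x_i,x_j)-\max(u_X(x,x_i),u_X(x',x_j))$ whenever $u_X(x_i,x_j)>\eps$; more precisely, since $u_X(x,x_i)\le\eps$ and $u_X(x',x_j)\le\eps$ with $\eps<u_X(x_i,x_j)$, the strong triangle inequality forces $u_X(x,x')=u_X(x_i,x_j)$. Analogously, when $u_Y(y_i,y_j)>2(\eps+\delta)$ we get $u_Y(y,y')=u_Y(y_i,y_j)$. Consequently, if one assumes for contradiction that $\Gamma^\infty_{X,Y}(x_i,y_i,x_j,y_j)=\Lambda_\infty(u_X(x_i,x_j),u_Y(y_i,y_j))>6(\eps+\delta)$, one checks that on all of $S_i\times S_j$ the value $\Gamma^\infty_{X,Y}(x,y,x',y')$ must exceed a fixed fraction of this, say be bounded below by roughly $2\delta$ or $\eps+\delta$ (this is where the numerical constant $6$ is calibrated). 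Integrating this lower bound against $\mu\otimes\mu$ over $S_i\times S_j$ then yields $\norm{\Gamma^\infty_{X,Y}}_{L^1(\mu\otimes\mu)}\gtrsim \delta\cdot\delta^2(1-\delta^2)^2$, which for $\delta<\tfrac12$ is at least of order $\delta^3$, comfortably exceeding $\delta^5$ and contradicting the standing hypothesis $\norm{\Gamma^\infty_{X,Y}}_{L^p(\mu\otimes\mu)}<\delta^5$ (using $\norm{\cdot}_{L^p}\geq\norm{\cdot}_{L^1}$ for probability measures).

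I would organize the argument as a case analysis on which of the two terms $u_X(x_i,x_j)$, $u_Y(y_i,y_j)$ realizes $\Lambda_\infty$: either $u_X(x_i,x_j)>u_Y(y_i,y_j)$ (so the $X$-distance dominates and, being forced to equal $u_X(x,x')$ on $S_i\times S_j$, propagates a large distortion because the $Y$-side distances are pinned within a controlled window), or the symmetric case. In each case the strong triangle inequality on the ultrametric, together with the radii $\eps$ and $2(\eps+\delta)$, lets me transfer the large value of $\Gamma^\infty_{X,Y}$ at the centers to a uniform lower bound on the whole product set, up to an additive error controlled by the radii. The main obstacle, and the step requiring genuine care, is the precise bookkeeping of this additive error: one must verify that the ``slack'' introduced by moving off the centers (bounded by $\eps$ in $X$ and by $2(\eps+\delta)$ in $Y$) is strictly smaller than the assumed excess $6(\eps+\delta)$, so that a definite positive lower bound survives on $S_i\times S_j$. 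This is exactly why the threshold in the claim is $6(\eps+\delta)$ rather than something smaller, and getting the constants to line up is the delicate part; everything else reduces to the mass lower bound from Claim~\ref{claim 1} and the $L^1\le L^p$ comparison already used in the proof of Claim~\ref{claim 1}.
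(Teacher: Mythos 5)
Your proposal is correct and follows essentially the same route as the paper's proof: assume $\Lambda_\infty(u_X(x_i,x_j),u_Y(y_i,y_j))>6(\eps+\delta)$, use the strong triangle inequality to show the dominant distance is exactly preserved (and the other one strictly dominated) for every $(x,y)\in S_i$, $(x',y')\in S_j$, so that $\Gamma_{X,Y}^\infty>2\delta$ on all of $S_i\times S_j$, and then combine the mass bound $\mu(S_i)\mu(S_j)\geq\delta^2(1-\delta^2)^2$ from Claim~1 with $\norm{\cdot}_{L^p}\geq\norm{\cdot}_{L^1}$ to contradict $\norm{\Gamma_{X,Y}^\infty}_{L^p(\mu\otimes\mu)}<\delta^5$. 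The only cosmetic difference is that the paper reduces to one case by symmetry and notes that the ultrametric rigidity gives exact equality $\Gamma_{X,Y}^\infty(x,y,x',y')=u_X(x_{i_0},x_{j_0})$ rather than merely a lower bound by a ``fixed fraction.''
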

\begin{proof}
Assume the claim fails for some $(i_0,j_0)$, i.e.,
 \[\Lambda_\infty(u_X(x_{i_0},x_{j_0}),u_Y(y_{i_0},y_{j_0}))>6(\eps+\delta)>0. \]
Then, we have $\Lambda_\infty(u_X(x_{i_0},x_{j_0}),u_Y(y_{i_0},y_{j_0}))=\max(u_X(x_{i_0},x_{j_0}),u_Y(y_{i_0},y_{j_0}))$. We assume without loss of generality that 
\[u_X(x_{i_0},x_{j_0})=\Lambda_\infty(u_X(x_{i_0},x_{j_0}),u_Y(y_{i_0},y_{j_0}))> u_Y(y_{i_0},y_{j_0}).\] 
Consider any $(x,y)\in S_{i_0}$ and $(x',y')\in S_{j_0}$. By the strong triangle inequality and the fact that $u_X(x_{i_0},x_{j_0})>6(\eps+\delta)>\eps$, it is easy to verify that $u_X(x,x')=u_X(x_{i_0},x_{j_0})$. Moreover,
\begin{align*}
u_Y(y,y')&\leq\max(u_Y(y,y_{i_0}),u_Y(y_{i_0},y_{j_0}),u_Y(y_{j_0},y'))\\
&< \max(2(\eps+\delta), u_X(x_{i_0},x_{j_0}),2(\eps+\delta))=u_X(x_{i_0},x_{j_0})=u_X(x,x').
\end{align*}
Therefore, 
\[\Gamma_{X,Y}^\infty(x,y,x',y')=u_X(x,x')=u_X(x_{i_0},x_{j_0})= \Gamma_{X,Y}^\infty(x_{i_0},y_{i_0},x_{j_0},y_{j_0})>6(\eps+\delta)>2\delta.\] Consequently, we have that
\begin{align*}
\norm{\Gamma_{X,Y}^\infty}_{L^p(\mu\otimes\mu)}&\geq \norm{\Gamma_{X,Y}^\infty}_{L^1(\mu\otimes\mu)} \geq \norm{\Gamma_{X,Y}^\infty\mathds{1}_{S_{i_0}}\mathds{1}_{S_{j_0}}}_{L^1(\mu\otimes\mu)}\geq 2\delta\mu(S_{i_0})\mu(S_{j_0})\\
>2\delta\left(\delta(1-\delta^2)\right)^2.
\end{align*}
However, for $\delta\leq 1/2$, $2\delta\left(\delta(1-\delta^2)\right)^2\geq 2\delta^5$. This leads to a contradiction. 
\end{proof}
    
    Consider $S\subseteq X\times Y$ given by $S\coloneqq\{(x_i,y_i)|\,i=1,\ldots,N\}$. Let $u_S$ be the ultrametric on $X\sqcup Y$ given by Lemma \ref{lm:ultra-coupling}. By Claim \ref{claim 2}, $\sup_{(x,y),(x',y')\in S}\Gamma_{X,Y}^\infty(x,y,x',y')\leq6(\eps+\delta)$. Then, for all $i=1,\ldots,N$ we have that $u_S(x_i,y_i)\leq 6(\eps+\delta)$ and for any $(x,y)\in X\times Y$ we have that
    \[u_S(x,y)\leq \max(\diam X,\diam Y,6(\eps+\delta)) \leq \max(\diam X,\diam Y,27)=:M'.\]
    Here in the second inequality we use the assumption that $\delta<\frac{1}{2}$ and the fact that $\eps=4v_\delta(X)\leq 4$.
    
   \setcounter{claimcount}{2}
\begin{claim}\label{claim 3}
Fix $i\in\{1,\dots,N\}$. Then, for all $(x,y)\in S_i$, it holds $u_S(x,y)\leq 6(\eps+\delta)$.
\end{claim}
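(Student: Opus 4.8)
The plan is to unwind the definition of $u_S$ furnished by \Cref{lm:ultra-coupling} and to estimate the defining infimum by evaluating it at the single anchor pair $(x_i,y_i)\in S$. Writing $\eta\coloneqq 6(\eps+\delta)$ (which is an admissible threshold in \Cref{lm:ultra-coupling} precisely because of Claim~\ref{claim 2}), recall that for every $(x,y)\in X\times Y$ one has
\[u_S(x,y)=\inf_{(x',y')\in S}\max\left(u_X(x,x'),u_Y(y,y'),\eta\right),\]
so it suffices to produce one element of $S$ for which this three-term maximum does not exceed $\eta$.

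First I would fix $(x,y)\in S_i$ and recall that, by construction, $S_i=B_\eps^X(x_i)\times B_{2(\eps+\delta)}^Y(y_i)$; hence $u_X(x,x_i)\le \eps$ and $u_Y(y,y_i)\le 2(\eps+\delta)$. Substituting the admissible choice $(x',y')=(x_i,y_i)\in S$ into the infimum above yields
\[u_S(x,y)\le \max\left(u_X(x,x_i),u_Y(y,y_i),\eta\right)\le \max\left(\eps,2(\eps+\delta),6(\eps+\delta)\right)=6(\eps+\delta),\]
where the final equality uses the chain $\eps\le \eps+\delta\le 2(\eps+\delta)\le 6(\eps+\delta)$. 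This is exactly the asserted bound.

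In truth there is no genuine obstacle here: the claim is a direct formal consequence of the definition of $u_S$, the only point needing verification being that the two ball radii $\eps$ and $2(\eps+\delta)$ are each dominated by the threshold $\eta=6(\eps+\delta)$, which is immediate. The substance of the argument sits in the earlier steps — that each $(x_i,y_i)$ lies in $S$ and that, by Claim~\ref{claim 2}, $\eta$ legitimately bounds $\sup_{(x,y),(x',y')\in S}\Gamma_{X,Y}^\infty(x,y,x',y')$ so that \Cref{lm:ultra-coupling} applies — after which the present claim merely records that the anchor pair $(x_i,y_i)$ witnesses the estimate uniformly over the whole box $S_i$.
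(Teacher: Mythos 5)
Your proof is correct and is essentially the paper's argument: the paper bounds $u_S(x,y)$ by $\max\{\uX(x,x_i),\uY(y,y_i),u_S(x_i,y_i)\}$ via the strong triangle inequality through $(x_i,y_i)$, while you obtain the identical three-term maximum by evaluating the defining infimum of $u_S$ at $(x_i,y_i)\in S$. The two routes differ only cosmetically and rest on the same facts, namely $u_X(x,x_i)\le\eps$, $u_Y(y,y_i)\le 2(\eps+\delta)$, and $u_S(x_i,y_i)\le\eta=6(\eps+\delta)$ from \Cref{lm:ultra-coupling} and Claim~2.
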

\begin{proof}
 Let $(x,y)\in S_i$. Then, $\uX(x,x_i)\leq\eps$ and $\uY(y,y_i)\leq 2(\eps+\delta)$. Then, by the strong triangle inequality for $u_S$ we obtain
 \begin{align*}
    u_S(x,y)\leq& \max\{\uX(x,x_i),\uY(y,y_i), u_S(x_i,y_i)\}\\
    \leq & \max\{\eps,2(\eps+\delta),6(\eps+\delta)\}\leq 6(\eps+\delta).
\end{align*}
\end{proof}

    Let $L\coloneqq\bigcup_{i=1}^NS_i$. The next step is to estimate the mass of $\mu$ in the complement of $L$.
    
   \setcounter{claimcount}{3}
   \begin{claim}
    $\mu(X\times Y\backslash L)\leq\eps+\delta$.
   \end{claim}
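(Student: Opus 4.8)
The plan is to split the complement $X\times Y\setminus L$ according to whether the first coordinate lies in the ``good'' region $B\coloneqq\bigcup_{i=1}^N B_\eps^X(x_i)$ covered by the net balls. First I would write $X\times Y\setminus L\subseteq\big((X\setminus B)\times Y\big)\cup\big((B\times Y)\setminus L\big)$ and control the two pieces separately. The first piece is immediate from the marginal condition $\mu\in\mathcal{C}(\muX,\muY)$: its $\mu$-mass equals $\muX(X\setminus B)$, and since \citet[Claim 10.1]{memoli2011gromov} guarantees $\muX(B)\geq 1-\eps$, this contributes at most $\eps$.

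For the second piece I would use a pointwise containment. If $(x,y)\in B\times Y$ but $(x,y)\notin L$, pick $i$ with $x\in B_\eps^X(x_i)$; then $(x,y)\notin S_i$ forces $y\notin B_{2(\eps+\delta)}^Y(y_i)$, so $(x,y)\in B_\eps^X(x_i)\times\big(Y\setminus B_{2(\eps+\delta)}^Y(y_i)\big)=\big(B_\eps^X(x_i)\times Y\big)\setminus S_i$. Hence $(B\times Y)\setminus L\subseteq\bigcup_i\big((B_\eps^X(x_i)\times Y)\setminus S_i\big)$, and using the first-marginal identity $\mu(B_\eps^X(x_i)\times Y)=\muX(B_\eps^X(x_i))$ together with Claim~\ref{claim 1} (i.e.\ $\mu(S_i)\geq(1-\delta^2)\,\muX(B_\eps^X(x_i))$) gives $\mu\big((B_\eps^X(x_i)\times Y)\setminus S_i\big)\leq\delta^2\,\muX(B_\eps^X(x_i))$ for each $i$.

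The one point requiring care is the summation $\sum_i\muX(B_\eps^X(x_i))$, which a priori could exceed $1$ if the net balls overlapped. Here I would invoke the ultrametric structure: the relation $x\sim x'\iff \uX(x,x')\leq\eps$ is an equivalence relation, so the \emph{distinct} closed balls among $\{B_\eps^X(x_i)\}$ are precisely the blocks of a partition of $X$ and are therefore pairwise disjoint (this is exactly the quotient $X_\eps$ of \Cref{sec:ultrametric Gromov-Hausdorff}). Re-indexing the cover above by distinct balls — choosing for each such ball one representative $x_i$ together with its associated $y_i$ — the pointwise containment still holds, and now $\sum_i\muX(B_\eps^X(x_i))\leq 1$. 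Thus the second piece has mass at most $\delta^2$. Combining the two bounds yields $\mu(X\times Y\setminus L)\leq\eps+\delta^2\leq\eps+\delta$, since $\delta<\tfrac12$. The main (and essentially only) obstacle is precisely this disjointness issue, which the ultrametric partition property resolves cleanly.
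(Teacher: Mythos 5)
Your proof is correct and follows essentially the same route as the paper: the identical decomposition of $X\times Y\setminus L$ into $\bigl(X\setminus\bigcup_{i=1}^N B_\eps^X(x_i)\bigr)\times Y$ and $\bigcup_{i=1}^N\bigl((B_\eps^X(x_i)\times Y)\setminus S_i\bigr)$, the same use of the marginal condition for the first piece, and the same per-ball estimate $\mu\bigl((B_\eps^X(x_i)\times Y)\setminus S_i\bigr)\leq\delta^2\,\muX(B_\eps^X(x_i))$ from Claim 1. The only divergence is the final summation: the paper bounds $\sum_{i=1}^N\delta^2\muX(B_\eps^X(x_i))\leq N\delta^2\leq\delta$ using $N\leq[1/\delta]$, whereas you invoke the ultrametric fact that same-radius balls are equal or disjoint to get the slightly sharper bound $\delta^2$; both are valid, your disjointness step being a clean but ultimately unnecessary refinement of an argument that already closes without any ultrametric input.
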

   \begin{proof}
       For each $i=1,\ldots,N$, let $A_i\coloneqq B_\eps^X(x_i)\times \lc Y\setminus B_{2(\eps+\delta)}^Y(y_i)\rc$. Then,
       \[A_i=\lc B_\eps^X(x_i)\times Y\rc \setminus\lc B_\eps^X(x_i)\times B_{2(\eps+\delta)}^Y(y_i)\rc=\lc B_\eps^X(x_i)\times Y\rc \setminus S_i.\]
       Hence, 
       \[\mu(A_i)=\mu\lc B_\eps^X(x_i)\times Y\rc -\mu(S_i)=\muX\lc B_\eps^X(x_i)\rc-\mu(S_i),\]
       where the last equality follows from the fact that $\mu\in \mathcal{M}(\muX,\muY)$. By Claim \ref{claim 1}, we have that $\mu(S_i)\geq \muX\lc B_\eps^X(x_i)\rc (1-\delta^2)$. Consequently, we obtain
       \[\mu(A_i)\leq \muX\lc B_\eps^X(x_i)\rc \delta^2.\]
       Notice that
       \[X\times Y\setminus L\subseteq \left(X\mathbin{\Big\backslash} \bigcup_{i=1}^NB_\eps^X(x_i)\right)\times Y\cup \left(\bigcup_{i=1}^N A_i\right). \]
       Hence, 
       \begin{align*}
           \mu(X\times Y\setminus L)&\leq \muX\left(X\mathbin{\Big\backslash} \bigcup_{i=1}^NB_\eps^X(x_i)\right)+\sum_{i=1}^N\mu(A_i)\\
           &\leq 1-\muX\left(\bigcup_{i=1}^NB_\eps^X(x_i)\right)+\sum_{i=1}^N\delta^2\muX\lc B_\eps^X(x_i)\rc\\
           &\leq \eps+N\cdot\delta^2\leq\eps+\delta.
       \end{align*}
       Here, the third inequality follows from the construction of $x_i$s in the beginning of this section and from the fact that $N\leq [1/\delta]$.
   \end{proof}

Now, 
    \begin{align*}
        \int_{X\times Y}u_S^p(x,y)\,\mu(dx\times dy)&=\left(\int_L+\int_{X\times Y\backslash L}\right) u_S^p(x,y)\,\mu(dx\times dy)\\
        &\leq (6(\eps+\delta))^p+{M'}^p\cdot(\eps+\delta).
    \end{align*}
   Since we have for any $a,b\geq 0$ and $p\geq 1$ that $a^{1/p}+b^{1/p}\geq (a+b)^{1/p}$, we obtain 
   \begin{align*}
       \usturm p(\X,\Y)&\leq (\eps+\delta)^\frac{1}{p}\left(6(\eps+\delta)^{1-\frac{1}{p}}+M'\right)\leq(\eps+\delta)^\frac{1}{p}\left(27+M'\right)\\
       &\leq \left(4v_\delta(\X)+\delta\right)^\frac{1}{p}\cdot M,\\
   \end{align*}
   where we used $\eps=4v_\delta(\X)$ and $M\coloneqq 2\max(\diam X,\diam Y)+54\geq M'+27$. Since the roles of $\X$ and $\Y$ are symmetric, we have that
    \[\usturm p(\X,\Y)\leq\left(4\min(v_\delta(\X),v_\delta(Y))+\delta\right)^\frac{1}{p}\cdot M. \]
    This concludes the proof.
	\end{proof}
	
	\subsection{Proofs from Section \ref{sec:topology and geodesic properties}}
	The subsequent section contains the full proofs of the statements in \Cref{sec:topology and geodesic properties}.
	\subsubsection{Proof of \Cref{thm: complete and separable}}\label{sec:proof of thm complete and separable}
	\begin{enumerate}
	    \item We first prove that $(\mathcal{U}^w,\ugw p)$ is non-separable for each $p\in[1,\infty]$. Recall notations in \Cref{ex:notation two point space} and consider the family $\{\hat{\Delta}_2(a)\}_{a\in[1,2]}$.
	    
	    \begin{claim}\label{claim:example compute}
	    $\forall a\neq b\in[1,2],$
	     $ \ugw p\left(\hat{\Delta}_2(a),\hat{\Delta}_2(b)\right)=2^{-\frac{1}{p}}\Lambda_\infty(a,b)\geq 2^{-\frac{1}{p}}$, where we let $2^{-\frac{1}{\infty}}=1$.
	    \end{claim}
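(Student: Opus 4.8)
The plan is to compute $\ugw{p}$ directly, exploiting that both spaces are two-point spaces carrying the uniform measure, so that the optimization over couplings reduces to a single real parameter. Write $X=\{x_1,x_2\}$ with $u_X(x_1,x_2)=a$ and $Y=\{y_1,y_2\}$ with $u_Y(y_1,y_2)=b$, both equipped with mass $\tfrac12$ on each point. Any coupling $\mu\in\mathcal{C}(\mu_a,\mu_b)$ is a $2\times2$ nonnegative matrix with all marginals equal to $\left(\tfrac12,\tfrac12\right)$, hence of the form $\mu_{11}=\mu_{22}=t$ and $\mu_{12}=\mu_{21}=s$ with $s=\tfrac12-t$ and $t\in\left[0,\tfrac12\right]$. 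First I would record the four possible values of $\Lambda_\infty(u_X(x_i,x_k),u_Y(y_j,y_l))$: it is $0$ when $i=k,\,j=l$; it equals $a$ when $i\neq k,\,j=l$; it equals $b$ when $i=k,\,j\neq l$; and it equals $\Lambda_\infty(a,b)=\max(a,b)$ when $i\neq k,\,j\neq l$ (using $a\neq b$).

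Summing the $p$-th powers against $\mu\otimes\mu$ and grouping the four cases, I expect to obtain for $1\leq p<\infty$
\[\big(\mathrm{dis}_p^\mathrm{ult}(\mu)\big)^p = 4ts\,(a^p+b^p) + 2(t^2+s^2)\,\Lambda_\infty(a,b)^p.\]
The key algebraic step is to substitute $t^2+s^2=\tfrac14-2ts$ (valid since $t+s=\tfrac12$) and to use $\Lambda_\infty(a,b)^p=\max(a^p,b^p)$, which recasts the right-hand side as
\[\tfrac12\,\Lambda_\infty(a,b)^p + 4ts\,\big(a^p+b^p-\max(a^p,b^p)\big) = \tfrac12\,\Lambda_\infty(a,b)^p + 4ts\,\min(a^p,b^p).\]
Since $a,b>0$ forces $\min(a^p,b^p)>0$ and $ts\geq0$, this is minimized exactly when $ts=0$, i.e. for the diagonal coupling $t=\tfrac12$ or the anti-diagonal coupling $t=0$, with minimal value $\tfrac12\,\Lambda_\infty(a,b)^p$.

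Taking $p$-th roots then yields $\ugw{p}\left(\hat{\Delta}_2(a),\hat{\Delta}_2(b)\right)=2^{-1/p}\Lambda_\infty(a,b)$ for $p<\infty$. For $p=\infty$ I would either evaluate $\mathrm{dis}_\infty^\mathrm{ult}$ directly — every coupling's support contains a pair realizing $\Lambda_\infty(a,b)$, and no larger value occurs — or simply invoke $\lim_{p\to\infty}\ugw{p}=\ugw{\infty}$ from \Cref{prop:ugw-properties}(3) together with $\lim_{p\to\infty}2^{-1/p}=1$, giving $\ugw{\infty}=\Lambda_\infty(a,b)$, consistent with the convention $2^{-1/\infty}=1$. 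Finally, since $a\neq b$ lie in $[1,2]$ we have $\Lambda_\infty(a,b)=\max(a,b)\geq1$, whence $\ugw{p}\left(\hat{\Delta}_2(a),\hat{\Delta}_2(b)\right)\geq 2^{-1/p}$, as claimed.

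The only nontrivial point is the simplification in the second paragraph: the decomposition into a coupling-independent term plus a manifestly nonnegative multiple of $ts$ is what makes the optimization transparent, and it hinges on the two identities $t^2+s^2=\tfrac14-2ts$ and $a^p+b^p-\max(a^p,b^p)=\min(a^p,b^p)$. Everything else is bookkeeping over the finitely many terms.
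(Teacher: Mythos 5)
Your computation is correct, and it takes a genuinely different (more self-contained) route than the paper. The paper sandwiches the value: the lower bound comes from $\ugw p\geq\uSLB p$ (Theorem \ref{thm:comparison with original}) together with the easy evaluation $\uSLB p\bigl(\hat{\Delta}_2(a),\hat{\Delta}_2(b)\bigr)=2^{-1/p}\Lambda_\infty(a,b)$, and the matching upper bound comes from plugging the diagonal coupling into $\mathrm{dis}_p^{\mathrm{ult}}$. You instead parametrize the full one-dimensional family of couplings and minimize the distortion exactly, via the identities $t^2+s^2=\tfrac14-2ts$ and $a^p+b^p-\max(a^p,b^p)=\min(a^p,b^p)$; I checked the four-case bookkeeping and the resulting expression $\tfrac12\Lambda_\infty(a,b)^p+4ts\min(a^p,b^p)$, and it is right, as is the conclusion that the minimum is attained exactly at the diagonal and anti-diagonal couplings. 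What your approach buys is independence from the lower-bound machinery of \Cref{sec:lower bounds} (useful since the claim appears inside the completeness/separability proof, logically before one might want to rely on those results) plus the extra information of which couplings are optimal; what the paper's approach buys is brevity and reusability of $\uSLB p$ as a general-purpose lower bound. Your handling of $p=\infty$ (either directly, noting that every coupling's support is a correspondence and hence contains a pair realizing $\max(a,b)$, or via \Cref{prop:ugw-properties}(3)) is also fine.
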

    \begin{proof}[Proof of Claim 1 ]
First note by \Cref{thm:comparison with original} that \[\ugw p \left(\hat{\Delta}_2(a),\hat{\Delta}_2(b)\right)\geq \uSLB p\left(\hat{\Delta}_2(a),\hat{\Delta}_2(b)\right).\] It is easy to verify that $\uSLB p\left(\hat{\Delta}_2(a),\hat{\Delta}_2(b)\right)=2^{-\frac{1}{p}}\Lambda_\infty(a,b)$. On the other hand, consider the diagonal coupling between $\mu_a$ and $\mu_b$, then for $p\in[1,\infty)$
\[\ugw p \left(\hat{\Delta}_2(a),\hat{\Delta}_2(b)\right)\leq \left(2\cdot\Lambda_\infty(a,b)^p\cdot\frac{1}{2}\cdot\frac{1}{2}\right)^\frac{1}{p}=2^{-\frac{1}{p}}\Lambda_\infty(a,b), \]
and for $p=\infty$
\[\ugw \infty \left(\hat{\Delta}_2(a),\hat{\Delta}_2(b)\right)\leq \Lambda_\infty(a,b). \]
Therefore, 
\[\ugw p \left(\hat{\Delta}_2(a),\hat{\Delta}_2(b)\right)=2^{-\frac{1}{p}}\Lambda_\infty(a,b). \]
\end{proof}
By Claim \ref{claim:example compute}, we have that $\left\{\hat{\Delta}_2(a)\right\}_{a\in[1,2]}$ is an uncountable subset of $\mathcal{U}^w$ with pairwise distance greater than $2^{-\frac{1}{p}}$, which implies that $(\mathcal{U}^w,\ugw p)$ is non-separable.
	
	Now for $p\in[1,\infty)$, we show that $\ugw p$ is not complete. Consider the family $\{\Delta_{2^n}(1)\}_{n\in\mathbb{N}}$ of $2^n$-point spaces with unitary interpoint distances. Endow each space $\Delta_{2^n}(1)$ with the uniform measure $\mu_n$ and denote the corresponding ultrametric measure space by $\hat{\Delta}_{2^n}(1)$. It is proven in \cite[Example 2.2]{sturm2012space} that $\{\hat{\Delta}_{2^n}(1)\}_{n\in\N}$ is a Cauchy sequence with respect to $\dgw p$ without a compact metric measure space as limit. It is not hard to check that 
	\[\ugw p\left(\hat{\Delta}_{2^m}(1),\hat{\Delta}_{2^n}(1)\right)=2\dgw p\left(\hat{\Delta}_{2^m}(1),\hat{\Delta}_{2^n}(1)\right),\quad\forall n,m\in\mathbb{N}. \]
	Therefore, $\{\hat{\Delta}_{2^n}(1)\}_{n\in\N}$ is a Cauchy sequence with respect to $\ugw p$ without limit in $\mathcal{U}^w$. This implies that $(\mathcal{U}^w,\ugw p)$ is not complete.

\item By \Cref{thm:ugw<ugw-sturm} and (1), we have that $\left(\mathcal{U}^w,\usturm p\right)$ is not separable. As for completeness, consider the subset $X\coloneqq\{1-\frac{1}{n}\}_{n\in\mathbb N}\subseteq (\mathbb R_{\geq 0},\Lambda_\infty)$. By Lemma \ref{lm:compact of R}, $X$ is not a compact ultrametric space. Let $\mu_0\in\mathcal{P}(X)$ be a probability defined as follows:
\[\mu_0\left(\left\{1-\frac{1}{n}\right\}\right)\coloneqq 2^{-n},\quad\forall n\in\mathbb N. \]
For each $N\in\mathbb N$, let $X_N\coloneqq \{1-\frac{1}{n}|\,n=1,\ldots,N\}$. Since each $X_N$ is finite, $(X_N,\Lambda_\infty)$ is a compact ultrametric space. Let $\mu_N\in\mathcal{P}(X_N)$ be a probability defined as follows:
\[\mu_N\left(\left\{1-\frac{1}{n}\right\}\right)\coloneqq\begin{cases} 2^{-n},& 1\leq n<N\\
2^{-N+1}&n=N
\end{cases}.\]
Then, it is easy to verify (e.g. via \Cref{thm:compact usturm A Phi representation}) that $\{(X_N,\Lambda_\infty,\mu_N)\}_{N\in\N}$ is a $\usturm p$ Cauchy sequence with $(X,\Lambda_\infty,\mu_0)$ being the limit. Since the set $X$ is not compact, $(X,\Lambda_\infty,\mu_0)\notin\mathcal{U}^w$ and thus $\left(\mathcal{U}^w,\usturm p\right)$ is not complete.

	\item That $(\mathcal{U}^w,\ugw\infty)$ is non-separable is already proved in (1).
	Given a Cauchy sequence $\{\X_n=(X_n,u_n,\mu_n)\}_{n\in\N}$ with respect to $\ugw\infty$, we have that the underlying ultrametric spaces $\{X_n\}_{n\in\N}$ form a Cauchy sequence with respect to $\ugh$ due to Corollary \ref{coro:ugw>ugh}. Since $(\mathcal{U},\ugh)$ is complete (see \cite[Proposition 2.1]{zarichnyi2005gromov}), there exists a compact ultrametric space $(X,u_X)$ such that 
	\[\lim_{n\rightarrow\infty}\ugh(X_n,X)=0. \]
	For each $n\in\mathbb{N}$, let $\delta_n\coloneqq\ugh(X_n,X)$. By \Cref{thm:ultrametric GH-distance}, we have that $(X_n)_{\delta_n}\cong X_{\delta_n}$. Denote by $\hat{\mu}_n\in\mathcal{P}(X_{\delta_n})$ the pushforward of $(\mu_n)_{\delta_n}$ under the isometry. Furthermore, we have by \Cref{lm:quotient-finite} that $X_{\delta_n}$ is finite and we let $X_{\delta_n}=\{[x_1]_{\delta_n},\ldots,[x_k]_{\delta_n}\}$ for $x_1,\ldots,x_k\in X$. Based on this, we define \[\nu_n\coloneqq\sum_{i=1}^k\hat{\mu}_n([x_i]_{\delta_n})\cdot\delta_{x_i}\in\mathcal{P}(X), \]
	where $\delta_{x_i}$ is the Dirac measure at $x_i$. Since $X$ is compact, $\mathcal{P}(X)$ is weakly compact. Therefore, the sequence $\{\nu_n\}_{n\in\N}$ has a cluster point $\nu\in\mathcal{P}(X)$.

	Now we show that $\X\coloneqq(X,u_X,\nu)$ is a $\ugw\infty$ cluster point of $\{\X_{n}\}_{n\in\mathbb{N}}$ and thus the limit of $\{\X_n\}_{n\in\mathbb{N}}$ since $\{\X_n\}_{n\in\mathbb{N}}$ is a Cauchy sequence. Without loss of generality, we assume that $\{\nu_n\}_{n\in\mathbb{N}}$ weakly converges to $\nu$. Fix any $\eps>0$, we need to show that $\ugw\infty(\X,\X_n)\leq\eps$ when $n$ is large enough. For any fixed $x_*\in X$, $[x_*]_{\eps}$ is both an open and closed ball in $X$. Therefore, $\nu([x_*]_{\eps})=\lim_{n\rightarrow\infty}\nu_n([x_*]_{\eps})$ (see e.g. \citet[Thm. 2.1]{billingsleyConvergenceProbabilityMeasures2013}). Since $\delta_n\rightarrow0$ as $n\rightarrow\infty$, there exists $N_1>0$ such that for any $n>N_1$, $\delta_n<\eps$. We specify an isometry $\varphi_n:(X_n)_{\delta_n}\rightarrow X_{\delta_n}$ that gives rise to the construction of $\nu_n$. Then, we let $\psi_n:(X_n)_\eps\rightarrow X_\eps$ be the isometry such that the following diagram commutes:
	\[\begin{tikzcd}
(X_n)_{\delta_n} \arrow{r}{\varphi_n} \arrow[swap]{d}{\eps\text{-quotient}} & X_{\delta_n} \arrow{d}{\eps\text{-quotient}} \\%
(X_n)_\eps \arrow{r}{\psi_n}& X_\eps
\end{tikzcd} \]
Assume that $[x_*]_\eps^X=\bigcup_{i=1}^l[x_i]_{\delta_n}^X$. Let $x_*^n\in X_n$ be such that $\psi_n([x_*^n]_\eps^{X_n})=[x_*]_\eps^X$ and let $x_1^n,\ldots,x_l^n\in X_n$ be such that $\varphi_n([x_i^n]_{\delta_n}^{X_n})=[x_i]_{\delta_n}^X$ for each $i=1,\ldots,l$. Then, $[x^n_*]_\eps^{X_n}=\bigcup_{i=1}^l[x_i^n]_{\delta_n}^{X_n}$. Therefore,
\begin{align*}
    \nu_n([x_*]_\eps^X)=\sum_{i=1}^l\nu_n([x_i]_{\delta_n}^X)=\sum_{i=1}^l\hat{\mu}_n([x_i]_{\delta_n}^X)=\sum_{i=1}^l{\mu}_n([x_i^n]_{\delta_n}^{X_n})=\mu_n([x^n_*]_\eps^{X_n}).
\end{align*}
Since $\X_n$ is a Cauchy sequence, there exists $N_2>0$ such that $\ugw\infty(\X_n,\X_m)<\eps$ when $n,m>N_2$. Then, by \Cref{thm:ugw-infty-eq}, $(\X_n)_\eps\cong_w(\X_m)_\eps$ for all $n,m>N_2$. By \Cref{lm:quotient-finite}, $(X_n)_\eps$ is finite, then $(X_n)_\eps$ has cardinality independent of $n$ when $n>N_2$. For all $n>N_2$, we define the finite set $A_n\coloneqq\left\{\mu_n([x^n]_\eps^{X_n})|\,x^n\in X_n\right\}$. $A_n$ is independent of $n$ since $(\X_n)_\eps\cong_w(\X_m)_\eps$ for all $n,m>N_2$. This implies that $\mu_n([x^n_*]_\eps^{X_n})$ only takes value in a finite set $A_n$. Combining with the fact that $\lim_{n\rightarrow\infty}\mu_n([x^n_*]_\eps^{X_n})=\lim_{n\rightarrow\infty}\nu_n([x]_\eps^X)=\nu([x_*]_\eps^X)$ exists, there exists $N_3>0$ such that when $n>N_3$, $\mu_n([x^n_*]_\eps)\equiv C$ for some constant $C$. This implies that
\[\nu([x_*]_\eps^X)=\mu_n([x^n_*]_\eps^{X_n}),\quad\text{ when } n>\max(N_1,N_2,N_3). \]
Since $X_\eps$ is finite, there exists a common $N>0$ such that for all $n>N$ and $\forall [x_*]_\eps\in X_\eps$ we have 
\[\nu([x_*]_\eps^X)=\mu_n([x^n_*]_\eps^{X_n}), \] 
where $[x^n_*]^{X_n}_\eps=\psi^{-1}_n([x_*]_\eps^X)\in (X_n)_\eps$. This indicates that $\nu_\eps=(\psi_n)_\#(\mu_n)_\eps$ when $n>N$. Therefore, $\X_\eps\cong_w (\X_n)_\eps$ and thus $\ugw\infty(\X,\X_n)\leq\eps$. 
	\end{enumerate}

	\subsubsection{Proof of \Cref{prop:usturm 1 geodesic}}\label{sec:proof of prop usturm 1 geodesic}
	Next, we will demonstrate \Cref{prop:usturm 1 geodesic}. However, before we come to this we recall some facts about $p$-metric and $p$-geodesic spaces.
	\begin{lemma}[{\citet[Proposition 7.10]{memoli2019gromov}}]\label{lm:p metric not geodesic}
    Given $p\in[1,\infty)$, if $X$ is a $p$-metric space, then $X$ is not $q$-geodesic for all $1\leq q<p$. 
\end{lemma}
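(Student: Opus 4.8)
The plan is to argue by contradiction: assuming $X$ is simultaneously a $p$-metric space and $q$-geodesic for some $1\le q<p$, I would extract three points lying on a single $q$-geodesic and show that the $p$-triangle inequality forces the long side to be strictly shorter than the geodesic prescribes. First I would dispose of the trivial case: if $X$ has at most one point the conclusion is to be read vacuously (there are no two distinct points to connect), so I assume there exist distinct $x_0,x_1\in X$ and set $D\coloneqq d_X(x_0,x_1)>0$. Invoking the $q$-geodesic hypothesis, I choose a $q$-geodesic $\gamma:[0,1]\to X$ with $\gamma(0)=x_0$ and $\gamma(1)=x_1$, so that $d_X(\gamma(0),\gamma(1))=D$.

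The key step is to read off the three mutual distances among $\gamma(0)$, $\gamma(1/2)$, $\gamma(1)$ from the $q$-geodesic condition: $d_X(\gamma(0),\gamma(1/2))=(1/2)^{1/q}D=d_X(\gamma(1/2),\gamma(1))$, while $d_X(\gamma(0),\gamma(1))=D$. Applying the $p$-metric inequality to this triple (with the long side $D$ bounded by the $\ell^p$-combination of the two halves) gives
\[
D=d_X(\gamma(0),\gamma(1))\le\Big(d_X(\gamma(0),\gamma(1/2))^p+d_X(\gamma(1/2),\gamma(1))^p\Big)^{1/p}=D\cdot\Big(2\cdot(1/2)^{p/q}\Big)^{1/p}=D\cdot 2^{\frac1p-\frac1q}.
\]
Since $q<p$ we have $\tfrac1p-\tfrac1q<0$, whence $2^{\frac1p-\frac1q}<1$; dividing through by $D>0$ yields $1\le 2^{\frac1p-\frac1q}<1$, a contradiction. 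Hence no $q$-geodesic joining $x_0$ and $x_1$ can exist, so $X$ is not $q$-geodesic.

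I expect no substantial obstacle here: the whole content is the observation that $2\cdot(1/2)^{p/q}<1$ precisely because $p/q>1$, which is exactly the failure of the $p$-metric inequality to be saturated by a $q$-geodesic bisection. The only points requiring care are the direction of the $p$-metric inequality (it controls the long side from above) and the harmless restriction to spaces with at least two points, so that the statement is not contradicted vacuously. A marginally more flexible variant would replace the midpoint by an arbitrary $t\in(0,1)$ and invoke $s^{p/q}<s$ for $s\in(0,1)$ to obtain $t^{p/q}+(1-t)^{p/q}<t+(1-t)=1$, but the choice $t=1/2$ already suffices.
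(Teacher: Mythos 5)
Your argument is correct, and the key computation $\bigl(2\cdot(1/2)^{p/q}\bigr)^{1/p}=2^{\frac{1}{p}-\frac{1}{q}}<1$ for $q<p$ is exactly the point. Note, however, that this paper contains no proof of its own to compare against: the lemma is imported verbatim from \citet[Proposition 7.10]{memoli2019gromov}, and your midpoint-bisection argument is the standard one (essentially the argument of the cited source). One small remark: under this paper's definitions a one-point space is a $p$-metric space and is \emph{vacuously} $q$-geodesic, so the statement as written is literally false (not vacuous) in that degenerate case; your explicit restriction to spaces with at least two distinct points is therefore not merely harmless but necessary, and the lemma should be read with that implicit hypothesis --- which costs nothing for its use in the paper, where it is applied to $(\mathcal{U}^w,\usturm{p})$.
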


\begin{lemma}[{\citet[Theorem 7.7]{memoli2019gromov}}]\label{lm:snow p geod}
    Let $X$ be a geodesic metric space. Then, for any $p\geq 1$, $S_\frac{1}{p}(X)$ is $p$-geodesic, where $S_\alpha$ denotes the snowflake transform for $\alpha>0$ (cf. \Cref{subsec:relation between ugw and usturm}).
\end{lemma}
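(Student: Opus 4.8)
The plan is to take a geodesic in the original space $(X,d_X)$ and verify that, viewed in the snowflaked space $S_{1/p}(X)=(X,d_X^{1/p})$, it becomes a $p$-geodesic essentially for free. First I would record that $S_{1/p}(X)$ is genuinely a metric space: since $p\geq 1$ we have $1/p\in(0,1]$, and the map $t\mapsto t^{1/p}$ is subadditive on $\Rp$ (i.e. $(a+b)^{1/p}\leq a^{1/p}+b^{1/p}$), so $d_X^{1/p}$ inherits the triangle inequality from $d_X$, with symmetry and positivity being immediate. I would also note that $d_X$ and $d_X^{1/p}$ induce the same topology on $X$, since an open ball of radius $r$ with respect to $d_X^{1/p}$ coincides with the open ball of radius $r^p$ with respect to $d_X$, and conversely; in particular a map into $X$ is continuous for $d_X$ if and only if it is continuous for $d_X^{1/p}$.

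Next, I would fix two distinct points $x,x'\in X$. Since $(X,d_X)$ is geodesic, there is a continuous map $\gamma:[0,1]\to X$ with $\gamma(0)=x$, $\gamma(1)=x'$ and $d_X(\gamma(s),\gamma(t))=|s-t|\cdot d_X(x,x')$ for all $s,t\in[0,1]$. I would then simply reinterpret $\gamma$ as a map into $S_{1/p}(X)$; by the topological equivalence noted above it remains continuous, and for all $s,t\in[0,1]$
\[
d_X^{1/p}(\gamma(s),\gamma(t))=\big(d_X(\gamma(s),\gamma(t))\big)^{1/p}=\big(|s-t|\,d_X(x,x')\big)^{1/p}=|s-t|^{1/p}\cdot d_X^{1/p}(x,x').
\]
Since $d_X^{1/p}(x,x')$ is precisely the distance between the endpoints of $\gamma$ in $S_{1/p}(X)$, this exhibits $\gamma$ as a $p$-geodesic joining $x$ and $x'$ in $S_{1/p}(X)$. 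As $x,x'$ were arbitrary distinct points, it follows that $S_{1/p}(X)$ is $p$-geodesic.

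The argument is short and essentially algebraic, so there is no serious obstacle. The only two points requiring a moment of care are (i) confirming that $d_X^{1/p}$ is a bona fide metric, which is handled by the subadditivity of $t\mapsto t^{1/p}$ for $1/p\leq 1$, and (ii) ensuring that continuity of $\gamma$ is preserved under the snowflake, which follows because the two metrics are topologically equivalent. Neither requires any nontrivial input beyond the definition of a (geodesic) and the elementary inequality $(a+b)^{1/p}\leq a^{1/p}+b^{1/p}$, so I expect the write-up to be essentially the computation displayed above together with these two remarks.
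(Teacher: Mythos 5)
Your proposal is correct and complete: the subadditivity of $t\mapsto t^{1/p}$ gives the triangle inequality for $d_X^{1/p}$, and the displayed computation exhibits any $d_X$-geodesic as a $p$-geodesic in $S_{1/p}(X)$, which is exactly what the definition of $p$-geodesic in the paper requires. Note that the paper itself does not prove this lemma but imports it as Theorem 7.7 of the cited work of M\'emoli et al.; your argument is the standard one underlying that result, and in fact your point (ii) is automatic, since the identity $d_X^{1/p}(\gamma(s),\gamma(t))=|s-t|^{1/p}\,d_X^{1/p}(x,x')$ already shows $\gamma$ is (H\"older) continuous into $S_{1/p}(X)$ without any separate appeal to topological equivalence.
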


For $p=1$, the proof is based on the following property of the $1$-Wasserstein space.
	\begin{lemma}[{\citet[Theorem 5.1]{bottou2018geometrical}}]\label{lm:W1 is geodesic}
	    Let $X$ be a compact metric space. Then, the space $W_1(X)\coloneqq(\mathcal{P}(X),d_{\mathrm{W},1}^X)$ is a geodesic space.
	\end{lemma}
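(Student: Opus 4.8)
The plan is to prove the sharper and fully explicit statement that on $W_1(X)$ the \emph{linear} (convex) interpolation between any two measures is already a geodesic. The crucial point — and the reason compactness of $X$ alone (with no geodesic structure) suffices — is that the transport cost is \emph{linear} in the coupling when $p=1$. Fix $\alpha,\beta\in\mathcal{P}(X)$ with $\alpha\neq\beta$ and define $\gamma(t)\coloneqq(1-t)\alpha+t\beta$ for $t\in[0,1]$, so that $\gamma(0)=\alpha$, $\gamma(1)=\beta$ and each $\gamma(t)\in\mathcal{P}(X)$. I must show $d_{\mathrm{W},1}^X(\gamma(s),\gamma(t))=|t-s|\,d_{\mathrm{W},1}^X(\alpha,\beta)$ for all $s,t\in[0,1]$, together with continuity of $\gamma$.

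First I would record the existence of an optimal coupling $\pi^*\in\mathcal{C}(\alpha,\beta)$ realizing $d_{\mathrm{W},1}^X(\alpha,\beta)$: since $X$ is compact, the metric $d_X$ is continuous and bounded on $X\times X$, so an optimal transport plan exists by standard optimal transport theory (e.g. \cite[Theorem 4.1]{villani2008optimal}, as already invoked in the proof of \Cref{lemma:wasserstein p-metrics}). The key step is then the Lipschitz upper bound $d_{\mathrm{W},1}^X(\gamma(s),\gamma(t))\le|t-s|\,d_{\mathrm{W},1}^X(\alpha,\beta)$, which I would establish by an explicit coupling. Assume without loss of generality $s<t$ and set $a\coloneqq t-s$. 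I decompose both endpoints around the common sub-probability measure $\nu\coloneqq(1-t)\alpha+s\beta$ (of total mass $1-a$), namely $\gamma(s)=\nu+a\,\alpha$ and $\gamma(t)=\nu+a\,\beta$, which is a direct computation. Then
\[\pi\coloneqq(\mathrm{id}\times\mathrm{id})_\#\nu+a\,\pi^*\]
is a probability measure on $X\times X$ whose first marginal is $\nu+a\,\alpha=\gamma(s)$ and whose second marginal is $\nu+a\,\beta=\gamma(t)$, so $\pi\in\mathcal{C}(\gamma(s),\gamma(t))$. Its cost is
\[\int_{X\times X}d_X\,d\pi=\int_X d_X(x,x)\,d\nu(x)+a\int_{X\times X}d_X\,d\pi^*=a\,d_{\mathrm{W},1}^X(\alpha,\beta),\]
since the diagonal part contributes zero, yielding the claimed bound.

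Finally, the matching lower bound is automatic. For fixed $s<t$ the triangle inequality gives $d_{\mathrm{W},1}^X(\alpha,\beta)\le d_{\mathrm{W},1}^X(\gamma(0),\gamma(s))+d_{\mathrm{W},1}^X(\gamma(s),\gamma(t))+d_{\mathrm{W},1}^X(\gamma(t),\gamma(1))$, and substituting the three upper bounds $sD$, $(t-s)D$, $(1-t)D$ with $D\coloneqq d_{\mathrm{W},1}^X(\alpha,\beta)$ produces $D\le D$; this forces each of the three terms to equal its upper bound, in particular $d_{\mathrm{W},1}^X(\gamma(s),\gamma(t))=(t-s)D=|t-s|D$. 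Continuity of $\gamma$ follows immediately from the same upper bound. The main obstacle — in fact the only non-formal point — is the coupling construction in the key step: one must observe that for $p=1$ holding the shared mass $\nu$ fixed on the diagonal costs nothing, so linear interpolation transports only the differential mass $a\,\alpha\mapsto a\,\beta$ at the optimal per-unit cost. This is precisely the feature of $W_1$ (as opposed to $W_p$ for $p>1$, where displacement interpolation and a geodesic base space are needed) that makes convex interpolation geodesic and dispenses entirely with any geodesic hypothesis on $X$.
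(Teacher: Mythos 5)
Your proof is correct, and it is worth noting that it does something the paper itself does not: the paper imports this lemma by citation (\citet[Theorem 5.1]{bottou2018geometrical}) and gives no internal proof, so your argument is a genuine, self-contained replacement. It is also the right argument: the mixture curve $\gamma(t)=(1-t)\alpha+t\beta$ is a geodesic in $W_1$, proved by decomposing $\gamma(s)=\nu+a\alpha$ and $\gamma(t)=\nu+a\beta$ around the common part $\nu=(1-t)\alpha+s\beta$, coupling $\nu$ to itself along the diagonal (zero cost) and shipping only the differential mass via $a\,\pi^*$, which yields $d_{\mathrm{W},1}^X(\gamma(s),\gamma(t))\leq(t-s)\,d_{\mathrm{W},1}^X(\alpha,\beta)$; the reverse inequality then follows from the triangle-inequality squeeze, exactly as you say. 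Two refinements. First, a wording quibble: the cost functional $\pi\mapsto\int d_X^p\,d\pi$ is linear in $\pi$ for \emph{every} $p$; what is special at $p=1$ is that the distance equals the cost rather than its $p$-th root, so rescaling the optimal plan by mass $a$ rescales the distance by $a$ rather than by $a^{1/p}$ — your construction applied to $p>1$ only gives $a^{1/p}d_{\mathrm{W},p}$, which is too weak, and your closing remark shows you understand this is the crux. Second, compactness of $X$ is used only to guarantee existence of $\pi^*$, and even that is dispensable: running the same construction with an $\varepsilon$-optimal coupling and letting $\varepsilon\to 0$ gives the Lipschitz bound without any existence result, so your proof is strictly more general than the statement (it works whenever $d_{\mathrm{W},1}$ is finite). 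What the paper's citation buys is brevity; what your proof buys is self-containedness, an explicit description of the geodesic, and the observation that no geodesic hypothesis on $X$ is needed — which is precisely why the lemma can be applied to the totally disconnected ultrametric spaces $Z$ arising in the proof of \Cref{prop:usturm 1 geodesic}.
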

Based on the above results and \Cref{coro:usturm_optimal}, the proof of \Cref{prop:usturm 1 geodesic} is straightforward. 
	\begin{proof}[Proof of \Cref{prop:usturm 1 geodesic}]
	 Let $\X$ and $\Y$ be two compact ultrametric measure spaces. First, we consider the case $p=1$. By \Cref{coro:usturm_optimal}, there exist a compact ultrametric space $Z$ and isometric embeddings $\phi:X\hookrightarrow Z$ and $\psi:Y\hookrightarrow Z$ such that
	\[\usturm{p}(\X,\Y)=d_{\mathrm{W},p}^Z(\phi_\#\muX,\psi_\#\muY).\]
	  The space $W_1(Z)$ is geodesic (cf. Lemma \ref{lm:W1 is geodesic}). Therefore, there exists a Wasserstein geodesic $\tilde{\gamma}:[0,1]\rightarrow W_1(Z)$ connecting $\phi_\#\mu_X$ and $\psi_\#\mu_Y$. This induces a curve $\gamma:[0,1]\rightarrow \mathcal{U}^w$ where for each $t\in[0,1]$, $\gamma(t)\coloneqq(\mathrm{supp}(\tilde{\gamma}(t)),u|_{\mathrm{supp}(\tilde{\gamma}(t))\times \mathrm{supp}(\tilde{\gamma}(t))},\tilde{\gamma}(t))$. Note that $\gamma(0)\cong_w\X$ and $\gamma(1)\cong_w\Y$ and hence we simply replace $\gamma(0)$ and $\gamma(1)$ with $\X$ and $\Y$, respectively. Now, for each $s,t\in[0,1]$, we have that
	    \[\dsturm 1(\gamma(s),\gamma(t))\leq d_{\mathrm{W},1}^{Z}(\tilde{\gamma}(s),\tilde{\gamma}(t))=|s-t|d_{\mathrm{W},1}^{Z}(\tilde{\gamma}(0),\tilde{\gamma}(1))=|s-t|\dsturm 1(\X,\Y). \]
	    Therefore, $\gamma$ is a geodesic connecting $\X$ and $\Y$ and thus $(\mathcal{U}^w,\usturm 1)$ is geodesic.

	    Next, we come to the case $p>1$. By \Cref{coro:iso-snow}, $S_p\left(\mathcal{U}^w,\usturm p\right)\cong \left(\mathcal{U}^w,\usturm 1\right)$. This implies that $S_\frac{1}{p}(\mathcal{U}^w,\usturm 1)\cong \left(\mathcal{U}^w,\usturm p\right)$. Hence, by \Cref{lm:snow p geod}, we have that $\left(\mathcal{U}^w,\usturm p\right)$ is $p$-geodesic.
	\end{proof}
\subsection{Technical issues from Section \ref{sec:ultrametric GW distance}}	
In the following, we address various technical issues from \Cref{sec:ultrametric GW distance}.

	\subsubsection{The Wasserstein pseudometric}\label{sec:Wasserstein pseudometric} 
 	Given a set $X$, a pseudometric is a symmetric function $d_X:X\times X\rightarrow\mathbb R_{\geq 0}$ satisfying the triangle inequality and $d_X(x,x)=0$ for all $x\in X$. Note that if moreover $d_X(x,y)=0$ implies $x=y$, then $d_X$ is a metric. There is a canonical identification on pseudometric spaces $(X,d_X)$: $x\sim x'$ if $d_X(x,x')=0$. Then, $\sim$ is in fact an equivalence relation and we define the quotient space $\tilde{X}=X/\sim$. Define a function $\tilde{d}_X:\tilde{X}\times \tilde{X}\rightarrow\mathbb R_{\geq 0}$ as follows:
 	\[\tilde{d}_X([x],[x'])\coloneqq\begin{cases}
 	d_X(x,x')&\text{if }d_X(x,x')\neq 0\\
 	0&\text{otherwise}
	\end{cases}.\]
	$\tilde{d}_X$ turns out to be a metric on $\tilde{X}$.
 	In the following, the metric space $(\tilde{X},\tilde{d}_X)$ is referred to as the \emph{metric space induced by the pseudometric space} $(X,d_X)$. Note that $\tilde{d}_X$ preserves the induced topology (see e.g. \cite{howes2012modern}) and thus the quotient map $\Psi:X\rightarrow \tilde{X}$ is continuous.
 	
 Analogously to the Wasserstein distance, which is defined for probability measures on metric spaces, we define the \emph{Wasserstein pseudometric} for measures on compact pseudometric spaces as done in \cite{thorsley2008model}. Let $\alpha,\beta\in\mathcal{P}(X)$. Then, we define for $p\in[1,\infty)$ the Wasserstein pseudometric of order $p$ as
 \begin{equation}\label{eq:def Wasserstein pseudpmetric p}
     \pseudoWasser{p}^{(X,d_X)}(\alpha,\beta)\coloneqq\left(\inf_{\mu\in\mathcal{C}(\alpha,\beta)}\int_{X\times X} d^p_X(x,y)\,\mu(dx\times dy)\right)^\frac{1}{p}
 \end{equation}
 and for $p=\infty$ as
 \begin{equation}\label{eq:def Wasserstein pseudpmetric infinity}
     \pseudoWasser{\infty}^{(X,d_X)}(\alpha,\beta)\coloneqq\inf_{\mu\in\mathcal{C}(\alpha,\beta)}\sup_{(x,y)\in\mathrm{supp}(\mu)}u(x,y).
 \end{equation}
 It is easy to see that the Wasserstein pseudometric  is closely related to the Wasserstein distance on the induced metric space. More precisely, one can show the following.
\begin{lemma}
Let $(X,d_X)$ denote a compact pseudometric space, let $\alpha,\beta\in\mathcal{P}(X)$. Then, it follows for  $p\in[1,\infty]$ that
\begin{equation}\label{eq:Wasserstein pseudo metric vs induced Wasserstein}
\pseudoWasser{p}^{(X,d_X)}(\alpha,\beta)=d_{\mathrm{W},p}^{(\tilde{X},\tilde{d}_X)}(\Psi_\#{\alpha},\Psi_\#{\beta}) \end{equation}
and in particular that the infimum in \Cref{eq:def Wasserstein pseudpmetric p} (resp. in \Cref{eq:def Wasserstein pseudpmetric infinity} if $p=\infty$) is attained for some $\mu\in\mathcal{C}(\alpha,\beta)$.
\end{lemma}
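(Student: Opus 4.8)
The plan is to transfer the optimal transport problem from the compact pseudometric space $(X,d_X)$ to the induced compact metric space $(\tilde X,\tilde d_X)$ along the continuous quotient map $\Psi:X\to\tilde X$. First I would record the elementary but crucial identity $\tilde d_X(\Psi(x),\Psi(x'))=d_X(x,x')$ for all $x,x'\in X$: if $d_X(x,x')\neq 0$ this is the definition of $\tilde d_X$, and if $d_X(x,x')=0$ then $\Psi(x)=\Psi(x')$, so both sides vanish. Thus the cost factors as $d_X=\tilde d_X\circ(\Psi\times\Psi)$. I would also note that $(\tilde X,\tilde d_X)$ is a \emph{compact} metric space, being the continuous image $\Psi(X)$ of the compact space $X$, with $\tilde d_X$ metrizing the quotient topology; this guarantees that standard optimal transport existence results apply on $\tilde X$.

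The heart of the argument is to show that the pushforward map $(\Psi\times\Psi)_\#$ sends $\mathcal{C}(\alpha,\beta)$ \emph{onto} $\mathcal{C}(\Psi_\#\alpha,\Psi_\#\beta)$. The inclusion $(\Psi\times\Psi)_\#\mathcal{C}(\alpha,\beta)\subseteq\mathcal{C}(\Psi_\#\alpha,\Psi_\#\beta)$ is immediate from $\pi_i\circ(\Psi\times\Psi)=\Psi\circ\pi_i$, which yields $(\pi_i)_\#(\Psi\times\Psi)_\#\mu=\Psi_\#(\pi_i)_\#\mu$ for $i=1,2$. The reverse inclusion --- lifting a coupling of the pushforwards to a coupling of $\alpha$ and $\beta$ --- is the main obstacle and is exactly the content of \Cref{lm:pushforward_coupling}: although that lemma is stated for real-valued maps, the disintegration and gluing argument behind it applies verbatim to Borel maps into Polish spaces, and in particular to $\Psi:X\to\tilde X$.

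With the surjectivity in hand, equality of the two quantities follows by change of variables. For $p<\infty$, for each $\mu\in\mathcal{C}(\alpha,\beta)$ I would write $\int_{X\times X}d_X^p\,d\mu=\int_{X\times X}\bigl(\tilde d_X^p\circ(\Psi\times\Psi)\bigr)\,d\mu=\int_{\tilde X\times\tilde X}\tilde d_X^p\,d\bigl((\Psi\times\Psi)_\#\mu\bigr)$, and taking the infimum over $\mu$ while using that $(\Psi\times\Psi)_\#$ ranges over all of $\mathcal{C}(\Psi_\#\alpha,\Psi_\#\beta)$ gives \Cref{eq:Wasserstein pseudo metric vs induced Wasserstein}. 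For $p=\infty$ I would use that, for a continuous map between compact spaces, $\supp{(\Psi\times\Psi)_\#\mu}=\overline{(\Psi\times\Psi)(\supp{\mu})}$; combined with the continuity of $\tilde d_X$ this shows $\sup_{(x,y)\in\supp{\mu}}d_X(x,y)=\sup_{\supp{(\Psi\times\Psi)_\#\mu}}\tilde d_X$, and again taking infima yields the claim. Finally, attainment follows by applying the standard existence of optimal plans on the compact metric space $\tilde X$ to obtain a minimizer $\nu\in\mathcal{C}(\Psi_\#\alpha,\Psi_\#\beta)$ and lifting it through the surjection to a minimizing $\mu\in\mathcal{C}(\alpha,\beta)$.
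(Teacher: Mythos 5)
Your proposal is correct and follows essentially the same route as the paper: both define $\theta=(\Psi\times\Psi)_\#$ from $\mathcal{C}(\alpha,\beta)$ onto $\mathcal{C}(\Psi_\#\alpha,\Psi_\#\beta)$, use the identity $d_X=\tilde d_X\circ(\Psi\times\Psi)$ to transfer the cost, and obtain attainment by lifting an optimal plan on the compact metric space $(\tilde X,\tilde d_X)$ back through the surjection. If anything, you are more explicit than the paper (which simply asserts that $\theta$ is well defined and surjective and that $p=\infty$ is analogous), in particular in justifying surjectivity via the disintegration argument behind \Cref{lm:pushforward_coupling} and in handling the support identity for the $\infty$-case.
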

\begin{proof}
In the course of this proof we focus on the case $p<\infty$ and remark that the case $p=\infty$ follows by similar arguments. The quotient map allows us to define the map $\theta:\mathcal{C}(\alpha,\beta)\to\mathcal{C}(\Psi_\#{\alpha},\Psi_\#{\beta})$ via $\mu\mapsto(\Psi\times\Psi)_\#\mu$. It is easy to see that $\theta$ is well defined and surjective. Furthermore, it holds by construction that
\[\int_{X\times X}{d}^p_X(x,y)\,{\mu}(dx\times dy)=\int_{\tilde{X}\times \tilde{X}} \tilde{d}^p_X(x,y)\,\theta({\mu})(dx\times dy)\]
for all $\mu\in\mathcal{C}(\alpha,\beta)$. Hence, \Cref{eq:Wasserstein pseudo metric vs induced Wasserstein} follows.

We come to the second part of the claim. By \cite[Sec.4]{villani2008optimal} there exists an optimal coupling $\tilde{\mu}^*\in \mathcal{C}(\Psi_\#{\alpha},\Psi_\#{\beta})$ such that
\[d_{\mathrm{W},p}^{(\tilde{X},\tilde{d}_X)}(\Psi_\#{\alpha},\Psi_\#{\beta})=\left(\int_{\tilde{X}\times \tilde{X}} \tilde{d}^p_X(x,y)\,\tilde{\mu}^*(dx\times dy)\right)^\frac{1}{p}.\]
In consequence, we find using our previous results that for any $\mu^*\in \theta^{-1}(\tilde{\mu}^*)$ it holds
\begin{align*}
    d_{\mathrm{W},p}^{(\tilde{X},\tilde{d}_X)}(\Psi_\#{\alpha},\Psi_\#{\beta})=&\left(\int_{\tilde{X}\times \tilde{X}} \tilde{d}^p_X(x,y)\,\tilde{\mu}^*(dx\times dy)\right)^\frac{1}{p}\\=&\left(\int_{{X}\times {X}} {d}^p_X(x,y)\,\mu^*(dx\times dy)\right)^\frac{1}{p}=\pseudoWasser{p}^{(X,d_X)}(\alpha,\beta).\end{align*}
This yields the claim.
\end{proof}

\subsubsection{Regularity of the cost functionals of \texorpdfstring{$\ugw{p}$}{the ultrametric Gromov-Wasserstein distance} and \texorpdfstring{$\usturm{p}$}{Sturm's ultrametric Gromov-Wasserstein distance}} In the remainder of this section, we collect various technical results required to demonstrate the existence of optimizers in the definitions of $\usturm{p}$ (see \Cref{eq:ultra Sturm}) and $\ugw{p}$ (see \Cref{eq:def uGW}). 
\begin{lemma}\label{lm:measure coupling compact}
    Let $\X=\ummspaceX$ and $\Y=\ummspaceY$ be {compact} ultrametric measure spaces. Then, $\mu\in\mathcal{C}(\muX,\muY)\subseteq \mathcal{P}(X\times Y,\max(u_X,u_Y))$ is compact with respect to weak convergence.
\end{lemma}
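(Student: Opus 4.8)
The plan is to show that $\mathcal{C}(\muX,\muY)$ is a weakly compact subset of $\mathcal{P}(X\times Y)$, where $X\times Y$ is regarded as a compact metric space under $\max(u_X,u_Y)$. Since $X$ and $Y$ are compact ultrametric spaces, their product $X\times Y$ equipped with $\max(u_X,u_Y)$ is again a compact metric space (a finite product of compact metric spaces is compact, and $\max(u_X,u_Y)$ metrizes the product topology). By Prokhorov's theorem, a subset of $\mathcal{P}(X\times Y)$ is relatively weakly compact if and only if it is tight, and on a compact space every family of probability measures is automatically tight. Hence $\mathcal{C}(\muX,\muY)$ is relatively compact, and it remains only to verify that it is weakly closed.

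First I would recall that $\mathcal{C}(\muX,\muY)$ is nonempty, since the product measure $\muX\otimes\muY$ is always a coupling. For closedness, I would take a sequence $\{\mu_n\}_{n\in\N}\subseteq\mathcal{C}(\muX,\muY)$ converging weakly to some $\mu\in\mathcal{P}(X\times Y)$ and check that $\mu$ still has the prescribed marginals. The clean way to do this is via test functions: for any continuous $f\in C(X)$, the function $(x,y)\mapsto f(x)$ is continuous and bounded on $X\times Y$, so weak convergence gives
\[
\int_{X\times Y} f(x)\,\mu(dx\times dy)=\lim_{n\to\infty}\int_{X\times Y} f(x)\,\mu_n(dx\times dy)=\lim_{n\to\infty}\int_X f\,d\muX=\int_X f\,d\muX,
\]
which shows $(\pi_X)_\#\mu=\muX$; the symmetric computation with $g\in C(Y)$ shows $(\pi_Y)_\#\mu=\muY$. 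Thus $\mu\in\mathcal{C}(\muX,\muY)$, so the set is weakly closed.

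Combining relative compactness with closedness yields that $\mathcal{C}(\muX,\muY)$ is weakly compact, which is the assertion. I expect no serious obstacle here: the main points are purely measure-theoretic and standard, the only thing worth stating carefully being that $\max(u_X,u_Y)$ does indeed induce a compact metric topology on $X\times Y$ (so that Prokhorov applies) and that the marginal constraints are preserved under weak limits. Both are routine, so this lemma is essentially a packaging of Prokhorov's theorem together with the continuity of projections.
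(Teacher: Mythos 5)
Your proof is correct and is essentially the standard Prokhorov-plus-closedness argument; the paper itself does not spell this out but simply cites \citet[Lemma 2.2]{chowdhury2019gromov}, which establishes exactly this fact by the same reasoning. Nothing further is needed.
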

\begin{proof}
    The proof follows directly from {\citet[Lemma 2.2]{chowdhury2019gromov}}.
\end{proof}
\begin{lemma}\label{lm:pre-compact-metric coupling}
    Let $\X,\Y\in\mathcal{U}^w$. Let $D_1\subseteq\mathcal{D}^\mathrm{ult}(\uX,\uY)$ be a non-empty subset satisfying the following: there exist $(x_0,y_0)\in X\times Y$ and $C>0$ such that $u(x_0,y_0)\leq C$ for all $u\in D_1$. Then, $D_1$ is pre-compact with respect to uniform convergence.
\end{lemma}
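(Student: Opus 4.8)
The plan is to reduce the statement to the Arzel\`a--Ascoli theorem on the compact metric space $(X\times Y,\max(\uX,\uY))$. First I would observe that any $u\in\mathcal{D}^\mathrm{ult}(\uX,\uY)$ is completely determined by its restriction $u|_{X\times Y}$: the values on $X\times X$ and $Y\times Y$ are prescribed to be $\uX$ and $\uY$, and $u$ is symmetric. Hence it suffices to show that the family $F\coloneqq\{u|_{X\times Y}\,|\,u\in D_1\}$ is relatively compact in $C(X\times Y)$ for uniform convergence; uniform convergence of the restrictions immediately upgrades to uniform convergence of the full functions on $(X\sqcup Y)\times(X\sqcup Y)$, since the blocks $X\times X$ and $Y\times Y$ are constant in $u$ and $Y\times X$ is the symmetric image of $X\times Y$. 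As $X$ and $Y$ are compact, $X\times Y$ is a compact metric space, so Arzel\`a--Ascoli reduces everything to \emph{uniform boundedness} and \emph{equicontinuity} of $F$.

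For uniform boundedness I would pass through the reference point $(x_0,y_0)$ supplied by the hypothesis. For any $u\in D_1$ and $(x,y)\in X\times Y$, two applications of the strong triangle inequality give
\[u(x,y)\leq\max\big(\uX(x,x_0),u(x_0,y_0),\uY(y_0,y)\big)\leq M,\]
where $M\coloneqq\max(\diam{X},\diam{Y},C)$ is finite because compact ultrametric spaces have finite diameter. Thus every element of $F$ is bounded by $M$ (and so is $u$ on all of $(X\sqcup Y)^2$).

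The core of the argument, and the step I expect to be the main obstacle, is equicontinuity, for which I would exploit the rigidity of ultrametrics. The key elementary fact is that if $\uX(x,x')<u(x',y')$ and $\uY(y,y')<u(x',y')$, then $u(x,y)=u(x',y')$: both inequalities $u(x,y)\leq u(x',y')$ and $u(x',y')\leq u(x,y)$ follow from the strong triangle inequality, since the two ``small'' sides cannot realize the relevant maximum. Given $\eps>0$, set $\delta\coloneqq\eps$ and suppose $\uX(x,x')<\delta$ and $\uY(y,y')<\delta$. If $u(x',y')\geq\delta$, the rigidity fact forces $u(x,y)=u(x',y')$; if $u(x',y')<\delta$, then $u(x,y)\leq\max(\uX(x,x'),u(x',y'),\uY(y,y'))<\delta$, so both values lie in $[0,\delta)$. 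In either case $|u(x,y)-u(x',y')|<\eps$, uniformly over $u\in D_1$, which is precisely equicontinuity of $F$ with respect to the product metric $\max(\uX,\uY)$ on $X\times Y$.

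With uniform boundedness and equicontinuity established, Arzel\`a--Ascoli yields that $F$ is relatively compact in $C(X\times Y)$, so every sequence in $D_1$ admits a uniformly convergent subsequence. It then remains to check that the limit again lies in $\mathcal{D}^\mathrm{ult}(\uX,\uY)$: symmetry, the relation $u(z,z)=0$, and the strong triangle inequality all pass to pointwise (hence uniform) limits, and the prescribed restrictions $u|_{X\times X}=\uX$ and $u|_{Y\times Y}=\uY$ are preserved. Therefore $D_1$ is precompact with respect to uniform convergence. The only genuinely ultrametric input is the rigidity fact driving the equicontinuity estimate; the remainder is standard Arzel\`a--Ascoli machinery.
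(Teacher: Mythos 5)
Your proof is correct, and its overall skeleton coincides with the paper's: restrict to $X\times Y$, verify uniform boundedness and equicontinuity for the product metric $\max(\uX,\uY)$, invoke Arzel\`a--Ascoli, and glue the limit back into an element of $\mathcal{D}^\mathrm{ult}(\uX,\uY)$. The one place where you genuinely diverge is the equicontinuity step, which you single out as the main obstacle and attack via an ultrametric rigidity lemma (if $\uX(x,x')$ and $\uY(y,y')$ are both strictly below $u(x',y')$, then $u(x,y)=u(x',y')$). That argument is valid, but the paper dispatches this step in one line using only the \emph{ordinary} triangle inequality for the pseudometric $u$ on $X\sqcup Y$:
\[|u(x,y)-u(x',y')|\leq \uX(x,x')+\uY(y,y')\leq 2\max(\uX,\uY)\bigl((x,y),(x',y')\bigr),\]
which gives a uniform Lipschitz bound and hence equicontinuity without any ultrametric input (the paper likewise uses this additive estimate for the uniform bound, obtaining the constant $2\max(\diam{X},\diam{Y})+C$ in place of your $\max(\diam{X},\diam{Y},C)$). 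So your ``rigidity'' lemma, while correct and in fact yielding sharper information (the functions are locally constant above scale $\delta$), is not needed here; the statement holds for metric couplings of arbitrary compact metric spaces. Your closing check that the uniform limit remains a pseudo-ultrametric with the prescribed restrictions matches the paper's construction of the limit $u$ from the limit $v$ of the restrictions.
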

\begin{proof}
    Let $\{u_n\}_{n\in\mathbb N}\subseteq D_1$ be a sequence. Note that $X\times Y\subseteq X\sqcup Y\times X\sqcup Y$. Let $v_n\coloneqq u_n|_{X\times Y}$. For any $n\in\mathbb N$ and any $(x,y),(x',y')\in X\times Y$, we have that
    \begin{align*}
        |u_n(x,y)-u_n(x',y')|\leq u_X(x,x')+u_Y(y,y')\leq 2\max\left(u_X,u_Y\right)\left((x,y),(x',y')\right).
    \end{align*}
    This means that $\{v_n\}_{n\in\mathbb N}$ is equicontinuous with respect to the ultrametric $\max\{\uX,\uY\}$ on $X\times Y$. Now, since $u_n(x_0,y_0)\leq C$, we have that for any $(x,y)\in X\times Y$,
    \[ u_n(x,y)\leq 2\max\left(u_X,u_Y\right)\left((x,y),(x_0,y_0)\right)+u_n(x_0,y_0)\leq 2\max(\diam X,\diam Y)+C.\]
    Consequently, $\{v_n\}_{n\in\mathbb N}$ is uniformly bounded. By the Arz\'ela-Ascoli theorem (\cite[Theorem 7 on page 61]{kolmogorov1957elements}), we have that each subsequence of $\{v_n\}_{n\in\mathbb N}$ has a uniformly convergent subsequence. Hence, we can assume without loss of generality that the sequence $\{v_n\}_{n\in\mathbb N}$ converges to $v:X\times Y\rightarrow\mathbb R_{\geq 0}$.
    
    Now, we define $u:X\sqcup Y\times X\sqcup Y\rightarrow\mathbb R_{\geq 0}$ as follows:
    \begin{enumerate}
        \item $u|_{X\times X}\coloneqq u_X$ and $u|_{Y\times Y}\coloneqq u_Y$;
        \item $u|_{X\times Y}\coloneqq v$;
        \item for $(y,x)\in Y\times X$, we let $u(y,x)\coloneqq u(x,y)$.
    \end{enumerate}
    It is easy to verify that $u\in \mathcal{D}^\mathrm{ult}(u_X,u_Y)$ and that $u$ is a cluster point of the sequence $\{u_n\}_{n\in\mathbb N}$. Therefore, $D_1$ is pre-compact.
    \end{proof}
    
    \begin{lemma}\label{lm:metric coupling bounded integral}
        Let $\X=\ummspaceX$ and $\Y=\ummspaceY$ be {compact} ultrametric measure spaces. Let $\{\mu_n\}_{n\in\mathbb N}\subseteq\mathcal{C}(\muX,\muY)$ be a sequence weakly converging to $\mu\in \mathcal{C}(\muX,\muY)$. Let $\{u_n\}_{n\in\mathbb N}\subseteq\mathcal{D}^\mathrm{ult}(\uX,\uY)$. Suppose that there exist a non-decreasing sequence $\{p_n\}_{n\in\mathbb N}\subseteq[1,\infty)$ and $C>0$ such that
        \[ \left(\int_{X\times Y}(u_n(x,y))^{p_n}\mu_n(dx\times dy)\right)^\frac{1}{p_n}\leq C\]
        for all $n\in\mathbb N$. Then, $\{u_n\}_{n\in\mathbb N}$ uniformly converges to some $u\in \mathcal{D}^\mathrm{ult}(\uX,\uY)$ (up to taking a subsequence).
    \end{lemma}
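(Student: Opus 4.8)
The plan is to reduce the statement to the pre-compactness criterion of \Cref{lm:pre-compact-metric coupling}. That lemma asserts that any subfamily of $\mathcal{D}^\mathrm{ult}(\uX,\uY)$ whose members are uniformly bounded at a single fixed point $(x_0,y_0)\in X\times Y$ is pre-compact with respect to uniform convergence, and moreover that every cluster point lies in $\mathcal{D}^\mathrm{ult}(\uX,\uY)$. Hence the entire content of the proof is to locate such a point and to establish $\sup_n u_n(x_0,y_0)<\infty$ for the family $D_1\coloneqq\{u_n\,|\,n\in\mathbb N\}$; once this is done, \Cref{lm:pre-compact-metric coupling} immediately supplies a uniformly convergent subsequence of $\{u_n\}_{n\in\mathbb N}$ whose limit $u$ already belongs to $\mathcal{D}^\mathrm{ult}(\uX,\uY)$, which is the assertion.

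To find the point, I would fix any $(x_0,y_0)\in\supp{\mu}$, which is non-empty since $\mu\in\mathcal{C}(\muX,\muY)$. Recall from (the setting of) \Cref{lm:measure coupling compact} that $X\times Y$ carries the ultrametric $\max(\uX,\uY)$, whose open ball of radius $r$ about $(x_0,y_0)$ factors as the product $B\coloneqq B_r^\circ(x_0)\times B_r^\circ(y_0)$. Because $(x_0,y_0)\in\supp{\mu}$, this open set satisfies $c\coloneqq\mu(B)>0$, and the portmanteau theorem applied to the weak convergence $\mu_n\to\mu$ gives $\liminf_n\mu_n(B)\geq c$, so that $\mu_n(B)>c/2$ for all $n$ sufficiently large (see e.g. \citet[Thm. 2.1]{billingsleyConvergenceProbabilityMeasures2013}).

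The key estimate then comes from Markov's inequality applied to the hypothesis: for any $t>0$,
\[\mu_n(\{u_n>t\})\leq t^{-p_n}\int_{X\times Y}u_n^{p_n}\,d\mu_n\leq (C/t)^{p_n}.\]
Since $\{p_n\}_{n\in\mathbb N}$ is non-decreasing with $p_n\geq p_1\geq 1$, for $t>C$ we have $(C/t)^{p_n}\leq (C/t)^{p_1}$, which I can make smaller than $c/2$ by choosing $t$ large. For such a $t$ and all large $n$, the inequalities $\mu_n(B)>c/2>\mu_n(\{u_n>t\})$ force $\mu_n\big(B\cap\{u_n\leq t\}\big)>0$, so there exists $(x,y)\in B$ with $u_n(x,y)\leq t$. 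The strong triangle inequality for the pseudo-ultrametric $u_n$, together with $\uX(x_0,x)<r$ and $\uY(y,y_0)<r$, then yields
\[u_n(x_0,y_0)\leq\max\big(\uX(x_0,x),\,u_n(x,y),\,\uY(y,y_0)\big)\leq\max(r,t).\]
This bounds $u_n(x_0,y_0)$ uniformly over all large $n$, and absorbing the finitely many remaining indices into a larger constant gives $\sup_n u_n(x_0,y_0)<\infty$, exactly the hypothesis of \Cref{lm:pre-compact-metric coupling}.

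I expect the main obstacle to be precisely this localization: the integral hypothesis only controls $u_n$ in an averaged, $\mu_n$-weighted sense, and $u_n$ could a priori be large on sets of small $\mu_n$-measure, so one must pair the mass lower bound near a support point of the limit (via portmanteau) with the Markov tail bound to guarantee that a controlled value of $u_n$ is actually attained inside a fixed neighborhood. The ultrametric (strong triangle) inequality is what transports such a controlled value at a nearby point to a controlled value at the fixed point $(x_0,y_0)$; this is the one place where the ultrametric structure, rather than a mere metric, is genuinely exploited.
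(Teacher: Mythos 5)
Your proposal is correct and follows essentially the same route as the paper: both reduce to \Cref{lm:pre-compact-metric coupling} by producing a uniform bound on $u_n(x_0,y_0)$ at a point $(x_0,y_0)\in\supp{\mu}$, using the weak convergence $\mu_n\to\mu$ to guarantee that $\mu_n$ puts a definite amount of mass near that point. The only difference lies in the estimate itself — you combine Markov's inequality with the monotonicity of $p_n$ to locate a nearby point where $u_n\leq t$ and then transfer the bound via the triangle inequality, whereas the paper first applies Jensen's inequality to pass to the $L^1$ bound and then lower-bounds $\int_{B}u_n\,d\mu_n$ by $(u_n(x_0,y_0)-2\eps)\,\mu_n(B)$ directly; both arguments are valid and yield the same conclusion.
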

    \begin{proof}
    The following argument adapts the proof of Lemma 3.3 in \cite{sturm2006geometry} to the current setting. 
        For any $(x_0,y_0)\in \supp\mu$, there exist $\eps,\delta>0$ and $N\in\N$ such that for all $n\geq N$
		\begin{align*}
		    C&\geq \left(\int_{X\times Y}(u_n(x,y))^{p_n}\mu_n(dx\times dy)\right)^\frac{1}{p_n}\geq \int_{X\times Y}u_n(x,y)\mu_n(dx\times dy)\\
		    &\geq \int_{B_\eps^X(x_0)\times B_\eps^Y(y_0)}u_n(x,y)\mu_n(dx\times dy)\geq \int_{B_\eps^X(x_0)\times B_\eps^Y(y_0)}(u_n(x_0,y_0)-2\eps)\mu_n(dx\times dy)\\
		    &\geq (u_n(x_0,y_0)-2\eps)\left(\mu\lc B_\eps^X(x_0)\times B_\eps^Y(y_0)\rc-\delta\right).
		\end{align*}        
		Therefore, $\{u_n(x_0,y_0)\}_{n\geq N}$ is uniformly bounded. By \Cref{lm:pre-compact-metric coupling}, we have that $\{u_n\}_{n\in\mathbb N}$ has a uniformly convergent subsequence.
    \end{proof}

	\begin{lemma}\label{lm:continuity-delta-infty}
    Let $X,Y$ be ultrametric spaces, then $\Lambda_\infty(\uX,\uY):X\times Y\times X\times Y\rightarrow\Rp$ is continuous with respect to the product topology (induced by $\max(\uX,\uY, \uX,\uY)$).
\end{lemma}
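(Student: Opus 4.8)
The goal is to show that the map $\Lambda_\infty(\uX,\uY):X\times Y\times X\times Y\rightarrow\Rp$, sending $(x,y,x',y')$ to $\Lambda_\infty(\uX(x,x'),\uY(y,y'))$, is continuous when the domain carries the product topology induced by the ultrametric $\max(\uX,\uY,\uX,\uY)$ on $X\times Y\times X\times Y$. The plan is to fix a point $(x_0,y_0,x_0',y_0')$ and verify continuity there by a careful case analysis on whether $\uX(x_0,x_0')$ and $\uY(y_0,y_0')$ are equal, exploiting the strong (ultrametric) triangle inequality, which forces $\uX$ and $\uY$ to be locally constant away from the diagonal.

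First I would record two elementary facts about any ultrametric $u$. Fact one: $u$ is $1$-Lipschitz in each slot, since $|u(x,x')-u(x,x'')|\le u(x',x'')$; hence both $\uX(\cdot,\cdot)$ and $\uY(\cdot,\cdot)$ are continuous real-valued functions on $X\times X$ and $Y\times Y$ respectively. Fact two (local constancy): if $u(a,b)=r>0$, then for any $a',b'$ with $u(a,a')<r$ and $u(b,b')<r$ the strong triangle inequality gives $u(a',b')=r$. I would state both as a short preliminary observation, since the second is what makes $\Lambda_\infty$ behave well even though $\Lambda_\infty(s,t)=\max(s,t)\cdot\mathbbm{1}_{s\neq t}$ is itself \emph{discontinuous} as a function of $(s,t)$ on the diagonal $s=t$.

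Next I would split into cases at the fixed point. Write $s_0\coloneqq\uX(x_0,x_0')$ and $t_0\coloneqq\uY(y_0,y_0')$. In the case $s_0\neq t_0$, say $s_0>t_0\ge 0$, then $s_0>0$ and $\Lambda_\infty(s_0,t_0)=s_0$; by Fact two applied to $\uX$ on a ball of radius $s_0$ and by $1$-Lipschitzness of $\uY$, for all nearby $(x,y,x',y')$ we have $\uX(x,x')=s_0$ while $\uY(y,y')$ stays strictly below $s_0$, so $\Lambda_\infty(\uX(x,x'),\uY(y,y'))=\uX(x,x')=s_0$ is \emph{constant} on a neighborhood, giving continuity trivially. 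The symmetric subcase $t_0>s_0$ is identical. The remaining, and genuinely delicate, case is $s_0=t_0$, where $\Lambda_\infty(s_0,t_0)=0$; here I must show that $\Lambda_\infty(\uX(x,x'),\uY(y,y'))\to 0$ as $(x,y,x',y')\to(x_0,y_0,x_0',y_0')$, i.e. that the two ultrametric values stay close. By $1$-Lipschitzness both $\uX(x,x')$ and $\uY(y,y')$ lie within a small window around $s_0=t_0$, but since $\Lambda_\infty(s,t)=\max(s,t)$ when $s\neq t$ and $s,t$ are both near $s_0$, the value $\Lambda_\infty$ could be as large as $\approx s_0$ — so continuity at $0$ fails unless $s_0=0$.

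This last observation is the crux and the main obstacle, and I expect the resolution to be that one must additionally invoke local constancy for the \emph{equal} case when $s_0=t_0>0$: if $s_0=t_0>0$, then by Fact two applied to \emph{both} $\uX$ and $\uY$ on balls of radius $s_0$, we get $\uX(x,x')=s_0$ and $\uY(y,y')=t_0=s_0$ simultaneously for all sufficiently close points, so $\Lambda_\infty(\uX(x,x'),\uY(y,y'))=\Lambda_\infty(s_0,s_0)=0$ is again locally constant, and continuity holds. The only truly limiting situation is $s_0=t_0=0$, i.e. $x=x_0=x_0'$ and $y=y_0=y_0'$ coincide on the diagonal; there $1$-Lipschitzness alone gives $\uX(x,x')\le\max(\uX,\uY,\uX,\uY)((x,y,x',y'),(x_0,y_0,x_0',y_0'))$ and likewise for $\uY$, so both values tend to $0$ and hence $\Lambda_\infty\le\max(\uX(x,x'),\uY(y,y'))\to 0$. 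Assembling these cases — locally constant whenever $s_0\neq t_0$ or $s_0=t_0>0$, and a direct $\eps$-estimate via Lipschitzness when $s_0=t_0=0$ — establishes continuity at every point and completes the argument.
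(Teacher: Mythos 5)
Your proof is correct and follows essentially the same route as the paper's: a pointwise $\eps$--$\delta$ case analysis resting on the local constancy of ultrametrics away from the diagonal (your ``Fact two'') plus the trivial bound $\Lambda_\infty(s,t)\le\max(s,t)$ near the diagonal. The only difference is cosmetic — you organize the cases by whether $\uX(x_0,x_0')$ and $\uY(y_0,y_0')$ coincide, while the paper splits on whether $x=x'$ and $y=y'$ — but the two decompositions cover the same situations with the same estimates.
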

\begin{proof}
    Fix $(x,y,x',y')\in X\times Y\times X\times Y$ and $\eps>0$. Choose $0<\delta<\eps$ such that $\delta<u_X(x,x')$ if $x\neq x'$ and $\delta<u_Y(y,y')$ if $y\neq y'$. Then, consider any point $(x_1,y_1,x_1',y_1')\in X\times Y\times X\times Y$ such that $\uX(x,x_1),\uY(y,y_1),\uX(x',x_1'),\uY(y',y_1')\leq\delta$. For $\uX(x_1,x_1')$, we have the following two situations:
    \begin{enumerate}
        \item $x=x'$: $\uX(x_1,x_1')\leq\max(\uX(x_1,x),\uX(x,x_1'))\leq\delta<\eps$;
        \item $x\neq x'$: $\uX(x_1,x_1')\leq\max(\uX(x_1,x),\uX(x,x'),\uX(x',x_1'))=\uX(x,x')$. Similarly, $\uX(x,x')\leq\uX(x_1,x_1')$ and thus $\uX(x,x')=\uX(x_1,x_1')$.
    \end{enumerate}
    Similar result holds for $\uY(y_1,y_1')$. This leads to four cases for $\Lambda_\infty(\uX(x_1,x_1'),\uY(y_1,y_1'))$:
    \begin{enumerate}
        \item $x=x',y=y'$: In this case we have $\uX(x_1,x_1'),\uY(y_1,y_1')< \eps$. Then, \begin{align*}
            |\Lambda_\infty(\uX(x_1,x_1'),\uY(y_1,y_1'))-\Lambda_\infty(\uX(x,x'),\uY(y,y'))|=\Lambda_\infty(\uX(x_1,x_1'),\uY(y_1,y_1'))\\\leq \eps; \end{align*}
        \item $x=x',y\neq y'$: Now $\uX(x_1,x_1')<\eps$ and $\uY(y_1,y_1')=\uY(y,y')$. If $\uY(y,y')\geq\eps>\uX(x_1,x_1')$, then
        \[|\Lambda_\infty(\uX(x_1,x_1'),\uY(y_1,y_1'))-\Lambda_\infty(\uX(x,x'),\uY(y,y'))|=|\uY(y,y')-\uY(y,y')|=0.\]
        Otherwise $\uY(y,y')<\eps$, which implies that $\Lambda_\infty(\uX(x_1,x_1'),\uY(y_1,y_1'))\leq \eps$ and $\Lambda_\infty(\uX(x,x'),\uY(y,y'))=\uY(y,y')\leq\eps$. Therefore, 
        \[|\Lambda_\infty(\uX(x_1,x_1'),\uY(y_1,y_1'))-\Lambda_\infty(\uX(x,x'),\uY(y,y'))|\leq \eps;\]
        \item $x\neq x',y=y'$: Similar with (2) we have 
        \[|\Lambda_\infty(\uX(x_1,x_1'),\uY(y_1,y_1'))-\Lambda_\infty(\uX(x,x'),\uY(y,y'))|\leq \eps;\]
        \item $x\neq x',y\neq y'$: Now $\uX(x_1,x_1')=\uX(x,x')$ and $\uY(y_1,y_1')=\uY(y,y')$. Therefore, 
        \[|\Lambda_\infty(\uX(x_1,x_1'),\uY(y_1,y_1'))-\Lambda_\infty(\uX(x,x'),\uY(y,y'))|=0.\]
    \end{enumerate}
    In conclusion, whenever $\uX(x,x_1),\uY(y,y_1),\uX(x',x_1'),\uY(y',y_1')\leq\delta$ we have that 
    \[|\Lambda_\infty(\uX(x_1,x_1'),\uY(y_1,y_1'))-\Lambda_\infty(\uX(x,x'),\uY(y,y'))|\leq \eps.\]
    Therefore, $\Lambda_\infty(\uX,\uY)$ is continuous with respect to the metric $\max(\uX,\uY, \uX,\uY)$.
\end{proof}
\subsubsection{\texorpdfstring{$\ugw{p}$}{the ultrametric Gromov-Wasserstein distance} and the one point space}\label{sec:ugw and one point space}
It is possible to explicitly write down $\ugw{p}$, $1\leq p\leq \infty$, in some simple settings. In the following, we derive an explicit formulation of $\ugw{p}$, $1\leq p\leq \infty$, between an arbitrary ultrametric measure space $\X$ and the one point ultrametric measure space $\ast$. For this purpose, we need to introduce some notation. Let $\X=(\X,\dX,\muX)$ be a ultrametric measure space. Let its $p$-diameter (see e.g.,  \cite{memoli2011gromov}) for $1\leq p<\infty$ be defined as 
\[\mathrm{diam}_p(\X)\coloneqq \left(\iint_{X\times X} \big(\dX(x,x')\big)^p\muX(dx)\,\muX(dx')\right)^{1/p}\]
	and for $p=\infty$ as \[\mathrm{diam}_\infty(\X)\coloneqq\sup_{(x,x')\in\supp{\muX}}\dX(x,x').\]	Then, one can show the subsequent proposition.
	\begin{proposition}\label{prop:ugw and one point space}
		Let  $\ast \in \mathcal{U}^w$ be the one-point space. Then, it holds for any $1\leq p\leq \infty$ that 
		\[\ugw{p}(\X,\ast) = \mathrm{diam}_p(\X).\]
	\end{proposition}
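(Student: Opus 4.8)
The plan is to compute $\ugw{p}(\X,\ast)$ directly from the definition in \Cref{eq:def uGW}, exploiting the fact that there is essentially only one coupling available. First I would observe that since $\ast$ is the one-point space, say $\ast=\{o\}$ with $\mu_\ast=\delta_o$, the set of couplings $\mathcal{C}(\muX,\mu_\ast)$ is a singleton: the only probability measure on $X\times\{o\}$ with the prescribed marginals is $\mu=\muX\otimes\delta_o$. Hence the infimum in $\ugw{p}(\X,\ast)=\inf_{\mu\in\mathcal{C}(\muX,\mu_\ast)}\disu_p(\mu)$ collapses to evaluating $\disu_p$ at this single coupling, and no optimization is actually needed.

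Next I would unwind the $p$-ultra-distortion of $\mu=\muX\otimes\delta_o$. Writing out \Cref{eq:distortion ult}, the integral is over $(x,o,x',o)$ ranging over $X\times\{o\}\times X\times\{o\}$, and the integrand is $\big(\Lambda_\infty(\uX(x,x'),u_\ast(o,o))\big)^p$. Since $u_\ast(o,o)=0$, the definition of $\Lambda_\infty$ gives $\Lambda_\infty(\uX(x,x'),0)=\uX(x,x')$ whenever $\uX(x,x')\neq 0$, and $\Lambda_\infty(0,0)=0$; in both cases this equals $\uX(x,x')$. Therefore the integrand is exactly $\big(\uX(x,x')\big)^p$, and the coupling factors so that $\mu(dx\times dy)\mu(dx'\times dy')$ becomes $\muX(dx)\muX(dx')$. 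This yields
\[
\disu_p(\mu)=\left(\iint_{X\times X}\big(\uX(x,x')\big)^p\,\muX(dx)\,\muX(dx')\right)^{1/p}=\mathrm{diam}_p(\X),
\]
which is precisely the definition of the $p$-diameter.

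For the case $p=\infty$, I would argue identically using $\disu_\infty$: the supremum runs over $(x,x')$ with $(x,o),(x',o)\in\supp{\mu}$, and since $\supp{\mu}=\supp{\muX}\times\{o\}$, this is the supremum of $\Lambda_\infty(\uX(x,x'),0)=\uX(x,x')$ over $(x,x')\in\supp{\muX}$, which is exactly $\mathrm{diam}_\infty(\X)$. The only subtlety worth stating explicitly is the reduction $\Lambda_\infty(a,0)=a$ for all $a\geq 0$, which follows directly from the definition $\Lambda_\infty(a,b)=\max(a,b)$ for $a\neq b$ together with $\Lambda_\infty(0,0)=0$. There is no genuine obstacle here: the proof is essentially a matter of recognizing that the coupling is forced and that the ultra-distortion functional reduces to the $p$-diameter. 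The main point to be careful about is simply to handle the definition of $\Lambda_\infty$ cleanly in the degenerate second argument, and to note that this computation is uniform across $1\le p\le\infty$.
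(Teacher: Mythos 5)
Your proof is correct and follows essentially the same route as the paper: both identify $\muX\otimes\delta_o$ as the unique coupling, reduce the integrand via $\Lambda_\infty(\uX(x,x'),0)=\uX(x,x')$, and recognize the resulting expression as $\mathrm{diam}_p(\X)$, with the $p=\infty$ case handled analogously. Your explicit remark on the degenerate second argument of $\Lambda_\infty$ is a harmless elaboration of a step the paper leaves implicit.
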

	\begin{proof}
		Denote by $\mu$ the unique coupling $\mu_X\otimes \delta_\ast$ between $\muX$ and $\delta_\ast$. Then, for any $p<\infty$ we have
		\begin{align*}
		\ugw{p}(\X,\ast)&=\left(\iint_{X\times \ast \times X\times \ast}\big(\Lambda_\infty(\uX(x,x'),u_\ast(y,y'))\big)^p\,\mu(dx\times dy)\,\mu(dx'\times dy')\right)^{1/p}\\
		&=\left(\iint_{X\times X} \big(\uX(x,x')\big)^p\muX(dx)\,\muX(dx')\right)^{1/p}=\mathrm{diam}_p(\X).
		\end{align*}
		The case $p=\infty$ follows by analogous arguments.
	\end{proof}

\section{Missing details from Section \ref{sec:lower bounds}}
\subsection{Proofs from Section \ref{sec:lower bounds}}\label{sec:uslb<utlb}
In the following, we state the full proofs of the results from \Cref{sec:lower bounds}.
\subsubsection{Proof of \Cref{thm:comparison with original}}\label{sec:proof of thm comparison with original}
    	We start by proving the first statement. To this end, we observe that for any point $x$ in an ultrametric space $X$, there always exists a point $x'\in X$ such that $\uX(x,x')=\diam{X}$ (see \cite{dordovskyi2011diameter}). By assumption $\mu_X$ is fully supported on $X$. Hence, $s_{X,\infty}\equiv\diam{X}$ is a constant function. Therefore, 
		\[\Lambda_\infty(s_{X,\infty}(x),s_{Y,\infty}(y))\equiv\Lambda_\infty(\diam{X},\diam{Y}),\quad\forall x\in X,y\in Y. \]
		This implies that $\uFLB{\infty}(\X,\Y)=\Lambda_\infty(\diam{X},\diam{Y}).$
		By Corollary 5.8 of \citet{memoli2019gromov} and Corollary \ref{coro:ugw>ugh}, we have that
		\[\ugw{\infty}(\X,\Y)\geq \ugh(X,Y)\geq\Lambda_\infty(\diam X,\diam Y)=\uFLB{\infty}(\X,\Y). \]
		
It remains to prove the second statement. The proof for $d_{\mathrm{GW},p}(\X,\Y)\geq \mathbf{TLB}_p(\X,\Y)$ in \cite[Sec. 6]{memoli2011gromov} can be used essentially without any change for showing $\ugw{p}(\X,\Y)\geq \uTLB{p}(\X,\Y)$. Hence, it only remains to show that $\uTLB{p}(\X,\Y)\geq\uSLB{p}(\X,\Y)$, i.e., the claim follows once we have established \Cref{prop:uTLB>uSLB}. 
\begin{proposition}\label{prop:uTLB>uSLB}
	Let $\X,\Y\in\mathcal{U}^w$ and let $p\in[1,\infty]$. Then,
\[ \uTLB{p}(\X,\Y)\geq\uSLB{p}(\X,\Y).\]
\end{proposition}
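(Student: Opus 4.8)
The plan is to prove $\uTLB{p}(\X,\Y)\geq\uSLB{p}(\X,\Y)$ by producing, out of any coupling $\mu\in\mathcal{C}(\muX,\muY)$, an admissible coupling $\gamma\in\mathcal{C}(\muX\otimes\muX,\muY\otimes\muY)$ whose $L^p$-cost is at most $\norm{\Omega_p^\infty}_{L^p(\mu)}$; infimizing over $\mu$ then gives the inequality. First I would fix $\mu\in\mathcal{C}(\muX,\muY)$ and, for each $(x,y)\in X\times Y$, choose a coupling $\mu_{x,y}\in\mathcal{C}(\muX,\muY)$ realizing (or $\eps$-realizing) the infimum defining $\Omega_p^\infty(x,y)$, so that $\norm{\Gamma_{X,Y}^\infty(x,y,\cdot,\cdot)}_{L^p(\mu_{x,y})}\approx\Omega_p^\infty(x,y)$. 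These fiber couplings are then glued along $\mu$: define $\gamma$ on $(X\times X)\times(Y\times Y)$ by
\[
\int f\,d\gamma=\int_{X\times Y}\left(\int_{X\times Y} f\big((x,x'),(y,y')\big)\,\mu_{x,y}(dx'\times dy')\right)\mu(dx\times dy)
\]
for bounded measurable $f$. Since the first marginal of each $\mu_{x,y}$ is $\muX$ and the second is $\muY$, while the marginals of $\mu$ are $\muX$ and $\muY$, a direct computation of marginals shows that the marginal of $\gamma$ on the $X\times X$ factor is $\muX\otimes\muX$ and on the $Y\times Y$ factor is $\muY\otimes\muY$; hence $\gamma\in\mathcal{C}(\muX\otimes\muX,\muY\otimes\muY)$.

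For $p<\infty$, unfolding the definition of $\gamma$ and applying Tonelli's theorem yields
\begin{align*}
\big\|\Lambda_\infty(\uX,\uY)\big\|_{L^p(\gamma)}^p
&=\int_{X\times Y}\left(\int_{X\times Y}\big(\Gamma_{X,Y}^\infty(x,y,x',y')\big)^p\,\mu_{x,y}(dx'\times dy')\right)\mu(dx\times dy)\\
&=\int_{X\times Y}\big\|\Gamma_{X,Y}^\infty(x,y,\cdot,\cdot)\big\|_{L^p(\mu_{x,y})}^p\,\mu(dx\times dy),
\end{align*}
which (for optimal $\mu_{x,y}$) equals $\int_{X\times Y}\Omega_p^\infty(x,y)^p\,\mu(dx\times dy)=\norm{\Omega_p^\infty}_{L^p(\mu)}^p$. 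Thus $\uSLB{p}(\X,\Y)\leq\norm{\Omega_p^\infty}_{L^p(\mu)}$, and infimizing over $\mu\in\mathcal{C}(\muX,\muY)$ concludes this case. If only $\eps$-optimal fiber couplings are available, the same chain gives a bound by $\int(\Omega_p^\infty+\eps)^p\,d\mu$, and letting $\eps\to0$ via bounded convergence (the integrand is bounded by diameters) recovers the estimate. For $p=\infty$ the same $\gamma$ works: its support lies in the closure of $\bigcup_{(x,y)\in\supp{\mu}}\{(x,y)\}\times\supp{\mu_{x,y}}$, so by the continuity of $\Lambda_\infty(\uX,\uY)$ (Lemma \ref{lm:continuity-delta-infty}) the supremum of $\Lambda_\infty(\uX,\uY)$ over $\supp{\gamma}$ is bounded by $\sup_{(x,y)\in\supp{\mu}}\Omega_\infty^\infty(x,y)$.

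The main obstacle is measurability: the gluing requires the assignment $(x,y)\mapsto\mu_{x,y}$ to be a measurable family of couplings so that $\gamma$ is well defined and Tonelli applies. I would resolve this using the compactness of $\mathcal{C}(\muX,\muY)$ under weak convergence (Lemma \ref{lm:measure coupling compact}) together with the joint continuity of $(x,y,\mu)\mapsto\norm{\Gamma_{X,Y}^\infty(x,y,\cdot,\cdot)}_{L^p(\mu)}$, which forces $\Omega_p^\infty$ to be measurable and the set-valued map of (near-)minimizers to be measurable with compact values; a standard measurable selection theorem (Kuratowski--Ryll-Nardzewski) then supplies the required family $\mu_{x,y}$. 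Alternatively, one may first establish the inequality for \emph{finite} $\X,\Y$, where the selection is trivial and $\gamma$ is a finite convex combination of product measures, and only afterwards appeal to the selection argument (or an approximation through weighted quotients) for the general compact case.
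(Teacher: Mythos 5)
Your construction is, for $p<\infty$, essentially the paper's own argument: both proofs glue a measurably selected family of fiber-optimal couplings along $\mu$ to obtain an admissible coupling for $\uSLB{p}$, and then apply Tonelli to identify its cost with $\norm{\Omega_p^\infty}_{L^p(\mu)}^p$. The only substantive difference is where the gluing happens. You work directly on $X\times Y\times X\times Y$ with fiber couplings $\mu_{x,y}\in\mathcal{C}(\muX,\muY)$, whereas the paper first pushes everything forward to the compact spectrum $S=\spec{X}\cup\spec{Y}\subseteq(\Rp,\Lambda_\infty)$ via \Cref{prop:flb-slb-w-form} and glues optimal plans $\pi^*_{xy}\in\mathcal{C}(dh_\X(x),dh_\Y(y))$ there. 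The paper's detour buys a cleaner measurable-selection step (continuity of $(x,y)\mapsto(dh_\X(x),dh_\Y(y))$ into $\mathcal{P}(S)\times\mathcal{P}(S)$ plus \citet[Corollary 5.22]{villani2008optimal} immediately supplies the selection), while your Kuratowski--Ryll-Nardzewski route on the compact set $\mathcal{C}(\muX,\muY)$ also works but obliges you to actually verify the joint continuity of $(x,y,\mu)\mapsto\norm{\Gamma^\infty_{X,Y}(x,y,\cdot,\cdot)}_{L^p(\mu)}$ rather than assert it; your marginal computation for $\gamma$ and the Tonelli step are correct as written.

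The one genuine soft spot is $p=\infty$. Your chain ends with $\sup_{\supp{\gamma}}\Lambda_\infty(\uX,\uY)\leq\sup_{(x,y)\in\supp{\mu}}\Omega_\infty^\infty(x,y)$, but $\uTLB{\infty}$ is defined through the \emph{essential} supremum $\norm{\Omega_\infty^\infty}_{L^\infty(\mu)}$, and the supremum over $\supp{\mu}$ can strictly exceed the $\mu$-essential supremum unless $\Omega_\infty^\infty$ is sufficiently regular; as an infimum of suprema it is a priori only upper semicontinuous, and continuity is not obvious here because $d_{\mathrm{W},\infty}$ is not continuous under weak convergence. The paper sidesteps this entirely by deducing the case $p=\infty$ from the finite-$p$ inequality via $\uSLB{\infty}=\lim_{p\rightarrow\infty}\uSLB{p}\leq\limsup_{p\rightarrow\infty}\uTLB{p}\leq\uTLB{\infty}$, using \citet[Proposition 3]{givens1984}. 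You should either adopt that limiting argument or restrict your fiber selection to a full-$\mu$-measure set on which $\Omega_\infty^\infty$ is bounded by its essential supremum; with that repair your proof is complete.
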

In order to prove \Cref{prop:uTLB>uSLB}, we need the following technical lemma.
\begin{lemma}\label{lemma:spectrum of X}
Let $\X=\mmspaceX\in \mathcal{U}^w$. Then, $\spec{X}\coloneqq \{\uX(x,x')\,|\, x,x'\in\X\}$ is a compact subset of $(\Rp,\Lambda_\infty)$.
\end{lemma}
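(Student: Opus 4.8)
The plan is to describe $\spec{X}$ through the finite quotients of $X$ and then invoke \Cref{lm:compact of R}, which characterizes compact subsets of $(\Rp,\Lambda_\infty)$ as exactly those that are finite or countable with $0$ as their unique cluster point (with respect to the Euclidean distance $\Lambda_1$). Two preliminary observations come for free: since $\uX(x,x)=0$ for every $x\in X$ we always have $0\in\spec{X}$, and since $X$ is compact it has finite diameter, so $\spec{X}\subseteq[0,\diam{X}]$ is bounded.

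The central step I would establish is that for every $\eps>0$ the set $\spec{X}\cap(\eps,\infty)$ is finite. To see this, fix $\eps>0$ and recall from \Cref{lm:quotient-finite} that the quotient $X_\eps$ is a finite ultrametric space, so its spectrum $\spec{X_\eps}$ is a finite set. If $\uX(x,x')>\eps$ for some $x,x'\in X$, then $x\not\sim_\eps x'$, whence $[x]_\eps\neq[x']_\eps$, and by the definition of the quotient ultrametric $u_{X_\eps}$ we obtain $u_{X_\eps}([x]_\eps,[x']_\eps)=\uX(x,x')$. Thus every element of $\spec{X}\cap(\eps,\infty)$ lies in the finite set $\spec{X_\eps}$, which gives the claim.

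From this the structure of $\spec{X}$ follows directly. Writing $\spec{X}=\{0\}\cup\bigcup_{n\in\mathbb N}\big(\spec{X}\cap(1/n,\infty)\big)$ exhibits $\spec{X}$ as a countable union of finite sets, so it is at most countable; moreover, since $\spec{X}\cap(\eps,\infty)$ is finite for every $\eps>0$, the set $\spec{X}$ has no cluster point with respect to $\Lambda_1$ in $(0,\infty)$. If $\spec{X}$ is finite we are done. Otherwise $\spec{X}$ is a bounded infinite subset of $\R$ and hence possesses a $\Lambda_1$-cluster point by Bolzano--Weierstrass; since no positive value can be such a cluster point, it must be $0$, so $0$ is the unique cluster point. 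In either case \Cref{lm:compact of R} yields that $\spec{X}$ is a compact subset of $(\Rp,\Lambda_\infty)$.

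The argument is essentially routine once the finite-quotient reduction is in place. The only point requiring a little care is the concluding case distinction, namely confirming that an infinite spectrum must have $0$ as its unique cluster point; this is exactly where the boundedness of $\spec{X}$ is combined with the absence of positive cluster points established in the central step.
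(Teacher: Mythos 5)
Your proof is correct and follows essentially the same route as the paper's: both reduce the problem to the finiteness of the quotients $X_\eps$ (via \Cref{lm:quotient-finite}), deduce that $\spec{X}$ is countable with no positive cluster point, and conclude by \Cref{lm:compact of R}. The only cosmetic difference is that the paper writes $\spec{X}=\bigcup_n\spec{X_{t_n}}$ for a sequence $t_n\searrow 0$ and rules out a positive cluster point by contradiction, whereas you phrase the same facts as the finiteness of $\spec{X}\cap(\eps,\infty)$ followed by Bolzano--Weierstrass.
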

\begin{proof}
By \Cref{lm:quotient-finite}, we have that for each $t>0$, $X_t$ is a finite set. Let $\{t_n\}_{n=1}^\infty$ be a positive sequence decreasing to $0$. Then, it is easy to see that 
\[\spec{X}=\bigcup_{n=1}^\infty\spec{X_{t_n}}.\]
Since each $\spec{X_{t_n}}$ is a finite set, $\spec{X}$ is a countable set.

Now, pick any $0\neq t\in \spec{X}$. Suppose $t$ is a cluster point in $\spec X$. Then, there exists infinitely many $s\in\spec{X}$ greater than $\frac{t}{2}$. However, this will result in $X_\frac{t}{2}$ being an infinite set, which contradicts the fact that $X_\frac{t}{2}$ is finite. Therefore, $0$ is the only possible cluster point of $\spec X$. By \Cref{lm:compact of R}, we have that $\spec X$ is compact.
\end{proof}
With the above auxiliary result available, we can demonstrate \Cref{prop:uTLB>uSLB} and hence finish the proof of \Cref{thm:comparison with original}.

\begin{proof}[Proof of \Cref{prop:uTLB>uSLB}]
We first prove the case when $p<\infty$. Let $dh_\X(x)\coloneqq\uX(x,\cdot)_\#\muX$ and let $dh_\Y(y)\coloneqq\uY(y,\cdot)_\#\muY$. Futher, define $dH_\X\coloneqq(\uX)_\#(\muX\otimes\muX)$ and $dH_\Y\coloneqq(\uY)_\#(\muY\otimes\muY)$. \Cref{lemma:spectrum of X} implies that the set $S\coloneqq\spec{X}\cup\spec{Y}$ is a compact subset of $(\Rp,\Lambda_\infty)$. It is easy to see that $\mathrm{supp}(dh_\X),\mathrm{supp}(dh_\Y),\mathrm{supp}(dH_\X),\mathrm{supp}(dH_\Y)\subseteq S\subseteq \Rp$. Now, recall that by \Cref{prop:flb-slb-w-form}
 \[\uSLB{p}(\X,\Y)=\WasserRpS{p}\left(dH_\X,dH_\Y\right)\]
 and
  \[\uTLB{p}(\X,\Y)=\left(\inf_{\pi\in\mathcal{C}(\muX,\muY)}\int_{X\times Y}\!\left(\WasserRpS{p}(dh_\X(x),dh_\Y(y))\right)^p\,\mu(dx\times dy)\right)^{1/p}.\]

Further, we observe for any $x\in X$ and $y\in Y$ that
\begin{align*}\WasserRpS{p}(dh_\X(x),dh_\Y(y))=\inf_{\pi_{xy}\in \mathcal{C}(dh_\X(x),dh_\Y(y))}\lc\int_{S\times S}\!\Lambda_\infty^p(s,t)\,\pi_{xy}(ds\times dt)\rc^\frac{1}{p}.\end{align*}
For the remainder of this proof, the metric on metric on $S\subseteq \Rp$ is always given by $\Lambda_\infty$. Additionally,  $\mathcal{P}(S)$ denotes the set of probability measures on $S$ and we equip $\mathcal{P}(S)$ with the Borel $\sigma$-field with respect to the topology induced by weak convergence. 

\begin{claim}\label{claim:measurability}
 There is a measurable choice $(x,y)\mapsto \pi^*_{xy}$ such that for each $(x,y)\in X\times Y$, $\pi^*_{x,y}$ is an optimal transport plan between $dh_\X(x)$ and $dh_\Y(y)$.
 \end{claim}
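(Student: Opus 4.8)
The plan is to obtain $(x,y)\mapsto\pi^*_{xy}$ as a measurable selection from the compact-valued correspondence of optimal couplings, via a standard measurable selection theorem for optimal transport.

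First I would assemble the topological facts. By \Cref{lemma:spectrum of X} the set $S=\spec{X}\cup\spec{Y}$ is a compact subset of $(\Rp,\Lambda_\infty)$; consequently $S\times S$ is compact metric and the spaces $\mathcal P(S)$ and $\mathcal P(S\times S)$, endowed with the topology of weak convergence, are compact metrizable, hence Polish. Moreover $\Lambda_\infty$ is continuous on $(S\times S,\Lambda_\infty)$: since every nonzero point of $(\Rp,\Lambda_\infty)$ is isolated and $0$ is the only possible cluster point of $S$ (cf. \Cref{lm:compact of R}), the only continuity that must be checked is at pairs involving $0$, where $\Lambda_\infty(a,b)=\max(a,b)$ behaves continuously. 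Being continuous and bounded, $\Lambda_\infty^p$ yields a weakly continuous cost functional $C(\pi):=\int_{S\times S}\Lambda_\infty^p\,d\pi$ on $\mathcal P(S\times S)$, and optimal plans exist for every pair of marginals.

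Second, I would check that the marginal maps $x\mapsto dh_\X(x)$ and $y\mapsto dh_\Y(y)$ are Borel measurable (in fact continuous) into $\mathcal P(S)$. Since $\uX$ is non-expansive, hence continuous, on $X\times X$ and bounded, for every $f\in C(S)$ the function $(x,x')\mapsto f(\uX(x,x'))$ is bounded and jointly measurable; by dominated convergence $x\mapsto\int_X f(\uX(x,x'))\,\muX(dx')=\int_S f\,d(dh_\X(x))$ is continuous, and as the weak topology on $\mathcal P(S)$ is generated by these integral functionals, $x\mapsto dh_\X(x)$ is continuous. The same argument gives continuity of $y\mapsto dh_\Y(y)$. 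Feeding these data into the measurable selection principle for optimal transference plans (see, e.g., \cite[Corollary~5.22]{villani2008optimal}), the assignment $(x,y)\mapsto(dh_\X(x),dh_\Y(y))$ being a continuous family of marginals on the compact space $S$ and the cost $\Lambda_\infty^p$ being continuous, yields a Borel map $(x,y)\mapsto\pi^*_{xy}\in\mathcal P(S\times S)$ with $\pi^*_{xy}$ optimal between $dh_\X(x)$ and $dh_\Y(y)$ for each $(x,y)$.

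The main obstacle is precisely this last step: upgrading pointwise existence of optimizers to a measurable selection. Concretely it amounts to verifying that the correspondence $(x,y)\rightrightarrows\mathcal C(dh_\X(x),dh_\Y(y))$ is upper hemicontinuous — which follows from the continuity of the marginals together with the weak compactness of the coupling set — and that the optimal-plan correspondence $\mathrm{Opt}(x,y):=\{\pi\in\mathcal C(dh_\X(x),dh_\Y(y)):C(\pi)=\inf_{\mathcal C(dh_\X(x),dh_\Y(y))}C\}$ has a measurable graph with nonempty compact values; an application of the Kuratowski--Ryll-Nardzewski selection theorem then produces the desired Borel selection. I would make sure these hypotheses are genuinely met in our compact setting, where $C$ is continuous and all coupling sets are weakly compact, so that no lower-semicontinuity pathologies intervene.
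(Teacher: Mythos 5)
Your proposal is correct and follows essentially the same route as the paper: establish that the marginal maps $x\mapsto dh_\X(x)$ and $y\mapsto dh_\Y(y)$ are continuous (hence Borel) into $\mathcal{P}(S)$ for the compact set $S=\spec{X}\cup\spec{Y}$, note that the cost is well behaved there, and invoke the measurable selection of optimal plans from \cite[Corollary 5.22]{villani2008optimal}. The only cosmetic difference is that the paper cites \cite[Remark 2.5]{memoli2021distance} for the continuity of the marginal maps and explicitly records that $\mathscr{B}(\mathcal P(S)\times\mathcal P(S))=\mathscr{B}(\mathcal P(S))\otimes\mathscr{B}(\mathcal P(S))$ by separability, whereas you derive the continuity directly and sketch the Kuratowski--Ryll-Nardzewski machinery behind Villani's corollary.
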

 
 \begin{proof}[Proof of Claim \ref{claim:measurability}]
 
 It is easy to see that both $\Lambda_1$ and $\Lambda_\infty$ induce the same topology and thus Borel sets on $S$. This therefore implies that $d_{\mathrm{W},p}^{(\Rp,\Lambda_1)}$ and $d_{\mathrm{W},p}^{(\Rp,\Lambda_\infty)}$ metrize the same weak topology on $\mathcal{P}(S)$. By \citet[Remark 2.5]{memoli2021distance}, the following two maps are continuous with respect to the weak topology and thus measurable:
 \[\Phi_1:X\to\mathcal{P}(S),~x\mapsto dh_\X(x)\]
 and 
  \[\Phi_2:Y\to\mathcal{P}(S),~y\mapsto dh_\Y(y).\]

 Since $S$ is a compact space, the space $\left(\mathcal{P}(S),d_{\mathrm{W},p}^{(S,\Lambda_\infty)}\right)$ is separable \citep[Theorem 6.18]{villani2008optimal}. This yields that $\mathscr{B}\left( \mathcal{P}(S)\times \mathcal{P}(S)\right)=\mathscr{B}\left( \mathcal{P}(S)\right)\otimes\mathscr{B}\left( \mathcal{P}(S)\right)$ \citep[Proposition 1.5]{folland1999real}. Hence, the product $\Phi$ of $\Phi_1$ and $\Phi_2$, defined by
 \[\Phi:X\times Y\to \mathcal{P}(S)\times \mathcal{P}(S),~(x,y)\mapsto(dh_\X(x),dh_\Y(y))\]
 is measurable \citep[Proposition 2.4]{folland1999real}. Since $\Phi$ is measurable, a direct application of \citet[Corollary 5.22]{villani2008optimal} gives the claim.
 \end{proof}
Now, we have that for every $\mu\in \mathcal{C}(\muX,\muY)$ that
 \begin{align*}
     &\int_{X\times Y}\!\left(\WasserRpS{p}(dh_\X(x),dh_\Y(y))\right)^p\,\mu(dx\times dy)\\
     =&\int_{X\times Y}\!\int_{S\times S}\Lambda_\infty^p(s,t)\,\pi^*_{xy}(ds\times dt)\,\mu(dx\times dy)\\
     =&\int_{S\times S}\!\Lambda^p_\infty(s,t)\,\widebar{\mu}(ds\times dt),
 \end{align*}
 by Fubini's Theorem, where $\widebar{\mu}\in \mathcal{P}(S\times S)$ is defined as
 \begin{equation}\label{eq:definition of mu bar}
     \widebar{\mu}(A)\coloneqq\int_{X\times Y}\!\pi^*_{xy}(A)\,\mu(dx\times dy) \end{equation}
 for measurable $A\subseteq S\times S$. We remark that by Claim 1 the measure $\widebar{\mu}$ in \Cref{eq:definition of mu bar} is well defined. Next, we verify that $\widebar{\mu}\in \mathcal{C}(dH_\X,dH_\Y)$. For any measurable $A\subseteq (S,\Lambda_\infty)$ we have
 \begin{align*}
     \widebar{\mu}(A\times S)=&\int_{X\times Y}\!\pi^*_{x,y}(A\times S)\,\mu(dx\times dy)\\
     = &\int_{X\times Y}\!dh_\X(x)(A)\,\mu(dx\times dy)\\
     = &\int_{X}\!dh_\X(x)(A)\,\muX(dx)\\
    \overset{(i)}{=}&\int_X\!\int_X\!\mathds{1}_{\{\dX(x,x')\in A\}}\,\muX(dx')\,\muX(dx)\\
     = &dH_\X(A),
 \end{align*}
 where we have applied the marginal constraints for $\pi_{xy}$ and $\mu$. Further, $(i)$ follows by the change-of-variables formula. The analogous arguments give that 
 \[\widebar{\mu}(S\times B)=dH_\Y(B),\]
 for any measurable $B\subseteq S$. Thus, we conclude that for every $\mu \in \mathcal{C}(\muX,\muY)$
 \begin{align*}
     \int_{X\times Y}\!\left(\WasserRpS{p}(dh_\X(x),dh_\Y(y))\right)^p\,\mu(dx\times dy)=&\int_{S\times S}\!\Lambda^p_\infty(s,t)\,\widebar{\mu}(ds\times dt)\\\geq&
     \inf_{\pi\in \mathcal{C}(dH_\X,dH_\Y)}\int_{S\times S}\!\Lambda_\infty(s,t)\,\pi(ds\times dt)\\
     =&\left(\WasserRpS{p}(dH_\X,dH_\Y)\right)^p.
 \end{align*}
 This gives the claim for $p<\infty$.
 
 Next, we prove the assertion for the case $p=\infty$. Note that for any $p<\infty$
 \begin{align}
     \uTLB{p}(\X,\Y)&=\inf_{\mu\in\mathcal{C}(\muX,\muY)}\norm{\WasserRpS{p}(dh_\X(\cdot),dh_\Y(\cdot))}_{L^p(\mu)}\\
     &\leq \inf_{\mu\in\mathcal{C}(\muX,\muY)}\norm{\WasserRpS{\infty}(dh_\X(\cdot),dh_\Y(\cdot))}_{L^\infty(\mu)}\\
     &=\uTLB{\infty}(\X,\Y),
 \end{align}
 where the inequality holds since $\WasserRpS{p}\leq \WasserRpS{\infty}$ and $\norm{\cdot}_{L^p(\mu)}\leq \norm{\cdot}_{L^\infty(\mu)}$.

 By \citet[Proposition 3]{givens1984} we have that
 \[\uSLB{\infty}(\X,\Y)=\WasserRpS{\infty}\left(dH_\X,dH_\Y\right)=\lim_{p\rightarrow\infty}\WasserRpS{p}\left(dH_\X,dH_\Y\right)=\lim_{p\rightarrow\infty}\uSLB{p}(\X,\Y).\]
 Therefore,
 \[\uSLB{\infty}(\X,\Y)=\lim_{p\rightarrow\infty}\uSLB{p}(\X,\Y)\leq \limsup_{p\rightarrow \infty}\uTLB{p}(\X,\Y)\leq \uTLB{\infty}(\X,\Y).\]
\end{proof}

	\subsubsection{Proof of \Cref{prop:flb-slb-w-form}}\label{sec:proof of prop flb-slb-w-form}
		We only prove the first statement for $p\in[1,\infty)$. The case $p=\infty$ as well as the second statement can be proven in a similar manner.
		
		By directly using the change-of-variables formula, we have the following:
		\begin{align*}
		\uSLB{p}(\X,\Y)=&\!\!\inf_{\gamma\in\mathcal{C}(\muX\otimes \muX,\muY\otimes\muY)}\int_{X\times X\times Y\times Y}\!\!\!\left(\Lambda_\infty\left(\uX(x,x'),\uY(y,y')\right)\right)^p\,\gamma(d(x,x')\times d(y,y'))\\
		=&\inf_{\gamma\in\mathcal{C}(\muX\otimes \muX,\muY\otimes\muY)}\int_{\Rp\times \Rp}\!\left(\Lambda_\infty\left(s,t\right)\right)^p\,(\uX\times \uY)_\#\gamma(ds\times dt),
		\end{align*}
		where $\uX\times \uY:X\times X\times Y\times Y\rightarrow\Rp\times \Rp$ maps $(x,x',y,y')$ to $(\uX(x,x'),\uY(y,y'))$. By \Cref{lm:pushforward_coupling}, we have that 
		\[ (\uX\times \uY)_\#\mathcal{C}(\muX\otimes \muX,\muY\otimes\muY)=\mathcal{C}\left((\uX)_\#(\muX\otimes \muX),(\uY)_\#(\muY\otimes \muY)\right).\]
		
		Therefore,
		\begin{align*}
		\uSLB{p}(\X,\Y)
		=&\inf_{\gamma\in\mathcal{C}(\muX\otimes \muX,\muY\otimes\muY)}\int_{\Rp\times \Rp}\!\left(\Lambda_\infty\left(s,t\right)\right)^p\,(\uX\times \uY)_\#\gamma(ds\times dt)\\
		=&\inf_{\tilde{\gamma}\in\mathcal{C}\left((\uX)_\#(\muX\otimes \muX),(\uY)_\#(\muY\otimes \muY)\right)}\int_{\Rp\times \Rp}\!\left(\Lambda_\infty\left(s,t\right)\right)^p\,\tilde{\gamma}(ds\times dt)\\
		=& d_{\mathrm{W},p}^{(\Rp,\Lambda_\infty)}((\uX)_\#(\muX\otimes\muX),(\uY)_\#(\muY\otimes\muY)).
		\end{align*}
\subsubsection{The relation between \texorpdfstring{$\uSLB{}$ and $\uTLB{}$}{the Second and the Third ultrametric Lower Bound}}\label{sec:uSLB zero uTLB greater zero}
Next, we will demonstrate that there are ultrametric measure spaces $\X_1$ and $\X_2$ such that $\uSLB{p}(\X_1,\X_2)=0$, while it holds $\uTLB{p}(\X_1,\X_2)>0$. To this end, consider the three point space $\Delta_3(1)=(\{x_1,x_2,x_3\},u)$ where $u(x_i,x_j)=1$ whenever $i\neq j$. Let $\mu_1\coloneqq \frac{2}{3}\delta_{x_1}+\frac{1}{6}\delta_{x_2}+\frac{1}{6}\delta_{x_3}$ and let $\mu_2\coloneqq \frac{1}{3}\delta_{x_1}+\lc\frac{1}{3}-\frac{1}{2\sqrt{3}}\rc\delta_{x_2}+\lc\frac{1}{3}+\frac{1}{2\sqrt{3}}\rc\delta_{x_3}$. Both $\mu_1$ and $\mu_2$ are probability measures on $\Delta_3(1)$. We then let $\X_1\coloneqq(\Delta_3(1),\mu_1)$ and $\X_2\coloneqq(\Delta_3(1),\mu_2)$. It is easy to check that 
\[u_\#(\mu_1\otimes\mu_1)=u_\#(\mu_2\otimes\mu_2)=\frac{1}{2}\delta_0+\frac{1}{2}\delta_1.\]
Then, by \Cref{prop:flb-slb-w-form} we immediately have that $\uSLB{p}(\X_1,\X_2)=0$ for any $p\in[1,\infty]$. Now, note that
\[u(x_1,\cdot)_\#\mu_1=\frac{2}{3}\delta_0+\frac{1}{3}\delta_1,\]
which is obviously different from all $u(x_i,\cdot)_\#\mu_2$ for $i=1,2,3$. This implies (by \Cref{prop:flb-slb-w-form}) that we have $\uTLB{p}(\X_1,\X_2)>0$ for any $p\in[1,\infty]$. 

In fact, this example works as well for showing that $\dTLB{p}(\X_1,\X_2)>\dSLB{p}(\X_1,\X_2)=0$.

\section{Missing details from Section \ref{sec:ultra-dissimilarity spaces} }
\subsection{Proofs from Section \ref{sec:ultra-dissimilarity spaces}}
Next, we give the complete proofs of the results stated in \Cref{sec:ultra-dissimilarity spaces}.
\subsubsection{Proof of \Cref{thm:ugw-p-metric-dis}}\label{sec: proof of thm ugw-p-metric-dis}
 The first step to prove this is to verify the existence of an optimal coupling. To this end, we make the following obvious observation.
\begin{lemma}\label{lm:continuity-delta-infty-dis}
    Let $X,Y$ be finite ultra-dissimilarity spaces, then $\Lambda_\infty(\uX,\uY):X\times Y\times X\times Y\rightarrow\Rp$ is continuous with respect to the discrete topology.
\end{lemma}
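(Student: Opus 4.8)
The plan is to observe that the continuity assertion is automatic, since the domain carries the \emph{discrete} topology; essentially nothing needs to be checked beyond unwinding the definition of continuity. First I would record that because $X$ and $Y$ are \emph{finite} ultra-dissimilarity spaces, the product $X\times Y\times X\times Y$ is a finite set, and when endowed with the discrete topology every one of its subsets is open. In particular, for any open set $U\subseteq\Rp$, the preimage $\left(\Lambda_\infty(\uX,\uY)\right)^{-1}(U)$ is simply a subset of $X\times Y\times X\times Y$ and is therefore open. By the very definition of continuity this already shows that $\Lambda_\infty(\uX,\uY)\colon X\times Y\times X\times Y\to\Rp$ is continuous, irrespective of the specific form of $\uX$, $\uY$, or $\Lambda_\infty$.

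I would then remark that this is precisely the discrete-topology counterpart of \Cref{lm:continuity-delta-infty}: there, for genuine ultrametric spaces equipped with the metric topology, continuity of $\Lambda_\infty(\uX,\uY)$ required a genuine case analysis on whether $x=x'$ and whether $y=y'$. Here no such work is needed, since the finiteness hypothesis makes the relevant topology on the domain discrete, and every map out of a discrete space is continuous. Consequently there is no real obstacle to the argument; the only point demanding care is to state explicitly which topology is being placed on the domain, so that the statement is not conflated with a stronger claim of joint continuity in a metric topology (which would be the natural analogue of \Cref{lm:continuity-delta-infty}, but is not what is asserted here). With this observation in hand the lemma follows immediately, and it can then be used exactly as \Cref{lm:continuity-delta-infty} was used in the ultrametric setting, namely to secure the existence of an optimal coupling in the definition of $\ugw{p}$ on $\ultradiscol$.
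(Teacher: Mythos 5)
Your proposal is correct and matches the paper's treatment: the paper states this lemma as an ``obvious observation'' with no proof precisely because, as you note, every map out of a finite set with the discrete topology is continuous. Your explicit unwinding of the definition, and the contrast with the genuine case analysis needed for \Cref{lm:continuity-delta-infty} in the metric setting, is exactly the right reading.
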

This allows us to verify the subsequent analogue to \Cref{prop:ugw-ext-opt}. 
\begin{proposition}\label{prop:ugw-ext-opt-dis}
	Let $\X,\Y\in\ultradiscol$. Then, for any $p\in[1,\infty]$, there always exists an optimal coupling $\mu\in\mathcal{C}(\muX,\muY)$ such that $\ugw{p}(\X,\Y)=\mathrm{dis}_p^\mathrm{ult}(\mu)$.
\end{proposition}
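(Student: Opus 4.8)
The plan is to follow the proof of \Cref{prop:ugw-ext-opt} essentially verbatim, exploiting that the present setting is in fact easier. Since $\X,\Y\in\ultradiscol$ are \emph{finite}, the coupling set $\mathcal{C}(\muX,\muY)$ is a transportation polytope, i.e. a closed bounded subset of the finite-dimensional space of $|X|\times|Y|$ matrices, and weak convergence of couplings reduces to entrywise convergence. In particular $\mathcal{C}(\muX,\muY)$ is compact, which one may either check directly or extract from \Cref{lm:measure coupling compact}. As in \Cref{prop:ugw-ext-opt}, I would treat the case $p<\infty$ first and then $p=\infty$ separately.

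For $p<\infty$, first I would pick a minimizing sequence $\{\mu_n\}_{n\in\mathbb N}\subseteq\mathcal{C}(\muX,\muY)$ with $\mathrm{dis}_p^\mathrm{ult}(\mu_n)\to\ugw{p}(\X,\Y)$ and pass (up to a subsequence) to a limit $\mu\in\mathcal{C}(\muX,\muY)$. Since $X,Y$ are finite, $\Lambda_\infty(\uX,\uY)$ takes only finitely many values and is in particular bounded and continuous (\Cref{lm:continuity-delta-infty-dis}), so the map $\mu\mapsto\mathrm{dis}_p^\mathrm{ult}(\mu)$ is a degree-two polynomial in the entries of $\mu$ with bounded coefficients, hence continuous on the polytope. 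Passing to the limit then yields $\mathrm{dis}_p^\mathrm{ult}(\mu)=\lim_{n}\mathrm{dis}_p^\mathrm{ult}(\mu_n)=\ugw{p}(\X,\Y)$, so $\mu$ is optimal.

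The one step requiring care, and the one I expect to be the main obstacle, is the case $p=\infty$, because $\mathrm{dis}_\infty^\mathrm{ult}$ depends on $\supp{\mu}$ and the support map is not continuous under weak limits. The key observation is that on a finite space the support map is lower semicontinuous in the following sense: if $(\mu_n)_{ij}\to\mu_{ij}$ and $\mu_{ij}>0$, then $(\mu_n)_{ij}>0$ for all large $n$, so $\supp{\mu}\subseteq\supp{\mu_n}$ eventually. Consequently the maximum defining $\mathrm{dis}_\infty^\mathrm{ult}(\mu)$ is taken over a subset of the index pairs defining $\mathrm{dis}_\infty^\mathrm{ult}(\mu_n)$, giving $\mathrm{dis}_\infty^\mathrm{ult}(\mu)\le\mathrm{dis}_\infty^\mathrm{ult}(\mu_n)$ for $n$ large. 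Starting from a minimizing sequence and letting $n\to\infty$ then forces $\mathrm{dis}_\infty^\mathrm{ult}(\mu)\le\ugw{\infty}(\X,\Y)$, whence equality and optimality of $\mu$. This mirrors the limiting argument in \Cref{prop:ugw-ext-opt} but replaces the open-set estimate used there by the elementary finite-support bookkeeping available in the discrete setting.
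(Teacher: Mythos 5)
Your proposal is correct and follows essentially the same route as the paper, which simply takes a minimizing sequence, invokes compactness of $\mathcal{C}(\muX,\muY)$, and passes to the limit using the continuity of $\Lambda_\infty(\uX,\uY)$ (the paper's proof of \Cref{prop:ugw-ext-opt-dis} literally reduces to the proof of \Cref{prop:ugw-ext-opt} with \Cref{lm:continuity-delta-infty} replaced by \Cref{lm:continuity-delta-infty-dis}, leaving the details to the reader). Your explicit treatment of $p=\infty$ via the eventual inclusion $\supp{\mu}\subseteq\supp{\mu_n}$ in the finite setting is a correct and clean way to fill in the step the paper dismisses as "similar".
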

\begin{proof}
The proof is essentially the same as the one for \Cref{prop:ugw-ext-opt}. We only replace \Cref{lm:continuity-delta-infty} with \Cref{lm:continuity-delta-infty-dis}.  The details are left to the reader.
\end{proof}
With \Cref{prop:ugw-ext-opt-dis} available and \Cref{thm:ugw-p-metric} already proven, it is immediately clear how to verify the symmetry and the $p$-triangle inequality for $\ugw{p}$ on $\ultradiscol$. Hence it only remains to demonstrate identity of indiscernibles.
\begin{proof}[Proof of \Cref{thm:ugw-p-metric-dis}]
Due to the similarity between \Cref{thm:ugw-p-metric-dis} and \Cref{thm:ugw-p-metric}, we only verify that
 $\ugw p(\X,\Y)=0$ if and only if $\X\cong_w\Y$. If $\X\cong_w\Y$, then obviously $\ugw p(\X,\Y)=0$. 

Next, we assume that $\ugw p(\X,\Y)=0$. By \Cref{prop:ugw-ext-opt-dis} there exists $\mu\in\mathcal{C}(\muX,\muY)$ such that $\ugw{p}(\X,\Y)=\mathrm{dis}_p^\mathrm{ult}(\mu)=0$. Now, we define a map $\varphi:X\rightarrow Y$ as follows: For any $x\in X$ we have $\muX(\{x\})>0$, since $\muX$ has full support and $X$ is finite. As a result, $\mu(\{(x,y)\})>0$ for some $y\in Y$, then we let $\varphi(x)\mapsto y$. This map is well-defined. Indeed, if there are $x\in X$ and $y,y'\in Y$ such that $\mu(\{(x,y)\}),\mu(\{(x,y')\})>0$, then by $\mathrm{dis}_p^\mathrm{ult}(\mu)=0$ we must have that
$$\Delta_\infty\left(u_X(x,x),u_Y(y,y')\right)= \Delta_\infty\left(u_X(x,x),u_Y(y,y)\right)=\Delta_\infty\left(u_X(x,x),u_Y(y',y')\right)=0.$$
This implies that $u_Y(y,y')=u_Y(y,y)=u_Y(y',y')=u_X(x,x)$. Since $u_Y$ is an ultra-dissimilarity, we have that $y=y'$ (cf. condition (3) in \Cref{def:ultra dissimilarity}). Essentially the same argument gives that $\varphi:X\rightarrow Y$ is an injective map. As $\mu\in\mathcal{C}(\muX,\muY)$ and $\varphi$ is injective, it follows $\muX(\{x\})=\mu(\{(x,\varphi(x))\})\leq \muY(\{\varphi(x)\})$ for any $x\in X$. Since
\[1=\sum_{x\in X}\muX(\{x\})\leq \sum_{x\in X}\muY(\{\varphi(x)\})\leq 1,\]
we have that $\muX(\{x\})=\muY(\{\varphi(x)\})$ for all $x\in X$. Since $\muY$ is fully supported, this implies that $\varphi$ is a bijective measure preserving map. Now, for any $x,x'\in X$, $\mathrm{dis}_p^\mathrm{ult}(\mu)=0$ implies that $\Delta_\infty(u_X(x,x'),u_Y(\varphi(x),\varphi(x')))=0$ and thus $u_X(x,x')=u_Y(\varphi(x),\varphi(x'))$. Therefore, $\varphi$ is also an isometry and thus an isomorphism. In consequence, $\X\cong_w\Y$. 
\end{proof}

\section{Missing details from Section \ref{sec:computational aspects}}
 \subsection{Missing details from Section \ref{sec:ugw toy examples}}\label{app:ugw toy examples}
 Here, we list the precise results for the comparisons of the spaces $\X_i$, $1\leq i\leq 4$, illustrated in \Cref{fig:umm spaces}. They are gathered in \Cref{tab:exemplary comparison} and \Cref{tab:exemplary comparison uslb}.
 
		\begin{table}[H]
		\centering
		\begin{tabular}{|c|c|c|c|c||c|c|c|c|}
			\hline\multicolumn{1}{|c|}{}&\multicolumn{4}{c||}{$\ugw{1}$\rule{0pt}{10pt}}& \multicolumn{4}{c|}{$\ugw{\infty}$\rule{0pt}{10pt}}\\
			\hline 	\rule{0pt}{10pt}  & $\X_1$ &$\X_2$& $\X_3$ & $\X_4$& $\X_1$ &$\X_2$& $\X_3$ & $\X_4$ \\\hline
			
			$\X_1$  &0.0000   & 0.9333      &0.2444 	& 0.7071  & 0.0000 &2.1000  &1.1000    &2.000 \\
			$\X_2$  & 0.9333  &  0.0000    &1.1778 	& 1.5107    & 2.1000  &0.0000  & 2.1000    &2.1000  \\
			$\X_3$  & 0.2444  &   1.1778  & 0.0000 	&0.4493   & 1.1000 &2.1000  &0.0000    &2.0000 	\\
			$\X_4$  & 0.7071  &   1.5107  &0.4493	&  0.0000    & 2.0000 &2.1000& 2.0000    &0.0000  \\\hline 
		\end{tabular}
		\medskip
		\caption{\textbf{Comparison of different ultrametric measure spaces I:} The values of  $\ugw{1}(\X_i,\X_j)$ (approximated by Algorithm 1) and $\ugw{\infty}(\X_i,\X_j)$, $1\leq i\leq j\leq 4$, where $\X_i$, $1\leq i \leq 4$, denote the ultrametric measure spaces displayed in \Cref{fig:umm spaces}.} \label{tab:exemplary comparison}
	\end{table}

	\begin{table}[H]
	\centering
		\begin{tabular}{|c|c|c|c|c|}
			\hline\multicolumn{1}{|c|}{}&\multicolumn{4}{c|}{$\uSLB{1}$\rule{0pt}{10pt}}\\
			\hline 	\rule{0pt}{10pt}  & $\X_1$ &$\X_2$& $\X_3$ & $\X_4$ \\\hline
			$\X_1$  &0.0000   &         0.9333     &0.2444 	& 0.0778 \\
			$\X_2$  &     0.9333 &  0.0000    & 1.1778 	& 1.4522  \\
			$\X_3$  &     0.2444 &   1.1778  & 0.0000 	&0.2764  	\\
			$\X_4$  & 0.0778 &   1.5107  &0.2764	&  0.0000     \\\hline 
		\end{tabular}
		\medskip
		\caption{\textbf{Comparison of different ultrametric measure spaces II:} The values of $\uSLB{1}(\X_i,\X_j)$, $1\leq i\leq j\leq 4$, where $\X_i$, $1\leq i \leq 4$, denote the ultrametric measure spaces displayed in \Cref{fig:umm spaces}.} \label{tab:exemplary comparison uslb}
	\end{table}

 \subsection{Missing details from Section \ref{subsec:relation to the GW-dist}}\label{sec:details from subsec relation to the GW-dist}
 Here, we state more results for the comparison of the ultrametric measure spaces illustrated in \Cref{fig:umm spaces} and give the precise construction of the ultrametric spaces $Z_{k,t}^i$, $2\leq k\leq 5$, $t=0,0.2,0.4,0.4$, $1\leq i\leq 15$.\\
 
  \paragraph{\textbf{The ultrametric measure spaces from \Cref{fig:umm spaces}}}
    First, we give the precise results for comparing the ultametric dissimilarity spaces in \Cref{fig:umm spaces} based on $\dgw{1}$ and $\dSLB{1}$. They are gathered in \Cref{tab:exemplary comparison GW}.
 	\begin{table}[H]
 	\centering
		\begin{tabular}{|c|c|c|c|c||c|c|c|c|}
			\hline\multicolumn{1}{|c|}{}&\multicolumn{4}{c||}{$\dgw{1}$\rule{0pt}{10pt}}& \multicolumn{4}{c|}{$\dSLB{1}$\rule{0pt}{10pt}}\\
			\hline 	\rule{0pt}{10pt}  & $\X_1$ &$\X_2$& $\X_3$ & $\X_4$& $\X_1$ &$\X_2$& $\X_3$ & $\X_4$ \\\hline
			
			$\X_1$  &0.0000   &     0.0444     &0.0222 	& 0.2111  & 0.0000 &    0.0444  &0.0222   &0.0422 \\
			$\X_2$  &     0.0444 &  0.0000    & 0.0667  	& 0.2556   &     0.0444  &0.0000  & 0.0667   &0.0867  \\
			$\X_3$  & 0.0222  &  0.0667   & 0.0000 	&0.2253   & 0.0222 &0.0667  &0.0000    &0.0573 	\\
			$\X_4$  & 0.2111  &   0.2556  &0.2253	&  0.0000    & 0.0422 &0.0867& 0.0573    &0.0000  \\\hline 
		\end{tabular}
		\medskip
		\caption{\textbf{Comparison of different ultrametric measure spaces III:} The values of  $\dgw{1}(\X_i,\X_j)$ (approximated by Algorithm 1) and $\dSLB{1}(\X_i,\X_j)$, $1\leq i\leq j\leq 4$, where $(X_i,d_{X_i},\mu_{X_i})$, $1\leq i \leq 4$, denote the ultrametric measure spaces displayed in \Cref{fig:umm spaces}.} \label{tab:exemplary comparison GW}
	\end{table}
 \paragraph{\textbf{Perturbations at level $t$}}Next, we give the precise construction of the ultrametric measure spaces $Z_{k,t}^i$, $2\leq k\leq 5$, $t=0,0.2,0.4,0.4$, $1\leq i\leq 15$. For each $k=2,3,4,5$ we first draw a sample with $100\times k$ points from the mixture distribution 
	\[\sum_{i=0}^k\frac{1}{k}U[1.5(k-1),1.5(k-1)+1],\]
	where $U[a,b]$ denotes the uniform distribution on $[a,b]$. For each sample, we employ the single linkage algorithm to create a dendrogram, which then induces an ultrametric on the given sample. We further draw a 30-point subspace from each ultrametric space and denote it by $Z_k$. These four spaces have similar diameter values between 0.5 and 0.6. Each space $Z_k$ is equipped with the uniform probability measure and the resulting ultrametric measure spaces are denoted by $\Z_{k}=\left( Z_{k},u_{Z_k},\mu_{Z_k}\right) $, $k=2,3,4,5$. We remark that $k$ can be regarded as the number of blocks in the dendrogram representation of the obtained ultrametric measure spaces (see the top row of \Cref{fig:randomly sampled ultrametric measure spaces} for a visualization of three 3-block spaces).

	Finally, we introduce our method for perturbing ultrametric spaces. Given a perturbation level $t\geq 0$ and an ultrametric space $X$, we consider the quotient space $X_t$. Each equivalence class $[x]_t\subseteq X$ is an ultrametric subspace of $X$. If $|[x]_t|>1$, we let $m\coloneqq\left| \spec{[x]_t}\right|-1$ and write $\spec{[x]_t}=\{0<s_1<\ldots<s_m\}$. Let $\delta\coloneqq\diam{[x]_t}$. We generate $m$ uniformly distributed numbers from $[0, t-\delta]$ and sort them according to ascending order to obtain $a_1\leq\ldots\leq a_m$. We then perturb $u_{X}|_{[x]_t\times [x]_t}$ by replacing $s_i$ with $s_i+a_i$ for each $i=1,\ldots,m$. We do the same for all equivalence classes $[x]_t$ and thus obtain a new ultrametric on $X$.

\end{document}